\newtheorem{theorem}{Theorem}[section]
\newtheorem{corollary}[theorem]{Corollary}
\newtheorem{lemma}[theorem]{Lemma}
\newtheorem{proposition}[theorem]{Proposition}
\newtheorem{remark}[theorem]{Remark}
\theoremstyle{definition}
\newtheorem{definition}[theorem]{Definition}
\numberwithin{equation}{section}
\newcommand{\norm}[1]{\left\Vert#1\right\Vert}
\newcommand{\abs}[1]{\left\vert#1\right\vert}
\newcommand{\pf}{\mathrm{pf}}
\newcommand{\PF}{\mathrm{PF}}
\newcommand{\dt}{\mathrm{det}}
\newcommand{\LX}{LX}
\newcommand{\LLX}{L^2X}
\newcommand{\R}{\mathbb R}
\newcommand{\Z}{\mathbb Z}
\newcommand{\N}{\mathbb N}
\newcommand{\CC}{\mathbb C}
\newcommand{\cir}{\mathbb{S}^1}
\newcommand{\tr}{\mathrm{Tr}}
\newcommand{\Tr}{\mathrm{Tr}}
\newcommand{\g}{\mathfrak{g}}
\newcommand{\disc}{\CC_{< 1}^*}
\newcommand{\dd}{\mathfrak{d}}
\newcommand{\cc}{\mathfrak{c}}
\newcommand{\CCS}{\mathrm{CS}}
\newcommand{\be}{\begin{equation}}
\newcommand{\ee}{\end{equation}}
\newcommand{\per}{\mathscr{PER}}
\newcommand{\ie}{\textit{i}.\textit{e}., }
\newcommand{\hol}{hol}
    \def\cir{S^1}%
    \def\LX{LX}%
    \def\LLX{L^2X}%
    \def\disc{D}%
    \def\dd{\partial}%
    \def\cc{c}%
    \def\tr{\mathrm{Tr}}%
    \def\pf{\mathrm{pf}}%
    \let\mathbb\relax%
    \let\mathcal\relax%
    \let\mathfrak\relax%
    \let\mathbf\relax%
    \let\widetilde\relax%
    \let\widehat\relax%
\newcommand{\Rmnum}[1]{\expandafter\@slowromancap\romannumeral #1@}
\title{Elliptic Chern Characters and Elliptic Atiyah--Witten Formula}
\author{\textnormal{Geyang Dai}}
\address{Department of Mathematics, National University of Singapore, Singapore 119076}
\email{e0983446@u.nus.edu}
\author{\textnormal{Fei Han}}
\address{Department of Mathematics, National University of Singapore, Singapore 119076}
\email{mathanf@nus.edu.sg}
\begin{document}

\begin{abstract}

Let $G$ be a compact, connected, and simply connected Lie group. 
A principal $G$-bundle over a manifold $X$, equipped with a connection, together with a positive-energy representation of the loop group $LG$, 
gives rise to a circle-equivariant gerbe module on the free loop space $LX$.
From this data we construct the \emph{elliptic Chern character} on $LX$, and a refinement, the \emph{elliptic Bismut--Chern character}, on the double loop space $L^2X$. 

Generalizing the classical Atiyah--Witten formula from the free loop space $LX$ to the double loop space $\LLX$, 
we establish an \emph{elliptic Atiyah--Witten formula}. 
The \emph{elliptic holonomy} on $\LLX$ is defined by $\tau$-deformed equivariant twisted parallel transport on $\LX$.
We show that the four Pfaffian sections, corresponding to the four spin structures on an elliptic curve, are identified with the four \emph{elliptic holonomies} arising from the four virtual level-one positive-energy representations when $G=\mathrm{Spin}(2n)$.
These constructions are intimately connected to the moduli of $G_{\CC}$-bundles over elliptic curves and conformal blocks in the context of Chern--Simons gauge theory.

\end{abstract}

\maketitle
\tableofcontents
\section{Introduction}
\subsection{Background and Motivation}

The loop-space approach to the Atiyah--Singer index theorem is based on two
main ingredients: the \textbf{Atiyah--Witten formula}
\cite{AST_1985__131__43_0} and the \textbf{Bismut--Chern character}
\cite{bismut_index_1985}. Together, they relate the classical index formula
to geometric structures on the free loop space.

Let $(X,g)$ be a $2n$-dimensional Riemannian manifold, and write $LX$ for
its free loop space. Assume that $X$ is spin. The Atiyah--Witten formula
then says that for each loop $\gamma\in LX$ one has
\begin{equation}
  \tr_s\!\bigl(\hol_{\mathcal{S}_X}(\gamma)\bigr)
  \;=\;
  \pf_{\zeta}\!\bigl(\nabla_{\dot{\gamma}}\bigr),
\end{equation}
where $\tr_s(\hol_{\mathcal{S}_X}(\gamma))$ is the supertrace of the
holonomy in the $\mathbb{Z}_2$-graded spinor bundle $\mathcal{S}_X$, and
$\pf_{\zeta}(\nabla_{\dot{\gamma}})$ is the $\zeta$-regularized Pfaffian of
the covariant derivative along the loop. This identity provides the link between the loop space expression of the
local index density associated with the Dirac operator on $X$ and the
localization procedure on $LX$.

The second ingredient is the Bismut--Chern character $BCh_V$, an
equivariantly closed differential form on $LX$ associated with a vector
bundle with connection $(V,\nabla^V)$ over $X$. It is a
loop-space refinement of the ordinary Chern character form $Ch_V$ on
$X$. When $BCh_V$ is paired with the loop-space index density and one
applies localization formally on $LX$, the result is the index of the Dirac
operator twisted by $V$. A systematic account of the
Hamiltonian--Lagrangian correspondence underlying these constructions can
be found in~\cite{Bismut2011DuistermaatHeckmanFA}.

\medskip

In \cite{Witten1987,Witten1988TheIO}, Witten explained how the
Hamiltonian--Lagrangian correspondence extends naturally from a
finite-dimensional manifold $X$ to its free loop space $LX$.
Formally, the $S^1$-equivariant index of the Dirac--Ramond operator on
$LX$ coincides with the partition function of the $(2|1)$-supersymmetric
sigma model with target $X$. Upon localization to $X$, both sides recover
the Witten genus.

A rigorous construction of the Dirac--Ramond operator requires, as a
first step, the existence of a spinor bundle over $LX$, which in turn
necessitates that $X$ admit a string structure. This program was
proposed by Stolz and Teichner \cite{Stolz2005TheSB}. In
\cite{Kristel2020SmoothFB}, the spinor bundle
$\mathcal{S}_{LX} \to LX$ is constructed rigorously as a smooth Fock
bundle, making essential use of rigged Hilbert space techniques. When
$X$ does not admit a string structure, one is led instead to a loop
spinor gerbe module over the loop-spin lifting gerbe on $LX$.

Motivated by the Atiyah--Witten formula on $LX$, it is
natural to seek an analogue on the double loop space $\LLX$, which we
shall call the \textbf{elliptic Atiyah--Witten formula}. 
One expects the existence of the \textbf{elliptic holonomy} associated to
the loop spinor gerbe module, playing the role of an elliptic analogue of
the supertrace of the holonomy in the spinor bundle. On the analytic side, one is again led to Pfaffians of Dirac
operators, now defined on the elliptic curves, coupled with vector potentials parametrized by $\LLX$.
The
elliptic holonomy, viewed as a section of the transgression line bundle
associated to the lifting gerbe, is expected to agree with this
Pfaffian, which arises canonically as a section of the Pfaffian line
bundle \cite{Freed1987OnDL,Bunke2009StringSA} over $\LLX$.
One of the aims of this paper is to establish this elliptic
Atiyah--Witten formula.

In close analogy with the finite-dimensional situation, where a vector
bundle is used to twist the spin complex on a manifold $X$, a principal
$G$-bundle $P$ together with a positive-energy representation $\pi$ determines a natural twisting of the spin complex on
 $\LX$. Two gerbe modules on $LX$ now enter the picture: the loop spinor gerbe module
$\mathcal{S}_{LX}$, and the gerbe module $\mathcal{E}$ determined by the principal
$G$-bundle together with the chosen positive-energy representation $\pi$ of the loop group.
When the corresponding twisting gerbes cancel, equivalently, when there
is anomaly cancellation, the coupling of these two gerbe modules would give
well-defined twisted spin complex on $LX$. Formally, upon localization
to $X$, the $S^1$-equivariant index of the resulting twisted
Dirac--Ramond operator computes the twisted Witten genus 
\cite{Brylinski1990RepresentationsOL,Liu1995OnMI}.

From the Lagrangian perspective, and guided by the role of the
Bismut--Chern character on the free loop space $LX$, it is natural to
expect, for the gerbe module  $\mathcal{E}$ on $LX$, a corresponding Bismut--Chern
character at the level of the double loop space $L^2X$.
We refer to this refinement as the \textbf{elliptic Bismut--Chern
character}.  This notion was also proposed in
\cite{BerwickEvans2019SupersymmetricLM}, from the perspective of the
Stolz--Teichner program \cite{StolzTeichner2004Elliptic}, which explores the connection between
supersymmetric field theories and elliptic cohomology and tmf
\cite{Hopkins2002ATM}.  We will investigate the
elliptic Bismut--Chern character along these lines in future.

When restricted back to $X$, the elliptic Bismut--Chern character
recovers precisely the twisting term appearing in the twisted Witten
genus. A further aim of this paper is to construct and
investigate the various Chern characters associated to the data
$(P,\pi)$, on $X$, on the loop space $LX$, and on the double loop space
$L^2X$.

\medskip

We now outline our plan in greater detail. 

We begin with the geometry of lifting gerbes on the loop space $LX$, together with gerbe modules arising from positive-energy representations. For simplicity, we assume $G$ is a compact, connected, simple, and simply connected Lie group.
Let $P\to X$ be a principal $G$-bundle equipped with a connection
$A$, and consider the associated loop bundle $LP\to LX$. The lifting
gerbe of $LP$ to the basic central extension of the loop group is denoted
by 
\be
(\mathcal{G}_P,\nabla^B)\;\longrightarrow\; LX.
\ee
This gerbe carries a natural $\cir$-equivariant structure with vanishing
moment map (see Subsection \ref{subsec:geometry of lifting gerbe and cir-equivariance}).

Let $\per^k(LG)$ denote the representation ring of level-$k$
positive-energy representations of the centrally extended loop group
$\widetilde{LG}$. For each $\mathcal{H}\in \per^k(LG)$, there exists an
$\cir$-equivariant gerbe module with connection
$(\mathcal{E},\nabla^{\mathcal{E}})$ over the $k$-fold tensor power
$(\mathcal{G}_P,\nabla^B)^{\otimes k}$. 

When the principal $G$-bundle $P$
admits a string-$G$ structure, this is equivalent to saying that the principal bundle $LP\to LX$ can be lifted to a principal
$\widetilde{LG}$-bundle. In this case,
$\mathcal{E}$ becomes an infinite-dimensional vector bundle on $LX$, as in
\cite{Brylinski1990RepresentationsOL}.

In the first part of this paper, we focus on the free loop space $LX$ and
construct Chern characters associated to the gerbe module
$(\mathcal{E},\nabla^{\mathcal{E}})$. Precisely, we introduce two
characters on $LX$: the \textbf{$q$-graded Bismut--Chern character} and
the \textbf{elliptic Chern character}. The latter is defined as a
$\disc$-extension of the $\cir$-equivariant twisted Chern character
associated to the gerbe module $(\mathcal{E},\nabla^{\mathcal{E}})$, where $\disc$ is the punctured open unit disk.  

In the second part, we turn to the geometry of the double loop space
$\LLX$. We introduce the \textbf{elliptic holonomy} associated to the
gerbe module $(\mathcal{E},\nabla^{\mathcal{E}})$ and establish an
\textbf{elliptic Atiyah--Witten formula} in the case
$G=\mathrm{Spin}(2n)$ at level one. We further construct the
\textbf{elliptic Bismut--Chern character} on $\LLX$ associated to
$(\mathcal{E},\nabla^{\mathcal{E}})$. One finds that the two Chern
characters constructed on $LX$ arise naturally as the restrictions of
this elliptic Bismut--Chern character on $\LLX$ along the two loop
directions.

In developing the elliptic Atiyah--Witten theory, we observe that the
elliptic holonomy admits an alternative description via
\emph{$2$-transgression}, directly from $X$ to $\LLX$, which extends
naturally to mapping spaces of higher genus. This $2$-transgression
approach is based on the quantization commutes with
reduction in Chern--Simons gauge theory (see
\cite{Preparation}). It would be interesting to
compare this double loop space perspective with loop space formulations
arising from the conformal field theory of the Wess--Zumino--Witten model
\cite{Distler2007HeteroticCW}. From a physical perspective, the
$\LLX$/$LX$ descriptions fit naturally into the framework of the
Chern--Simons/Wess--Zumino--Witten correspondence.

\subsection{Main Results}

We now describe the main results of this paper.

\subsubsection{Equivariant twisted Bismut--Chern character}\label{intro:eq twisted bch}

In order to prepare a geometric framework for the elliptic Chern
character on $\LX$ and the elliptic Bismut--Chern character on $\LLX$,
we first develop, in Section~\ref{sec:twisted bismut}, a general theory
of \emph{equivariant twisted Chern characters} and
\emph{equivariant twisted Bismut--Chern characters}.

Building on \cite{Han2014ExoticTE}, we construct the
$\cir$-equivariant twisted Bismut--Chern character as a loop-space
refinement of the $\cir$-equivariant twisted Chern character introduced
in \cite{Mathai2002ChernCI}. More precisely, let $M$ be a
$\cir$-manifold, and let $(\mathcal{G},\nabla^B)$ be a
$\cir$-equivariant gerbe with connection and vanishing moment map (for
general equivariant gerbes, see
\cite{Meinrenken2002TheBG}). We construct the
$\cir$-equivariant twisted Bismut--Chern character
$BCh_{\cir,\mathcal{G}}$ on $LM$ as a loop-space refinement of the
$\cir$-equivariant twisted Chern character
$Ch_{\cir,\mathcal{G}}$.

Moreover, if the gerbe $\mathcal{G}$ is trivial when restricted to the
$\cir$-fixed point set $Y=M^{\cir}$, we obtain the following commutative
diagram:
\begin{equation}\label{intro: diagram of M}
\begin{tikzcd}
    & h_{S^1\times \cir}^{2*}(LM,(\mathcal{L}^B,\nabla^{\mathcal{L}^B},\overline{H}))
    \arrow[ld, "Lj^*"'] \arrow[dr,"i^*"] & \\
h_{S^1}^{2*}(LY)(u)
    \arrow[dr,"i_{\cir}",swap]
& K^0_{\cir}(M,\mathcal{G})
    \arrow[dashed,l, "BCh|_{Y}",swap]
    \arrow[dashed,u, "BCh_{\cir, \mathcal{G}}"]
    \arrow[dashed,d, "Ch|_{Y}"']
    \arrow[dashed,r, "Ch_{\cir,\mathcal{G}}"]
& h_{\cir}^{2*}(M,H)(v)
    \arrow[ld, "j^*"'swap] \\
    & h^{2*}(Y)(u,v) &
\end{tikzcd}
\end{equation}
We briefly explain notations. $H\in \Omega^3(M)^{\cir}$ is
the curving $3$-form of the gerbe $(\mathcal{G},\nabla^B)$.
$K^0_{\cir}(M,\mathcal{G})$ is 
$\cir$-equivariant twisted $K$-theory.
$h_{S^1}^{*}(LY)$ denotes the completed $S^1$-equivariant
cohomology of the loop space $LY$.
 $h_{\cir}^{*}(M,H)$ denotes the $\cir$-equivariant cohomology
of $M$ twisted by the $3$-form $H$.
Finally,
$h_{S^1\times \cir}^{*}(LM,(\mathcal{L}^B,\nabla^{\mathcal{L}^B},\overline{H}))$
denotes the exotic $(S^1\times \cir)$-equivariant cohomology of $LM$, constructed from $(S^1\times \cir)$-invariant differential forms over $LM$ twisted by
the transgression line bundle $(\mathcal{L}^B,\nabla^{\mathcal{L}^B})$
associated to the gerbe (see Definition \ref{exotic T^2 twisted}).

It is worth emphasizing that the {\em vanishing of the moment map is
not essential} for the construction of the equivariant twisted
Bismut--Chern character. We impose this condition here only because, in
later applications, we will specialize to the case $M=\LX$ and take
$(\mathcal{G},\nabla^B)$ to be the lifting gerbe, which indeed has the vanishing moment
map. Furthermore, the entire construction extends naturally to $G$-equivariant twisted Bismut--Chern character for any compact Lie group $G$, when formulated in the Cartan model of
equivariant cohomology.

\subsubsection{Lifting gerbe, gerbe modules from positive energy representations and Chern characters on \LX} 

$\, $

Given a principal $G$-bundle with connection together with a
positive-energy representation,  in
Section~\ref{sec: Elliptic Loop Chern Character on LX}, we construct two Chern
characters on $\LX$: the \textbf{$q$-graded Bismut--Chern character} and
the \textbf{elliptic Chern character}.

Let $(P,A)\to X$ be a principal $G$-bundle equipped with a connection,
and let $\per^k(LG)$ denote the representation ring of level-$k$
positive-energy representations of the loop group $LG$. For
$\mathcal{H}\in \per^k(LG)$, there is an energy decomposition
\be
\mathcal{H}=\bigoplus_{n\geq 0}\mathcal{H}_n,
\ee
where each $\mathcal{H}_n$ is a finite-dimensional $G$-representation.
Associated to this decomposition is a $q$-graded vector bundle
\begin{align}
\Psi(P,\mathcal{H})
\;=\;
\sum_{n=0}^{\infty}\bigl(P\times_G\mathcal{H}_n\bigr)\,q^n
\;\in\; K(X)[[q]].
\end{align}
The corresponding $q$-graded Chern character is given by
\be
\begin{split}
   Ch:\;\per^k(LG)&\longrightarrow H^{2*}(X)[[q]], \\
      \mathcal{H}&\longmapsto
      \sum_{n=0}^{\infty} Ch\bigl(P\times_G\mathcal{H}_n\bigr)\,q^n .
\end{split}
\ee
This character appears naturally in the twisted Witten genus
\cite{Brylinski1990RepresentationsOL,Liu1995OnMI}, and is closely related
to Miller’s elliptic character \cite{Miller1989EllipticCW}. It also
arises in \cite{Berwick-Evans:2021jlr} in the construction of cocycle
representatives for universal Euler and Thom classes in complex analytic
equivariant elliptic cohomology. Modular invariance emerges only after multiplication by the modular anomaly
$q^{m_{\Lambda}}$, where $\Lambda$ denotes the highest weight of
$\mathcal{H}$. Then the modified character
$q^{m_{\Lambda}} Ch(\mathcal{H})$ can be expressed as a linear
combination of Jacobi theta functions. 

Motivated by Bismut-Chern character, it is natural to define
the \emph{$q$-graded Bismut--Chern character} as a loop-space refinement
of the $q$-graded Chern character:
\be
\begin{split}
 BCh:\; \per^k(LG)&\longrightarrow h_{S^1}^{2*}\!\bigl(\LX\bigr)[[q]],\\
        \mathcal{H}&\longmapsto
        \sum_{n=0}^{\infty}
        BCh\bigl(P\times_G\mathcal{H}_n\bigr)\,q^n .
\end{split}
\ee

\medskip
In order to obtain a genuine elliptic Chern--Weil theory of loop groups on
$\LX$, we work with the lifting gerbe
$(\mathcal{G}_P,\nabla^B)$ on $\LX$ and with gerbe modules arising from
positive-energy representations.

The lifting gerbe measures the
obstruction to lifting the principal $LG$-bundle $LP\to \LX$ to a
principal $\widetilde{LG}$-bundle (see
\cite{Gomi2001ConnectionsAC,Waldorf2010ALS}).
Let $R$ denote the curvature of the connection $A$, and set
$
\Phi=\langle R,R\rangle 
$
as the degree $4$ curvature form defined by the minimal bilinear form. It serves as the curving of the Chern--Simons $2$-gerbe on
$X$ \cite{Waldorf2009StringCA}. The associated lifting gerbe on $\LX$ is
$\cir$-equivariant with vanishing moment map, and its curving, denoted by
$H$, is obtained by transgressing $\Phi$ from $X$ to $\LX$.

For $\mathcal{H}\in \per^k(LG)$, there exists an associated
infinite-dimensional gerbe module
$(\mathcal{E},\nabla^{\mathcal{E}})$ on the $k$-fold tensor power
$(\mathcal{G}_P,\nabla^B)^{\otimes k}$. A naive approach to defining an
elliptic Chern character would be to apply the
$\cir$-equivariant twisted Chern character to
$(\mathcal{E},\nabla^{\mathcal{E}})$ on $\LX$. However, this approach fails
because the trace-class condition required in
\cite{Mathai2002ChernCI} is not satisfied in this infinite-dimensional
setting.

\medskip
\noindent\emph{To remedy this, we instead work over the interior of a punctured disk rather than restricting to its boundary circle.}

\medskip
Let $\disc=\{q\in \CC^*, |q|<1\}$ be the punctured disk, which can be identified with the upper-plane $\mathbb{H}$ via the  map $q=e^{2\pi i\tau}$.
For $\tau\in \mathbb{H}$, we introduce the cohomology group
$h_{\disc}^{*}(\LX,kH)\big|_{\tau}$, defined as the cohomology of the
complex
\begin{align}
  \bigl(\Omega^*(\LX)^{\cir},\;
  \mathcal{D}_{\tau}= d+\tau\,\iota_K - kH \bigr).
\end{align}
We refer to this as the \textbf{completed-periodic $\disc$-extended
$S^1$-equivariant twisted cohomology of $\LX$ at $\tau$}.

\begin{remark}
We emphasize that the symbol $\disc$ in the notation does not denote an
action of $\disc$ on $\LX$. 
Throughout, we work with
differential forms over $\CC$.
\end{remark}

Our \textbf{elliptic Chern character} is then defined as a map
\be
 ECh_A:\; \per^k(LG) \longrightarrow 
 \bigl\{\text{cocycles in }\; h_{\disc}^{2*}\!\bigl(\LX,kH\bigr)\bigr\}
 \big|_{\tau}.
\ee

We now proceed to explain the construction in detail.
At the level of representation theory, the Kac--Weyl character diverges when $q\in S^1$.
Nevertheless, upon extending $\cir$ to the punctured disc
$\disc=\{q\in \CC^*,\, |q|<1\}$, the convergence issue can be overcome.
The parameter $\tau\in\mathbb{H}$ on the upper half-plane now appears as the generator of the $\disc$--action.
We therefore consider the \textbf{$\tau$--deformed curvature}, \ie the equivariant curvature with respect to the complex vector field $\tau K$,
where $K$ is the generator of rotation of loops on $\LX$.

Let $\widetilde{L\g_{\CC}}'=L\g_{\CC}\oplus \CC \dd\oplus \CC \cc$ be the complex double extended loop algebra (see Subsection \ref{Double extended loop alg}),
where $\cc$ is the central part and $\dd$ is the derivation part.
Locally, the $\tau$-deformed curvature takes the form on $\LX$ with values in 
 $\widetilde{L\g_{\CC}}'$ as follows:
\begin{align}
 \tau \dd + \bigl(\tau \iota_K\hat{A} + \hat{R} + \square \cc\bigr),
\end{align}
where $\hat{A}$ is the evaluation of the connection $1$-form on $LX$,
and $\hat{R}$ is the evaluation of the curvature $2$-form on $LX$.
The central part $\square\cc$ arises from the local data of the lifting gerbe.
The elliptic Chern character on $\LX$ is locally given by
\begin{align}
 \Tr_{\mathcal{H}}\!\left[\exp\bigl(\tau \dd + (\tau \iota_K\hat{A} + \hat{R} + \square \cc)\bigr)\right].   
\end{align}
We prove that these local expressions assemble into a globally defined even form on $\LX$
and it is $D_{\tau}$-closed, based on the framework of equivariant twisted Chern character discussed in Subsubsection \ref{intro:eq twisted bch}.

The main difficulty is to establish convergence of the expression above.
To overcome this, we will combine energy estimates in \cite{Goodman1984StructureAU} with the Floquet theory \cite{FrenkelOrbital}.

We now summarize our two formulations of Chern characters defined on $\LX$ in the following diagram.
\be 
\begin{tikzcd}[column sep=large, row sep=large]
h_{S^1}^{2*}(LX)[[q]]
    \arrow[dr, "i^*"'] &
\per^k(LG)
    \arrow[l, "BCh"']
    \arrow[d, "Ch"]
    \arrow[r, "ECh"]
& h_{\disc}^{2*}(\LX,kH)\big|_{\tau}
    \arrow[dl, "i^*"] \\
& h^{2*}(X)[[q]] &
\end{tikzcd}
\ee

The relation between above two now is not clear.
Afterwards, we will show
{\bf they are restrictions along two directions of loops of the elliptic Bismut-Chern character}.

As an application, we define the elliptic Chern character of the loop
spinor bundle $\mathcal{S}_{\LX}$ on $\LX$, and thereby construct a
differential form on $\LX$ which may be viewed as a
$\tau$-deformation of the $\hat{A}$-form on loop space
(see \cref{hatA on LX}). We further establish its relation to the
Witten genus, upon localization to $X$
(see \cref{localized to be Witten genus}).

\medskip

We now turn to the double loop space $\LLX$ and describe our results in
this setting.

\subsubsection{Elliptic Atiyah--Witten formula on $\LLX$}

As indicated above, one of the central aims of this paper is to extend
the classical Atiyah--Witten formula from the loop space to the
double loop space. We refer to this extension as the
\textbf{elliptic Atiyah--Witten formula}, and establish it in
Section~\ref{Sec: Elliptic AW and Modular}.

Recall that the classical Atiyah--Witten formula asserts that the
supertrace of the spinor holonomy along a loop coincides with the
$\zeta$-regularized Pfaffian of the Dirac operator on the circle. 
In the
elliptic setting, the role of the supertrace is played by the super
\emph{elliptic holonomy} of the loop spinor gerbe module, while the
analytic counterpart is given by the Pfaffian of family of 
Dirac operators on an
elliptic curve, coupled to vector potentials parametrized by $\LLX$ (see \eqref{intro: family index}).

We define the \textbf{elliptic holonomy} on $\LLX$ as a $\tau$-deformed
equivariant twisted holonomy of a gerbe module
$(\mathcal{E},\nabla^{\mathcal{E}})\to \LX$ arising from a positive-energy
representation $\mathcal{H}$. The resulting elliptic holonomy
$Ehol_{\mathcal{H}}$ is a $T^2$-equivariant section of the transgression
line bundle $\mathcal{L}(\mathcal{G}_P)\to \LLX$, where
$\mathcal{L}(\mathcal{G}_P)$ denotes the transgression line bundle
associated to the lifting gerbe.

We locally consider a first-order differential operator in the $y$-direction 
with values in $\widetilde{L\g_{\CC}}'$:
\begin{align}
\mathcal{D}_y=\frac{d}{dy}-(\tau \dd+\iota_{\tau \partial_x - \partial_y}\widetilde{A}),
\end{align}
where the evaluation on the $y$-direction $(\iota_{\tau \partial_x - \partial_y}\widetilde{A})_y\in \widetilde{L\g_{\CC}}$ includes a central part that arises from the local data of the lifting gerbe.
The elliptic holonomy is then defined as the trace of the parallel transport of $\mathcal{D}_y$ 
under the positive-energy representation $\mathcal{H}$ (see Subsection~\ref{subsec: Elliptic Holonomy} for details).

A key analytical difficulty is that convergence of the elliptic
holonomy cannot be established for arbitrary smooth double loops. To
overcome this issue, we restrict attention to
\emph{gauged polystable double loops}, for which convergence can be
proved rigorously. Polystability is defined in terms of the long-time
convergence of the Yang--Mills flow on Riemann surfaces, and is
characterized by complex gauge-group orbits of flat connections (cf.
\cite{Preparation}).

We now explain the elliptic Atiyah--Witten formula. Let $(X,g)$ be a
$2n$-dimensional spin Riemannian manifold with Levi--Civita connection
$\nabla^g$ on $TX$. Denote by $P_{\mathrm{Spin}}\to X$ the spin frame
bundle, and by $A$ the induced spin connection.

\medskip
\noindent\textbf{The analytic side.}
The Pfaffian line bundle arises naturally from a $2$-transgression
construction. Fix $\tau\in\mathbb{H}$ and endow the elliptic curve
\[
\Sigma_\tau=\CC/(\Z\oplus \tau\Z)
\]
with its standard flat K\"ahler metric of unit volume. For each choice
of spin structure on $\Sigma_\tau$, specified by a $2$-torsion point
$\frac{i+\tau j}{2},(i,j)\in \Z_2\times \Z_2$, there is an associated Pfaffian line bundle
\[
\mathrm{PF}_{i,j}\;\longrightarrow\;\LLX,
\]
equipped with the Bismut--Freed connection
$\nabla^{\mathrm{PF}}_{i,j}$ and the Quillen metric
$g^{\mathrm{PF}}_{i,j}$. This line bundle is canonically the square root
of the determinant line bundle arising from the family index theorem
applied to the family
\be
\begin{tikzcd}
    & ev^*(TX,\nabla^g) \arrow[d] &\\
 \Sigma_\tau \arrow[r]   & \LLX \times \Sigma_\tau \arrow[d]  \\
&    \LLX &
\end{tikzcd}\label{intro: family index}
\qquad\Longrightarrow\qquad
\begin{tikzcd}
(\mathrm{PF}_{i,j},\nabla^{\mathrm{PF}}_{i,j},g^{\mathrm{PF}}_{i,j})
\arrow[d] \\
\LLX .
\end{tikzcd}
\ee
In the genus-one case, the Atiyah--Singer index theorem implies that the
index of this family vanishes identically. Consequently, each Pfaffian
line bundle $\mathrm{PF}_{i,j}$ admits a canonical Pfaffian section,
denoted $\pf_{i,j}[\tau]$.

\medskip
\noindent\textbf{The topological side.}
From the viewpoint of $(1+1)$-transgression, we consider the four virtual
positive-energy representations of $L\mathrm{Spin}(2n)$ at level one,
which form a basis of $\per^1(L\mathrm{Spin}(2n))$ (see
\cite{Brylinski1990RepresentationsOL}):
\be
\per^1(L\mathrm{Spin}(2n))
=
\{\,S^+-S^-,\; S^++S^-,\; S_+-S_-,\; S_++S_-\,\}.
\ee
These four virtual representations correspond to the four spin
structures on the elliptic curve, according to the identifications
\be
S_{11}=S^+-S^-,\qquad
S_{10}=S^++S^-,\qquad
S_{01}=S_+-S_-,\qquad
S_{00}=S_++S_-.
\ee
To each $S_{ij}$ there is an associated elliptic holonomy
\[
Ehol_{S_{ij}}[\tau]\in
\Gamma\bigl(\LLX,\mathcal{L}(\mathcal{G}_P)\bigr)^{T^2},
\]
where $\mathcal{L}(\mathcal{G}_P)$ denotes the transgression line bundle
of the lifting gerbe.

\medskip
In particular, the odd spin structure corresponds to the level-one Fock
representation $S_{11}=S^+-S^-$. This representation may be regarded as
the loop-space analogue of the finite-dimensional spin representation.
The associated gerbe module is precisely the loop spinor gerbe module
$\mathcal{S}_{\LX}\to\LX$. If $X$ admits a string structure, the
transgressed loop-spin structure lifts this gerbe module to an honest
Fock bundle on $\LX$, yielding the spinor bundle
$\mathcal{S}_{\LX}$ of the loop space. For rigorous constructions via
smooth Fock bundles, we refer to \cite{Kristel2020SmoothFB}.

\begin{theorem}[Elliptic Atiyah--Witten formula]\label{elliptic AW formula}
There is an isometry between the Pfaffian line bundle and the
transgression line bundle associated to the lifting gerbe,
\begin{align}
\Phi_{ij}:\;
(\mathrm{PF}_{ij},\nabla^{\mathrm{PF}}_{ij},g^{\mathrm{PF}}_{i,j})
\xrightarrow{\;\cong\;}
(\mathcal{L}(\mathcal{G}_P,\nabla^B),h),
\end{align}
where $h$ denotes the canonical metric on the transgression line bundle.
Under the isometry, for gauged poly-stable double loops one has
\be
\Phi_{1-i,\,1-j}\!\bigl(\pf_{1-i,\,1-j}[\tau]\bigr)
\;=\;
q^{m_{ij}}\cdot Ehol_{S_{ij}}[\tau],
\ee
where $m_{ij}$ denotes the modular anomaly.
\end{theorem}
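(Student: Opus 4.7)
The plan is to prove \cref{elliptic AW formula} in two stages: first construct the isometry $\Phi_{ij}$ by matching curvatures and equivariant data on the two line bundles, then verify the section identity pointwise over gauged poly-stable double loops by reducing both sides to Jacobi theta functions.

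For the isometry, both $(\mathrm{PF}_{ij},\nabla^{\mathrm{PF}}_{ij},g^{\mathrm{PF}}_{ij})$ and $(\mathcal{L}(\mathcal{G}_P,\nabla^B),h)$ are $T^2$-equivariant Hermitian line bundles on $\LLX$. The Bismut--Freed curvature of $\mathrm{PF}_{ij}$, obtained from the local family index theorem applied to the family \eqref{intro: family index}, reduces by dimensional reasons to the fiber integral $\tfrac{1}{2}\int_{\Sigma_\tau} ev^*\langle R,R\rangle$ of the Chern--Simons $2$-gerbe curving, i.e.\ the double transgression of $\Phi=\langle R,R\rangle$ from $X$ to $\LLX$. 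The curvature of the transgression line bundle $\mathcal{L}(\mathcal{G}_P,\nabla^B)$ is obtained by transgressing the loop-space $3$-form $H$ along the second loop direction, and equals the same double transgression. Together with the matching $T^2$-equivariant structures (both with vanishing moment map along the loop rotation used to define the lifting gerbe) and the classification of level-one gerbe modules, this yields a canonical $T^2$-equivariant isometry $\Phi_{ij}$.

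For the section identity, fix a gauged poly-stable double loop $\Gamma\colon \Sigma_\tau \to X$. Poly-stability, via the Narasimhan--Seshadri-type theorem on elliptic curves, provides a complex gauge transformation bringing $\Gamma^*A$ to a constant flat connection with holonomies $z=(z_1,\dots,z_n)$ in a maximal torus of $\mathrm{Spin}(2n)_{\CC}$. In this gauge, the twisted Dirac operator on $\Sigma_\tau$ with spin structure labelled $(1-i,1-j)$ splits over weight lines, and its $\zeta$-regularized Pfaffian $\pf_{1-i,1-j}[\tau]$ admits a closed-form expression as a product of Jacobi theta functions $\theta_{1-i,1-j}(z|\tau)$. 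On the representation-theoretic side, the elliptic holonomy $Ehol_{S_{ij}}[\tau]$ evaluates, in the same gauge, to the $\tau$-deformed trace of $z$ acting on the level-one positive-energy representation $S_{ij}$, which by the Kac--Peterson character formula equals precisely the same theta function up to the modular-anomaly prefactor $q^{m_{ij}}$ arising from the normalization of $L_0$. This gives the pointwise equality; continuity of both sides in $\Gamma$ and density of gauged poly-stable double loops in the convergence locus propagate the identity to all such $\Gamma$.

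The main obstacle will be globalizing the pointwise computation to a section-level equality while controlling convergence of the infinite-dimensional traces and Pfaffians. On the topological side, convergence of $Ehol_{S_{ij}}[\tau]$ for non-flat vector potentials relies on the Floquet theory and energy estimates already developed for the elliptic Chern character; on the analytic side, poly-stability is precisely what produces the gauge in which the Pfaffian is computable. The most delicate step is ensuring compatibility of the Bismut--Freed connection with the transgression connection under $\Phi_{ij}$ as one varies within $\LLX$; this hinges on the precise compatibility between the local family index theorem and the $2$-transgression of the Chern--Simons $2$-gerbe curving $\Phi$, and is where the vanishing moment-map hypothesis enters decisively.
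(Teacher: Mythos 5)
Your broad outline—reduce to constant Cartan connections via poly-stability, then match theta functions on both sides—is the paper's strategy, but there are two genuine gaps in your execution.

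\textbf{Isometry of line bundles.} Matching Bismut--Freed and transgression curvatures on $\LLX$ does not by itself produce an isomorphism of line bundles (a fortiori an isometry): on an infinite-dimensional space with rich $\pi_1$, flat but non-trivial twists cannot be ruled out by curvature alone, and "classification of level-one gerbe modules" is too vague to close this. The paper instead factors both bundles through the universal Chern--Simons prequantum line bundle $(\mathcal{L},h,\nabla)$ over $[\mathcal{A}/\Sigma G]$: Proposition~\ref{1+1 transgression=2 transgression} shows $(1{+}1)$-transgression equals $2$-transgression, so $\mathcal{L}(\mathcal{G}_P,\nabla^B)=(\mathcal{L}_{\CCS},\nabla_{\CCS}^A)$, and Theorem~\ref{detpf} (from \cite{Preparation}) identifies the Pfaffian line bundle with a power of $\mathcal{L}$ as $\Sigma G$-equivariant prequantum Hermitian line bundles. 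The $\Sigma G$-equivariance over $\mathcal{A}$ is what rigidifies the identification; $T^2$-equivariance on $\LLX$ alone is far weaker.

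\textbf{Reduction to constant Cartan connections.} You apply a complex gauge transformation $g\in\Sigma G_{\CC}$ to bring $\Gamma^*A$ to a constant flat connection and then compute both sides there. For this to yield the section-level identity at the original double loop, you must know that both $Ehol_{S_{ij}}[\tau]$ and $\pf_{1-i,1-j}[\tau]$ transform under $\Sigma G_{\CC}$ by the \emph{same} cocycle (the gauged-WZW cocycle). For the Pfaffian this is part of Theorem~\ref{detpf}; for the elliptic holonomy it is precisely the ``Global transformation property'' proposition preceding the theorem, showing $Ehol_{gA}(\mathcal{H})=e^{ikW(\hat g,\hat A)}Ehol_A(\mathcal{H})$. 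Without this lemma the pointwise computation at a constant connection does not determine the value at $\Gamma$. Relatedly, you conflate the $\zeta$-regularized Pfaffian (a norm) with the Pfaffian section: the paper works with the holomorphic section directly, pushing it down to $M_G[\tau]$ (Proposition~\ref{det as theta} and Theorem~\ref{AW formula on MG by computation}) and identifying it with the Kac--Weyl character as a theta function, rather than comparing only absolute values. Finally, the concluding density/continuity step is both unnecessary (the theorem is stated only for gauged poly-stable double loops) and not how the paper argues.
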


\begin{remark} The identification of the odd–spin Pfaffian with the elliptic holonomy of
the loop spinor gerbe module associated to
$S_{11}=S^+-S^-$ is particularly striking: it is the most natural
analogue of the classical Atiyah–Witten formula. 
In contrast,
the remaining three even spin structures admit no comparable
finite-dimensional counterpart, reflecting their intrinsically
elliptic nature.

When restricted to constant double loops, identified with $X$, the
elliptic holonomy corresponding to $S^+-S^-$ vanishes. On the analytic
side, the Pfaffian section associated to the odd spin structure also
vanishes, since the Dirac operator on the torus with odd spin structure
(the $\bar{\partial}$-operator) has a nontrivial kernel of constant
dimension. This vanishing is not accidental: it precisely mirrors the
familiar phenomenon that occurs when restricting the classical
Atiyah--Witten formula from the loop space $\LX$ back to $X$.
\end{remark}

Let $i_y,i_x:\LX\to \LLX$ denote the inclusions corresponding to
double loops that are constant in the $y$- and $x$-directions,
respectively. By construction, the elliptic holonomy restricts along the
$y$-direction to the degree-zero component of the elliptic Chern
character. Its restriction along the $x$-direction coincides with the
degree-zero component of the $q$-graded Bismut--Chern character on $\LX$,
namely the $q$-graded holonomy. A natural question is how the elliptic
Atiyah--Witten formula degenerates to the classical Atiyah--Witten
formula in the limit $\tau\to i\infty$.

On the holonomy side, the restriction of the modified elliptic holonomy
along the $x$-direction satisfies
\be \label{res to LX as q}
i_x^*\bigl(q^{m_{\Lambda}}\cdot Ehol_{S^+-S^-}[\tau]\bigr)(\gamma)
    = q^{\frac{n}{12}}\bigl(\Tr_s[hol_{\mathcal{S}_X}(\gamma)] + O(q)\bigr),
    \quad \gamma\in \LX.
\ee
Here $O(q)$ denotes higher-order terms in $q$, arising from the holonomy
of higher-energy modes in the loop group representation. After dividing
by the Virasoro anomaly factor $q^{\tfrac{n}{12}}$, which originates from
the regularization $\sum_{m=1}^\infty m=-\tfrac{1}{12}$, the elliptic
holonomy reduces precisely to the supertrace of the spinor holonomy on
$\LX$.

On the Pfaffian side, the degeneration is more subtle, and the Virasoro
anomaly is not immediately visible. Nevertheless, the elliptic
Atiyah--Witten formula allows one to deduce the limiting behavior of the
Pfaffian. Combining the elliptic Atiyah--Witten formula,
\cref{res to LX as q}, and the classical Atiyah--Witten formula
\be
\Tr_s\!\bigl[hol_{\mathcal{S}_X}(\gamma)\bigr]
=
\pf_{\zeta}\!\bigl(\nabla_{\dot{\gamma}}\bigr),
\ee
we obtain the following corollary. Let $\pf[\tau]$ denote the Pfaffian section corresponding to the odd spin structure.
\begin{corollary}
\be \label{limq}
 q^{-\frac{n}{12}}\,
i_x^*\bigl(\pf[\tau]\bigr)(\gamma)
    \to
\pf\!\bigl(\nabla_{\dot{\gamma}}\bigr),
\quad q\to 0.
\ee
\end{corollary}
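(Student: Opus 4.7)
The plan is to chain the three ingredients explicitly invoked just before the statement: the elliptic Atiyah--Witten formula of \cref{elliptic AW formula}, the $q$-expansion identity \cref{res to LX as q}, and the classical Atiyah--Witten formula on $\LX$. Geometrically, the inclusion $i_x:\LX\to\LLX$ sends a smooth loop $\gamma$ to a double loop which is constant in the $x$-direction, hence is a gauged flat connection on the elliptic curve and in particular is gauged poly-stable, so \cref{elliptic AW formula} applies pointwise along the $x$-restriction and both sides of the elliptic Atiyah--Witten identity are defined at $i_x(\gamma)$.

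First, I apply \cref{elliptic AW formula} with $(i,j)=(1,1)$, which is the case corresponding to the odd spin structure and to the virtual Fock representation $S_{11}=S^+-S^-$. This gives the isometric identification
\begin{equation*}
\Phi_{0,0}\bigl(\pf[\tau]\bigr) \;=\; q^{m_{11}}\cdot Ehol_{S^+-S^-}[\tau]
\end{equation*}
of $T^2$-equivariant sections of the transgression line bundle $\mathcal{L}(\mathcal{G}_P,\nabla^B)$. Pulling back along $i_x$ and inserting \cref{res to LX as q}, which matches the modular anomaly $m_{11}$ with the exponent $n/12$ coming from the Virasoro shift of the basic level-one representation of $L\mathrm{Spin}(2n)$, yields
\begin{equation*}
i_x^*\bigl(\Phi_{0,0}(\pf[\tau])\bigr)(\gamma) \;=\; q^{\frac{n}{12}}\bigl(\Tr_s[\hol_{\mathcal{S}_X}(\gamma)] + O(q)\bigr).
\end{equation*}
Dividing by $q^{n/12}$ and letting $q\to 0$ kills all the higher-energy contributions from the Fock decomposition $\mathcal{H}=\bigoplus_{n\geq 0}\mathcal{H}_n$, leaving only the zero-mode piece; the classical Atiyah--Witten formula $\Tr_s[\hol_{\mathcal{S}_X}(\gamma)]=\pf_{\zeta}(\nabla_{\dot\gamma})$ then identifies this residual piece with $\pf(\nabla_{\dot\gamma})$, establishing \cref{limq}.

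The technical points to secure, rather than deep obstacles, are: (i) that the isometry $\Phi_{0,0}$ of \cref{elliptic AW formula} is compatible with the pullback along $i_x$, so that the identification of $\pf[\tau]$ with $q^{m_{11}}Ehol_{S^+-S^-}[\tau]$ survives after restriction and the pointwise limit $q\to 0$ on one side equals that on the other; (ii) that the modular anomaly $m_{11}$ for the $(1,1)$ spin structure coincides with the Virasoro shift $n/12$ visible in the Kac--Weyl character of $S^+-S^-$, which is a direct computation with the character of the basic representation; and (iii) that the higher-energy tails in $Ehol_{S^+-S^-}[\tau]$ genuinely collapse to an $O(q)$ remainder at a fixed loop $\gamma$, which is controlled by the energy estimates and Floquet theory already deployed in \cref{sec: Elliptic Loop Chern Character on LX} to establish convergence of the elliptic Chern character. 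Once these compatibility checks are in place, the chain of equalities above gives the corollary immediately, with the main subtlety being point (iii): one must verify that the uniform control from the convergence theorem for $ECh$ survives the pointwise evaluation at $i_x(\gamma)$.
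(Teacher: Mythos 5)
Your proposal is correct and follows essentially the same route as the paper, which states the corollary as an immediate combination of the elliptic Atiyah--Witten formula (specialized to $(i,j)=(1,1)$, so $\pf[\tau]=\pf_{0,0}[\tau]$ and $S_{11}=S^+-S^-$), the $q$-expansion identity \cref{res to LX as q}, and the classical Atiyah--Witten formula. Your added remark that $i_x(\gamma)$ is gauged poly-stable (because the pulled-back connection $\gamma^*A$ has no $dx$-component and is $x$-independent, hence flat) is a worthwhile check that the elliptic Atiyah--Witten identity is actually available at the points where you invoke it, a detail the paper leaves implicit.
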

This formula clarifies how the Pfaffian on the double loop space $\LLX$
degenerates to the Pfaffian on the loop space $\LX$ as the complex
structure parameter of the elliptic curve approaches the cusp,
$\tau\to i\infty$. Specifically, this degeneration is a loop
space analogue of the following product expansion:
\be
q^{-\frac{1}{12}}\cdot \frac{\theta_{11}(z,\tau)}{\eta(\tau)}
    =(e^{\pi i z}-e^{-\pi i z})
      \prod_{m=1}^{\infty}\bigl(1-q^m e^{2\pi i z}\bigr)
      \bigl(1-q^m e^{-2\pi i z}\bigr).
\ee

\subsubsection{Geometric quantization of Chern--Simons gauge theory and double-loop space geometry}

$\,$

Our proof of the elliptic Atiyah–Witten formula is based on the
geometric quantization of Chern–Simons gauge theory
\cite{Axelrod1991GeometricQO}. We begin by describing the isometry of line bundles in \cref{elliptic AW formula}.
The identification between the transgression line bundle of the lifting gerbe
and the Pfaffian line bundle is not accidental: both are isomorphic to the Chern–Simons prequantum line bundle.

Let
\(\mathcal{A}=\Omega^1(\Sigma,\g)\) denote the space of \(G\)-connections on the torus \(\Sigma\),
and let \(\Sigma G=C^{\infty}(\Sigma,G)\) be the gauge group.
In \cref{subsec: EQ of Line Bundles} we review the construction of the universal Chern–Simons line bundle
\((\mathcal{L},h,\nabla)\) over \(\mathcal{A}\), which is a \(\Sigma G\)-equivariant prequantum line bundle \cite{FREED1995237}.
Evaluating a connection on double loops $\gamma\to \gamma^*A$ 
defines a \(\Sigma G\)-equivariant map
\begin{align}
    \Psi_A:\; L^2P \longrightarrow \mathcal{A}.
\end{align}
This induces the stack morphism \(\LLX\to [\mathcal{A}/\Sigma G]\).
Pulling back \((\mathcal{L},h,\nabla)\) along \(\Psi_A\) produces the Chern–Simons line bundle
\((\mathcal{L}_{\CCS},\nabla_{\CCS}^A,h)\) on \(\LLX\), which we will show coincides with the transgression line bundle \(\mathcal{L}(\mathcal{G}_P,\nabla^B,h)\). On the other hand,  the isometry between Pfaffian line bundle and Chern–Simons line
bundle on $\LLX$ has already been established in \cite{Bunke2009StringSA}. 
In \cite{Preparation}, the isometry is shown universally
over $\mathcal{A}$, viewing both line
bundles as $\Sigma G$–equivariant Hermitian prequantum line bundles.

The main idea of the proof of Theorem~\ref{elliptic AW formula} is to
establish the identity first on the coarse moduli $M_G[\tau]$, \ie the moduli space of
semistable holomorphic principal $G_{\CC}$-bundles over the elliptic
curve. When $G=\mathrm{Spin}(2n)$, the elliptic Atiyah--Witten formula on
$\LLX$ then arises as a consequence of the corresponding formula on
$M_G[\tau]$, which we refer to as the \emph{elliptic Atiyah--Witten
formula on the moduli space}, asserting that the push-down
of the four Pfaffian sections to $M_G[\tau]$ conincide the
Kac--Weyl characters of the four level-one virtual representations
$\{S_{ij}\}$. See \cref{subsec:Elliptic Atiyah-Witten on MG} for details.

Throughout this analysis, Chern--Simons gauge theory appears naturally
through the mechanism of $2$-transgression. In particular, the elliptic
holonomy admits a natural interpretation in terms of the principle that
\emph{quantization commutes with reduction} for Chern--Simons gauge
theory.

Unlike the $(1+1)$-transgression, where the parameter $\tau\in\mathbb{H}$
serves as the generator of the $\disc$-action, the role of $\tau$ here is
of a different nature: it specifies the complex structure of the torus
and thereby determines the choice of polarization in geometric
quantization. Once $\tau$ is fixed, the space of connections
$\mathcal{A}$ acquires a K\"ahler structure. After choosing this
polarization, the space
\be
\mathcal{H}_k(G)|_{\tau}:=H^0_{\Sigma G}(\mathcal{A},\mathcal{L}^k)
\ee
is the space of quantization before reduction, while its
push-down, the space of quantization after reduction, is precisely the
genus-one conformal block $V_k(G)|_{\tau}$. Precise definitions of
$M_G[\tau]$ and $V_k(G)|_{\tau}$ are taken from
\cite{Axelrod1991GeometricQO,Looijenga1976RootSA}, and the necessary
background will be reviewed in
\cref{genus one CS and CB}.

Within this framework, there is a complex-analytic form of the principle
that quantization commutes with reduction, to be compared with the
algebraic-stack counterpart in \cite{Teleman1998TheQC}.

\begin{theorem}[{\cite{Preparation}}]\label{[Q,R]=0}
The push-down map
\be
r:\mathcal{H}_k(G)|_{\tau}\longrightarrow V_k(G)|_{\tau}
\ee
is bijective.
\end{theorem}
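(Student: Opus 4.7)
The plan is to prove Theorem~\ref{[Q,R]=0} by implementing the complex-analytic Kempf--Ness correspondence on the infinite-dimensional Kähler manifold $(\mathcal{A},\omega_\tau)$. Fixing $\tau\in\mathbb{H}$ endows $\Sigma_\tau=\CC/(\Z\oplus\tau\Z)$ with a complex structure, which in turn induces a flat $\Sigma G$-invariant Kähler form $\omega_\tau$ on $\mathcal{A}$. The moment map for the $\Sigma G$-action is (up to normalization) the curvature $A\mapsto F_A$, its zero set is the space of flat connections, and the symplectic quotient is identified with $M_G[\tau]$ via the Narasimhan--Seshadri/Ramanathan correspondence on the elliptic curve. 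Correspondingly, the universal Chern--Simons line bundle $\mathcal{L}$ is a $\Sigma G$-equivariant prequantum bundle whose $k$-th power descends along $\mathcal{A}^{ps}\to M_G[\tau]$ to the theta line bundle $\overline{\mathcal{L}}^k$, and by definition $V_k(G)|_\tau=H^0(M_G[\tau],\overline{\mathcal{L}}^k)$.

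First I would upgrade real gauge invariance to complex gauge invariance: any holomorphic section $s\in\mathcal{H}_k(G)|_\tau$ is automatically $\Sigma G_{\CC}$-invariant, because the Lie derivative of $s$ along an infinitesimal imaginary gauge transformation $i\xi\in i\,\mathrm{Lie}(\Sigma G)$ is controlled by the pairing of $s$ with the moment map $\langle\mu,\xi\rangle$, and the standard Kähler moment-map identity forces this derivative to vanish on a $\Sigma G$-invariant, holomorphic section. Second, the Atiyah--Bott stratification of $\mathcal{A}$ by Harder--Narasimhan type, together with the long-time convergence of the Yang--Mills flow on $\Sigma_\tau$ (the same polystability input used earlier in the construction of the elliptic holonomy), shows that every $\Sigma G_{\CC}$-orbit in the semistable locus $\mathcal{A}^{ss}$ admits a unique polystable representative. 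This yields a well-defined restriction map $\mathcal{H}_k(G)|_\tau\to V_k(G)|_\tau$ by pull-back along $\mathcal{A}^{ps}\to M_G[\tau]$, and injectivity follows from the identity principle, since the $\Sigma G_{\CC}$-saturation of $\mathcal{A}^{ps}$ is dense in $\mathcal{A}$.

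For surjectivity I would take a section $s\in V_k(G)|_\tau$, pull it back to a $\Sigma G_{\CC}$-invariant holomorphic section on the open stratum $\mathcal{A}^{ps}$, and extend it across the unstable Atiyah--Bott strata by a Hartogs-type analytic continuation. The main obstacle lies precisely here: in finite-dimensional GIT the unstable locus has positive codimension inside a normal variety so the Hartogs extension is automatic, but $\mathcal{A}$ is infinite-dimensional and one must justify the extension analytically. The approach is to use the gradient flow of $\|\mu\|^2$ to retract $\mathcal{A}$ onto $\mathcal{A}^{ps}$ stratum-by-stratum, and to control the growth of $s$ along unstable directions via explicit bounds on the Kähler potential, exploiting the fact that each unstable Harder--Narasimhan stratum has infinite codimension on the elliptic curve in a precise analytic sense.

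Once surjectivity is established, the identification of $H^0(M_G[\tau],\overline{\mathcal{L}}^k)$ with the genus-one conformal block $V_k(G)|_\tau$ from \cite{Looijenga1976RootSA} is built into the definition of the right-hand side, and the equality of dimensions recovers the genus-one Verlinde number, which provides an independent numerical consistency check on the bijection. This final comparison is effectively the complex-analytic counterpart of Teleman's algebraic-stack theorem \cite{Teleman1998TheQC}, with the analytic content concentrated in the stratum-by-stratum extension step described above.
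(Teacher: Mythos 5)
The paper itself does not prove Theorem~\ref{[Q,R]=0}: both in the introduction and in Subsection~\ref{subsec: from QR=0 of CS and Conformal Blocks} it is explicitly attributed to the forthcoming reference \cite{Preparation}, with the remark that the statement appears without full justification in \cite[(1.56)]{Axelrod1991GeometricQO}. So there is no in-paper proof to compare your argument against, and the authors clearly regard the rigorous statement as nontrivial enough to defer.

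That said, evaluated on its own terms your proposal is a plausible sketch of the Kempf--Ness route, but it has two substantive issues. First, the surjectivity step is the entire analytic content of the theorem, and you acknowledge that you do not carry it out: in finite-dimensional GIT the Hartogs extension across the unstable locus works because that locus has complex codimension $\ge 2$ in a normal ambient variety, but on the Banach/Fr\'echet manifold $\mathcal{A}$ one cannot invoke a codimension-two Hartogs theorem off the shelf, and ``infinite codimension of each Harder--Narasimhan stratum'' is a heuristic, not a lemma. You would need a concrete extension mechanism (e.g.\ uniform bounds on $\|s\|$ along the $\|\mu\|^2$-gradient flow, or a finite-dimensional slice model near each stratum) before this counts as a proof. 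Second, your phrase ``every $\Sigma G_{\CC}$-orbit in $\mathcal{A}^{ss}$ admits a unique polystable representative'' is not correct as stated: strictly semistable orbits do not contain polystable points; the polystable representative lies only in the orbit closure, which is exactly why $M_G[\tau]$ parametrizes $S$-equivalence classes. This matters because it changes how the restriction map $r$ is actually defined (restriction to $\mathcal{A}^{ps}$ followed by descent, with the value at a strictly semistable point forced by continuity) and it is the kind of slip that can silently break the injectivity argument if not tracked. Finally, note that the paper emphasizes a genus-one shortcut you do not use: $M_G[\tau]$ has the explicit global quotient model $\mathfrak{t}_{\CC}/W_{\mathrm{aff}}$, and the restriction-to-constant-Cartan-connections map reduces much of the problem to explicit theta-function computations (this is how the elliptic Atiyah--Witten formula on $M_G[\tau]$ is established in Subsection~\ref{subsec:Elliptic Atiyah-Witten on MG}). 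It is quite possible that \cite{Preparation} exploits this concrete model rather than, or alongside, the abstract stratification argument, in which case your proposal is a genuinely different and harder route.
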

As a consequence, we obtain the composition
\be
\begin{tikzcd}
V_k(G)|_{\tau} \arrow[r,"r^{-1}"] &
\mathcal{H}_k(G)|_{\tau} \arrow[r,"\Psi_A^*"] &
\Omega^0(\LLX,\mathcal{L}_{\CCS}^k)^{T^2},
\end{tikzcd}
\ee
which assigns to each element of the conformal block a $T^2$-invariant
section of the Chern--Simons line bundle over the entire double loop
space.

The following theorem shows that the conformal blocks are generated by
Kac--Weyl characters.

\begin{theorem}[{\cite{Looijenga1976RootSA}}]\label{looijenga character and genus one cb}
The genus-one conformal block $V_k(G)|_{\tau}$ is generated by the
modified Kac--Weyl characters of level-$k$ positive-energy
representations. Consequently, after tensoring with $\CC$, the modified
Kac--Weyl character map
\be
\mathrm{ch}:\per^k(LG) \longrightarrow V_k(G)|_{\tau}
\ee
is an isomorphism.
\end{theorem}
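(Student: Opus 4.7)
The plan is to realize $V_k(G)|_\tau$ and $\per^k(LG)\otimes\CC$ as two descriptions of the same classical object, namely the space of level-$k$ Weyl-invariant theta functions on the Cartan subalgebra $\mathfrak{h}$, and then to check that the character map $\mathrm{ch}$ implements this identification explicitly.

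The first step is to describe $V_k(G)|_\tau$ analytically. Since $G$ is compact, connected, simple, and simply connected, a classical result (Looijenga, Friedman--Morgan--Witten, Laszlo) identifies the coarse moduli $M_G[\tau]$ of semistable holomorphic $G_\CC$-bundles over $\Sigma_\tau$ with a weighted projective space, concretely
\[
M_G[\tau]\;\cong\;\bigl(\mathfrak{h}/(Q^\vee+\tau Q^\vee)\bigr)/W,
\]
with $W$ the Weyl group and $Q^\vee$ the coroot lattice. Pulling the $k$-th power $\mathcal{L}^k_{\CCS}$ back along $\mathfrak{h}\to M_G[\tau]$ and reading off its automorphy factor, I would identify $V_k(G)|_\tau=H^0(M_G[\tau],\mathcal{L}^k_{\CCS})$ with the finite-dimensional space $\mathrm{Th}^k_W$ of holomorphic, $W$-invariant functions $f\colon\mathfrak{h}\to\CC$ satisfying the level-$k$ theta quasi-periodicity
\[
f(z+\mu+\tau\nu)\;=\;e^{-\pi i k\tau(\nu,\nu)-2\pi i k(\nu,z)}f(z),\qquad \mu,\nu\in Q^\vee.
\]

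The second step is to verify that the modified Kac--Weyl characters lie in $\mathrm{Th}^k_W$ and form a basis. By the affine Kac--Weyl character formula, for each level-$k$ dominant integral weight $\Lambda$ the modified character is a ratio of Weyl-antisymmetric theta functions,
\[
q^{m_\Lambda}\chi_\Lambda(\tau,z)\;=\;\frac{A_{\Lambda+\rho}(\tau,z)}{A_\rho(\tau,z)}.
\]
A direct computation of the Jacobi automorphy factors of numerator and denominator shows the ratio is holomorphic on $\mathfrak{h}$, Weyl-invariant, and satisfies the level-$k$ quasi-periodicity above, hence defines an element of $\mathrm{Th}^k_W$. Linear independence is clear from the dominant-weight Fourier mode in the expansion about the cusp. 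It then remains to match the number of level-$k$ dominant integral highest weights with $\dim_\CC \mathrm{Th}^k_W$, which is exactly Looijenga's dimension count for invariant theta functions on elliptic root systems.

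The main obstacle is the analytic identification $V_k(G)|_\tau\cong \mathrm{Th}^k_W$ in the first step: one must pin down the correct theta characteristic on $M_G[\tau]$ (which for general simply connected $G$ involves the $\rho$-shift and the dual Coxeter number $h^\vee$), handle the singular weighted projective quotient by an appropriate vanishing theorem, and check that pullback to $\mathfrak{h}$ is surjective onto $\mathrm{Th}^k_W$. Once this identification is in hand, the remainder of the argument is formal: the Kac--Weyl formula manufactures explicit elements of $\mathrm{Th}^k_W$ indexed by $\per^k(LG)$, and the dimension count forces the modified character map to be an isomorphism.
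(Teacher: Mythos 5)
Your proposal is correct in outline and follows the standard route — indeed essentially the route of the cited reference \cite{Looijenga1976RootSA}. The paper itself does not prove this theorem; it is quoted by citation, and the only setup the paper supplies is exactly your Step 1, namely the global quotient model $M_G[\tau]=\mathfrak{t}_{\CC}/W_{\mathrm{aff}}=(\Sigma_\tau\otimes_{\Z}\Lambda)/W$ (with $\Lambda$ the coroot lattice, matching your $Q^\vee$) and the identification $V_k(G)|_\tau=H^0(\Sigma_\tau\otimes_{\Z}\Lambda,\mathcal{L}_0^k)^W$ with level-$k$ $W$-invariant theta functions. Your Step 2 — writing $q^{m_\Lambda}\chi_\Lambda=A_{\Lambda+\rho}/A_\rho$, noting that the numerator has quasi-periodicity at level $k+h^\vee$ and the denominator at level $h^\vee$ so the ratio has level $k$, checking holomorphicity and $W$-invariance, then invoking linear independence from the leading $q^{m_\Lambda}$-mode and Looijenga's dimension count — is the intended content of the theorem being cited, so there is nothing in the paper to contrast it against.
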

On gauged polystable double loops, the composition
$
\Psi_A^*\circ r^{-1}\circ \mathrm{ch}
$
coincides with the elliptic holonomy $Ehol_A$ obtained via
$(1+1)$-transgression. Accordingly, we continue to refer to the map
$\Psi_A^*\circ r^{-1}$ as the elliptic holonomy.

\begin{remark}
We now further explain why it is natural to refer to $\Psi_A^*\circ r^{-1}$ as
the \emph{elliptic holonomy}. Classically, the trace of the
universal holonomy on $X=BG$ yields the character map. After tensoring
with $\CC$, the ordinary trace of the holonomy on $BG$ gives
\begin{align}\label{analogue BG}
hol:\mathrm{Rep}(G)\cong K_G(pt)=K(BG)\longrightarrow
\CC[LBG]=\CC[G]^G,
\end{align}
sending a representation to its character as a class function on $G$. Here we use the
Morita equivalence of groupoids
$LBG=[\mathcal{A}_{S^1}/LG]=[G/G]$ (cf.\cite{Behrend2003EquivariantGO}).

Passing from $K$-theory to elliptic cohomology, the representation ring
$\mathrm{Rep}(G)$ is replaced by the ring of positive-energy
representations $\per^k(LG)$. According to Grojnowski
\cite{Grojnowski2007EllipticCD}, Ando \cite{Ando2000TheWG}, and Lurie
\cite{Lurie2009ASO}, this ring appears as the $k$-twisted
$G$-equivariant elliptic cohomology of a point.
Correspondingly, there
is an elliptic analogue of \eqref{analogue BG}:
\begin{align}
\per^k(LG)=Ell^k_G(pt)\longrightarrow
H^0(\Sigma BG,\mathcal{L}^k).
\end{align}
Here $\Sigma BG$ should be interpreted algebraically as 
$[\Sigma_{\tau},BG_{\CC}]=Bund_G(\Sigma_{\tau})$, the moduli stack of
holomorphic $G_{\CC}$-bundles over the elliptic curve $\Sigma_{\tau}$.
The universal elliptic holonomy thus takes the form
\begin{align}
Ehol:\per^k(LG)\longrightarrow
H^0(Bund_G(\Sigma_{\tau}),\mathcal{L}^k).
\end{align}
Recall that $Bund_G(\Sigma_{\tau})$ can be presented as the complex-analytic Atiyah--Bott stack $[\mathcal{A}/\Sigma G_{\CC}]$, and that $\mathcal{L}$ denotes the universal Chern--Simons line bundle.
 Hence the right-hand side is precisely $\mathcal{H}_k(G)$, 
the space of quantization before reduction.
Taken together with Theorem \ref{[Q,R]=0}, this justifies the interpretation of $Ehol$ as the elliptic analogue of holonomy.

In \cite{Freed2005LoopGA}, $\per^k(LG)$ is identified with the $k$-twisted $K$-theory on $[G/G]$ as a ring.
$\mathrm{Bund}_G(\Sigma_{\tau})$ admits a $q$-conjugacy class presentation $[LG^h/_qLG^h]$ (see, e.g., \cite{etingof_central_1994,Baranovsky1996ConjugacyCI}), where $LG^h$ denotes the holomorphic loop group.
The elliptic holonomy can also be viewed as a map from twisted $K$-theory on $[G/G]$ to holomorphic sections of the transgression line bundle over  $[LG^h/_qLG^h]$.
\end{remark}

Many of the constructions arising from $2$-transgression admit natural
extensions beyond the genus-one case. Let $J$ be a complex structure on a
closed surface $\Sigma$ of genus $g>1$, and denote by $V_k(G)|_{J}$ the
corresponding genus-$g$ conformal block at level $k$. In complete
analogy with the genus-one situation, one may define the
\textbf{higher-genus holonomy} as the composition
\begin{align}
Ehol_A:\;
V_k(G)|_{J} \longrightarrow \mathcal{H}_k(G)|_{J}
\longrightarrow \Gamma(\Sigma X,\mathcal{L}_{\CCS}^k),
\end{align}
where $\Sigma X$ denotes the mapping space from $\Sigma$ into $X$. This
construction provides a higher-genus generalization of the elliptic
holonomy, see Subsection
\ref{subsec: from QR=0 of CS and Conformal Blocks} for details.

We will not explore these deep topics any further in this paper. We only present some computations of the relative Pfaffian line bundle in the genus-one case and investigate the modular behavior of the relative odd Pfaffian. For discussions of the relative Pfaffian
line bundle, we refer to \cite{Freed1987OnDL,Bunke2009StringSA}. We identify its natural
counterpart on $[\mathcal{A}/\Sigma G]\times\mathbb{H}$ and study its
push-down to the subspace of constant Cartan connections. The relative odd Pfaffian, up to
a phase factor, is given by
$
\frac{\theta_{11}(z,\tau)}{\eta(\tau)^3}.
$
Here $\frac{\theta_{11}(z,\tau)}{\eta(\tau)}$ gives the odd Pfaffian, and  $\eta(\tau)^{-2}$ from relative structure cancels the
$(\mathbb{Z}/24\mathbb{Z})$-anomaly. However, the resulting relative odd
Pfaffian is still not modular under the $S$-transformation $(\tau,z)\to (-1/\tau,z/\tau)$. This fact reflects that the Hitchin connection is only projectively flat.

\medskip

\subsubsection{Elliptic Bismut--Chern characters on $\LLX$}

In the final part of the paper, we construct the \textbf{elliptic
Bismut--Chern character} on $\LLX$, whose degree-zero component recovers
the elliptic holonomy. 

This construction arises naturally from
$(1+1)$-transgression. To this end, we introduce the
\emph{completed-periodic exotic twisted}
$(S^1\times \disc)$-equivariant cohomology of $\LLX$, twisted by the
$k$-th power of the Chern--Simons line bundle
$(\mathcal{L}_{\CCS},\nabla_{\CCS}^A)$, and denote it by
\begin{align}
 h_{S^1\times \disc}^{*}\!\bigl(
 \LLX,(\mathcal{L}_{\CCS},\nabla_{\CCS}^A,\overline{H})^k
 \bigr).
\end{align}
As before, we emphasize that the double loop space $\LLX$ itself does not
carry a genuine $\disc$-action; the appearance of $\disc$ merely reflects
the replacement of the vector field $K$ by $\tau K$ in the corresponding
equivariant differential.

Within this framework, the elliptic Bismut--Chern character is defined
as the map
\begin{align}
EBCh_A:\; \per^k(LG) \longrightarrow 
\bigl\{\text{cocycles in }\;
h_{S^1\times \disc}^{2*}\!
\bigl(\LLX,(\mathcal{L}_{\CCS},\nabla_{\CCS}^A,\overline{H})^k\bigr)|_{\tau}
\bigr\}.
\end{align}
From the perspective of the double loop space, this construction
provides a unified conceptual framework that clarifies the relationship
between the $q$-graded Bismut--Chern character and the elliptic Chern
character on $\LX$: both arise simply as restrictions of the elliptic
Bismut--Chern character on $\LLX$ along the two independent loop
directions.

However, we must emphasize that since we can only establish convergence of the elliptic
Bismut--Chern character on gauged polystable double loops, the above picture 
remains a subject for future work.

All of the Chern characters introduced in this paper are summarized in
the following commutative diagram:
\be 
\begin{tikzcd}
& 
h_{S^1 \times \disc}^{2*}
\!\bigl(
\LLX,\,(\mathcal{L}_{\CCS}, \nabla_{\CCS}^{A}, \overline{H})^k
\bigr)\big|_{\tau}
\arrow[ld, "i_{10}^*"']
\arrow[dr, "i_{01}^*"]
& \\[0.5em]
h_{S^1}^{2*}(\LX)[[q]]
    \arrow[dr, "i^*"']
& 
\per^k(LG)
    \arrow[l, "BCh"']
    \arrow[d, "Ch"]
    \arrow[r, "ECh"]
    \arrow[u, "EBCh"]
&
h_{\disc}^{2*}(\LX, kH)\big|_{\tau}
    \arrow[dl, "i^*"]
\\
& h^{2*}(X)[[q]] &
\end{tikzcd}
\ee
This diagram captures transgression procedures and
highlights the role of the elliptic Bismut--Chern character as the
central organizing object from which the various loop-space Chern
characters naturally descend.

\medskip

\noindent\textbf{Organization of the paper.}
In \cref{sec: Loop groups and positive energy representations}, we review
loop groups, loop algebras, and positive-energy representations. In
\cref{sec:twisted bismut}, we introduce the equivariant twisted
Bismut--Chern character. The elliptic Chern character on $\LX$ is
constructed in \cref{sec: Elliptic Loop Chern Character on LX}. In
\cref{Sec: Elliptic AW and Modular}, we establish the elliptic
Atiyah--Witten formula on $\LLX$ and investigate its relation to
Chern--Simons gauge theory. Finally, in
\cref{sec: Elliptic Bismut Chern Character}, we introduce the elliptic
Bismut--Chern character on $\LLX$.

\medskip

\section{Loop Groups, Loop Algebras, and Positive Energy Representations}\label{sec: Loop groups and positive energy representations}
We present an overview of loop groups, loop algebras, and positive energy representations. 
The Hamiltonian geometry of loop groups plays a crucial role in understanding the geometry of the lifting gerbe on the loop space.
Positive energy representations are the central theme throughout this paper.

{\bf This section is organized as follows.}
In Subsection \ref{subsec: Central extension of LG}, we review the Hamiltonian geometry on loop groups and the central extension of loop groups.
In Subsection \ref{subsec: Extended Structures and Central Extensions}, we review the extended structures on loop algebras.
In Subsection \ref{subsec: Positive Energy Representations and Characters}, we review positive energy representations and Kac-Weyl characters.
\subsubsection*{\textbf{Conventions and Notations}}
Without further specification, $G$ is a group of Cartan type, \ie compact, connected, simple and simply connected.
$G_{\CC}$ is the complexification of $G$, $\g_{\CC}=\g\otimes_{\R}\CC$ is the complexification of the Lie algebra $\g$.
Let $T$ be a maximal torus of $G$, $\mathfrak{t}$ is the Lie algebra of $T$, which is the Cartan subalgebra of $\g$.
$\mathfrak{t}_{\CC}$ is the complexification of $\mathfrak{t}$.

The Killing form $\mathfrak{K}$ is given by the trace of adjoint representation.
Since $\g$ is assumed to be simple, Killing form $\mathfrak{K}$ is non-degenerate and can be normalized to the minimal one as the generator.
Let
$\langle \cdot,\cdot \rangle$ be the minimal bilinear form on Lie algebra $\g$ such that the longest roots have square length $2$.  Then
\begin{align}
\mathfrak{K}(X,Y)=\Tr(ad_Xad_Y)=2h^{\vee}\langle X,Y \rangle.
\end{align}
 $h^{\vee}$ is the dual Coxeter number. For the general representation $\rho:\g \to End(V)$, 
 the Dynkin index $d_V$ is defined by
\begin{align}
\Tr_V(\rho(X)\rho(Y))=d_V\langle X,Y \rangle.
\end{align}
From the definition, Dynkin index of the adjoint representation is $d_{ad}=2h^{\vee}$.

\subsection{Hamiltonian geometry on \texorpdfstring{$LG$}{LG} and Central Extensions}\label{subsec: Central extension of LG}
 $LG:=\CC^{\infty}(S^1,G)$ is the smooth loop group and $L\g$ is its Lie algebra. 
Let $R:S^1\to Aut(LG)$ be the rotation of loops, \ie for $g\in LG$,
$R_tg=g_{-t+\cdot}$. The induced vector field on $LG$ generating rotations is the opposite of the derivative 
$K:g\to g'=\frac{dg}{dt}$.

Let $\mu\in \Omega^1(G,\g)$ be the left Maurer-Cartan form of $G$. We frequently use the map $LG\to L\g$, $g\to i_{\partial_t}(g^*\mu)=g^{-1}g'$.

We define the {\bf  energy functional} $E$ on $LG$ as
\be 
E[g]=\frac{1}{4\pi^2}\int_{S^1}\langle g^{-1}g',g^{-1}g'\rangle.
\ee
It is clear that the energy functional is invariant under the adjoint action of $LG$ and the rotation by $S^1$.

 $LG$ can be equipped with the symplectic form $\omega_{LG}$, such that $S^1$-action is Hamiltonian and the moment map is exactly the energy $E$.
\begin{definition}
The symplectic form of $\omega_{LG}$ is given by $\omega$. For $g\in LG$,
$x,y\in T_{g}(LG)$,
\begin{align}
    \omega_{LG}(x,y)_{g}:=\omega(g^{-1}x,g^{-1}y).
\end{align}
It's direct to see $\omega_{LG}$ is indeed symplectic and is $LG\rtimes S^1$-invariant.
\end{definition}
The Chern-Simons $3$-form $\Theta$ as the generator of $H^3(G)$ is
\begin{align}
\Theta(X,Y,Z)=\frac{1}{8\pi^2}\langle X,[Y,Z]\rangle.
\end{align}
On the cohomology level, $[\omega_{LG}/2\pi]\in H^2(LG)$ is the transgression of the Chern-Simons $3$-form $[\Theta]\in H^3(G)$.
\begin{lemma}[{\cite[(4.4.4)]{pressley_loop_1988}},\cite{Coquereaux1989StringSO}]\label{up to exact of LG}
\begin{align}
\frac{1}{2\pi}\omega_{LG}=\int_{S^1}\Theta+d\beta,
\end{align}
where $\beta \in \Omega^1(LG)$,
\begin{align}
\beta_{\gamma}(y)=-\frac{1}{8\pi^2}\int_{S^1}\langle \gamma^{-1}\gamma', (\gamma^{-1}y)\rangle.
\end{align}
\end{lemma}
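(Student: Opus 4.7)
The plan is to verify the identity pointwise on $LG$ by evaluating both sides on pairs of left-invariant vector fields. First I would exploit the invariances: $\omega_{LG}$ is left-invariant by the very definition given, $\int_{S^1}\Theta$ is bi-invariant since $\Theta$ is bi-invariant on $G$, and the $1$-form $\beta$ is likewise built only from the left-translated quantities $\gamma^{-1}\gamma'$ and $\gamma^{-1}y$. Consequently it suffices to fix $X, Y \in L\g$, let $L_X, L_Y$ denote the associated left-invariant vector fields on $LG$, and check the identity at an arbitrary $\gamma \in LG$ with $X_0 := \gamma^{-1}\gamma'$.

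Next I would compute the three pieces separately. For the transgression, integrating along the fiber gives
\begin{align}
\Bigl(\int_{S^1}\Theta\Bigr)(L_X, L_Y)_\gamma
   \;=\;\frac{1}{8\pi^2}\int_{S^1}\langle X_0,\,[X, Y]\rangle\,dt.
\end{align}
For $d\beta$ I apply Cartan's formula
$d\beta(L_X, L_Y) = L_X\beta(L_Y) - L_Y\beta(L_X) - \beta([L_X, L_Y])$,
whose only nontrivial ingredient is the linearization
\begin{align}
L_X(X_0)\;=\;\frac{d}{ds}\Big|_{s=0}(\gamma e^{sX})^{-1}(\gamma e^{sX})'\;=\;X' + [X_0, X],
\end{align}
obtained by expanding $e^{sX}$ to first order in $s$. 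Substituting and using the ad-invariance of $\langle\cdot,\cdot\rangle$ to collapse cross terms via $\langle [X_0, X], Y\rangle - \langle [X_0, Y], X\rangle = 2\langle X_0, [X, Y]\rangle$, together with periodic integration by parts on $S^1$ (so $\int_{S^1}\langle X', Y\rangle + \langle X, Y'\rangle\,dt = 0$), the bracket contributions in $d\beta$ cancel exactly against the transgression term. What remains is a purely derivative integral proportional to $\int_{S^1}\langle X', Y\rangle\,dt$.

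Finally I would compare this residue with $\tfrac{1}{2\pi}\omega_{LG}(L_X, L_Y)_\gamma = \tfrac{1}{2\pi}\omega(X, Y)$. The normalization of $\omega$ is pinned down independently by the moment map condition $\iota_{-K}\omega_{LG}=dE$, using $E[\gamma]=\tfrac{1}{4\pi^2}\int_{S^1}\langle X_0, X_0\rangle\,dt$ and the same linearization $\delta X_0 = Y' + [X_0, Y]$; this determines $\omega(X, Y)$ to be a multiple of $\int_{S^1}\langle X', Y\rangle\,dt$ with exactly the constant needed for the two sides to agree. The main obstacle is purely bookkeeping: the normalization factors from $\Theta = \tfrac{1}{8\pi^2}\langle\mu,[\mu,\mu]\rangle$, from the energy functional, and from the chosen convention for $\int_{S^1}$ must be threaded so that the $\langle X_0, [X, Y]\rangle$ contributions in $d\beta$ and in the transgression cancel with matching signs, after which the identity reduces to a single algebraic check of coefficients.
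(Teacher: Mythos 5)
The paper does not actually prove this lemma; it is cited from Pressley--Segal \cite[(4.4.4)]{pressley_loop_1988} and from \cite{Coquereaux1989StringSO}, so there is no in-text argument to compare against. Your direct verification on left-invariant vector fields is the natural route, and the computational core is correct: the linearization $L_X(X_0)=X'+[X_0,X]$ is right, the ad-invariance collapse $\langle[X_0,X],Y\rangle-\langle[X_0,Y],X\rangle=2\langle X_0,[X,Y]\rangle$ is right, and grouping the $[X_0,\cdot]$ parts of the Lie-derivative terms with $-\beta([L_X,L_Y])$ and with the transgression does cancel all bracket contributions, leaving only the derivative integral which matches $\tfrac{1}{2\pi}\omega_{LG}(L_X,L_Y)_\gamma$ after one periodic integration by parts.

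One issue to flag: you propose to pin down the normalization of $\omega$ via the moment-map condition $\iota_{-K}\omega_{LG}=dE$. In this paper that Hamiltonian statement is the \emph{corollary} that follows the lemma, and the text explicitly derives it ``by computing $\iota_K\beta$'' once the lemma is in hand; invoking it here would be circular. The detour is also unnecessary, because the cocycle $\omega$ is defined independently in the next subsection as $\omega(X,Y)=\int_{S^1}\langle X,dY\rangle=\tfrac{1}{2\pi}\int_0^{2\pi}\langle X,Y'\rangle\,dt$. Substituting that formula into $\omega_{LG}(L_X,L_Y)_\gamma=\omega(X,Y)$ closes the coefficient check directly and removes the only logical soft spot in the argument.
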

By computing $\iota_K\beta$, directly we have the following corollary.
\begin{corollary}\label{moment map is energy}
$S^1$-action on $(LG,\omega_{LG})$ is Hamiltonian, the moment map
is the energy $E$. $(LG,\omega_{LG},E)$ is a Hamiltonian $S^1$-space.
\end{corollary}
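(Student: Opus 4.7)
The plan is to apply Cartan's magic formula to the identity in \cref{up to exact of LG} and contract with the rotation vector field $K$, using the hint that $\iota_K\beta$ directly produces the energy.

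First, I would observe that both terms on the right-hand side of \cref{up to exact of LG} are manifestly rotation-invariant: the transgressed form $\int_{S^1}\Theta$ is invariant because its integrand transforms covariantly under reparametrization of the loop, and a change of variables $s\mapsto s-t$ shows that the 1-form $\beta$ is pulled back to itself by $R_t^*$. Consequently $\mathcal{L}_K\beta=0$, so Cartan's formula yields $\iota_K d\beta=-d\,\iota_K\beta$. Next, I would compute $\iota_K\int_{S^1}\Theta$: by definition of transgression,
\begin{equation*}
\Bigl(\int_{S^1}\Theta\Bigr)_{\gamma}(x,y)=\int_{S^1}\Theta_{\gamma(s)}\bigl(\gamma'(s),x(s),y(s)\bigr)\,ds,
\end{equation*}
and since $K_\gamma(s)$ is proportional to $\gamma'(s)$, this contraction inserts $\gamma'$ twice into the alternating trilinear form $\Theta$, which vanishes.

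Next, I would perform the computation $\iota_K\beta$ alluded to in the hint. Substituting $y=K_\gamma=-\gamma'$ into the defining formula for $\beta$ gives
\begin{equation*}
\iota_K\beta\,\bigl|_{\gamma}=-\tfrac{1}{8\pi^{2}}\int_{S^{1}}\bigl\langle \gamma^{-1}\gamma',\gamma^{-1}(-\gamma')\bigr\rangle\,ds=\tfrac{1}{8\pi^{2}}\int_{S^{1}}\bigl\langle\gamma^{-1}\gamma',\gamma^{-1}\gamma'\bigr\rangle\,ds,
\end{equation*}
which is precisely proportional to $E[\gamma]$. Combining these three ingredients with \cref{up to exact of LG} gives $\frac{1}{2\pi}\iota_K\omega_{LG}=-d\,\iota_K\beta$, and hence $\iota_K\omega_{LG}$ equals $dE$ up to the overall normalization baked into the definitions of $\omega_{LG}$, $E$, and the sign convention for $K$. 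This is exactly the Hamiltonian condition, identifying $E$ as the moment map.

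The substantive work is essentially already done in \cref{up to exact of LG}; the remaining obstacle is simply bookkeeping of constants and signs, namely: matching the sign convention $K_\gamma=-\gamma'$ coming from the rotation $R_tg=g_{-t+\cdot}$, the factor $\tfrac{1}{2\pi}$ in the lemma, and the factor $\tfrac{1}{4\pi^{2}}$ in the definition of $E$, so that $\iota_K\omega_{LG}=dE$ holds on the nose. No deeper difficulty enters, so the corollary follows almost immediately from \cref{up to exact of LG} once the two contractions $\iota_K\int_{S^{1}}\Theta$ and $\iota_K\beta$ are evaluated.
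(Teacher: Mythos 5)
Your proof is correct and follows essentially the same route as the paper, whose entire stated argument is the single sentence ``By computing $\iota_K\beta$, directly we have the following corollary.'' You correctly identify the three required ingredients — $\mathcal{L}_K\beta=0$ so that $\iota_K d\beta=-d\iota_K\beta$, the vanishing $\iota_K\int_{S^1}\Theta=0$ because $K_\gamma$ is parallel to $\dot\gamma$ and $\Theta$ is alternating, and the evaluation of $\iota_K\beta$ as a multiple of $E$ — and you rightly flag that the remaining work is only the bookkeeping of signs and constants in the definitions of $\omega_{LG}$, $E$, and $K$.
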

Since the cohomology class $[\omega_{LG}/2\pi]$ is integral, $(LG,\omega_{LG})$ admits a prequantization. 
The corresponding prequantum circle bundle can be realized as the central extension of $LG$, described by the following short exact sequence of Lie groups:
\be
    \begin{tikzcd}
    1 \arrow[r]& S^1 \arrow[r]& \widetilde{LG}\arrow[r,"\rho"] & LG\arrow[r] & 1.
    \end{tikzcd}
\ee
The model of the central extension follows \cite{Murray2001HiggsFB}.
There is no essential difference with \cite{Mickelsson1987KacMoodyGT}. 

Consider smooth paths $P(LG)=\{\gamma:[0,1]\to LG\mid\gamma(0)=e\}$. 
For $\gamma\in P(LG)$, let $\gamma^{\wedge}:[0,1]\times S^1 \to G$ denote its adjoint map, defined by
$\gamma^{\wedge}(s,t)=\gamma(s)(t)$, with $\gamma^{\wedge}(0,t)=e$ for all $t\in S^1$. 
Let $ev:P(LG)\to LG$ denote the evaluation map at the endpoint.
The central extension $\widetilde{LG}$ is defined as the quotient $(P(LG)\times S^1)/\sim$, where the equivalence relation is
\begin{align}
(\gamma_1,z_1)\sim (\gamma_2,z_2) \quad \text{if}\quad  ev(\gamma_1)=ev(\gamma_2)\quad\text{and}\quad \exp\!\big(i \int_{D}\sigma^*\omega_{LG}\big)\cdot z_1=z_2.
\end{align}
Here $\sigma:D\to LG$ is any extension of $(\gamma_1\#\gamma_2^{-1}):S^1\to LG$ to a smooth map on the disc $D$.
Since the cohomology class $[\omega_{LG}/2\pi]$ is integral, this definition is independent of the choice of extension $\sigma$.
The projection $\rho:\widetilde{LG}\to LG$ is defined by the evaluation map $ev:P(LG)\to LG$. The group multiplication on $\widetilde{LG}$ is given by
\begin{align}
(\gamma_1,z_1)\cdot (\gamma_2,z_2) \mapsto (\gamma_1\gamma_2,z_1z_2\cdot \exp{(-2\pi i \int_{[0,1]\times S^1}(\gamma_1,\gamma_2)^*\alpha)}),
\end{align}
where $\alpha\in \Omega^1(LG\times LG)$ is defined as follows. Let 
$\pi_1,\pi_2:LG\times LG\to LG$ be the projections to the first and second factors,
$
    \alpha:=\int_{S^1}\langle \pi_1^*\hat{\mu},\pi_2^*i_{\partial_t}\hat{\mu}\rangle dt,
$
where $\hat{\mu}\in \Omega^1(LG,L\g)$ is the evaluation of the Maurer-Cartan form $\mu$. 
The pair $(\alpha,\omega_{LG})$ defines a Deligne cocycle, which guarantees that the group structure and associativity are well-defined.

The connection $v$ on $\widetilde{LG}$ is defined canonically. The angular form $\theta\in \Omega^1(S^1)$ and the transgression $\int_{[0,1]}\omega_{LG}\in \Omega^1(P(LG))$ combine to form
\begin{align}
\theta+\int_{[0,1]}\omega_{LG}\in \Omega^1(P(LG)\times S^1).
\end{align}
This combination can be descended to the connection $v$ on $\widetilde{LG}$, whose curvature is given by $dv=\rho^*\omega_{LG}$.
\begin{proposition}
Let $\widetilde{K}$ be the vector field generating the $S^1$-action on $\widetilde{LG}$, such that
\begin{align}
i_{\widetilde{K}}v=-\rho^*E.
\end{align}
\begin{proof}
For $(\gamma,t)\in P(LG)\times S^1$,
$
ev^*(i_{\widetilde{K}}v)(\gamma,t)=\int_{[0,1]}(\iota_K\omega_{LG})_{\gamma}=\int_{[0,1]}-dE=-E(ev(\gamma)).
$
\end{proof}
\end{proposition}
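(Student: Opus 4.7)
The plan is to lift everything to the explicit path-space model $P(LG) \times S^1$ where the connection $v$ has the concrete description $\theta + \int_{[0,1]} \omega_{LG}$, and then reduce the identity to the fact that $E$ is the moment map for the rotation action on $(LG, \omega_{LG})$.

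First I would pin down the lift of the rotation to $\widetilde{LG}$. The rotation $R:S^1\act LG$ induces an action on $P(LG)$ by post-composing each path pointwise, $(R_s\cdot\gamma)(u)(t)=\gamma(u)(t-s)$, and trivially on the central $S^1$-factor of $P(LG)\times S^1$. I would check that this descends through the equivalence relation $\sim$ to a well-defined $S^1$-action on $\widetilde{LG}$ by verifying that the cocycle $\exp\!\bigl(i\int_D\sigma^*\omega_{LG}\bigr)$ is invariant under pre-composition with $R_s$; this is immediate from the $LG\rtimes S^1$-invariance of $\omega_{LG}$. The generator of this lifted action is the vector field $\widetilde{K}$, and by construction $\rho_*\widetilde{K}=K$, while its pullback along $ev$ to $P(LG)\times S^1$ has no component in the angular $S^1$-direction.

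Next I would compute the contraction directly. Since $ev^{*}v=\theta+\int_{[0,1]}\omega_{LG}$ and the lift of $\widetilde{K}$ annihilates $\theta$, we obtain
\begin{align*}
ev^{*}\bigl(\iota_{\widetilde{K}}v\bigr)(\gamma,t)
  \;=\; \int_{[0,1]}(\iota_{K}\omega_{LG})_{\gamma(s)}\,ds .
\end{align*}
By Corollary \ref{moment map is energy}, $\iota_{K}\omega_{LG}=-dE$, so Stokes' theorem telescopes the integral:
\begin{align*}
\int_{[0,1]}\iota_{K}\omega_{LG} \;=\; -\int_{[0,1]}dE \;=\; -E(\gamma(1))+E(\gamma(0)) \;=\; -E\bigl(ev(\gamma)\bigr),
\end{align*}
using $\gamma(0)=e$ and $E(e)=0$. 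Since $\iota_{\widetilde{K}}v$ is basic with respect to $\rho$ (both $v$ and $\widetilde{K}$ being invariant under the central $S^1$, and the contraction being horizontal), this descends to $\iota_{\widetilde{K}}v=-\rho^{*}E$ on $\widetilde{LG}$.

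The only nontrivial point is the compatibility of the naive rotation on $P(LG)\times S^1$ with the equivalence relation defining $\widetilde{LG}$; after that is dispensed with, the argument is a one-line Stokes computation coupled with the Hamiltonian identity from Corollary \ref{moment map is energy}. One could alternatively bypass the path-space model by arguing abstractly that any $S^1$-equivariant lift of $K$ through the prequantum bundle $\widetilde{LG}\to LG$ must satisfy $d(\iota_{\widetilde{K}}v)=-\rho^{*}dE$ and hence $\iota_{\widetilde{K}}v=-\rho^{*}E+c$ for a constant $c$, then pin down $c=0$ by evaluating at the constant loop $e\in LG$; but the path-space calculation has the virtue of being completely explicit.
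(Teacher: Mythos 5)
Your proposal is correct and takes essentially the same route as the paper: pull back to the explicit model $P(LG)\times S^1$ where $v$ is $\theta+\int_{[0,1]}\omega_{LG}$, contract with the lifted rotation, use $\iota_K\omega_{LG}=-dE$, and apply Stokes together with $E(e)=0$. You additionally spell out details the paper's one-line computation leaves implicit (well-definedness of the lifted $S^1$-action through $\sim$, vanishing of the contraction with $\theta$, and descent), which is fine but not a different argument.
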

This demonstrates that the associated line bundle of $(\widetilde{LG},v)$ is an $S^1$-equivariant prequantum line bundle on the Hamiltonian $S^1$-space $(LG,\omega_{LG},E)$.
\subsubsection*{\textbf{Maurer-Cartan Form of $\widetilde{LG}$}}
The evaluation $\hat{\mu}\in \Omega^1(LG,L\g)$ is the Maurer-Cartan form of $LG$. The Maurer-Cartan form $\widetilde{\mu}$ of $\widetilde{LG}$ is given by
\begin{align}\label{MC form of central extension}
    \widetilde{\mu}=\rho^*\hat{\mu}+v\in \Omega^1(\widetilde{LG},\widetilde{L\g}).
\end{align}
It satisfies the Maurer-Cartan equation due to the prequantum structure:
\begin{align}
d\widetilde{\mu}+\frac{1}{2}[\widetilde{\mu},\widetilde{\mu}]_{\widetilde{LG}}=dv+\rho^* \omega_{LG}=0.
\end{align}

\subsection{Double Extended Loop Algebras}\label{subsec: Extended Structures and Central Extensions}  
In this subsection, we review the semi-product and central extensions of loop groups and loop algebras.

We call the semi-direct product 
$\widehat{LG}:=LG\rtimes S^1$ the {\bf extended loop group}, whose Lie algebra $\widehat{L\g}=L\g \oplus \R \dd$ can be identified with $\g$-valued first-order differential operators
$
(\eta,A)\to \eta\frac{d}{dt}+A.
$
The adjoint action $\widehat{Ad}:\widehat{LG}\to Aut(\widehat{L\g})$ is given by combining rotation and conjugation:
\be
\widehat{Ad}_{g,\theta}(\eta\frac{d}{dt}+A)=g(\eta\frac{d}{dt}+R_{\theta}A) g^{-1}.
\ee
Let $\omega:L\g\times L\g\to \R$ be the cocycle
\begin{align}
\omega(X,Y):=\int_{S^1}\langle X,dY\rangle=\frac{1}{2\pi}\int_{0}^{2\pi}\langle X(t),Y'(t)\rangle dt,
\end{align}
which defines the central extension of $L\g$.

\begin{definition}\label{Double extended loop alg}
We denote by  
\begin{align}
\widetilde{L\g}':= L\g \oplus \R \dd\oplus \R \cc = \widehat{L\g}\oplus \R \cc,
\end{align}
where $\cc$ is the central element. We call $\widetilde{L\g}'$ the \textbf{double extended loop algebra}. The Lie bracket is given by: for $a,b\in L\g$ and $t\in S^1$,
\begin{align}
[a,b]_t= [a(t),b(t)] - \omega(a,b)\cc, \quad
[\dd,a]= a'.
\end{align}
\end{definition}

Denote by $\langle\cdot,\cdot\rangle$ the invariant bilinear form on $L\g$ induced from the minimal form of $\g$, \ie for $X,Y\in L\g$,
\begin{align}
\langle X,Y \rangle = \int_{S^1}\langle X(t),Y(t)\rangle \, dt.
\end{align}
 $\widetilde{L\g}'$ admits an extension, also denoted by $\langle\cdot, \cdot \rangle$, defined as follows:
\begin{align}
\langle (X + a\dd, c), (Y + b\dd, d) \rangle = \langle X, Y \rangle + ad + bc.
\end{align}   

\begin{proposition}[{\cite[(3.1.5)]{FrenkelOrbital}}]\label{Adjoint formula}
The adjoint action $\widetilde{Ad}$ of $LG$ on $\widetilde{L\g}'$ is given by
\begin{align}
\widetilde{Ad}_g(a\cc + b\dd + x) = \widehat{Ad}_g(b\dd + x) + \left(a + \langle g^{-1}g', x \rangle - \frac{b}{2}\langle g'g^{-1}, g'g^{-1}\rangle\right)\cc.
\end{align}
\end{proposition}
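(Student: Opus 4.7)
The plan is to determine $\widetilde{Ad}_g$ in two stages: first compute the adjoint action on the semidirect product Lie algebra $\widehat{L\g}= L\g\oplus\R\dd$, and then fix the central correction in $\widetilde{L\g}'$ by imposing ad-invariance of the extended bilinear form $\langle\cdot,\cdot\rangle$.

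At the semidirect-product level, the loop algebra factor is clearly acted on by pointwise conjugation, $\widehat{Ad}_g(x)=gxg^{-1}$. For the derivation $\dd$ I would write $\widehat{Ad}_g(\dd)=\dd+Y(g)$ with $Y(g)\in L\g$ and pin down $Y$ by compatibility of the adjoint action with the bracket: from $[\dd,x]=x'$ and the pointwise computation $[\dd+Y,gxg^{-1}]=gx'g^{-1}+[g'g^{-1}+Y,gxg^{-1}]_{\mathrm{pt}}$ one reads off $Y(g)=-g'g^{-1}$. (Equivalently, one can differentiate $(g,0)(e,s)(g^{-1},0)=(g\cdot R_s(g^{-1}),s)$ in $LG\rtimes S^1$ at $s=0$.) Since $\cc$ is central, $\widetilde{Ad}_g(\cc)=\cc$, while the quotient action on $\widetilde{L\g}'/\R\cc\cong\widehat{L\g}$ must factor through $\widehat{Ad}_g$. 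Hence
\begin{align*}
\widetilde{Ad}_g(x) &= gxg^{-1}+\mu(g,x)\,\cc, \\
\widetilde{Ad}_g(\dd) &= \dd-g'g^{-1}+\lambda(g)\,\cc,
\end{align*}
with central corrections $\mu(g,x)$ and $\lambda(g)$ to be identified, and the formula for a general element $a\cc+b\dd+x$ following by $\R$-linearity.

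To pin down $\lambda$ and $\mu$, I would invoke ad-invariance of $\langle\cdot,\cdot\rangle$, using $\langle\dd,\cc\rangle=1$ while $\langle\dd,\dd\rangle=\langle\cc,\cc\rangle=\langle L\g,\dd\rangle=\langle L\g,\cc\rangle=0$. Expanding $\langle\widetilde{Ad}_g(\dd),\widetilde{Ad}_g(\dd)\rangle$ retains only the cross term $2\lambda(g)\langle\dd,\cc\rangle$ and the pointwise pairing $\langle g'g^{-1},g'g^{-1}\rangle$; setting this equal to $\langle\dd,\dd\rangle=0$ yields $\lambda(g)=-\tfrac{1}{2}\langle g'g^{-1},g'g^{-1}\rangle$. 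Similarly, expanding $\langle\widetilde{Ad}_g(x),\widetilde{Ad}_g(\dd)\rangle$ reduces to $\mu(g,x)-\langle gxg^{-1},g'g^{-1}\rangle$, and pointwise $\mathrm{Ad}$-invariance of the minimal form on $\g$ (applied loop-wise under the integral) rewrites the second term as $\langle x,g^{-1}g'\rangle$, giving $\mu(g,x)=\langle g^{-1}g',x\rangle$. Assembling these three components produces the formula in the statement.

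As a final consistency check I would differentiate both sides at $g=\exp(\epsilon y)$ and verify that the infinitesimal action reproduces the Lie brackets $[y,x]=[y,x]_{\mathrm{pt}}-\omega(y,x)\cc$ and $[y,\dd]=-y'$ of \cref{Double extended loop alg}, using the integration-by-parts identity $\omega(y,x)=\int\langle y,x'\rangle=-\int\langle y',x\rangle$. The main delicate point, and the principal source of possible error, is keeping sign conventions consistent throughout: the cocycle $\omega$, the normalization $\langle\dd,\cc\rangle=1$, and the rotation action of $S^1$ on $LG$ must all be aligned with the conventions fixed earlier for the Maurer--Cartan form \eqref{MC form of central extension} of $\widetilde{LG}$. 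Provided these line up, no deeper obstacle arises.
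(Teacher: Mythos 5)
Your proof is correct. The paper gives no proof of this proposition---it simply cites \cite[(3.1.5)]{FrenkelOrbital}---so your derivation fills a genuine gap rather than competing with an existing argument. Your route is to first pin down $\widehat{Ad}_g$ at the semidirect-product level (where compatibility with $[\dd,x]=x'$, or directly the paper's formula $\widehat{Ad}_{g}(\eta\tfrac{d}{dt}+A)=g(\eta\tfrac{d}{dt}+A)g^{-1}$, gives $\widehat{Ad}_g(\dd)=\dd-g'g^{-1}$), and then to identify the central corrections by imposing invariance of the extended non-degenerate form; the computations of $\lambda(g)=-\tfrac12\langle g'g^{-1},g'g^{-1}\rangle$ and $\mu(g,x)=\langle g^{-1}g',x\rangle$ both check out, as does your infinitesimal consistency check against the bracket of \cref{Double extended loop alg}. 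Two small points are worth making explicit to close the argument. First, invariance of the extended form under $\widetilde{Ad}_g$ is not recorded in the paper; it follows from infinitesimal $\mathrm{ad}$-invariance (a direct check using $\omega(x,y)=\int\langle x,y'\rangle$) together with connectedness of $LG$. Second, your argument determines $\lambda,\mu$ only if the constraints---fixing $\cc$, reducing to $\widehat{Ad}_g$ modulo $\cc$, and preserving the form---admit at most one solution; this is indeed the case, since any difference $\psi$ of two such maps takes values in $\R\cc$, and form-invariance together with $\langle\cc,\dd\rangle=1$ and the fact that the $\dd$-coefficient is preserved forces $\psi=0$ on taking the second argument to be $\dd$. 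With these two remarks added, the derivation is complete and self-contained.
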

Denote the group cocycle $\mathcal{Z}: LG \times L\g \to \R$ by
\begin{align}\label{Z cocycle}
\mathcal{Z}(g, x) = \langle g^{-1}g', x \rangle.
\end{align}
It satisfies the cocycle condition
\begin{align}
\mathcal{Z}(g_1g_2, x) = \mathcal{Z}(g_1, Ad_{g_2}x) + \mathcal{Z}(g_2, x),
\end{align}
which corresponds to the central extension $\widetilde{LG}$.

\subsection{Positive Energy Representations and Kac-Weyl Character}\label{subsec: Positive Energy Representations and Characters} 
We adopt the convention that a positive energy representation of $LG$ refers to a smooth unitary positive energy representation of $\widetilde{LG}'$.
 Up to essential equivalence, every positive energy representation is completely reducible, unitary, extends to the complexification, and admits a projective intertwining action of $\mathrm{Diff}^+(S^1)$ (see \cite{pressley_loop_1988}).
\subsubsection{Integrable Highest Weight Modules and Kac-Weyl Character}
The irreducible positive energy representation is completely described by the corresponding {\bf integrable highest weight modules} at the Lie algebra level.

Consider the subalgebra of polynomial loops in $\widetilde{L\g_{\CC}}'$, referred to as the {\bf affine Kac-Moody algebra} and denoted by $\widehat \g$, which has the structure
$\widehat \g=\CC \cc\oplus \CC \dd \oplus  \g\otimes \CC[z,z^{-1}]$. The algebra $\widehat \g$ has a triangular decomposition
\be
    \widehat \g=\hat{\mathfrak{n}}_-\oplus\hat{\mathfrak{h}}\oplus \hat{\mathfrak{n}}.
\ee
Here, $\hat{\mathfrak{n}}_{\pm}$ are nilpotent subalgebras, and 
\begin{align}
    \hat{\mathfrak{h}}=\CC \cc\oplus \CC \dd\oplus \mathfrak{h}_{\CC}
\end{align}
is the Cartan subalgebra.

Let $\Delta$ be the root space where $\{\alpha_i\}$ is the root basis, and $Q=\Z(\alpha_1,\cdots, \alpha_l)$ is the root lattice.
\begin{definition}{\label{highest weight module}}
A $\widehat \g$-module $L(\Lambda)$ is called a \textbf{highest weight module} with highest weight $\Lambda\in \hat{\mathfrak{h}}^*$ 
 if there exists a nonzero vector $v_{\Lambda}\in L(\Lambda)$ such that
 \begin{align*}
    \hat{\mathfrak{n}}_+(v_{\Lambda})=0, \quad h(v_{\Lambda})=\Lambda(h)v_{\Lambda},\quad
U(\widehat \g)v_{\Lambda}=L(\Lambda),
 \end{align*}
 where $U(\widehat \g)$ is the universal enveloping algebra of $\widehat \g$.
 The level of $L(\Lambda)$ is the scalar $k=\Lambda(c)$. It is called \textbf{integrable} if $\Lambda\in P^+$,
  where $P^+=\{\lambda\in \hat{\mathfrak{h}}^*|(\lambda,\alpha_i)\in \N \}$ is the set of dominant integral weights. 
\end{definition}
By the Serre relations, the integrable property ensures that any element $a \in \widehat{\g}$ acts nilpotently on every vector $v \in L(\Lambda)$.
This integrability guarantees that the highest weight module $L(\Lambda)$ integrates uniquely to a positive energy representation of $LG$.

\begin{definition}
Let $\per^k(LG)$ denote the abelian group generated by equivalence classes of positive energy representations of $LG$ at level $k$.
\end{definition}
Every integrable highest weight module $\mathcal{H} = L(\Lambda)$ decomposes as a direct sum of finite-dimensional irreducible $\g_{\CC}$-modules:
\begin{align}
    L(\Lambda) = \bigoplus_{\mu \in \hat{\mathfrak{h}}^*} L(\Lambda)_{\mu}.
\end{align}

The \textbf{modified Kac-Weyl character}, defined for $\tau \in \mathbb{H}$, is given by
\begin{align}
\chi_{\Lambda}(\tau, \cdot) = q^{m_{\Lambda}} \sum_{\mu \in \hat{\mathfrak{h}}^*} \dim L(\Lambda)_{\mu} e^{\mu},
\end{align}
where the modular anomaly is defined as
\begin{align}
m_{\Lambda} = h_{\Lambda} - \frac{c}{24}.
\end{align}
Here, $h_{\Lambda} = \frac{\langle \Lambda + 2\rho, \Lambda \rangle}{2(k + h^{\vee})}$ denotes the lowest conformal weight, and
$c = \frac{k \dim \g}{k + h^{\vee}}$ is the central charge.

\subsubsection{Theta Functions and the Dedekind Eta Function}\label{theta functions}
\newcommand{\thchar}[3]{\vartheta\!\begin{bmatrix}#1\\#2\end{bmatrix}\!(#3)}
\newcommand{\q}{e^{2\pi i \tau}}
Dedekind eta function $\eta(\tau)$ is 
\be
\eta(\tau)=q^{1/24}\prod_{n=1}^{\infty}(1-q^n).
\ee
Theta functions with characteristics $(a, b)\in \R^2$ are defined as
\be
\vartheta\!\begin{bmatrix}a\\ b\end{bmatrix}\!(z,\tau)
=\sum_{n\in\mathbb Z}
\exp\!\big(\pi i (n+a)^2\tau + 2\pi i (n+a)(z+b)\big).
\ee
The four standard $\theta_{ij} (i,j\in\{0,1\})$ as half-characteristics
\be
\theta_{00}(z,\tau)=\vartheta\!\begin{bmatrix}0\\0\end{bmatrix}\!(z,\tau),\qquad
\theta_{01}(z,\tau)=\vartheta\!\begin{bmatrix}0\\\tfrac12\end{bmatrix}\!(z,\tau),
\ee
\be
\theta_{10}(z,\tau)=\vartheta\!\begin{bmatrix}\tfrac12\\0\end{bmatrix}\!(z,\tau),\qquad \nonumber
\theta_{11}(z,\tau)=\vartheta\!\begin{bmatrix}\tfrac12\\\tfrac12\end{bmatrix}\!(z,\tau).
\ee
The product expansions are, for \(q=e^{2\pi i\tau},\ \xi=2\pi i z\).
\begin{align}
\theta_{11}(\xi,\tau)
&= q^{1/8}\big(e^{\xi/2}-e^{-\xi/2}\big)\prod_{n=1}^{\infty}(1-q^n)\,(1-q^n e^{\xi})\,(1-q^{n}e^{-\xi}),  \\
\theta_{10}(\xi,\tau) 
&= q^{1/8}\big(e^{\xi/2}+e^{-\xi/2}\big)\prod_{n=1}^{\infty}(1-q^n)\,(1+q^n e^{\xi})\,(1+q^{n}e^{-\xi}), \nonumber \\
\theta_{00}(\xi,\tau) 
&= \prod_{n=1}^{\infty}(1-q^n)\,(1+q^{\,n-\tfrac12}e^{\xi})\,(1+q^{\,n-\tfrac12}e^{-\xi}), \nonumber \\
\theta_{01}(\xi,\tau) 
&= \prod_{n=1}^{\infty}(1-q^n)\,(1-q^{\,n-\tfrac12}e^{\xi})\,(1-q^{\,n-\tfrac12}e^{-\xi}). \nonumber 
\end{align}

\subsubsection{Level one positive energy representations of $L\mathrm{Spin}(2l)$}
We provide explicit formulas for the level-one positive-energy representations of $L\mathrm{Spin}(2l)$, which will be used repeatedly throughout the paper.
The space $\per^1(L\mathrm{Spin}(2l))$ is spanned by four irreducible representations: $S^+,S^-,S_+,S_-$. 
For further details, see \cite{Brylinski1990RepresentationsOL,Liu1995OnMI}.
\begin{proposition}\label{characters are theta functions}
    The modified super characters of level-one representations can be expressed as products of Jacobi theta functions:
\begin{align}
 \chi_{S^+-S^-}=\prod_{j=1}^l \frac{\theta_{11}(\alpha_j,\tau)}{\eta(\tau)},\quad    \chi_{S_+-S_-}=\prod_{j=1}^l \frac{\theta_{01}(\alpha_j,\tau)}{\eta(\tau)}\\
  \chi_{S^++S^-}=\prod_{j=1}^l \frac{\theta_{10}(\alpha_j,\tau)}{\eta(\tau)},\quad  \chi_{S_++S_-}=\prod_{j=1}^l \frac{\theta_{00}(\alpha_j,\tau)}{\eta(\tau)}\nonumber.
\end{align}
Here $\{\pm \alpha_j\}$ denote the roots of $\mathrm{spin}(2l)$. These four virtual representations correspond naturally to the four spin structures on the elliptic curve $\Sigma_{\tau}$. For convenience, we use the notation
\be
S_{11}=S^+-S^-,\qquad S_{10}=S^++S^-,\qquad S_{01}=S_+-S_-,\qquad S_{00}=S_++S_-.
\ee
We denote by $\chi_{S_{ij}}$ the modified super character of $S_{ij}$ for $i,j \in \{0,1\}$.
\end{proposition}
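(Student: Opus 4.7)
The plan is to prove the identities by realizing the four level-one positive-energy representations of $L\mathrm{Spin}(2l)$ as $\Z_2$-graded Fock spaces of $2l$ free Majorana fermions on $S^1$, computing the modified Kac--Weyl characters as supertraces on those Fock spaces, and matching the resulting infinite products against the theta function product expansions recalled in Subsubsection \ref{theta functions}.

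First I would set up the free-fermion realization. Let $\psi^1,\dots,\psi^{2l}$ be real free fermions on $S^1$, organized into $l$ complex fermions $\psi^{\pm}_j = \tfrac{1}{\sqrt 2}(\psi^{2j-1}\pm i\psi^{2j})$ diagonalizing the Cartan subalgebra $\mathfrak{t}_{\CC}$ so that $\psi^{\pm}_j$ carries $\mathfrak{t}_{\CC}$-weight $\pm\alpha_j$. The Neveu--Schwarz (antiperiodic) Fock space $\mathcal{F}_{NS}$ and the Ramond (periodic) Fock space $\mathcal{F}_R$ decompose under $(-1)^F$ as $\mathcal{F}_{NS} = \mathcal{F}_{NS}^{\mathrm{even}}\oplus \mathcal{F}_{NS}^{\mathrm{odd}}$ and $\mathcal{F}_R = \mathcal{F}_R^{\mathrm{even}}\oplus \mathcal{F}_R^{\mathrm{odd}}$, and under the normal-ordered bilinear currents $\psi^a\psi^b$ these sectors realize the level-one modules $S_+\oplus S_-$ and $S^+\oplus S^-$ respectively. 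This fixes central charge $c=l$, NS vacuum weight $0$, and Ramond vacuum weight $l/8$, so that $m_\Lambda = -l/24$ on NS and $m_\Lambda = l/12$ on R.

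Next I would compute the four characters directly. Because each Fock space factorizes as a tensor product over the $l$ complex fermions and over the modes, the trace factorizes correspondingly. For the NS sector the half-integer modes give
\begin{align}
\Tr_{\mathcal{F}_{NS}}\!\bigl(\epsilon^F q^{L_0-c/24}e^{\sum_j \alpha_j}\bigr)
= q^{-l/24}\prod_{j=1}^{l}\prod_{n\geq 1}\bigl(1+\epsilon q^{n-\tfrac12}e^{\alpha_j}\bigr)\bigl(1+\epsilon q^{n-\tfrac12}e^{-\alpha_j}\bigr),
\end{align}
with $\epsilon=+1$ yielding $\chi_{S_{00}}$ and $\epsilon=-1$ yielding $\chi_{S_{01}}$. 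For the Ramond sector the zero-mode Clifford algebra produces the $2^l$-dimensional spinor module, whose trace (respectively supertrace) yields the factor $\prod_j(e^{\alpha_j/2}+\epsilon e^{-\alpha_j/2})$, giving
\begin{align}
\Tr_{\mathcal{F}_R}\!\bigl(\epsilon^F q^{L_0-c/24}e^{\sum_j \alpha_j}\bigr)
= q^{l/12}\prod_{j=1}^{l}\bigl(e^{\alpha_j/2}+\epsilon e^{-\alpha_j/2}\bigr)\prod_{n\geq 1}\bigl(1+\epsilon q^{n}e^{\alpha_j}\bigr)\bigl(1+\epsilon q^{n}e^{-\alpha_j}\bigr),
\end{align}
with $\epsilon=+1$ yielding $\chi_{S_{10}}$ and $\epsilon=-1$ yielding $\chi_{S_{11}}$. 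Finally I would match these products against the expansions
\begin{align}
\frac{\theta_{ij}(\alpha_j,\tau)}{\eta(\tau)},
\end{align}
recalled earlier: in each of the four cases an infinite product $\prod_{n\geq 1}(1-q^n)$ from $\theta_{ij}$ cancels against $\eta(\tau)=q^{1/24}\prod_{n\geq 1}(1-q^n)$, while the prefactors $q^{\mp l/24}$ and $q^{l/12}$ combine with the $l$ copies of $q^{\mp 1/24}$ from $\eta(\tau)^{-l}$ to match the modular anomaly $q^{m_\Lambda}$ exactly.

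The main obstacle I expect is the careful bookkeeping of the Ramond zero modes. One must verify that, under the chosen embedding of $L\mathrm{spin}(2l)$ into the free-fermion algebra, the Ramond ground states decompose precisely as the two minuscule $\mathrm{Spin}(2l)$-modules $S^+\oplus S^-$, and that the fermionic $(-1)^F$ operator restricts on the ground states to the $\mathrm{Spin}(2l)$-chirality, so that $\chi_{S^+-S^-}$ is matched with the $\theta_{11}$-product rather than with $\theta_{10}$. Once this sign convention on the Clifford module is fixed, the remaining identifications (NS/R sectors and even/odd sectors with $S_{ij}$) are dictated by the boundary conditions and by the central charge, and the four identities follow by direct inspection of the four theta product expansions.
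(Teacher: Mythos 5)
Your proof is correct, and it supplies an explicit argument where the paper gives none: the paper states the proposition and then simply refers the reader to Kac's book and Pressley--Segal for the Kac--Weyl character computations, so there is no proof in the paper to compare against. Your free-fermion realization is a standard and self-contained alternative to invoking the Kac--Weyl character formula directly. The factorization over $l$ complex fermions, the mode-by-mode infinite products, and the matching against the four product expansions of $\theta_{ij}/\eta$ from Subsubsection \ref{theta functions} all work out, including the powers of $q$: the NS ground-state energy $0$ and the Ramond ground-state energy $l/8$ combine with $c=l$ to give $q^{-l/24}$ and $q^{l/12}$, matching the residual powers after cancelling $\prod(1-q^n)$ and the $q^{1/8}$ in $\theta_{10},\theta_{11}$ against $\eta(\tau)^l$.

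Two small remarks that do not affect correctness. First, your statement ``$m_\Lambda = -l/24$ on NS'' glosses over the fact that the two NS irreducibles $S_+$ (basic, $h_\Lambda=0$) and $S_-$ (vector, $h_\Lambda=\tfrac12$) have different modular anomalies; this is harmless because you compute $q^{-c/24}\Tr_{\mathcal F_{NS}}[q^{L_0}\cdots]$ directly, which automatically equals $\chi_{S_+}+\chi_{S_-}$ by the shift $L_0=h_\Lambda+(\text{energy})$ on each summand, but the phrasing could mislead. Second, the paper's statement ``$\{\pm\alpha_j\}$ denote the roots of $\mathrm{spin}(2l)$'' is a slip — for $D_l$ there are $l(l-1)$ positive roots, not $l$ — and what is actually meant (and what you correctly use) is that $\alpha_j$ are the weights of the standard $2l$-dimensional vector representation, i.e.\ Cartan coordinates. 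You might note this explicitly so a reader does not try to reconcile a product over $l$ factors with the full root system. The sign bookkeeping on the Ramond zero modes that you flag as the main potential pitfall is, as you say, just a choice of which ground-state half-spinor is called $S^+$; with the convention fixed so that $(-1)^F$ restricts to the $\mathrm{Spin}(2l)$-chirality on the ground states, $\chi_{S^+-S^-}$ indeed picks up the factor $\prod_j(e^{\alpha_j/2}-e^{-\alpha_j/2})$ matching $\theta_{11}$.
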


Here we only compute the $G=\mathrm{Spin}(2l)$ and level one.
The modified Kac-Weyl character can generally be expressed as a linear combination of higher theta functions. 
For computations and the Kac-Weyl character formula, we refer the reader to \cite{Kac1990InfiniteDL, pressley_loop_1988}.

\section{Equivariant Twisted Chern and Bismut--Chern Characters}\label{sec:twisted bismut}
In \cite{Han2014ExoticTE}, the twisted Bismut--Chern character is introduced based on the Bismut--Chern character in \cite{bismut_index_1985}. 
The purpose of this section is to introduce the equivariant twisted Bismut--Chern character, which provides a finite-dimensional geometric model for the elliptic Bismut--Chern character. 
Later, we will set $M=\LX$ and let $\mathcal{G}$ denote the lifting gerbe. 

Let $M$ be a $\cir$-manifold
and
 $\mathcal{G}$ be a $\cir$-equivariant gerbe with connection $\nabla^{B}$ whose curving is $H$ and whose moment map vanishes.
 Let $(\mathcal{L}^B,\nabla^{\mathcal{L}^B})=\mathcal{L}(\mathcal{G},\nabla^B)$ denote the transgression line bundle over $LM$.
The $\cir$-equivariant twisted Bismut--Chern character $BCh_{\cir, \mathcal{G}}$ is defined such that the following diagram commutes:
\be
    \begin{tikzcd}
    K^0_{\cir}(M,\mathcal{G}) \arrow[rr,"BCh_{\cir, \mathcal{G}}"] \arrow[dr,"Ch_{\cir, \mathcal{G}}"] &  &  h_{S^1\times \cir}^{2*}(LM,(\mathcal{L}^B,\nabla^{\mathcal{L}^B},\overline{H})) \arrow[dl,"i^*"] \\
        &  H_{\cir}^{2*}(M,H) &
    \end{tikzcd}
\ee
{\bf This section is organized as follows.}
 In Subsection \ref{subsec: bismut chern}, we review the geometry of extended loop bundles and the Bismut-Chern character. 
    In Subsection \ref{subsec: completed Periodic Exotic Twisted equivariant Cohomology of LM}, we describe the geometry of $\cir$-equivariant gerbe $(\mathcal{G},\nabla^B)$, its
 transgressed line bundle  $\mathcal{L}(\mathcal{G},\nabla^B)=(\mathcal{L}^B,\nabla^{\mathcal{L}^B})\to LM$ and establish the completed periodic exotic twisted $(S^1\times \cir)$-equivariant cohomology on $LM$.
    In Subsection \ref{subsec: Eq twisted BCH}, we define the equivariant twisted Bismut-Chern character and restrictions.

\subsection{Bismut-Chern Character of Extended Loop Bundles}\label{subsec: bismut chern}
We recall the construction of the Bismut–Chern character from \cite{bismut_index_1985}, which can be viewed as the characteristic form of the extended loop bundle over $\LX$.

To be compatible with further discussions, here we pursue a principal bundle approach.
Start from the principal $G$-bundle $\pi:P\to X$ with the connection $A$,
the loop bundle $LP\to \LX$ is a principal $LG$-bundle. 
Let $K$ be the generator of rotation $\cir$ on $\LX$, $K'$ be its lifting on $LP$.
    
We first introduce some basic operations to construct differential forms on loop bundles.
\begin{enumerate}
    \item The \textbf{evaluation} of of $\omega\in \Omega^*(P,\g)$ is written as $\hat{\omega} \in \Omega^*(LP,L\g)$, given $X\in T_{\gamma}LP$,
    \begin{align*}
    \hat{\omega}(X_1\cdots X_n)_{t}:=\omega(X_1(\gamma(t)),\cdots X_n(\gamma(t)) )\in \g.
    \end{align*}
   Set $\omega_t\in \Omega^*(LP,\g)$ as the value of $\hat{\omega}$ at time $t$.
    \item 
    The \textbf{average} of $\omega\in \Omega^*(P,\g)$ is written as $\overline{\omega} \in \Omega^{*}(LP,\g)$, that
    \begin{align*}
    \overline{\omega}(X_1\cdots X_n)_{\gamma}:=\int_{S^1}\omega(X_1(\gamma(t))\cdots X_n(\gamma(t)))dt=\int_{S^1}\omega_t dt \in \g.
    \end{align*}
    \item
    The \textbf{transgression} of $\omega\in \Omega^*(P,\g)$ is written as $\int_{S^1}\omega\in \Omega^{*-1}(LP,\g)$, that
    \begin{align*}
    (\int_{S^1}\omega)(X_1\cdots X_{n-1})_{\gamma}:=\int_{S^1}\omega(\dot{{\gamma}},X_1(\gamma(t))\cdots X_{n-1}(\gamma(t)))dt\in \g=\iota_{K'}\overline{\omega}.
    \end{align*}
\end{enumerate}
The evaluation $\hat{A}\in \Omega^1(LP,L\g)$ is a connection on the loop bundle $LP\to \LX$, whose curvature is the evaluation $\hat{R}$.

One key observation is that the principal $LG$-bundle $LP\to \LX$ is not $\cir$-equivariant. To address this, we consider the semi-product, the extended loop group $\widehat{LG}= LG\rtimes S^1$, and the corresponding extended loop bundle $\widehat{LP}=(LP\times S^1)\to \LX$, which is a principal $\widehat{LG}$-bundle over $\LX$. 

We extend the connection $\hat{A}$ to $\widehat{A}$ on $\widehat{LP}\to \LX$ as follows:
for $(Y,b)\in T_{(r,s)}\widehat{LP}$, where $r\in LP$ and $s\in S^1$,
    \begin{align}\label{Connection on semiproduct bundle}
        \widehat{A}(Y,b)=(b,-k_{-s}\hat{A}(Y))\in  \widehat{L\g}.
    \end{align}
    Here, $k_s$ denotes the rotation of $L\g$ by $s$. 
The vector field $K''=(K',1)$ acts on the extended loop bundle $\widehat{LP}$.
\begin{proposition}
The equivariant curvature of $\widehat{A}$ with respect to $K''$, restricted to $LP\subset \widehat{LP}$ 
where $b=0$, can be written as
\begin{align}
\widehat{R}_{\cir}=\iota_{K''}\widehat{A}+\hat{R}=(1,\iota_{K'}\hat{A}+\hat{R})\in \Omega^*(LP,\widehat{L\g}).
\end{align}
\end{proposition}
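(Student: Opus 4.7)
The plan is to compute the equivariant curvature directly from the Cartan-model formula
\[
\widehat{R}_{\cir} \;=\; d\widehat{A} + \tfrac{1}{2}[\widehat{A},\widehat{A}]_{\widehat{L\g}} + \iota_{K''}\widehat{A},
\]
decompose each term according to the splitting $\widehat{L\g} = L\g \oplus \R\partial$, and pull back along the inclusion $LP \hookrightarrow \widehat{LP}$ at the identity of the $S^1$-factor, using tangent vectors of the form $(Y,0)$ so that $b = 0$.

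First I would handle the $\R\partial$-slot. By the definition of $\widehat{A}$, this component is the pull-back of the angular $1$-form on the $S^1$-factor; it is closed and contributes to the self-bracket only through the semi-direct product cross term. Its contribution to $\iota_{K''}\widehat{A}$ is the constant $1$, coming from $K'' = (K',1)$, which accounts for the first entry of $(1,\iota_{K'}\hat{A} + \hat{R})$.

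Next I would analyse the $L\g$-slot, where $\widehat{A}_{L\g} = -k_{-s}\hat{A}$ depends on the base $S^1$-coordinate $s$ through the rotation. Applying $d$ produces an $LP$-derivative contribution $-k_{-s}\,d\hat{A}$ together with an $S^1$-derivative contribution of the form $ds \wedge \partial_s(-k_{-s}\hat{A})$. The self-bracket $\tfrac{1}{2}[\widehat{A},\widehat{A}]_{\widehat{L\g}}$ splits into the pure $L\g$ piece $\tfrac{1}{2}[-k_{-s}\hat{A},-k_{-s}\hat{A}]$ and a cross term arising from the semi-direct product relation $[\partial, X] = X'$. Using the identity $\partial_s(k_{-s}X) = -k_{-s}X'$, the $\partial_s$-piece cancels against this cross term. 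What survives, after specialising to $s=0$ with $b=0$, is precisely the pull-back of the curvature of $\hat{A}$ on $LP$, namely $\hat{R}$, in the $L\g$-slot.

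Finally, $\iota_{K''}\widehat{A}$ evaluated at $K'' = (K',1)$ on the restricted locus gives $\iota_{K'}\hat{A}$ in the $L\g$-slot, and assembling the three contributions yields the asserted formula $(1,\iota_{K'}\hat{A} + \hat{R})$. The main obstacle is the careful bookkeeping of the rotation factors $k_{-s}$ and the semi-direct product bracket $[\partial, X] = X'$: one must verify that the $S^1$-derivative of $k_{-s}$ cancels exactly the cross term from this bracket, so that the ordinary curvature of $\widehat{A}$ restricts cleanly to $\hat{R}$ with no residual $s$-derivative artefacts.
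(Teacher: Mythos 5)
Your plan --- expand the Cartan-model formula $\widehat{R}_{\cir}=d\widehat{A}+\tfrac{1}{2}[\widehat{A},\widehat{A}]+\iota_{K''}\widehat{A}$, sort into the $\R\dd$- and $L\g$-slots, and restrict to $LP$ at $s=0$, $b=0$ --- is the natural direct verification (the paper supplies no proof), and the structural observations are sound: the angular form is closed, and the semidirect-product bracket $[\dd,X]=X'$ is what produces the $\theta$-wedge cross terms. The sign bookkeeping, however, does not close with the paper's stated definition $\widehat{A}(Y,b)=(b,-k_{-s}\hat{A}(Y))$. At $s=0$, contracting with $K''=(K',1)$ gives $\iota_{K''}\widehat{A}=(1,-\hat{A}(K'))$, so the $L\g$-slot is $-\iota_{K'}\hat{A}$, not $+\iota_{K'}\hat{A}$ as you assert; and since the $L\g$-component of $\widehat{A}$ restricts to $-\hat{A}$ at $s=0$, the $L\g$-slot of $d\widehat{A}+\tfrac{1}{2}[\widehat{A},\widehat{A}]$ on tangent vectors with $b=0$ is $-d\hat{A}+\tfrac{1}{2}[\hat{A},\hat{A}]$, which is \emph{not} $\hat{R}$. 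You appear to drop the minus sign silently when specialising. Everything closes if the definition reads $(b,\,k_{-s}\hat{A}(Y))$ without the minus, so this is very likely a sign typo in the paper itself, but a free-standing proof must at least flag and resolve the discrepancy. Relatedly, your identity $\partial_s(k_{-s}X)=-k_{-s}X'$ is the convention $(k_sX)(t)=X(t+s)$; the paper's, inferred from $R_tg=g_{-t+\cdot}$, is $(k_sX)(t)=X(t-s)$, which gives $\partial_s(k_{-s}X)=+k_{-s}X'$.

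A smaller point: the cancellation you flag as the main obstacle is not actually needed for the restricted statement. The $ds$-derivative piece of $d\widehat{A}$ and the $[\dd,\cdot]$ cross term in the bracket are both multiples of the angular $1$-form $\theta=ds$, which vanishes identically under pullback along $LP\hookrightarrow\widehat{LP}$ (tangent vectors with $b=0$). That cancellation is the reason the curvature of $\widehat{A}$ is horizontal on all of $\widehat{LP}$, which is worth noticing, but for the proposition as stated the restricted computation reduces immediately to the curvature of the $L\g$-component of $\widehat{A}$ at $s=0$ together with $\iota_{K''}\widehat{A}|_{s=0}$, and the only delicate point is the sign discussed above.
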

For convenience, we always identify $\widehat{L\g}$ with $\g$-valued first-order differential operators, and the equivariant curvature is then
\begin{align}
\widehat{R}_{\cir}=\frac{d}{dt}-(\iota_{K}\hat{A}+\hat{R}) \in \Omega^*(LP,\widehat{L\g}).
\end{align}
Let $P_t=\mathrm{ev}_t^*(P)$ denote the evaluation of $LP$ at time $t$. The connection $A$ defines the horizontal vector field on $P$; denote by $g_t:P_0\to P_t$ the $G$-bundle morphism given by parallel transport of $A$ from time $0$ to $t$. Let $R_{t}\in \Omega^2(P_t,\g)$ denote the evaluation of the curvature at time $t$.
The Bismut--Chern character with respect to connection $A$ and representation $V$ is
\begin{align}\label{Bismut-Chern formula}
BCh_A(V)=\Tr_V\left[\sum_{n=0}^{\infty}\int_{\Delta_n}\prod_{i=1}^ng_{t_i}^*(R_{t_i})dt_i\right],
\end{align}
where $\Delta_n=\{0\leq t_1\leq \cdots \leq t_n\leq 1\}$ is the standard $n$-simplex.

The above definition actually arises from the transport equation, which is a first-order differential equation with values in differential forms on $LP$:
\begin{align}
\left(\frac{d}{dt}-(\iota_{K}\hat{A}_t+R_t)\right)\psi(t)=0, \quad \psi(0)=I.
\end{align}
Then $BCh_V[A]=\Tr_V[\psi(1)]$. 

From this, we can also consider formal differences of two representations and similarly define the super Bismut--Chern character.
This construction generalizes naturally to $\mathbb{Z}_2$-graded vector bundles equipped with superconnections.
\subsubsection*{\textbf{$\cir$-equivariant Euler class of $\LX$}}
Let $X$ be a spin manifold equipped with a Riemannian metric $g$.
Denote by $P_{\mathrm{Spin}(X)}$ the spin frame bundle, and let $A$ be the spin connection induced from the Levi-Civita connection.
 Let $R$ denote its curvature.
Let $\mathcal{S}_X=\Delta^{+}\oplus \Delta^{-}$ be the spinor bundle over $X$.
$Ch_s(\mathcal{S}_X)$ is the super Chern character of spinor bundles.

The super Bismut-Chern character $BCh_s(\mathcal{S}_X)$ given by the super-trace is a $(d+\iota_{K})$-closed even form on $\LX$ that 
\begin{enumerate}
    \item The degree zero is super-trace of the holonomy of spinors $\Tr_s[hol_{\mathcal{S}_X}]$.
    \item Restricted to the constant loop space $X$, it is $Ch_s(\mathcal{S}_X)$.
\end{enumerate}
By localization,
\begin{align}
  \int_{\LX}BCh_s(\mathcal{S}_X)\to \int_X \hat{A}(X)\wedge Ch_s(\mathcal{S}_X)=\int_X e(X).
\end{align}
Then it could be viewed as the 
$\cir$-equivariant Euler class of $\LX$.



\subsection{Exotic Equivariant Twisted Cohomology}\label{subsec: completed Periodic Exotic Twisted equivariant Cohomology of LM}
The free loop space $LM$ has an intrinsic circular symmetry arising from loop rotation, denoted by $S^1$. 
Let $\partial_y$ denote the vector field generating these rotations.
The space $M$ itself admits a $\cir$-action, which induces an action on $LM$ by acting on each point of the loop. Let $\partial_x$ denote the vector field on $M$ generating the $\cir$-action. We also use the notation $\partial_x$ for the induced vector field on $LM$.
\subsubsection{Equivariant gerbe with vanishing moment map}
\begin{definition}
   Suppose that $\{U_{\alpha}\}$ is a maximal open cover of $M$ with the property that 
$H^i(U_{\alpha_I})=0$ for $i=2,3$ where $U_{\alpha_I}=\bigcup_{i \in I}U_{\alpha_i},\abs{I}<\infty$.
We call such an open cover a Brylinski open cover of $M$.
For convenience, we assume the existence of $\cir$-invariant Brylinski cover $\{U_{\alpha}\}$ and pursue everything locally.
\end{definition}

General concepts of equivariant gerbes can see \cite{Meinrenken2002TheBG}.
 Here we only deal with the $\cir$-equivariant gerbe with the vanishing moment map.
 This condition is not necessary but simplifies the construction.
Gerbe with connection
 $(\mathcal{G},\nabla^B)$ is given by following geometric data.
\begin{itemize}
    \item Global $H\in \Omega^3(M)$ that $\frac{1}{2\pi i}H $ has integral periods.
    \item On each $U_{\alpha}$, there's a $2$-form $B_{\alpha}\in \Omega^2(U_{\alpha},i\R)$ such that
    $
    dB_{\alpha}=H|_{U_{\alpha}}.
    $
    \item On each double intersection $U_{\alpha\beta}$, there's a line bundle $L_{\alpha\beta}\to U_{\alpha\beta}$ with connection $\nabla^{L}_{\alpha\beta}$
    whose equivariant curvature 
    $
    F^L_{\alpha\beta}=B_{\beta}-B_{\alpha}.
    $
    \item On each triple intersection $U_{\alpha\beta\gamma}$, there's an isomorphism of line bundles
    $
    \mu_{\alpha\beta\gamma}:L_{\alpha\beta}\otimes L_{\beta\gamma}\to L_{\alpha\gamma}
    $
    satisfying the cocycle condition on quadruple intersections.
\end{itemize}
The gerbe $(\mathcal{G},\nabla^B)$ is $\cir$-equivariant
if all of them are $\cir$-equivariant.
Its moment map vanishes if both $\iota_{\partial_x}B_{\alpha}$ and the moment map of $\nabla^L_{\alpha\beta}$ vanishes.

When the line bundle $L_{\alpha\beta}$ is trivialized,
the connection $\nabla_{\alpha\beta}^L=d+A_{\alpha\beta}$ where $A_{\alpha\beta}\in \Omega^1(U_{\alpha\beta},i\R)$ that 
\begin{align}
    dA_{\alpha\beta}=B_{\beta}-B_{\alpha}.
\end{align}
Then the isomorphism is given by $z_{\alpha\beta \gamma}:U_{\alpha\beta\gamma}\to S^1$ that
$
(z_{\alpha\beta\gamma},A_{\alpha\beta},B_{\alpha}) 
$
is a Deligne cocycle (see \cite{Brylinski1993}).
\subsubsection{Transgression line bundle}
The transgression line bundle $(\mathcal{L}^B,\nabla^{\mathcal{L}^B})$ is described as follows.
The transition function is given by
\begin{align}
    h_{\alpha\beta}=e^{\int_{S^1}A_{\alpha\beta}}:LU_{\alpha}\cap LU_{\beta}\to S^1,
\end{align}
and the connection on $LU_{\alpha}$ is
\begin{align}
     \nabla^{\mathcal{L}^B}|_{LU_{\alpha}}=d+\iota_{\partial_y}\overline{B_{\alpha}}=d+\int_{S^1}B_{\alpha}.
\end{align}
The curvature is given by
$
R^{\mathcal{L}^B}=\int_{S^1}H.
$
The moment map $\mu^{\mathcal{L}^B}$ vanishes for both the $S^1$ and $\cir$ actions.
\subsubsection{Setting up the cohomology}
We denote 
\be H^*(BS^1)=\CC[v],\quad H^*(B\cir)=\CC[u],\quad H^*(B(S^1\times \cir))=\CC[u,v],\ee 
where $u$ and $v$ are generators of degree $2$.
Let $\CC(u,v)$ denote the ring of Laurent polynomials in two variables.
\begin{definition}\label{Eq twisted coho}
The completed-periodic $\cir$-equivariant twisted cohomology on $M$ is given by
\begin{align}
h^*_{\cir}(M,H)=H^*(\Omega^*(M)^{\cir}[u,u^{-1}],d-u\iota_{\partial_x}-H).
\end{align}
\end{definition}
For the exotic version, we define 
\begin{align}\label{D_H}
\mathcal{D}_{\overline{H}}=\nabla^{\mathcal{L}^B}+v\iota_{\partial_y}-u\iota_{\partial_x}-\overline{H}
\end{align}
as an operator on $\Omega^*(LM,\mathcal{L}^B)(u,v)$.
Locally, it can be expressed as the conjugation
\begin{align}
  \mathcal{D}_{\overline{H}}|_{LU_{\alpha}}=e^{\overline{B_{\alpha}}}(d+v\iota_{\partial_y}-u\iota_{\partial_x})e^{\overline{-B_{\alpha}}}.
\end{align}
Then $(\mathcal{D}^{\overline{H}})^2=L_{v\partial_y-u\partial_x}$. It vanishes on $T^2$-invariant part $\Omega^*(LM,\mathcal{L}^B)^{T^2}(u,v)$.
\begin{definition}\label{exotic T^2 twisted}
  The exotic twisted $T^2$-equivariant cohomology is 
\begin{align}
    h_{S^1\times \cir}^*(LM,(\mathcal{L}^B,\nabla^{\mathcal{L}^B},\overline{H})):=H(\Omega^*(LM,\mathcal{L}^B)^{T^2}(u,v),D_{\overline{H}}).
\end{align}  
\end{definition}
Let \(i:M\to LM\) be the inclusion. It induces the restriction of the cohomology to the constant loop space
\begin{align}
i^*:h_{S^1\times \cir}^*(LM,(\mathcal{L}^B,\nabla^{\mathcal{L}^B},\overline{H})) \to h^*_{\cir}(M,H).
\end{align}
\begin{remark}\label{cohomology with nonvanishing moment map}
When the moment map of \((\mathcal{G},\nabla^B)\) does not vanish, we can still define the cohomology in the same way. We replace \(H\) with its equivariant extension \(H_{\cir}=H+u\mu\), which is a \((d-u\iota_{\partial_x})\)-closed \(3\)-form in the Cartan model representing the \(\cir\)-equivariant Dixmier-Douady class.
\end{remark}

\subsection{Equivariant Twisted Bismut-Chern Character}\label{subsec: Eq twisted BCH}
We define the $\cir$-equivariant twisted Bismut-Chern character, which is the loop space refinement of the $\cir$-equivariant twisted Chern character.
The $\cir$-equivariant gerbe module with connection $(\mathcal{E},\nabla^{\mathcal{E}})$ is defined as follows:
$E_{\alpha}\to U_{\alpha}$ is a $\cir$-equivariant $n$-dimensional Hermitian vector bundle equipped with the Hermitian connection $\nabla_{\alpha}$, along with a $\cir$-invariant isomorphism that preserves both metrics and connections:
\begin{align}
\Phi_{\alpha\beta}:(L_{\alpha\beta},\nabla_{\alpha\beta})\otimes(E_{\beta},\nabla_{\beta})\to (E_{\alpha},\nabla_{\alpha}),
\end{align}
which is compatible on triple overlaps.

When trivialized, the connection 
$
\nabla_{\alpha}=d+A_{\alpha}
$
where $A_{\alpha}\in \Omega^1(U_{\alpha},u(n))$ is a matrix $1$-form.
Let $R_{\cir,\alpha}=R_{\alpha}+u\mu_{\alpha}$ be its equivariant curvature,
where $R_{\alpha}=dA_{\alpha}+A_{\alpha}\wedge A_{\alpha}$ is the curvature and $\mu_{\alpha}=-\iota_{\partial_x}A_{\alpha}$ is the moment map.
\subsubsection{Equivariant Twisted Chern Character}\label{eq-twisted chern}
Follow by $F_{\alpha\beta}^L=B_{\beta}-B_{\alpha}$, 
\begin{align}\label{phiab compa}
\Phi_{\alpha\beta}^{-1}(B_{\alpha}I+R_{\cir,\alpha})\Phi_{\alpha\beta}=B_{\beta}I+R_{\cir,\beta}.
\end{align}
Then 
$e^{B_{\alpha}}tr[\exp{(R_{\cir,\alpha})}]$ 
can be patched together to be
the $\cir$-equivariant twisted Chern character $Ch_{\cir,\mathcal{G}}(\mathcal{E},\nabla^{\mathcal{E}})$.
It defines 
    \begin{align}
    Ch_{\cir,\mathcal{G}}:K_{\cir}^0(\mathcal{G},H) \to  H_{\cir}^{2*}(M,H).
    \end{align}
\subsubsection{Equivariant Twisted Bismut-Chern Character}\label{eq-twisted bismut D operator}
When $E_{\alpha}\to U_{\alpha}$ is trivialized, $\hat{A}_{\alpha}\in \Omega^1(LU_{\alpha},Lu(n))$ represents the evaluation of the connection form $A_{\alpha}$ on $LU_{\alpha}$. 
The equivariant curvature is given by $\hat{R}_{\cir,\alpha}=\hat{R}_{\alpha}+u\hat{\mu}_{\alpha}$, where $\hat{R}_{\alpha}\in \Omega^2(LU_{\alpha},Lu(n))$ is the evaluation of the curvature $R_{\alpha}$ on $LU_{\alpha}$, 
and $\hat{\mu}_{\alpha}=-\iota_{\partial_x}\hat{A}_{\alpha}$ is the evaluation of the moment map. 
Additionally, $\hat{B}_{\alpha}\in \Omega^2(LU_{\alpha})$ represents the evaluation of the $B$-field $B_{\alpha}$.

Consider the first-order matrix differential operator in the $y$-direction with values in $\Omega^*(LU_{\alpha})(u,v)$.
\begin{align}
\mathcal{D}_{\alpha}=\frac{d}{dy}-v\iota_{\partial_y}\hat{A}_{\alpha}-(\hat{R}_{\cir,\alpha}+\hat{B}_{\alpha})
=\frac{d}{dy}-[(i_{v\partial_y-u\partial_x}\hat{A}_{\alpha}+\hat{R}_{\alpha})+\hat{B}_{\alpha}].
\end{align}
Its transport equation is, for $\psi\in \CC^{\infty}([0,1],\CC^n)\otimes \Omega^{2*}(LU_{\alpha})(u,v)$,
\begin{align}
\mathcal{D}_{\alpha}\psi=0, \quad \psi(0)=1.
\end{align}
The equivariant twisted Bismut-Chern character is locally given by
$
BCh_{\cir,\mathcal{G}, \alpha}=\tr[\psi(1)].
$

Let $\Delta_n=\{(t_1,\cdots,t_n)|0\leq t_1\leq \cdots \leq t_n\leq 1\}$ be the standard $n$-simplex. We solve the transport equation by iterated integrals.
Set $Y_{\alpha}=i_{v\partial_y-u\partial_x}\hat{A}_{\alpha}+\hat{R}_{\alpha}$,   
\begin{align}
BCh_{\cir,\mathcal{G}, \alpha}=\Tr[\sum_{n=0}^{\infty}\int_{\Delta_n} \prod_{i=1}^n(Y_{\alpha}+\hat{B}_{\alpha})_{t_i}dt_i]=e^{(\overline{B_{\alpha}})}tr(\sum_{n=0}^{\infty}\int_{\Delta_n} \prod_{i=1}^n (Y_{\alpha})_{t_i}dt_i).
\end{align}
The last equality follows from the fact that $(\hat{B}_{\alpha})_t$ is central commutative with everything and
\begin{align}
\sum_{n=0}^{\infty}\int_{\Delta_n} \prod_{i=1}^n(\hat{B}_{\alpha})_{t_i}dt_i=\sum_{n=0}^{\infty}\frac{1}{n!}(\int_0^1(\hat{B}_{\alpha})_t dt)^n=e^{\overline{B_{\alpha}}}.
\end{align}
From \eqref{phiab compa}, 
\be
\Phi_{\alpha\beta}^{-1}(Y_{\alpha}+\hat{B}_{\alpha})_t\Phi_{\alpha\beta}=(Y_{\beta}+\hat{B}_{\beta})_t.
\ee
Then we have
$BCh_{\cir,\mathcal{G}, \alpha}=BCh_{\cir,\mathcal{G}, \beta}$ on the overlap $LU_{\alpha}\cap LU_{\beta}$.
It globally defines the $\cir$-equivariant twisted Bismut-Chern character $BCh_{\cir, \mathcal{G}}(\mathcal{E},\nabla^{\mathcal{E}})$.

Also, it is a cocycle representative of  $h_{S^1\times \cir}^*(LM,(\mathcal{L}^B,\nabla^{\mathcal{L}^B},\overline{H}))$.
\begin{proposition}\label{D_H closed}
$BCh_{\cir, \mathcal{G}}(\mathcal{E},\nabla^{\mathcal{E}})$ is $\mathcal{D}_{\overline{H}}$-closed.
\begin{proof}
Recall that 
$
  \mathcal{D}_{\overline{H}}|_{LU_{\alpha}}=e^{-\overline{B_{\alpha}}}(d+v\iota_{\partial_y}-u\iota_{\partial_x})e^{\overline{B_{\alpha}}}.
$.
It's suffice to prove the local non-twisted version.
\begin{align}
(d+v\iota_{\partial_y}-u\iota_{\partial_x})tr(\sum_{n=0}^{\infty}\int_{\Delta_n} \prod_{i=1}^n (Y_{\alpha})_{t_i}dt_i)=0.
\end{align}
This follows from the Bianchi identity. For details, one can refer back to the transport equation
and simply replace the curvature with the equivariant curvature in the proof of \cite[Theorem 3.9]{bismut_index_1985}.
\end{proof}
\end{proposition}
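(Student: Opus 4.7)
The plan is to reduce the statement to a local, untwisted identity on each chart of the Brylinski cover, and then invoke the classical Bianchi-identity argument of Bismut, suitably equivariantized.

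First, I would exploit the gauge equivalence identified just before the proposition. On each open $LU_\alpha$, the operator $\mathcal D_{\overline H}$ is conjugate to the bare equivariant differential: there is the identity
\begin{equation*}
\mathcal D_{\overline H}\big|_{LU_\alpha} \;=\; e^{-\overline{B_\alpha}}\,(d+v\iota_{\partial_y}-u\iota_{\partial_x})\,e^{\overline{B_\alpha}} .
\end{equation*}
By the computation in Subsubsection~\ref{eq-twisted bismut D operator}, the local Bismut--Chern representative takes the form $BCh_{\cir,\mathcal G,\alpha}=e^{\overline{B_\alpha}}\,\tr[\psi_\alpha(1)]$, where $\psi_\alpha(1)=\sum_{n\ge 0}\int_{\Delta_n}\prod_i(Y_\alpha)_{t_i}\,dt_i$ with $Y_\alpha=\iota_{v\partial_y-u\partial_x}\hat A_\alpha+\hat R_\alpha$. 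It therefore suffices to verify
\begin{equation*}
(d+v\iota_{\partial_y}-u\iota_{\partial_x})\,\tr[\psi_\alpha(1)] \;=\; 0 .
\end{equation*}

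Second, I would record the equivariant Bianchi identity that will drive the cancellations. Writing $d_{\mathrm{eq}}=d+v\iota_{\partial_y}-u\iota_{\partial_x}$ and using Cartan's magic formula for the vector field $X=v\partial_y-u\partial_x$, a direct computation combining the ordinary Bianchi identity $d\hat R_\alpha+[\hat A_\alpha,\hat R_\alpha]=0$ with $\iota_X\hat R_\alpha=\iota_X d\hat A_\alpha+\iota_X[\hat A_\alpha,\hat A_\alpha]/2$ gives
\begin{equation*}
d_{\mathrm{eq}} Y_\alpha+[\hat A_\alpha,Y_\alpha] \;=\; 0 .
\end{equation*}
This is the precise input that, in the non-equivariant case, is played by the Bianchi identity $d\hat R_\alpha+[\hat A_\alpha,\hat R_\alpha]=0$ in \cite{bismut_index_1985}.

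Finally, I would run the Chen iterated-integral argument of \cite[Theorem 3.9]{bismut_index_1985} with the substitutions $\hat R\rightsquigarrow Y_\alpha$ and $d\rightsquigarrow d_{\mathrm{eq}}$. Applying $d_{\mathrm{eq}}$ to $\int_{\Delta_n}\prod_i(Y_\alpha)_{t_i}$ and distributing across the $n$ factors, each derivative becomes a commutator $[\hat A_\alpha,Y_\alpha]$ by the equivariant Bianchi identity; using the transport equation these commutators rewrite as $\partial_{t_i}$-derivatives along the simplex, so that integration by parts on $\Delta_n$ produces boundary contributions on the codimension-one faces. These boundary terms telescope against adjacent contributions from $n-1$ and $n+1$, and the surviving terms are commutators that vanish under $\tr$. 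Reassembling locally gives the vanishing of $\mathcal D_{\overline H}\,BCh_{\cir,\mathcal G}$ globally on $LM$.

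The only step beyond a direct transcription of Bismut's proof is the verification of the equivariant Bianchi identity for $Y_\alpha$, and this is the sole place where the two contraction terms in $d_{\mathrm{eq}}$ interact with the connection. The vanishing-moment-map hypothesis does not enter the local computation directly, since all $B_\alpha$-dependence has been absorbed into the conjugating factor $e^{\overline{B_\alpha}}$; its role is only to ensure that $\mathcal D_{\overline H}^2=0$ on the $T^2$-invariant subcomplex where the cocycle representative is required to live.
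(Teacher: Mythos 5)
Your proposal is correct and follows essentially the same line of reasoning as the paper's proof: conjugate away the $e^{\overline{B_\alpha}}$ factor, reduce to the local untwisted identity, and run Bismut's iterated-integral argument with the ordinary curvature replaced by the $(u,v)$-equivariant curvature. One small imprecision worth flagging: the "equivariant Bianchi identity" you state, $d_{\mathrm{eq}}Y_\alpha + [\hat A_\alpha, Y_\alpha] = 0$, does not actually hold pointwise; a direct computation gives $d_{\mathrm{eq}}Y_\alpha + [\hat A_\alpha, Y_\alpha] = vL_{\partial_y}\hat A_\alpha - uL_{\partial_x}\hat A_\alpha$, and while the $u$-term vanishes by $\cir$-equivariance, the $v$-term is the loop-rotation derivative $v\,\partial_t\hat A_\alpha$, which is generally nonzero. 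It is precisely this $\partial_t$-term that, once inserted into the iterated integral and integrated by parts over $\Delta_n$, produces the telescoping boundary contributions you describe in your final paragraph — so your overall mechanism is right, but the Bianchi identity should be stated with the residual $\partial_t$-term rather than as a vanishing, exactly as in Bismut's original proof of \cite[Theorem 3.9]{bismut_index_1985}.
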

\begin{remark}
As before, when the moment map of the gerbe does not vanish, we can still define the equivariant twisted Bismut–Chern character by simply replacing $H$ with $H_{\cir}$ everywhere.
\end{remark}
\subsubsection{Restriction to constant loop space $M$}
When restricted to the constant loop space, consider the operator
\begin{align}
\mathcal{D}_{\alpha}=\frac{d}{dy}-(\hat{R}_{\cir,\alpha}+\hat{B}_{\alpha})
\end{align}
where $(\hat{R}_{\cir,\alpha}+\hat{B}_{\alpha})$ is constant.
The solution of the transport equation is given by the exponential
\begin{align}
BCh_{\cir, \mathcal{G}}(\mathcal{E},\nabla^{\mathcal{E}})|_M=\exp(B_{\alpha})\mathrm{tr}[\exp(R_{\cir,\alpha})]=Ch_{\cir,\mathcal{G}}(\mathcal{E},\nabla^{\mathcal{E}}).
\end{align}
Thus, we obtain the following commutative diagram:
\begin{equation}
    \begin{tikzcd}
    K^0_{\cir}(M,\mathcal{G}) \arrow[rr,"BCh_{\cir, \mathcal{G}}"] \arrow[dr,"Ch_{\cir,\mathcal{G}}"] &  &  h_{S^1\times \cir}^{2*}(LM,(\mathcal{L}^B,\nabla^{\mathcal{L}^B},\overline{H})) \arrow[dl,"i^*"] \\
        &  h_{\cir}^{2*}(M,H)[v,v^{-1}] &
    \end{tikzcd}
\end{equation}
\subsubsection{Restriction to fixed-point $M^{\cir}$ and its loop space}
The torus $T^2= (S^1 \times \cir)$ acts naturally on $LM$. The fixed point set of this torus action is the $\cir$-fixed point set of $M$.
We assume $M^{\cir}$ is a submanifold of $M$.
Let $j: M^{\cir} \to M$ denote the inclusion map, which induces $Lj: LM^{\cir} \to LM$.
The inclusion of constant loops of $M^{\cir}$ is denoted by $i_{\cir}: M^{\cir} \to LM^{\cir}$.
\begin{equation}
    \begin{tikzcd}
                                    & M \arrow[rd,"i"] &\\
    M^{\cir}\arrow[ru,"j"] \arrow[rd,"i_{\cir}"] \arrow[rr]  &      & LM\\
       & LM^{\cir}\arrow[ru,"Lj"] & 
\end{tikzcd}
\end{equation}
We assume that the gerbe $(\mathcal{G},\nabla^{B})$ restricts trivially over $M^{\cir}$. 
More precisely, both $B_{\alpha}$ on $U_{\alpha}^{\cir}$ and $A_{\alpha\beta}$ on $U_{\alpha\beta}^{\cir}$ vanish.
Consequently, the curving satisfies $H = 0$.
Over $M^{\cir}$, the twisting and equivariance both disappear, reducing to ordinary de Rham cohomology:
\begin{align}
j^*: h_{\cir}^*(M,H) \to h^*(M^{\cir})[u,u^{-1}].
\end{align}
On the other hand, the cohomology of $LM^{\cir}$ is the standard completed-periodic $S^1$-equivariant cohomology (untwisted):
\begin{align}
h_{S^1}^*(LM^{\cir}) := H^*(\Omega^*(LM^{\cir})^{S^1}[v,v^{-1}], d + v\iota_{\partial_y}).
\end{align}
When restricted to $M^{\cir}$, gerbe modules become ordinary vector bundles.
\begin{equation}
    \begin{tikzcd}
        &  & h_{S^1}^{2*}(LM^{\cir})\arrow[dd,"i_{\cir}^*"] \\
    K^0_{\cir}(M,\mathcal{G})\arrow[r,"j^*"] & K^0(M^{\cir}) \arrow[ur,"BCh"]\arrow[dr,"Ch"]\\
        &   & H^{2*}(M^{\cir})[v,v^{-1}]
    \end{tikzcd}
\end{equation}
We have $BCh|_{M^{\cir}}=BCh\circ j^*$ and $Ch|_{M^{\cir}}=Ch\circ j^*$.
The loop space $LM$ admits a natural $(S^1\times \cir)$-action.
There are two restriction directions: one to $M$, and another to $LM^{\cir}$, and finally to $M^{\cir}$.

The following commutative diagram summarizes all constructions in this section and the relationships between them.
\begin{equation}\label{M and LM diagram}
\begin{tikzcd}
     & h_{S^1\times \cir}^{2*}(LM,(\mathcal{L}^B,\nabla^{\mathcal{L}^B},\overline{H}))  \arrow[ld, "Lj^*"'] \arrow[dr,"i^*"] & \\
h_{\cir}^{2*}(LM^{\cir})(u) \arrow[dr,"i_{\cir}",swap]& K^0_{\cir}(M,\mathcal{G}) \arrow[dashed,l, "BCh|_{M^{\cir}}",swap] \arrow[dashed,u, "BCh_{\cir, \mathcal{G}}"] \arrow[dashed,d, "Ch|_{M^{\cir}}"'] \arrow[dashed,r, "Ch_{\cir,\mathcal{G}}"] &  h_{S^1}^{2*}(M,H)(v) \arrow[ld, "j^*"'swap] \\
     & h^{2*}(M^{\cir})(u,v)   &
\end{tikzcd}
\end{equation}
\begin{remark}
    In \cite{Han2014ExoticTE}, it is shown that the localization map $j^*$ is an isomorphism when $M$ is a strongly regular $\cir$-manifold.
On the other hand, $LM$ is a strongly regular $S^1$-manifold, since the fixed-point set $M$ admits an $S^1$-invariant tubular neighborhood in $LM$.
Therefore, $i_{\cir}^*$ is also an isomorphism.
Similar arguments should apply in the equivariant twisted case, showing that $i^*$ and $Lj^*$ are isomorphisms.
\end{remark}


\section{Chern Characters on \texorpdfstring{$\LX$}{LX}}\label{sec: Elliptic Loop Chern Character on LX}
In this section, we introduce two Chern characters on $\LX$ associated with positive-energy representations of loop groups: the $q$-graded Bismut--Chern character and the elliptic Chern character.
The elliptic Chern character is defined as the $\disc$-deformed equivariant twisted Chern character of the gerbe module associated with a positive-energy representation.
It can be viewed as an elliptic Chern--Weil theory for loop groups.

On $X$, the elliptic Chern character is defined by the $q$-graded Chern character, which assembles the ordinary Chern characters of the energy subspaces. Under the energy decomposition, we have the map
\begin{align}
\per^k(LG)\otimes \CC \to \mathrm{Rep}(G)\otimes \CC[[q]].
\end{align}
The $q$-graded Chern character appears naturally in twisted genera \cite{Brylinski1990RepresentationsOL,Liu1996MODULARFA,Miller1989EllipticCW}, 
and plays a key role in understanding modular invariance.
This construction extends naturally to the loop space, giving rise to the corresponding $q$-graded Bismut-Chern character.

Given a principal $G$-bundle $P \to X$, the looped principal $LG$-bundle $LP \to \LX$ is naturally induced.
The presence of a string $G$-structure, \ie the absence of string $G$-anomaly, 
allows a positive energy representation of $LG$ to define an admissible Virasoro-equivariant vector bundle on $\LX$ in \cite{Brylinski1990RepresentationsOL}. 
However, when the string $G$ anomaly exists, the lifting gerbe $\mathcal{G}_P$ on $\LX$ obstructs such an existence.
Instead, for $\mathcal{H} \in \per^k(LG)$, one can construct the associated infinite-dimensional gerbe module $(\mathcal{E}, \nabla^{\mathcal{E}})$ for the $k$-fold tensor power $(\mathcal{G}_P, \nabla^B)^{\otimes k}$, yielding a map
\be
\per^k(LG) \to K_{\cir}(\LX, k\mathcal{G}_P).
\ee
A naive approach to defining its Chern character would be to take $M=\LX$ and define the $\cir$-equivariant twisted Chern character as in \cref{eq-twisted chern}. 
However, the trace-class condition required in \cite{Mathai2002ChernCI} fails in this infinite-dimensional setting.
 This divergence is already apparent in representation theory: the formal characters of positive energy representations diverge when $q \in S^1$.

As shown in \cite{pressley_loop_1988}, the positive energy representation extends to an action of the punctured disk $\CC_{<1}^*$, where the character converges for $q \in \disc$.
To define the equivariant twisted Chern character properly, we replace $S^1$ with $\disc$ and define the character as differential forms over complex coefficients.

A key challenge is ensuring convergence. The convergence of the degree-zero part can be established using Floquet theory in \cite{FrenkelOrbital}.
For higher degrees, we address this using energy estimates. Roughly, under the energy decomposition, the rotation acts as $q^n$ on the $n$-energy subspace, while other geometric terms grow only polynomially.

It is worth noting that, since the gerbe module associated with positive energy representations is infinite-dimensional, there are analytical subtleties concerning \textbf{smooth} structures. For more details on Fock representations and the smooth spinor bundle on $\LX$, see \cite{Kristel2020SmoothFB}.
The subtlety arises from the fact that positive energy representations of loop groups are only smooth in a dense subspace.
For rigorous treatments of infinite-dimensional representations, we refer to \cite{Goodman1984StructureAU}.
Here, we discuss Chern characters at the level of differential forms, without delving into the smoothness of transition functions.

As an application, although the Dirac-Ramond operator on $\LX$ has not yet been rigorously constructed, 
we can properly define the $\disc$-equivariant $\hat{A}$-genus of $\LX$, thereby formally establishing the Kirillov formula of the $\disc$-equivariant index theorem on $\LX$. When localized to the constant loop space $X$, it recovers the Witten genus.

{\bf This section is organized as follows.}
 In Subsection \ref{subsec: Elliptic Character and q-graded Bismut-Chern Character}, we define the $q$-graded Chern and Bismut-Chern character.
In Subsection \ref{subsec:geometry of lifting gerbe and cir-equivariance}, we describe the geometry of the lifting gerbe and the $\cir$-equivariance.
In Subsection \ref{subsec:Elliptic loop Chern Character}, 
 we define the elliptic Chern character on $\LX$ as the $\disc$-equivariant twisted Chern character.
 In Subsection \ref{Convergence of Elliptic Loop Chern Character}, we deal with the convergence of elliptic Chern character.
In Subsection \ref{subsec: Trivialized Elliptic Loop Chern Character by String G-structure}, we consider the geometric trivialization of Chern-Simons $2$-gerbe and its transgression.
The formula of trivialized elliptic Chern character is given.
In Subsection \ref{subsec: Spinor Bundle and Equivariant A-hat-genus on LX},
we propose the $\disc$-equivariant $\hat{A}$-genus of $\LX$.
\subsection{\texorpdfstring{$q$}{q}-graded Chern and Bismut-Chern Character}\label{subsec: Elliptic Character and q-graded Bismut-Chern Character}
The $q$-graded Chern character of positive energy representations arises naturally in the study of twisted genera.
Given $\mathcal{H} \in \per^k(LG)$, the energy decomposition yields
\begin{align}
\mathcal{H} = \bigoplus_{n \geq 0} \mathcal{H}_n,
\end{align}
where $\mathcal{H}_n$ is the energy-$n$ subspace. There is an associated $q$-series of graded vector bundles:
\begin{align}
\Psi(P, \mathcal{H}) = \sum_{n \geq 0} (P \times_G \mathcal{H}_n) \, q^n \in K(X)[[q]].
\end{align}
\begin{definition}
The $q$-graded Chern character is defined by
\begin{align}
Ch_A(\mathcal{H})_q = \sum_{n \geq 0} Ch_{\mathcal{H}_n}(P, A) \, q^n,
\end{align}
where $Ch_{\mathcal{H}_n}(P, A)$ is the Chern character of the associated vector bundle $(P, A) \times_G \mathcal{H}_n$.
Similarly, the $q$-graded Bismut-Chern character is 
\begin{align}
BCh_A(\mathcal{H})_q = \sum_{n \geq 0} BCh_{\mathcal{H}_n}(P, A) \, q^n.
\end{align}
\end{definition}

\subsection{Geometry of the Lifting Gerbe}\label{subsec:geometry of lifting gerbe and cir-equivariance}
We describe the geometry of the lifting gerbe $(\mathcal{G}_P,\nabla^B)$ of the loop bundle $LP \to \LX$ and establish its $\cir$-equivariant structure. 
Our construction of the lifting gerbe follows \cite{Gomi2001ConnectionsAC}.
Many of the notations and formulas introduced in this subsection will be used repeatedly throughout the subsequent sections.

Let $P \to X$ be a principal $G$-bundle equipped with a connection $A$. Locally, the bundle is described by:
\begin{itemize}
    \item Transition functions $g_{\alpha\beta}: U_\alpha \cap U_\beta \to G$,
    \item Connection forms $A_\alpha \in \Omega^1(U_\alpha, \g)$, satisfying
    \begin{align}
      A_\beta = \mathrm{Ad}_{g_{\alpha\beta}^{-1}}(A_\alpha) + g_{\alpha\beta}^*\mu = g_{\alpha\beta}^{-1}A_\alpha g_{\alpha\beta} + g_{\alpha\beta}^{-1}dg_{\alpha\beta}.
    \end{align}
\end{itemize}
The Chern-Simons $3$-form is given by:
\begin{align}
\CCS(A_\alpha) = \langle A_\alpha \wedge dA_\alpha \rangle + \frac{2}{3}\langle A_\alpha \wedge A_\alpha \wedge A_\alpha \rangle \in \Omega^3(U_\alpha).
\end{align}
The local data for the principal $LG$-bundle $(LP, \hat{A}) \to \LX$ are $\{LU_\alpha, Lg_{\alpha\beta}, \hat{A}_\alpha\}$, where $\hat{A}_\alpha \in \Omega^1(LU_\alpha, L\g)$ is the evaluation of $A_\alpha$. Let $K$ denote the generator of rotations on $\LX$. 
The geometric data describing the lifting gerbe are:
\begin{itemize}
    \item $c_\alpha = \int_{S^1} \langle \iota_K \hat{A}_\alpha \wedge \hat{A}_\alpha \rangle \, dt \in \Omega^1(LU_\alpha)$,
    \item $\Psi_\alpha = \int_{S^1} \langle A_\alpha \wedge R_\alpha \rangle - \frac{1}{2} \omega(\hat{A}_\alpha, \hat{A}_\alpha) \in \Omega^2(LU_\alpha)$,
    \item $\widetilde{A}_\alpha = \hat{A}_\alpha + c_\alpha \cc \in \Omega^1(LU_\alpha, \widetilde{L\g})$.
\end{itemize}
There is a fiberwise extension of Lemma \ref{up to exact of LG}:
\begin{lemma}[\cite{Coquereaux1989StringSO}]
\begin{align}
    \Psi_\alpha - d c_\alpha = \int_{S^1} \CCS(A_\alpha).
\end{align}
\end{lemma}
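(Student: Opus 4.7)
The plan is to treat the stated identity as a fiberwise extension of \lemref{up to exact of LG} and to verify it by a direct computation on the evaluation space $LU_\alpha\times S^1$. Introduce the evaluation map $\mathrm{ev}:LU_\alpha\times S^1\to U_\alpha$ and use the canonical splitting
\[
\mathrm{ev}^*A_\alpha \;=\; \hat A_\alpha \;+\; (\iota_K\hat A_\alpha)\,dt
\]
into the $dt$-free horizontal piece and the vertical piece; write $a:=\hat A_\alpha$ and $\phi:=\iota_K\hat A_\alpha$. Every object appearing in the lemma then becomes a fiber integral over $S^1$ of a polynomial expression in $a,\phi,d_{LU}a,d_{LU}\phi$, and $\partial_t a$.

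Using $\CCS(A)=\langle A\wedge R\rangle-\tfrac{1}{3}\langle A\wedge A\wedge A\rangle$, which follows from $R=dA+A\wedge A$, the common term $\int_{S^1}\langle A_\alpha\wedge R_\alpha\rangle$ cancels on the two sides, and the claim reduces to
\[
 d c_\alpha \;+\; \tfrac{1}{2}\,\omega(\hat A_\alpha,\hat A_\alpha) \;=\; \tfrac{1}{3}\int_{S^1}\langle A_\alpha\wedge A_\alpha\wedge A_\alpha\rangle.
\]
For the right-hand side, $dt\wedge dt=0$ forces only the three summands of $\mathrm{ev}^*(A_\alpha^{\wedge 3})$ that are linear in $\phi\,dt$ to survive the fiber integration, and cyclic invariance of the invariant cubic on $\g$-valued $1$-forms collapses them into a single term proportional to $\int_{S^1}\langle\phi\wedge a\wedge a\rangle\,dt$.

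For the left-hand side, commuting $d$ past the fiber integration (the fiber is closed) and applying Leibniz yields $dc_\alpha=\int_{S^1}\bigl(\langle d_{LU}\phi\wedge a\rangle+\langle\phi,d_{LU}a\rangle\bigr)\,dt$, while the definition of the cocycle gives $\omega(\hat A_\alpha,\hat A_\alpha)=\int_{S^1}\langle a\wedge\partial_t a\rangle\,dt$. Substituting for $d_{LU}a$ the $dt$-free component of $\mathrm{ev}^*(R_\alpha-A_\alpha\wedge A_\alpha)=\mathrm{ev}^*dA_\alpha$, invoking ad-invariance of $\langle\cdot,\cdot\rangle$, and integrating by parts in $t$ along the closed fiber (which converts the $\partial_t$-derivative into the cocycle $\omega$) then matches the two sides.

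The principal obstacle is purely bookkeeping: signs must be tracked carefully when moving $dt$ past $\g$-valued $1$-forms, when applying the graded symmetry $\langle\alpha\wedge\beta\rangle=(-1)^{pq}\langle\beta\wedge\alpha\rangle$ of the bilinear pairing, and when using cyclic invariance of $\langle A\wedge A\wedge A\rangle$. Conceptually the argument is the fiberwise replica of \lemref{up to exact of LG} with the base $U_\alpha$ in place of a single point and $A_\alpha$ in place of the Maurer--Cartan form; no further input is required beyond the calculation of \cite{Coquereaux1989StringSO}.
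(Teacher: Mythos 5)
The paper itself does not prove this lemma; it is quoted from Coquereaux--Pilch, so there is no internal proof to compare against. Your proposal is a direct verification, and the overall shape is reasonable: the reduction $\CCS(A_\alpha)=\langle A_\alpha\wedge R_\alpha\rangle-\tfrac13\langle A_\alpha^{3}\rangle$ and hence the target identity $dc_\alpha+\tfrac12\omega(\hat A_\alpha,\hat A_\alpha)=\tfrac13\int_{S^1}\langle A_\alpha^{3}\rangle$ is algebraically correct, and the treatment of the right-hand side (only the terms linear in $\phi\,dt$ survive, cyclic invariance produces a factor of $3$) is fine.

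The gap is on the left-hand side. After writing $dc_\alpha=\int_{S^1}\bigl(\langle d_{LU}\phi\wedge a\rangle+\langle\phi,d_{LU}a\rangle\bigr)\,dt$ and substituting $d_{LU}a_t=\hat R_t-a_t\wedge a_t$, a leftover term $\langle\phi,\hat R\rangle$ appears; and the other summand $\langle d_{LU}\phi\wedge a\rangle$ is never touched, although it is where most of the work lives. Controlling it requires the complementary relation $d_{LU}\phi_t=\partial_t a_t-(\iota_K\hat R)_t-[a_t,\phi_t]$ extracted from the $dt$-component of $\mathrm{ev}^*dA_\alpha=\mathrm{ev}^*(R_\alpha-A_\alpha\wedge A_\alpha)$, which brings in a second curvature-dependent term $\langle a\wedge\iota_K\hat R\rangle$. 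After applying ad-invariance to the bracket term, the identity to be proved reduces to a non-obvious cancellation of the form
\begin{equation*}
\int_{S^1}\Bigl(-\tfrac12\langle a\wedge\partial_t a\rangle+\langle a\wedge\iota_K\hat R\rangle+\langle\phi,\hat R\rangle\Bigr)\,dt=0,
\end{equation*}
and the only integration by parts available on $\langle a\wedge\partial_t a\rangle$ is a tautology ($\partial_t\langle a\wedge a\rangle=0$), so your closing sentence "integrating by parts in $t$ $\dots$ then matches the two sides" does not by itself account for this cancellation. A proof along these lines must actually exhibit how the two $\hat R$-terms and the $\omega$-term conspire; asserting that the obstacle is "purely bookkeeping" and deferring to the cited paper is circular, since the content of the lemma is precisely that bookkeeping. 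As written, the proposal is therefore a plausible outline rather than a proof, and the crucial cancellation step is missing.
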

$\widetilde{A}_\alpha$ serves as a connection form for the local principal $\widetilde{LG}$-bundle on $LU_\alpha$. Its curvature is given by:
\begin{align}\label{local wideLG curvature}
    \widetilde{R}_\alpha = \hat{R}_\alpha + \frac{1}{2}[\widetilde{A}_\alpha, \widetilde{A}_\alpha] 
    = \hat{R}_\alpha + \left( \frac{1}{2}\omega(\hat{A}_\alpha, \hat{A}_\alpha) + d c_\alpha \right)\cc.
\end{align}
Applying \cite[Theorem 3.9]{Gomi2001ConnectionsAC} to the central extension of loop groups, we obtain the following theorem that describes the local data of the lifting gerbe with connection.
\begin{theorem}\label{!!!cocycle}
Let $\widetilde{Lg}_{\alpha\beta}: L(U_\alpha \cap U_\beta) \to \widetilde{LG}$ be a lifting of $Lg_{\alpha\beta}$. Define the following:
\begin{itemize}
    \item $z_{\alpha\beta\gamma} = \widetilde{Lg}_{\alpha\beta} \widetilde{Lg}_{\beta\gamma} \widetilde{Lg}_{\gamma\alpha} \in \Omega^0(L(U_\alpha \cap U_\beta \cap U_\gamma), S^1)$,
    \item $u_{\alpha\beta} = \widetilde{A}_\beta - \big[\widetilde{Ad}_{Lg_{\alpha\beta}^{-1}} \widetilde{A}_\alpha + \widetilde{Lg}_{\alpha\beta}^* \widetilde{\mu}\big] \in \Omega^1(L(U_\alpha \cap U_\beta), \R)$,
    \item $K_\alpha = \Psi_\alpha - d c_\alpha = \int_{S^1} \CCS(A_\alpha) \in \Omega^2(LU_\alpha, \R)$.
\end{itemize}
The Čech cochain $(z_{\alpha\beta\gamma}, u_{\alpha\beta}, K_\alpha)$ describes the lifting gerbe with connection $(\mathcal{G}_P, \nabla^B)$: $K_\alpha$ is the local $B$-field on $LU_\alpha$, and $u_{\alpha\beta}$ is the connection form of the line bundle $L_{\alpha\beta} \to L(U_\alpha \cap U_\beta)$.
\begin{proof}
We will verify the cocycle conditions for $(z_{\alpha\beta\gamma}, u_{\alpha\beta}, K_\alpha)$. The first condition follows from
\begin{align}
d \log z_{\alpha \beta \gamma} &= \widetilde{Ad}_{Lg_{\gamma \alpha}^{-1}} \widetilde{Ad}_{Lg_{\beta\gamma}^{-1}}\widetilde{Lg}_{\alpha\beta}^*\widetilde{\mu} 
+ \widetilde{Ad}_{Lg_{\gamma \alpha}^{-1}} \widetilde{Lg}_{\beta \gamma}^*\widetilde{\mu}
+ \widetilde{Lg}_{\gamma \alpha}^*\widetilde{\mu}.
\end{align}
We now verify the second condition,
\begin{align}\label{K diff =u}
du_{\alpha\beta} = K_\alpha - K_\beta.
\end{align}
Since $u_{\alpha\beta} = \widetilde{A}_\beta - [\widetilde{Ad}_{Lg_{\alpha\beta}^{-1}} \widetilde{A}_\alpha + \widetilde{Lg}_{\alpha\beta}^* \widetilde{\mu}]$,
\begin{align}\label{LGtilde curv rel}
\widetilde{R}_\beta = \widetilde{Ad}_{Lg_{\alpha\beta}^{-1}}(\widetilde{R}_\alpha) + d u_{\alpha\beta}.
\end{align}
Consider the central part of \cref{local wideLG curvature},
\begin{align}
du_{\alpha\beta} = \left[ \frac{1}{2}\omega(\hat{A}_\beta, \hat{A}_\beta) + d c_\beta - \frac{1}{2}\omega(\hat{A}_\alpha, \hat{A}_\alpha) - d c_\alpha \right] - \mathcal{Z}(Lg_{\alpha\beta}^{-1}, \hat{R}_\alpha).
\end{align}
Also,
\begin{align}
\int_{S^1} \langle A_\alpha \wedge R_\alpha \rangle - \langle A_\beta \wedge R_\beta \rangle = -\mathcal{Z}(Lg_{\alpha\beta}^{-1}, \hat{R}_\alpha).
\end{align}
Combined with
\begin{align}
\Psi_\alpha = \int_{S^1} \langle A_\alpha \wedge R_\alpha \rangle - \frac{1}{2}\omega(\hat{A}_\alpha, \hat{A}_\alpha),
\end{align}
we have
\begin{align}
du_{\alpha\beta} = (\Psi_\alpha - d c_\alpha) - (\Psi_\beta - d c_\beta) = K_\alpha - K_\beta.
\end{align}
\end{proof}
\end{theorem}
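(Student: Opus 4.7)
The plan is to verify the three Deligne cocycle conditions for $(z_{\alpha\beta\gamma}, u_{\alpha\beta}, K_\alpha)$ at the levels of triple, double, and single intersections respectively, showing that these objects assemble into a gerbe with connection whose curving is $K_\alpha$. The key input is that $\widetilde{A}_\alpha = \hat{A}_\alpha + c_\alpha\mathfrak{c}$ behaves as a local connection for a local $\widetilde{LG}$-lift of $LP|_{LU_\alpha}$, so the gerbe data arise as the obstructions to gluing these local lifts coherently.

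First, I would verify that $z_{\alpha\beta\gamma}$ is an $S^1$-valued Čech $2$-cocycle on $4$-fold intersections. Since $\widetilde{Lg}_{\alpha\beta}$ is a set-theoretic lift of $Lg_{\alpha\beta}$ to the central extension, the failure of the triple product to be $1$ lies in the center $S^1$, and $\delta z \equiv 1$ on $U_{\alpha\beta\gamma\delta}$ follows from the associativity of the lifted multiplication combined with centrality. Then I would derive the structural identity $d\log z_{\alpha\beta\gamma}$ from the pullback of the Maurer--Cartan form $\widetilde{\mu}$, which feeds into the third compatibility.

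Second, and this is where the main computation lives, I would establish $du_{\alpha\beta} = K_\alpha - K_\beta$. Using the Maurer--Cartan equation $d\widetilde{\mu} + \frac{1}{2}[\widetilde{\mu},\widetilde{\mu}] = 0$ and the local formula $\widetilde{A}_\alpha = \hat{A}_\alpha + c_\alpha\mathfrak{c}$, I compute
\begin{equation}
\widetilde{R}_\alpha = \hat{R}_\alpha + \Bigl(\tfrac{1}{2}\omega(\hat{A}_\alpha,\hat{A}_\alpha) + dc_\alpha\Bigr)\mathfrak{c}.
\end{equation}
The transformation rule $\widetilde{R}_\beta = \widetilde{Ad}_{Lg_{\alpha\beta}^{-1}}\widetilde{R}_\alpha + du_{\alpha\beta}$ then isolates $du_{\alpha\beta}$ on the central line, where the central part of $\widetilde{Ad}_{Lg_{\alpha\beta}^{-1}}\hat{R}_\alpha$ picks up the anomalous term $\mathcal{Z}(Lg_{\alpha\beta}^{-1},\hat{R}_\alpha)$ from Proposition \ref{Adjoint formula}. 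Comparing with the transgression identity $\Psi_\alpha - dc_\alpha = \int_{S^1}\CCS(A_\alpha)$ together with the Chern--Simons transformation $\int_{S^1}\langle A_\alpha \wedge R_\alpha\rangle - \langle A_\beta \wedge R_\beta\rangle = -\mathcal{Z}(Lg_{\alpha\beta}^{-1},\hat{R}_\alpha)$ collapses everything to $K_\alpha - K_\beta$.

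Third, I would verify the remaining triple-overlap compatibility $u_{\alpha\beta} + \widetilde{Ad}_{Lg_{\alpha\beta}^{-1}}u_{\beta\gamma} - u_{\alpha\gamma} = z_{\alpha\beta\gamma}^{-1}dz_{\alpha\beta\gamma}$ by expanding both sides in terms of $\widetilde{\mu}$ and using the cocycle identity for $\mathcal{Z}$ recorded in \eqref{Z cocycle}. The main obstacle throughout is bookkeeping of central contributions: the nonabelian adjoint action on $\widetilde{L\g}'$ twists by the group cocycle $\mathcal{Z}$ and by $\langle g'g^{-1},g'g^{-1}\rangle$, and all these anomalies must combine precisely with the transgressed Chern--Simons term so that the central extension cocycle matches the curving $K_\alpha$. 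Once the anomaly accounting is done, the three identities fall into place and identify the Čech--Deligne data with the lifting gerbe $(\mathcal{G}_P,\nabla^B)$.
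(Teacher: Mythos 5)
Your proposal is correct and takes essentially the same route as the paper: both isolate $du_{\alpha\beta}$ by comparing the central components of the curvature transformation law $\widetilde{R}_\beta = \widetilde{Ad}_{Lg_{\alpha\beta}^{-1}}\widetilde{R}_\alpha + du_{\alpha\beta}$, invoking the local central-curvature formula $\widetilde{R}_\alpha = \hat{R}_\alpha + (\tfrac12\omega(\hat{A}_\alpha,\hat{A}_\alpha)+dc_\alpha)\cc$ together with the anomaly $\mathcal{Z}(Lg_{\alpha\beta}^{-1},\hat{R}_\alpha)$ from the adjoint formula and the Chern--Simons transformation identity, then collapsing via $\Psi_\alpha - dc_\alpha = \int_{S^1}\CCS(A_\alpha)$. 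Your added bookkeeping (the Čech $\delta z = 1$ check and the $\delta u = d\log z$ compatibility, with the $\widetilde{Ad}$ twist acting trivially on the $\mathbb{R}$-valued central part) is consistent with the paper, which treats those conditions more tersely.
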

The curving of the lifting gerbe locally is given by $dK_{\alpha}$. This defines a global $3$-form $H$ on $\LX$, which is the transgression of $\Phi$.
\begin{remark}
The local $B$-field $K_{\alpha}$ can also be derived from the reduced splitting 
(see \cite{Murray2001HiggsFB,Waldorf2010ALS}).
Locally, the reduced splitting $L_{\alpha}:LU_{\alpha}\times \widetilde{L\g}\to \R$ is defined for $y+m\cc\in \widetilde{L\g}$ as follows:
\begin{align}
L_{\alpha}(\gamma,y+m\cc)=\int_{S^1}\Tr[A_{\alpha}\wedge y]_{\gamma}-m.
\end{align}
Thus, the local $B$-field is given by
\begin{align}
K_{\alpha}=L_{\alpha}(\widetilde{R}_{\alpha})=\int_{S^1}\langle A_{\alpha}\wedge R_{\alpha}\rangle-(\frac{1}{2}\omega(\hat{A}_{\alpha},\hat{A}_{\alpha})+dc_{\alpha})=\int_{S^1}\CCS(A_{\alpha}).
\end{align}
\end{remark}
\begin{corollary}\label{Rel of curv-B}
   From \cref{K diff =u} and \cref{LGtilde curv rel}, 
\begin{align}
(\widetilde{R}_{\beta}+K_{\beta})=\widetilde{Ad}_{Lg_{\alpha\beta}^{-1}}(\widetilde{R}_{\alpha}+K_{\alpha}).
\end{align}
\end{corollary}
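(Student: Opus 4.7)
The proof should be essentially a one-line computation, as the corollary is already framed as a direct consequence of \cref{K diff =u} and \cref{LGtilde curv rel}. The plan is to substitute one into the other and then exploit the fact that the central part of $\widetilde{L\g}$ is fixed by the adjoint action.

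Starting from \cref{LGtilde curv rel},
\[
\widetilde{R}_\beta \;=\; \widetilde{Ad}_{Lg_{\alpha\beta}^{-1}}(\widetilde{R}_\alpha) + du_{\alpha\beta},
\]
I would then apply \cref{K diff =u}, which gives $du_{\alpha\beta} = K_\alpha - K_\beta$, to rewrite this as
\[
\widetilde{R}_\beta + K_\beta \;=\; \widetilde{Ad}_{Lg_{\alpha\beta}^{-1}}(\widetilde{R}_\alpha) + K_\alpha.
\]
It remains only to absorb the scalar $K_\alpha$ into the adjoint action. Viewing $K_\alpha \in \Omega^2(LU_\alpha,\R)$ as sitting in the central direction via $K_\alpha \cc$, Proposition \ref{Adjoint formula} shows that $\widetilde{Ad}_g$ acts trivially on the center $\CC\cc$ of $\widetilde{L\g}'$; indeed, setting $b=0$ and $x=0$ in that formula yields $\widetilde{Ad}_g(a\cc) = a\cc$. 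Hence $K_\alpha = \widetilde{Ad}_{Lg_{\alpha\beta}^{-1}}(K_\alpha)$, and combining gives the desired identity
\[
\widetilde{R}_\beta + K_\beta \;=\; \widetilde{Ad}_{Lg_{\alpha\beta}^{-1}}(\widetilde{R}_\alpha + K_\alpha).
\]

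There is no real obstacle here; the only conceptual point worth flagging is the implicit convention that $K_\alpha$ is regarded as a central $\widetilde{L\g}'$-valued $2$-form, which is consistent with the way the central part of $\widetilde{R}_\alpha$ in \cref{local wideLG curvature} was computed. This form of the result is exactly what is needed downstream, as it says that the combined object $\widetilde{R}_\alpha + K_\alpha$ transforms tensorially under the loop gauge group, making it a well-defined $\widetilde{L\g}'$-valued $2$-form associated to the lifting-gerbe data.
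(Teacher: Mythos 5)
Your proof is correct and is exactly the one-line substitution the paper intends, since the corollary is stated as an immediate consequence of \cref{K diff =u} and \cref{LGtilde curv rel} with no further proof given. The additional observation that $\widetilde{Ad}_g$ fixes the central direction (from Proposition \ref{Adjoint formula} with $b=0$, $x=0$) is the right detail to supply, as it is what lets you slide $K_\alpha\cc$ inside the adjoint action.
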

\subsubsection*{\textbf{$\cir$-equivariance}}
$(z_{\alpha\beta\gamma},u_{\alpha\beta},K_{\alpha})$ describes a
$\cir$-equivariant gerbe when
 $\widetilde{Lg_{\alpha\beta}}$ is  $\cir$-equivariant.
 A more conceptual explanation for the vanishing of the moment map is that the lifting gerbe is, in fact, the 1-transgression of the Chern-Simons 2-gerbe.

\begin{proposition}\label{moment map vanishes of uab}
The moment map of $(z_{\alpha\beta\gamma},u_{\alpha\beta},K_{\alpha})$ vanishes, \ie
$\iota_{K}(u_{\alpha\beta})=0$.
\begin{proof}
Recall that 
$u_{\alpha\beta}=\widetilde{A}_{\beta}-[\widetilde{Ad}_{Lg_{\alpha\beta}^{-1}}\widetilde{A}_{\alpha}+\widetilde{Lg_{\alpha\beta}}^*\widetilde{\mu}]$,
\begin{align}
\iota_{K}(u_{\alpha\beta})=\iota_{K}(c_{\beta}-c_{\alpha})-\iota_{K}(\widetilde{Lg_{\alpha\beta}}^*v)-\mathcal{Z}(Lg_{\alpha\beta}^{-1},\iota_{K}\hat{A}_{\alpha}).
\end{align}
It directly follows from Corollary \ref{moment map is energy} that $i_{\widetilde{K}}v=-\rho^*E$. Therefore, we have $\iota_{K}(\widetilde{Lg_{\alpha\beta}}^*v)=-E\circ Lg_{\alpha\beta}$.
\end{proof}
\end{proposition}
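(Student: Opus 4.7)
The plan is to unpack the defining expression of $u_{\alpha\beta}$, split it into its $L\g$-valued part and its central $\R\cc$-valued part, and then apply $\iota_K$ only to the central part (which is where the whole claim lives, since $u_{\alpha\beta}\in \Omega^1(L(U_\alpha\cap U_\beta),\R)$ per Theorem \ref{!!!cocycle}). Using $\widetilde{A}_\alpha=\hat{A}_\alpha+c_\alpha\cc$ and $\widetilde{\mu}=\rho^*\hat{\mu}+v$ together with the adjoint formula in Proposition \ref{Adjoint formula}, the $L\g$-part of $u_{\alpha\beta}$ is simply
\[
\hat{A}_\beta-\widehat{Ad}_{Lg_{\alpha\beta}^{-1}}\hat{A}_\alpha-Lg_{\alpha\beta}^*\hat{\mu},
\]
which vanishes by the usual transformation rule for the connection $\hat{A}$ on the loop bundle $LP\to\LX$. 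So everything reduces to the central part.

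Next, I would read off the central coefficient, which by Proposition \ref{Adjoint formula} (with $b=0$) is
\[
\bigl(c_\beta-c_\alpha-\mathcal{Z}(Lg_{\alpha\beta}^{-1},\hat{A}_\alpha)\bigr)\cc-\widetilde{Lg_{\alpha\beta}}^*v,
\]
where $\mathcal{Z}(g,x)=\langle g^{-1}g',x\rangle$ is the cocycle from \eqref{Z cocycle}. Applying $\iota_K$ now reduces the problem to three scalar identities: first, $c_\alpha=\int_{S^1}\langle \iota_K\hat{A}_\alpha,\hat{A}_\alpha\rangle dt$ contracts against $K$ to give a pointwise energy expression built from $\hat{A}_\alpha$; second, the cocycle term contributes $\mathcal{Z}(Lg_{\alpha\beta}^{-1},\iota_K\hat{A}_\alpha)$, which is exactly the discrepancy between the energies built from $\hat{A}_\beta$ and from $\widehat{Ad}_{Lg_{\alpha\beta}^{-1}}\hat{A}_\alpha$; and third, by the prequantization relation $i_{\widetilde K}v=-\rho^*E$, one has $\iota_K(\widetilde{Lg_{\alpha\beta}}^*v)=-E\circ Lg_{\alpha\beta}$, which matches the remaining energy term from $c_\beta$.

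The heart of the argument is therefore a bookkeeping check: the three energy-type contributions, coming from $\iota_Kc_\alpha$, $\iota_Kc_\beta$, $\mathcal{Z}(Lg_{\alpha\beta}^{-1},\iota_K\hat{A}_\alpha)$, and the Maurer-Cartan moment $E\circ Lg_{\alpha\beta}$, must cancel exactly. I expect the main obstacle to be precisely this sign-and-normalization bookkeeping, especially because the energy $E$ enters both through the explicit definition of $c_\alpha$ (where it is built intrinsically from $\hat{A}$) and through the tautological formula $i_{\widetilde K}v=-\rho^*E$ (where it is a function on $LG$ pulled back via the lift $\widetilde{Lg_{\alpha\beta}}$); one must verify that the transgressed energy of $\hat{A}_\beta$ differs from that of $\widehat{Ad}_{Lg_{\alpha\beta}^{-1}}\hat{A}_\alpha$ by exactly $\mathcal{Z}(Lg_{\alpha\beta}^{-1},\iota_K\hat{A}_\alpha)$, which is Corollary \ref{moment map is energy} transported to the loop bundle. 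Once this compatibility is in place, the four contributions cancel in pairs and $\iota_K u_{\alpha\beta}=0$ follows.
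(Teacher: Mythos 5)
Your proposal takes essentially the same route as the paper's proof: decompose $u_{\alpha\beta}$ via the adjoint formula, note that the $L\g$-part vanishes by the connection transformation law, apply $\iota_K$ to the central part, and invoke Corollary \ref{moment map is energy} to identify $\iota_K\bigl(\widetilde{Lg_{\alpha\beta}}^*v\bigr)=-E\circ Lg_{\alpha\beta}$. The paper's proof in fact stops at exactly that point and leaves the remaining bookkeeping implicit, so you match it step for step.

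The one caveat is in your final description of that bookkeeping, which is imprecise: the four contributions do not cancel ``in pairs,'' nor does $\iota_K c_\beta - \iota_K c_\alpha$ equal $\mathcal{Z}(Lg_{\alpha\beta}^{-1},\iota_K\hat{A}_\alpha)$ alone. Since $\iota_K\hat{A}_\beta$ is obtained from $\iota_K\hat{A}_\alpha$ by a gauge transformation (an $\mathrm{Ad}_{Lg_{\alpha\beta}^{-1}}$-conjugation plus a Maurer--Cartan shift), expanding the quadratic form $\iota_K c_\beta=\int_{S^1}\langle\iota_K\hat{A}_\beta,\iota_K\hat{A}_\beta\rangle$ produces three terms: $\iota_K c_\alpha$ itself (from $\mathrm{Ad}$-invariance of the pairing), a cross-term that is identified with $\mathcal{Z}(Lg_{\alpha\beta}^{-1},\iota_K\hat{A}_\alpha)$, and a pure Maurer--Cartan quadratic term that is identified with $-E\circ Lg_{\alpha\beta}$. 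The cancellation is this three-way polarization identity rather than a pairwise pairing, so if you try to execute your last step as stated, the terms will not line up the way you expect; the structure is right, the pairing description is not.
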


\subsection{Elliptic Chern Character on \texorpdfstring{$\LX$}{LX}}\label{subsec:Elliptic loop Chern Character}
We define the elliptic Chern character on $\LX$ as the $\disc$-equivariant twisted Chern character of the gerbe module associated with the positive energy representation $\mathcal{H} \in \per^k(LG)$.

Given $\tau \in \mathbb{H}$,
we replace the vector field $K$ with $\tau K$, 
the resulting equivariant curvature locally can be written as follows.
\begin{definition}[Local $\tau$-deformed curvature]
    \begin{align}
\widetilde{R}_{\alpha}[\tau]=\tau \dd+[\tau \iota_{K}(\widetilde{A}_{\alpha})+\widetilde{R}_{\alpha}]\in \Omega^*(LU_{\alpha},\widetilde{L\g}_{\CC}').
\end{align}
\end{definition}

The elliptic Chern character is defined locally as
\begin{align}
ECh_{A}(\mathcal{H})|_{LU_{\alpha}} = \Tr_{\mathcal{H}}\left[\exp\left(\widetilde{R}_{\alpha}[\tau] + K_{\alpha}\right)\right].
\end{align}

We now demonstrate the global well-definedness and equivariant closedness of the elliptic Chern character. The convergence is more difficult and will be addressed in the following subsection.
\subsubsection{Global well-definedness}
\begin{lemma}\label{rel of tau curv-B}
\begin{align}
\widetilde{R}_{\alpha}[\tau]=
\widetilde{Ad}_{Lg_{\alpha\beta}^{-1}}(\widetilde{R}_{\beta}[\tau])+(du_{\alpha\beta})\cc.
\end{align}
\end{lemma}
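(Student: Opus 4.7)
The plan is to decompose $\widetilde{R}_\alpha[\tau] = \tau\dd + \tau\iota_K\widetilde{A}_\alpha + \widetilde{R}_\alpha$ into its three natural pieces in $\widetilde{L\g}'_{\CC} = L\g_{\CC}\oplus \CC\dd\oplus \CC\cc$, and track how each piece transforms under $\widetilde{Ad}_{Lg_{\alpha\beta}^{-1}}$. The non-central curvature piece $\widetilde{R}_\alpha$ is already handled by equation \eqref{LGtilde curv rel}, which is precisely the statement obtained by applying $d+\tfrac{1}{2}[\cdot,\cdot]$ to the defining cocycle $u_{\alpha\beta}=\widetilde{A}_\beta-[\widetilde{Ad}_{Lg_{\alpha\beta}^{-1}}\widetilde{A}_\alpha+\widetilde{Lg}_{\alpha\beta}^*\widetilde{\mu}]$. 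So the proof reduces to verifying that the extra two pieces $\tau\dd$ and $\tau\iota_K\widetilde{A}$ transform without contributing any further terms beyond $(du_{\alpha\beta})\cc$.

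For the equivariant piece, I would apply $\iota_K$ to the defining relation for $u_{\alpha\beta}$ and invoke Proposition \ref{moment map vanishes of uab}, namely $\iota_K u_{\alpha\beta}=0$, to obtain an exact identity
\begin{align*}
\iota_K\widetilde{A}_\beta = \iota_K\!\bigl(\widetilde{Ad}_{Lg_{\alpha\beta}^{-1}}\widetilde{A}_\alpha\bigr) + \iota_K\bigl(\widetilde{Lg}_{\alpha\beta}^*\widetilde{\mu}\bigr).
\end{align*}
Using $\widetilde{\mu}=\rho^*\hat{\mu}+v$ from \eqref{MC form of central extension} together with $i_{\widetilde{K}}v=-\rho^*E$ (as derived in the proof of Proposition \ref{moment map vanishes of uab}), the last summand splits into an $L\g_{\CC}$-contribution $Lg_{\alpha\beta}^{-1}(Lg_{\alpha\beta})'$ and a central $\cc$-contribution $-E\circ Lg_{\alpha\beta}$. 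For the derivation piece I would apply Proposition \ref{Adjoint formula} to $\tau\dd$: this produces $\tau\dd$ plus a correction $\tau\,Lg_{\alpha\beta}^{-1}(Lg_{\alpha\beta})'$ in $L\g_{\CC}$ and a central correction proportional to $\langle (Lg_{\alpha\beta})^{-1}(Lg_{\alpha\beta})',(Lg_{\alpha\beta})^{-1}(Lg_{\alpha\beta})'\rangle$. The $L\g_{\CC}$-corrections from these two sources combine precisely into the discrepancy between $\iota_K\hat{A}_\alpha$ and $Ad_{Lg_{\alpha\beta}^{-1}}\iota_K\hat{A}_\beta$, via the standard transformation law of the connection.

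The main obstacle is the bookkeeping of the central terms. One obtains an a priori complicated sum involving $\omega(\hat{A},\hat{A})/2$, $dc$, the cocycle $\mathcal{Z}(Lg_{\alpha\beta}^{-1},\hat{R})$, the pairings $\langle Lg_{\alpha\beta}^{-1}(Lg_{\alpha\beta})',\iota_K\hat{A}\rangle$, and the energy term $E\circ Lg_{\alpha\beta}$; the goal is to show this collapses to exactly $du_{\alpha\beta}\cc$. The key mechanism is that, after the non-central correction terms pair off, what remains of the $\tau$-dependent central pieces is precisely $\tau\,\iota_K$ applied to the (already vanishing) $1$-form $u_{\alpha\beta}$, so the deformation contributes nothing, while the $\tau$-independent part yields $du_{\alpha\beta}\cc$ as in the central part of \eqref{LGtilde curv rel}. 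Thus the lemma is ultimately a manifestation of the vanishing moment map of the lifting gerbe, which makes the $\tau$-deformation behave compatibly with the Čech cocycle description of $(\mathcal{G}_P,\nabla^B)$.
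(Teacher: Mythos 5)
Your proposal follows essentially the same route as the paper: decompose $\widetilde{R}_\alpha[\tau]$ into the curvature piece $\widetilde{R}_\alpha$, which produces the $(du_{\alpha\beta})\cc$ correction via \eqref{LGtilde curv rel} (equivalently Corollary \ref{Rel of curv-B} together with \eqref{K diff =u}), and the deformation piece $\tau\dd + \tau\iota_K\widetilde{A}_\alpha$, which transforms strictly equivariantly under $\widetilde{Ad}_{Lg_{\alpha\beta}^{-1}}$ by combining Proposition \ref{moment map vanishes of uab} (vanishing moment map) with Proposition \ref{Adjoint formula}. The paper states this two-proposition argument tersely while you unpack the central/non-central bookkeeping explicitly, but the key mechanism you identify (vanishing of $\iota_K u_{\alpha\beta}$ making the $\tau$-deformation contribute nothing beyond what the undeformed curvature already produces) is exactly the paper's.
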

\begin{proof}
By Corollary \ref{Rel of curv-B}, it suffices to show
\begin{align}\label{transform of connection}
(\tau \dd+\tau \iota_{K}(\widetilde{A}_{\alpha}))=
\widetilde{Ad}_{Lg_{\alpha\beta}^{-1}}((\tau \dd+\tau \iota_{K}(\widetilde{A}_{\beta}))).
\end{align}
This follows from Proposition \ref{moment map vanishes of uab} combined with Proposition \ref{Adjoint formula}, which gives
\begin{align}
    \widetilde{Ad}_{Lg_{\alpha\beta}^{-1}}(\dd+\iota_{K}(\widetilde{A}_{\beta}))=(\dd+\iota_{K}(\widetilde{A}_{\alpha})).
\end{align}
\end{proof}
As a corollary, we obtain
\begin{corollary}\label{rel of tau-eq curvature}
\begin{align}
(\widetilde{R}_{\alpha}[\tau]+K_{\alpha})=\widetilde{Ad}_{Lg_{\alpha\beta}^{-1}}(\widetilde{R}_{\beta}[\tau]+K_{\beta}).
\end{align}
\end{corollary}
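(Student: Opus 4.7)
The plan is to combine Lemma \ref{rel of tau curv-B} with the cocycle identity $du_{\alpha\beta}=K_{\alpha}-K_{\beta}$ from \eqref{K diff =u}, exactly as Corollary \ref{Rel of curv-B} was derived from \eqref{LGtilde curv rel} and \eqref{K diff =u} in the undeformed case. First I would invoke Lemma \ref{rel of tau curv-B} to extract the transformation law
\begin{align*}
\widetilde{R}_{\alpha}[\tau]-\widetilde{Ad}_{Lg_{\alpha\beta}^{-1}}\!\bigl(\widetilde{R}_{\beta}[\tau]\bigr)\;=\;(du_{\alpha\beta})\cc,
\end{align*}
and then substitute \eqref{K diff =u} to rewrite the obstruction as the central $2$-form $(K_{\alpha}-K_{\beta})\cc$. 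This already identifies the failure of $\widetilde{R}[\tau]$ to be $\widetilde{Ad}$-equivariant as the coboundary of the local $B$-field $K_{\alpha}$ tensored with the central generator $\cc$, which is exactly the type of shift the $B$-field was introduced to absorb.

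The final step exploits the fact that $\cc$ spans the centre of $\widetilde{L\g}_{\CC}'$, so $\widetilde{Ad}_{Lg_{\alpha\beta}^{-1}}$ acts trivially on any scalar multiple of $\cc$. In particular, one may move $K_{\beta}\cc$ inside the adjoint:
\begin{align*}
\widetilde{Ad}_{Lg_{\alpha\beta}^{-1}}\!\bigl(\widetilde{R}_{\beta}[\tau]+K_{\beta}\cc\bigr)\;=\;\widetilde{Ad}_{Lg_{\alpha\beta}^{-1}}\!\bigl(\widetilde{R}_{\beta}[\tau]\bigr)+K_{\beta}\cc.
\end{align*}
Rearranging the previous identity then isolates the claimed equality. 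There is no genuine difficulty here: the corollary is an essentially bookkeeping statement, the $\tau$-deformed counterpart of Corollary \ref{Rel of curv-B}, and the only nontrivial input is the centrality of $\cc$ in $\widetilde{L\g}_{\CC}'$. Conceptually, this is the identity that will guarantee that the local expressions $\Tr_{\mathcal{H}}\!\left[\exp(\widetilde{R}_{\alpha}[\tau]+K_{\alpha})\right]$ agree on overlaps $LU_{\alpha}\cap LU_{\beta}$ and hence patch into a globally defined form on $\LX$; the genuinely delicate questions of convergence and $\mathcal{D}_{\tau}$-closedness are deferred to the subsequent subsections.
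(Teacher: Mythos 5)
Your plan is the right one, and it is exactly what the paper's one-line ``As a corollary'' implies: invoke Lemma~\ref{rel of tau curv-B} to obtain the coboundary $(du_{\alpha\beta})\cc$, replace it by $(K_{\alpha}-K_{\beta})\cc$ using~\eqref{K diff =u}, and absorb the $K_{\bullet}\cc$ terms into the adjoint using the centrality of $\cc$. The paper provides no written proof, so conceptually there is nothing to compare against except this; the two key ingredients you isolate (the cocycle identity and the centrality of $\cc$) are exactly the ones needed, and your remark about the role this identity plays in gluing $\Tr_{\mathcal{H}}[\exp(\widetilde{R}_{\alpha}[\tau]+K_{\alpha})]$ across overlaps is spot on.

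One caveat you should not have glossed over, however: if you actually carry out the ``rearranging'' you defer to the last sentence, the signs do not close as written. From Lemma~\ref{rel of tau curv-B} as printed, $\widetilde{R}_{\alpha}[\tau]=\widetilde{Ad}_{Lg_{\alpha\beta}^{-1}}(\widetilde{R}_{\beta}[\tau])+(K_{\alpha}-K_{\beta})\cc$, and adding $K_{\alpha}\cc$ to both sides yields $\widetilde{R}_{\alpha}[\tau]+K_{\alpha}\cc=\widetilde{Ad}_{Lg_{\alpha\beta}^{-1}}(\widetilde{R}_{\beta}[\tau])+(2K_{\alpha}-K_{\beta})\cc$, not $\widetilde{Ad}_{Lg_{\alpha\beta}^{-1}}(\widetilde{R}_{\beta}[\tau]+K_{\beta}\cc)=\widetilde{Ad}_{Lg_{\alpha\beta}^{-1}}(\widetilde{R}_{\beta}[\tau])+K_{\beta}\cc$. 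What one actually gets is the $\alpha\leftrightarrow\beta$-swapped version $\widetilde{R}_{\beta}[\tau]+K_{\beta}\cc=\widetilde{Ad}_{Lg_{\alpha\beta}^{-1}}(\widetilde{R}_{\alpha}[\tau]+K_{\alpha}\cc)$, which is what the index convention of the undeformed Corollary~\ref{Rel of curv-B} and of equation~\eqref{LGtilde curv rel} would predict. The discrepancy traces to Lemma~\ref{rel of tau curv-B} itself, where the roles of $\alpha$ and $\beta$ appear to have been inadvertently transposed relative to~\eqref{LGtilde curv rel}; the corollary as stated inherits the same transposition. Your argument is therefore correct in substance, but the blind assertion that ``rearranging \dots isolates the claimed equality'' would have fallen apart had you actually written out the final line---this is precisely the kind of bookkeeping a proof of a bookkeeping statement must do, and doing it would also have surfaced the labeling slip in the source.
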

Since the positive energy representation $\mathcal{H}$ is unitary, combined with the corollary above, we have
\begin{align}
\Tr_{\mathcal{H}}(\exp{(\widetilde{R}_{\alpha}[\tau]+K_{\alpha})})=\Tr_{\mathcal{H}}(\exp{(\widetilde{R}_{\beta}[\tau]+K_{\beta})})\quad
\text{on} \quad LU_{\alpha}\cap LU_{\beta}.
\end{align}
\subsubsection{Equivariant closedness}
\begin{proposition}
 $ECh_{A}(\mathcal{H})\in \Omega^{2*}(\LX)^{\cir}$ is $(d+\tau \iota_K-kH)$-closed.
\end{proposition}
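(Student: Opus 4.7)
The strategy is to reduce the twisted closedness to an untwisted local one via a scalar conjugation, and then to prove the untwisted version via an equivariant Bianchi identity combined with trace cyclicity. Following the convention of \cref{Rel of curv-B,rel of tau-eq curvature}, the symbol $\widetilde R_\alpha[\tau]+K_\alpha$ denotes $\widetilde R_\alpha[\tau]+K_\alpha\cc$; since $\pi(\cc)$ acts as $k\cdot\mathrm{Id}$ on the level-$k$ representation $\mathcal H$, we have
\[
ECh_A(\mathcal H)|_{LU_\alpha}=e^{kK_\alpha}\,\Tr_\mathcal H\!\bigl[\exp(\widetilde R_\alpha[\tau])\bigr].
\]
A direct computation gives $(d+\tau\iota_K)(e^{kK_\alpha})=kH\cdot e^{kK_\alpha}$ using $dK_\alpha=H$ locally together with $\iota_K K_\alpha=0$; the latter follows from the fiber-integration identity $K_\alpha=\iota_K\overline{\CCS(A_\alpha)}$ and $\iota_K^2=0$, in parallel with \cref{moment map vanishes of uab}. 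Hence the whole claim reduces to the untwisted local statement
\[
(d+\tau\iota_K)\,\Tr_\mathcal H\!\bigl[\exp(\widetilde R_\alpha[\tau])\bigr]=0.
\]

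The heart of the proof is the equivariant Bianchi identity
\[
(d+\tau\iota_K)\widetilde R_\alpha[\tau]=[\widetilde R_\alpha[\tau],\widetilde A_\alpha]\qquad\text{in }\Omega^*(LU_\alpha,\widetilde{L\g}').
\]
I would derive this from the ordinary Bianchi identity $d\widetilde R_\alpha=-[\widetilde A_\alpha,\widetilde R_\alpha]$ together with the auxiliary identity
\[
L_K\widetilde A_\alpha=[\dd,\widetilde A_\alpha],
\]
which encodes the fact that the infinitesimal $\cir$-rotation acts on the evaluation form $\hat A_\alpha$ by $t$-differentiation, \ie by bracketing with the derivation $\dd\in\widetilde{L\g}'$ (the central $c_\alpha\cc$-piece contributes nothing by $\cir$-invariance of $c_\alpha$). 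Expanding $(d+\tau\iota_K)(\tau\dd+\tau\iota_K\widetilde A_\alpha+\widetilde R_\alpha)$ using $L_K=d\iota_K+\iota_K d$, one checks that the extra $\tau\dd$-summand in $\widetilde R_\alpha[\tau]$ is precisely what is needed for the cross-terms to collapse into the right-hand commutator. Applying $\pi$, cyclic invariance of $\Tr_\mathcal H$, and the standard identity $\Tr\bigl[[X,A]\exp(X)\bigr]=0$ yields the vanishing. Global well-definedness and $\cir$-invariance of the resulting even form then follow from \cref{rel of tau-eq curvature} and the unitarity of $\mathcal H$.

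The principal obstacle is analytic rather than algebraic: because $\mathcal H$ is infinite-dimensional, neither convergence of $\Tr_\mathcal H[\exp(\widetilde R_\alpha[\tau])]$ nor the cyclic property of the trace is automatic. The resolution lies in the positive-energy structure: $\pi(\dd)=L_0$ is self-adjoint and positive with finite-dimensional eigenspaces, so for $\tau\in\mathbb H$ the factor $\exp(\tau\pi(\dd))$ has operator norm bounded by $|q|<1$ and is trace-class. This makes the full trace absolutely convergent and reduces trace cyclicity to its manifest validity on each finite-dimensional energy subspace. These are precisely the questions addressed in the convergence subsection to follow, which the present closedness argument is free to invoke.
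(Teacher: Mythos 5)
Your proof is correct and follows essentially the same strategy as the paper's: reduce the twisted closedness to the untwisted local statement $(d+\tau\iota_K)\Tr_{\mathcal H}[\exp(\widetilde R_\alpha[\tau])]=0$ via the scalar factor $e^{kK_\alpha}$ (using $dK_\alpha=H$ and the vanishing of the moment map $\iota_K K_\alpha=0$), then invoke the equivariant Bianchi identity $(d+\mathrm{ad}_{\widetilde A_\alpha}+\tau\iota_K)\widetilde R_\alpha[\tau]=0$ together with trace cyclicity, the latter justified by the convergence estimate of Proposition \ref{coupled with Lg trace convergence}. You supply more detail than the paper on the conjugation step and on the origin of the cross-term cancellation through $L_K\widetilde A_\alpha=[\dd,\widetilde A_\alpha]$, but the mathematical content and the appeal to positive-energy trace-class estimates are the same.
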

\begin{proof}
It suffices to show that 
$
(d+\tau \iota_K)\Tr_{\mathcal{H}}[\exp{(\widetilde{R}_{\alpha}[\tau])}]=0.
$
The Bianchi identity still holds in this infinite-dimensional case, but we must be careful about convergence when taking the trace.
\begin{align}
    (d+ad_{\widetilde{A}_{\alpha}}+\tau \iota_{K})\cdot (\widetilde{R}_{\alpha}[\tau])=(d+ad_{\widetilde{A}_{\alpha}}+\tau \iota_{K})\cdot( \tau \dd+[\tau \iota_{K}(\widetilde{A}_{\alpha})+\widetilde{R}_{\alpha}])=0.
\end{align}
By Proposition \ref{coupled with Lg trace convergence} in the next subsection,
$\Tr_{\mathcal{H}}[\widetilde{A}_{\alpha},\exp{(\widetilde{R}_{\alpha}[\tau])}]=0.
$
Therefore,
\begin{align}
(d+\tau \iota_K)\Tr_{\mathcal{H}}[\exp{(\widetilde{R}_{\alpha}[\tau])}]=\Tr_{\mathcal{H}}[(d+ad_{\widetilde{A}_{\alpha}}+\tau \iota_K)\exp{(\widetilde{R}_{\alpha}[\tau])}]=0.
\end{align}
\end{proof}
\subsection{Convergence}\label{Convergence of Elliptic Loop Chern Character}
We now deal with the convergence of elliptic Chern character.
\subsubsection{Energy estimates}
Let $\pi:\widetilde{L\g_{\CC}}'\to \mathrm{End}(\mathcal{H})$ be a positive energy representation. 
The following lemma is a consequence of the Segal-Sugawara construction.
\begin{lemma}[{\cite[Lemma 3.2]{Goodman1984StructureAU},\cite{Wassermann1998OperatorAA}}]\label{key lemma for estimate}
For a vector $v$ of fixed energy $n$ and unit norm, there exists a constant $C>0$ such that for any $m\in \Z$ and $y_m\in \g$,
\begin{align}
\norm{\pi(y_m\otimes z^m)\cdot v}\leq C(\abs{m}+n)\abs{y_m}.
\end{align}
\end{lemma}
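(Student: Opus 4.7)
The plan is to prove the estimate via the Segal--Sugawara construction of the energy operator $L_0$. The three key facts driving the argument are that, on any positive-energy representation of $\widetilde{L\g_{\CC}}'$ at level $k$: (i) $L_0$ is self-adjoint and nonnegative, with $L_0 v = n v$ on the energy-$n$ subspace $\mathcal{H}_n$; (ii) $[L_0,\, \pi(X \otimes z^j)] = -j\, \pi(X \otimes z^j)$; and (iii) the affine commutator $[\pi(X \otimes z^{-j}),\, \pi(X \otimes z^j)] = jk \langle X,X\rangle \cdot \mathrm{Id}$ for any $X \in \g$ with $[X,X]=0$.

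First, I would fix an orthonormal basis $\{X^a\}$ of $\g$ with respect to $\langle\cdot,\cdot\rangle$ and expand $y_m = \sum_a c_a X^a$, with $\sum_a |c_a|^2 = |y_m|^2$. By the triangle inequality applied to this finite sum, it suffices to prove the estimate with $y_m$ replaced by a single unit basis vector $X^a$. Second, I would invoke the Segal--Sugawara formula
\begin{align*}
2(k+h^{\vee})\, L_0 \;=\; \sum_a \pi(X^a)^2 \,+\, 2 \sum_{a}\sum_{j>0} \pi(X^a \otimes z^{-j})\, \pi(X^a \otimes z^j),
\end{align*}
valid on finite-energy vectors. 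Pairing with a unit vector $v \in \mathcal{H}_n$ and using unitarity $\pi(X^a \otimes z^{j})^{*} = \pi(X^a \otimes z^{-j})$ together with (i) gives
\begin{align*}
2(k+h^{\vee})\, n \;=\; \langle v,\, C_{\g}\, v\rangle \;+\; 2 \sum_{a,\, j>0} \|\pi(X^a \otimes z^j)\, v\|^2,
\end{align*}
where $C_{\g} = \sum_a \pi(X^a)^2$ acts as the Casimir on $\mathcal{H}_n$ viewed as a finite-dimensional $\g$-module, a bounded operator whose norm depends only on the highest-weight data of $\mathcal{H}$. Positivity of every term on the right then bounds each individual summand by a constant multiple of $n$, giving $\|\pi(X^a \otimes z^j)\, v\|^2 \leq (k+h^{\vee})\, n$ for $j>0$.

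Third, for $m < 0$ (the raising direction), the commutator identity (iii) yields
\begin{align*}
\|\pi(X^a \otimes z^m)\, v\|^2 \;=\; |m|\, k\, \|v\|^2 + \|\pi(X^a \otimes z^{|m|})\, v\|^2,
\end{align*}
so combining with the lowering bound produces $\|\pi(X^a \otimes z^m)\, v\|^2 \leq C'(|m| + n)$. Taking square roots gives a bound of order $(|m|+n)^{1/2}$, which \emph{a fortiori} implies the stated linear bound $C(|m|+n)|y_m|$; restoring the expansion $y_m = \sum_a c_a X^a$ and applying the triangle inequality recovers the general statement.

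The main obstacle is making the Sugawara computation rigorous: the defining sum is infinite, so one must verify that it converges on the dense subspace of finite-energy vectors (which is standard, as only finitely many modes fail to annihilate any given such $v$) and that the normal-ordering subtractions produce the correct shift by the dual Coxeter number $h^{\vee}$. Once these analytic points are settled, the estimate is a purely algebraic consequence of positivity of $L_0$ together with the affine commutation relation, and the linear growth in $|m|+n$ emerges transparently.
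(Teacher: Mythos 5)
Your proof takes the same Sugawara route as the cited references (Goodman--Wallach Lemma~3.2, Wassermann); the paper itself does not prove this lemma but merely invokes it, so there is no internal argument to compare against, and your three-step structure — positivity of the normal-ordered summands in the Sugawara expansion of $L_0$, then the affine commutator to transfer the bound from lowering modes to raising modes — is exactly the standard argument.

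A few small corrections are worth recording. First, your fact (iii) has the wrong sign: with the Kac--Moody bracket $[X\otimes z^{p}, Y\otimes z^{q}] = [X,Y]\otimes z^{p+q} + p\,\delta_{p+q,0}\langle X,Y\rangle c$ (equivalently the cocycle convention $[a,b] = [a,b]_{\mathrm{ptwise}} - \omega(a,b)\cc$ used in the paper) one has $[\pi(X\otimes z^{-j}),\pi(X\otimes z^{j})] = -jk\langle X,X\rangle$, not $+jk\langle X,X\rangle$; your step-3 identity $\|\pi(X^a_{-|m|})v\|^2 = \|\pi(X^a_{|m|})v\|^2 + |m|k$ is correct, but only with this corrected sign in (iii). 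Second, the operator you call $L_0$ in (i) is the Sugawara operator, which differs from the energy grading $\dd$ by the constant conformal weight $h_\Lambda$; its eigenvalue on $\mathcal{H}_n$ is $n + h_\Lambda$ rather than $n$. This shift is harmless for the estimate (it only contributes an additive constant inside $|m|+n$) and is in fact what makes your identity self-consistent at $n=0$, where the Casimir term $\langle v_\Lambda, C_{\g}\, v_\Lambda\rangle = 2(k+h^\vee)h_\Lambda$ is nonzero even though the energy is zero. Third, your parenthetical claim that $C_{\g}$ is a bounded operator with norm depending only on the highest-weight data of $\mathcal{H}$ is neither needed nor true uniformly in $n$: the $\g$-constituents of $\mathcal{H}_n$ can carry highest weights growing with $n$, so the Casimir eigenvalues are unbounded. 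Your positivity argument already yields $\|\pi(X^a\otimes z^j)v\|^2 \leq (k+h^\vee)(n+h_\Lambda)$ for $j>0$ directly from nonnegativity of each summand, with no need to bound $C_{\g}$ separately.
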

Under the representation, $L\g$ acts as skew-adjoint but \textbf{unbounded} operators. The following proposition demonstrates that even though it is unbounded, it can be controlled by the energy $n$ as $n\to \infty$.
\begin{proposition}\label{Lg action controlled by energy}
Let $y\in L\g$, $v$ is a unit norm vector of fixed energy $n$.
When $n$ is sufficiently large, there exists $C'(y)>0$ such that
\begin{align}
\norm{\pi(y)\cdot v}\leq C'(y)n.
\end{align}
\end{proposition}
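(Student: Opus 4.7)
The plan is to Fourier decompose $y\in L\g$ into its polynomial modes, apply Lemma~\ref{key lemma for estimate} mode-by-mode, and combine the resulting bounds via orthogonality together with the rapid decay of Fourier coefficients of smooth loops.

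First I would write $y=\sum_{m\in\Z} y_m\otimes z^m$, with $y_m\in\g_{\CC}$ satisfying the appropriate reality condition. Smoothness of $y$ gives Schwartz-type decay of $|y_m|$, so in particular both
\[
\|y\|_0^2 := \sum_{m} |y_m|^2 \qquad \text{and} \qquad \|y\|_1^2 := \sum_{m} m^2 |y_m|^2
\]
are finite. Next I would invoke the relation $[\dd,a\otimes z^m]=im(a\otimes z^m)$ together with the diagonalization of $\pi(\dd)$ by energy to conclude that $\pi(y_m\otimes z^m)$ maps $\mathcal{H}_n$ into the single energy subspace $\mathcal{H}_{n\pm m}$ (the sign depending only on the chosen convention). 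Since distinct energy subspaces are mutually orthogonal, the Pythagorean theorem gives
\[
\|\pi(y)v\|^2 \;=\; \sum_{m} \|\pi(y_m\otimes z^m)v\|^2.
\]

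Applying Lemma~\ref{key lemma for estimate} to each term and using $(|m|+n)^2\leq 2(m^2+n^2)$, I obtain
\[
\|\pi(y)v\|^2 \;\leq\; C^2 \sum_{m}(|m|+n)^2 |y_m|^2 \;\leq\; 2C^2\bigl(n^2\|y\|_0^2 + \|y\|_1^2\bigr).
\]
For $n\geq 1$, the right-hand side is bounded by $2C^2(\|y\|_0^2+\|y\|_1^2)\,n^2$, so setting $C'(y):=C\sqrt{2(\|y\|_0^2+\|y\|_1^2)}$ yields $\|\pi(y)v\|\leq C'(y)\,n$ for all sufficiently large $n$ (indeed for all $n\geq 1$).

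The main subtlety I anticipate is not analytic but bookkeeping: one must verify the sign convention governing how $\pi(y_m\otimes z^m)$ shifts energy, so that the sequence $\{\pi(y_m\otimes z^m)v\}_m$ truly lies in pairwise orthogonal subspaces; once this is pinned down the Pythagorean step is immediate. Beyond this, the only care required is that although $\pi(y)$ is unbounded in general, its action on the finite-energy (hence smooth) vector $v$ is defined term-by-term in the Fourier series, with convergence and the final bound both controlled by the estimate above.
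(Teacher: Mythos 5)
Your proof is correct, and it follows essentially the same route as the paper: Fourier-decompose $y=\sum_m y_m\otimes z^m$, estimate each mode via Lemma~\ref{key lemma for estimate}, and use the Schwartz decay of $|y_m|$ to sum. The one place you diverge is in how the mode-wise bounds are combined: the paper simply applies the triangle inequality, giving
\[
\|\pi(y)v\|\;\le\;\sum_m\|\pi(y_m\otimes z^m)v\|\;\le\;C\sum_m(|m|+n)|y_m|\;\le\;C'(y)\,n ,
\]
whereas you invoke the fact that $\pi(y_m\otimes z^m)$ shifts the energy grading by exactly $m$, so the vectors $\pi(y_m\otimes z^m)v$ lie in pairwise orthogonal energy subspaces and the Pythagorean identity $\|\pi(y)v\|^2=\sum_m\|\pi(y_m\otimes z^m)v\|^2$ holds. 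That observation is correct and standard, and it buys you a slightly sharper constant (controlled by the $\ell^2$-type norms $\|y\|_0,\|y\|_1$ rather than the $\ell^1$-type sums the triangle inequality produces), at the mild cost of having to justify the energy-shift/orthogonality step, which the paper's argument never needs. Since $y$ is smooth the Fourier coefficients decay faster than any polynomial, so both variants converge without issue. No gap.
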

\begin{proof}
Consider the Fourier decomposition $y=\sum_{m\in \Z} y_m \otimes z^m,\ y_m\in \g$.
Since $y$ is smooth, $\abs{y_m}$ decays faster than any polynomial in $m$. 
Therefore, by the lemma above, for sufficiently large $n$, there exists $C'(y)>0$ such that
\begin{align}
\norm{\pi(y)\cdot v}\leq \sum_{m\in \Z}C(\abs{m}+n)\abs{y_m}\leq C'(y)n.
\end{align}
\end{proof}

To be more precise about the constant $C'(y)$, we should follow \cite{Goodman1984StructureAU} to introduce the Sobolev norms on $L\g$.
But here we only care about the convergence, so we omit many analytic details.

Next, the convergence of the Kac-Weyl character is known.
Given $x\in \mathfrak{t}_{\CC}$, let $V_{n,\mu,c}$ be the weight space of weight $(n,\mu,c)$.
By the decomposition of highest weight modules, the formal character is
\begin{align}
\Tr_{\mathcal{H}}[\exp{( (\pi(\tau \dd+x)))}]=\sum_{n,\mu} q^n \mu(\exp{( x)})\dim(V_{n,\mu,c}).
\end{align}
The following lemma shows that the formal character decays exponentially as the energy $n\to \infty$.
\begin{lemma}[{\cite[5.3]{Slodowy1985ACA}}, \cite{Kac1990InfiniteDL}]\label{decay of formal character}
    \begin{align}
    \sum_{n,\mu}\abs{q^n \mu(\exp{(x)})}\dim(V_{n,\mu,c})\preceq \sum_n n^r \exp{(-\epsilon n)}
    \end{align}
for some $r\in \N,\epsilon>0$.
\end{lemma}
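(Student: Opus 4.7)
The plan is to reduce the series to a classical $q$-expansion via the Weyl--Kac character formula and to absorb all subexponential factors into an exponential decay coming from $|q|<1$.

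First, I would invoke the Weyl--Kac character formula, which at level $k$ expresses $\chi_\Lambda(\tau,x)$ as a finite linear combination of classical Jacobi theta functions divided by the affine denominator (a product involving the Dedekind $\eta$-function). Both numerator and denominator are absolutely convergent on $\mathbb{H}\times\mathfrak{t}_{\CC}$, so the series $\sum_{n,\mu} q^n\,\mu(\exp x)\dim V_{n,\mu,c}$ already converges absolutely for every $\tau\in\mathbb{H}$ and $x\in\mathfrak{t}_{\CC}$.

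Second, I would extract uniform per-term estimates. A Cardy-type bound gives $\dim\mathcal{H}_n=\sum_\mu\dim V_{n,\mu,c}\leq C_0\exp(2\pi\sqrt{cn/6})$, where $c$ is the Virasoro central charge. For fixed $x\in\mathfrak{t}_{\CC}$, a weight $\mu$ contributing to $V_{n,\mu,c}$ satisfies $|\operatorname{Re}\mu(x)|\leq C_1(x)\sqrt{n}$, since the affine Casimir eigenvalue on $V_{n,\mu,c}$ is $2(k+h^\vee)(n+h_\Lambda)$, which constrains $\mu$ to a ball of radius $O(\sqrt{n})$ in any fixed Euclidean norm on $\mathfrak{t}^*$. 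Hence $|\mu(\exp x)|=e^{\operatorname{Re}\mu(x)}\leq\exp(C_1(x)\sqrt{n})$, and the number of admissible $\mu$ at energy $n$ grows polynomially in $n$. Writing $|q|=e^{-2\pi\operatorname{Im}\tau}$, the total energy-$n$ contribution is bounded by
\begin{align*}
\sum_\mu\abs{q^n\mu(\exp x)}\dim V_{n,\mu,c}\;\leq\;C\,n^r\,e^{-2\pi n\operatorname{Im}\tau}\,e^{C_2(x)\sqrt n}.
\end{align*}
Since $e^{C\sqrt n}\leq C_\delta e^{\delta n}$ for any $\delta>0$, choosing $0<\delta<2\pi\operatorname{Im}\tau$ gives the claimed bound $\sum_n n^r e^{-\epsilon n}$ with $\epsilon=2\pi\operatorname{Im}\tau-\delta>0$.

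The main obstacle is isolating the genuine polynomial prefactor $n^r$ rather than a crude subexponential envelope. This sharper form is exactly what the convergence arguments for the elliptic Chern character require, where the character must be paired against polynomial-in-$n$ geometric factors arising from $\hat{A}_\alpha$ and $\hat{R}_\alpha$ (cf.\ Lemma \ref{key lemma for estimate} and Proposition \ref{Lg action controlled by energy}). The polynomial growth is essentially a Hardy--Ramanujan-type asymptotic for weight-restricted partitions of $n$, and is precisely what the cited theta-series manipulations of Slodowy and Kac--Peterson establish.
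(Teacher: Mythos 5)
Your argument is sound and self-contained, and since the paper simply cites this lemma to Slodowy and Kac without reproducing a proof (the remark after the lemma notes that convergence ``can also be derived from the properties of theta functions after multiplication by $q^{m_\Lambda}$''), you have filled a gap rather than reproduced an existing argument. Two comparative remarks are worth making.

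First, your opening paragraph (Weyl--Kac formula plus $\theta/\eta$ convergence) is logically disconnected from the rest: it proves absolute convergence of the sum and then is discarded, since the direct estimate you give afterward also proves absolute convergence and, more to the point, the specific $n^r e^{-\epsilon n}$ envelope that the paper actually uses downstream. You could safely delete that paragraph.

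Second, the bookkeeping in your displayed bound conflates two different ways of treating the weight sum. The Cardy/Hardy--Ramanujan estimate $\dim\mathcal{H}_n = \sum_\mu \dim V_{n,\mu,c} \preceq e^{2\pi\sqrt{cn/6}}$ is itself subexponential, not polynomial; using it together with $\abs{\mu(\exp x)} \leq e^{C_1(x)\sqrt n}$ from the Casimir constraint gives directly
\begin{align*}
\sum_\mu \abs{q^n\mu(\exp x)}\dim V_{n,\mu,c}
\leq e^{-2\pi n\operatorname{Im}\tau}\, e^{C_1(x)\sqrt n}\,\dim\mathcal{H}_n
\preceq e^{-2\pi n\operatorname{Im}\tau}\, e^{C_3(x)\sqrt n},
\end{align*}
with no polynomial counting of weights needed, and absorbing $e^{C_3\sqrt n}$ into the exponential gives the conclusion with $r=0$. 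Alternatively, if one wants to see an honest $n^r$, one should count weights (polynomially many, $O(n^{l/2})$) and bound the multiplicities $\dim V_{n,\mu,c}$ separately; but those multiplicities are string-function coefficients and grow subexponentially, so that route also ends up absorbing a subexponential factor. Either way the lemma follows; just be aware that your displayed inequality mixes the two routes and the $n^r$ there is not actually produced by the Cardy bound. This does not affect correctness, and your essential inputs --- the $\|\mu\| = O(\sqrt n)$ constraint from the affine Casimir, subexponential growth of $\dim\mathcal{H}_n$, and absorption into $\abs q^n$ --- give exactly the elementary estimate route that the cited Slodowy reference is understood to take, while the paper's remark about theta functions indicates the other (Kac-style) route.
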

The absolute convergence of the formal character can also be derived from the properties of theta functions after multiplication by the anomaly factor $q^{m_{\Lambda}}$.
Moreover, we demonstrate that the trace remains convergent when coupled with several elements \( y^1, \dots, y^k \in L\g \). 
The off-diagonal components of the operator \( \pi(y^1) \cdots \pi(y^k) \) do not contribute to the trace,
 so it suffices to estimate the diagonal part.
\begin{proposition}\label{coupled with Lg trace convergence}
For \( y^1, \cdots, y^k \in L\g \) and \( x \in \mathfrak{t}_{\CC} \), 
 \begin{align}
\Tr_{\mathcal{H}}[\exp{((\pi(\tau\dd+x)))}\cdot (\pi(y^1)\cdots \pi(y^k))]<\infty.
\end{align}
\end{proposition}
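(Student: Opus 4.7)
The plan is to reduce the trace to a sum indexed by a simultaneous eigenbasis for $\pi(\dd)$ and $\pi(\mathfrak{t}_{\CC})$, on which $\exp(\pi(\tau\dd+x))$ acts diagonally, and then bound each summand by combining Lemma \ref{decay of formal character} with an iterated polynomial energy estimate built from Proposition \ref{Lg action controlled by energy}. Decompose $\mathcal{H}=\bigoplus_{n,\mu}V_{n,\mu,c}$ into joint weight spaces and choose an orthonormal basis $\{v_{n,\mu,i}\}$ compatible with this decomposition. Since $\exp(\pi(\tau\dd+x))\,v_{n,\mu,i}=q^{n}e^{\mu(x)}\,v_{n,\mu,i}$ with $q=e^{2\pi i\tau}$, the trace becomes
\begin{equation*}
T \;=\; \sum_{n,\mu,i} q^{n}\,e^{\mu(x)}\,\bigl\langle v_{n,\mu,i},\;\pi(y^{1})\cdots\pi(y^{k})\,v_{n,\mu,i}\bigr\rangle,
\end{equation*}
and by Cauchy--Schwarz each matrix element is bounded by $\|\pi(y^{1})\cdots\pi(y^{k})\,v_{n,\mu,i}\|$.

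The heart of the argument is the polynomial bound
\begin{equation*}
\|\pi(y^{1})\cdots\pi(y^{k})\,v_{n,\mu,i}\|\;\leq\; C(y^{1},\dots,y^{k})\,(1+n)^{k},
\end{equation*}
with $C$ independent of $n,\mu,i$. The base case $k=1$ is Proposition \ref{Lg action controlled by energy}. For the inductive step I would expand each $\pi(y^{j})$ into Fourier modes $y^{j}=\sum_{m}y^{j}_{m}\otimes z^{m}$, apply Lemma \ref{key lemma for estimate} to control the norm contribution of each mode, and exploit the rapid decay of $|y^{j}_{m}|$ (a consequence of smoothness of $y^{j}$) to sum absolutely, producing a bound linear in the maximal energy of the intermediate vector. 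Substituting into the trace sum and using $|q|<1$ yields
\begin{equation*}
|T|\;\leq\; C\sum_{n,\mu}|q|^{n}\,|e^{\mu(x)}|\,(1+n)^{k}\,\dim V_{n,\mu,c},
\end{equation*}
which by Lemma \ref{decay of formal character} is dominated by a convergent series of the form $\sum_{n}(1+n)^{r+k}e^{-\epsilon n}$.

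The main technical obstacle is the inductive step just described: because $\pi(y^{j})$ does not preserve the energy grading, its action after $k$ compositions spreads a pure-energy vector over many energies, so a naive iteration of Proposition \ref{Lg action controlled by energy} is not immediate. The cleanest remedy is to work inside the Goodman--Wallach subspace of smooth vectors and upgrade the pointwise estimate to a Sobolev-type operator bound $\|\pi(y)w\|\leq C(y)\,\|(N+1)w\|$, where $N=\pi(\dd)$ is the energy operator; iterating $k$ times gives $\|\pi(y^{1})\cdots\pi(y^{k})w\|\leq C\|(N+1)^{k}w\|$, which restricted to $\mathcal{H}_{n}$ is exactly the desired $(1+n)^{k}$ bound. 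Once this is established, the proposition follows, and it also supplies the analytic input needed to justify the use of cyclicity of the trace (together with the Bianchi identity) in the proof earlier in this section that the elliptic Chern character is $(d+\tau\iota_{K}-kH)$-closed.
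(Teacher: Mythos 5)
Your proposal follows essentially the same route as the paper: decompose $\mathcal{H}$ into joint eigenspaces of $\dd$ and $\mathfrak{t}_{\CC}$, use the exponential decay of the Kac--Weyl character (Lemma \ref{decay of formal character}), and establish a polynomial-in-$n$ bound on the action of $\pi(y^1)\cdots\pi(y^k)$ by Fourier-decomposing each $y^j$ and iterating the Goodman--Wallach energy estimate (Lemma \ref{key lemma for estimate}). The only presentational differences: the paper observes up front that only the energy-diagonal (``constant'') part $\sum_{i_1+\cdots+i_k=0}$ contributes to the trace and estimates that directly, whereas you bound the matrix element by $\|\pi(y^1)\cdots\pi(y^k)v\|$ via Cauchy--Schwarz; both yield the same $n^{k}$ growth.

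One thing worth correcting: the ``main technical obstacle'' you flag is not actually an obstacle once you have committed to working mode by mode, as you already propose. Each Fourier mode $y^{j}_{m}\otimes z^{m}$ shifts the energy grading by exactly $m$, so when you apply $\pi(y^{k}_{i_k}\otimes z^{i_k})\cdots\pi(y^{1}_{i_1}\otimes z^{i_1})$ to a pure-energy vector $v$ of energy $n$, at every intermediate stage the vector remains of pure (shifted) energy, and Lemma \ref{key lemma for estimate} applies cleanly with the appropriate shifted energy in place of $n$. The paper's displayed chain of inequalities is precisely this bookkeeping; there is no spreading to control. The ``spreading'' worry would be relevant only if you iterated Proposition \ref{Lg action controlled by energy} on the full operators $\pi(y^{j})$ without Fourier decomposing. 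Your proposed remedy via the Sobolev-type operator bound $\|\pi(y)w\|\leq C(y)\|(N+1)w\|$ is a valid and clean alternative (it is essentially the Goodman--Wallach estimate packaged at the operator level, and iterating it requires commuting $N$ past $\pi(y)$, which costs a derivative of $y$), but it is not needed: the mode-by-mode iteration you already sketch is the paper's proof and is self-contained.
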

\begin{proof}
To motivate our approach, we start with the case \( k=1 \). It suffices to consider the constant part \( y \in \mathfrak{t}_{\CC} \) since other non-constant components are off-diagonal.
The norm \( \norm{\pi(y)}_{\mathcal{H}_n} \) is bounded by \( n \) for sufficiently large \( n \), while \( \exp{((\pi(\tau\dd+x)))} \) decays exponentially, thus ensuring the trace is convergent.
For any \( k \), the idea is fundamentally the same. 

Under the Fourier decomposition
\begin{align}
    y^j=\sum_{i_j\in \Z}y_{i_j}^j\otimes z^{i_j}, \quad j=1\cdots k,
\end{align}
the constant part of \( \pi(y^1)\cdots \pi(y^k) \) is given by
\begin{align}
\sum_{i_1+\cdots +i_k=0, i_j\in \Z}\pi(y^1_{i_1}\otimes z^{i_1})\cdots \pi(y^k_{i_k}\otimes z^{i_k}).
\end{align}
Given a unit norm vector \( v \) of fixed energy \( n \), using Lemma \ref{key lemma for estimate} iteratively, we have
\begin{align}
&\norm {\sum_{i_1+\cdots i_k=0}\pi(y^1_{i_1}\otimes z^{i_1})\cdots \pi(y^k_{i_k}\otimes z^{i_k})v }  \\
&\ \preceq 
\sum_{i_1+\cdots i_k=0}(\abs{i_1}+\abs{n-i_1})\abs{y_{i_1}^1}\norm{\pi(y_{i_2}^2\otimes z^{i_2})\cdots \pi(y_{i_k}^k\otimes z^{i_k})v} \cdots \nonumber \\
&\ \preceq \sum_{i_1+\cdots i_k=0}(\abs{i_1}+\abs{n-i_1})(\abs{i_2}+\abs{n-i_1-i_2} )\cdots (\abs{i_k}+\abs{n})\abs{y_{i_1}^1}\cdots \abs{y_{i_k}^k}. \nonumber
\end{align}
Since \( y^j \) is smooth, \( a_{i_j}=\abs{y^j_{i_j}} \) has \( \frac{1}{\abs{i_j}^{\infty}} \) decay as \( i_j\to \infty \). On the other hand, 
the coefficient
\begin{align}
c_{i_1,\cdots i_k}=(\abs{i_1}+\abs{n-i_1})(\abs{i_2}+\abs{n-i_1-i_2} )\cdots (\abs{i_k}+\abs{n})
\end{align}
only exhibits polynomial growth in \( n \). As the energy \( n\to \infty \), 
\begin{align}
    \norm {\sum_{i_1+\cdots i_k=0}\pi(y^1_{i_1}\otimes z^{i_1})\cdots \pi(y^k_{i_k}\otimes z^{i_k})v } \preceq  n^k, n\to \infty.
\end{align}
By Lemma \ref{decay of formal character}, we have
\begin{align}
\abs{\Tr_{\mathcal{H}}[\exp{(\tau\dd+x)}\cdot (y_1\cdots y_k)]} \preceq \sum_{n}n^{k+r}\exp{(-\epsilon n)}<\infty.
\end{align}  
\end{proof}
\subsubsection{Convergence of elliptic Chern character}
Locally, the elliptic Chern character is 
\[
ECh_{A}(\mathcal{H})|_{LU_{\alpha}}
=\Tr_{\mathcal{H}}[\exp{(\tau \dd+[\tau \iota_K\hat{A}_{\alpha}+\hat{R}_{\alpha}])}]\cdot \exp{(k[(d+\tau \iota_K)c_{\alpha}+\frac{1}{2}\omega(\hat{A}_{\alpha},\hat{A}_{\alpha})])}.
\]

We only need to establish the convergence of
$\Tr_{\mathcal{H}}[\exp{(\tau\dd+\tau \iota_K\hat{A}_{\alpha}+\hat{R}_{\alpha})}]$.
More precisely, convergence means that for any $\gamma\in LU_{\alpha}$ and arbitrary tangent vectors $X_1,\cdots, X_{2k}\in T_{\gamma}(\LX)$, the resulting operator is trace-class with respect to the positive energy representation $\mathcal{H}$.

Floquet theory, as applied in \cite[(7.2.5)]{FrenkelOrbital}, establishes the convergence of the degree-zero component. Combined with the energy estimates above, we prove convergence of the entire form.

\begin{proposition}\label{convergence of Ech}
 For any $\gamma\in LU_{\alpha}$,
$
  \Tr_{\mathcal{H}}[\exp{(\tau \dd+[\tau \iota_K\hat{A}_{\alpha}+\hat{R}_{\alpha}])}]_{\gamma}\in \Omega^{2*}(LU_{\alpha})_{\gamma}
$
is convergent.
\end{proposition}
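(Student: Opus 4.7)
The plan is to combine the degree-zero convergence, established by Floquet theory in \cite[(7.2.5)]{FrenkelOrbital}, with the iterated energy estimates of \cref{Lg action controlled by energy} and \cref{coupled with Lg trace convergence}. Fix $\gamma\in LU_\alpha$ and a tuple of tangent vectors. Split the exponent into a form-degree-zero part $M=\pi(\tau\dd+\tau\iota_K\hat{A}_\alpha|_\gamma)$ and a form-degree-two part $Y=\pi(\hat{R}_\alpha|_\gamma)$, viewed as operator-valued forms at $\gamma$ with values in (the closure of) $\pi(L\g_\CC)$.

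The first step is to expand by Duhamel's formula,
\[
\exp(M+Y)=\sum_{j\ge 0}\int_{\Delta_j}e^{s_0 M}\,Y\,e^{(s_1-s_0)M}\,Y\cdots Y\,e^{(1-s_{j-1})M}\,ds_1\cdots ds_j.
\]
When paired with a $2k$-tuple of tangent vectors, only the $j=k$ term survives because $Y$ is homogeneous of form-degree two. Thus the form-valued exponential reduces to a finite sum of iterated integrals of operator products of the shape
\[
\int_{\Delta_k}\Tr_\mathcal{H}\!\bigl[e^{s_0 M}\,Y_1\,e^{(s_1-s_0)M}\,Y_2\cdots Y_k\,e^{(1-s_{k-1})M}\bigr]\,ds,
\]
where each $Y_i\in\pi(L\g_\CC)$ is the evaluation of $Y$ on a pair of tangent vectors at $\gamma$.

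In the second step, I would use cyclicity of the trace together with Floquet theory to bound each integrand. Floquet theory shows that $e^M$ behaves, up to conjugation by a smooth bounded operator, like the bare energy exponential $e^{\pi(\tau\dd)}$: in particular $\Tr_\mathcal{H}[e^M]$ converges absolutely and enjoys exponential decay in energy as in \cref{decay of formal character}. Moreover, each conjugate $e^{-sM}\pi(y)e^{sM}$ remains a smooth $L\g_\CC$-valued operator with rapidly decaying Fourier coefficients, so the energy estimate of \cref{Lg action controlled by energy}, applied $k$ times as in the proof of \cref{coupled with Lg trace convergence}, gives an operator-norm bound of order $n^k$ on the energy-$n$ subspace. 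Combining with the exponential decay produced by $\Tr_\mathcal{H}[e^M]$ yields a convergent series $\sum_n n^{k+r}e^{-\epsilon n}<\infty$, and integration over the compact simplex $\Delta_k$ contributes only a finite factor.

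The main obstacle will be justifying the Floquet step: namely, showing that conjugation by $e^{sM}$, where $M$ is not diagonal but involves the first-order action $\pi(\tau\dd)$ perturbed by a loop-algebra element, preserves the smooth Fourier decay required to apply \cref{coupled with Lg trace convergence} uniformly in $s\in[0,1]$. This amounts to analysing the monodromy of the first-order operator $\tau\partial_t-\pi(\tau\iota_K\hat{A}_\alpha)|_\gamma$, which is precisely the Floquet analysis used in \cite[(7.2.5)]{FrenkelOrbital} to establish convergence of the degree-zero component of the elliptic Chern character. Once this uniform bound is in place, the three-step argument above assembles into the desired convergence statement.
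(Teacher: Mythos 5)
Your proposal rests on the same two ingredients the paper uses — Floquet theory to diagonalize the degree-zero part of the exponent, and the iterated energy estimates of Proposition \ref{coupled with Lg trace convergence} — but the order in which you apply them creates a gap. The paper conjugates the \emph{entire} exponent by the Floquet gauge $g_0\in LG$ at the very start: this replaces $\tau\dd+\tau\iota_K\hat A_\alpha|_\gamma$ by the constant Cartan element $\tau\dd+\tau K_0$ with $K_0\in\mathfrak t$, and replaces each $Y_i=\hat R(X_{2i-1},X_{2i})_\gamma$ by $Ad_{g_0}Y_i$, which is still a genuine smooth element of $L\g$ because $g_0$ lies in the real loop group. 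Only after this reduction does one expand the exponential and invoke the energy estimates (Lemma \ref{key lemma for estimate}, Lemma \ref{decay of formal character}, Proposition \ref{coupled with Lg trace convergence}), which then apply directly.

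Your route — Duhamel first, then control the $e^{sM}$-conjugates inside the iterated integral — breaks at the claim that ``each conjugate $e^{-sM}\pi(y)e^{sM}$ remains a smooth $L\g_\CC$-valued operator with rapidly decaying Fourier coefficients.'' This is not true. Conjugation by $e^{s\pi(\tau\dd)}$ rescales the $m$-th Fourier mode $y_m\otimes z^m$ by $q^{\mp sm}$, and since $0<|q|<1$ one of the two directions grows \emph{exponentially} in $|m|$. A smooth loop has Fourier coefficients that decay faster than any polynomial but not exponentially, so $e^{-sM}\pi(y)e^{sM}$ is in general \emph{not} of the form $\pi(y')$ for any smooth $y'\in L\g_\CC$, and Proposition \ref{coupled with Lg trace convergence} cannot be applied to it as you describe. (A related minor imprecision: Floquet gives conjugacy to $e^{\pi(\tau\dd+\tau K_0)}$, not to the bare $e^{\pi(\tau\dd)}$; $K_0$ is a nonzero monodromy logarithm in $\mathfrak t$ in general.) The paper's ordering is precisely designed to avoid this: after conjugating by $g_0$, one never commutes the curvature insertions past complex rotations — one only needs to trace a product of genuine $L\g$-elements against the diagonal operator $e^{\pi(\tau\dd+\tau K_0)}$, which is what Proposition \ref{coupled with Lg trace convergence} handles via positivity of energy and the Kac--Weyl decay.

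If you wish to keep the Duhamel-first framing, the repair is to \emph{not} conjugate the $Y_i$ past the intermediate propagators at all. After applying the Floquet gauge $g_0$ (so $M_0=\pi(\tau\dd+\tau K_0)$ is diagonal on weight spaces and the $Y_i'=\pi(Ad_{g_0}Y_i)$ are smooth), estimate the integrand $\Tr[e^{s_0M_0}Y_1'e^{(s_1-s_0)M_0}\cdots Y_k'e^{(1-s_{k-1})M_0}]$ directly by tracking the intermediate energies (all $\geq 0$ by the positive-energy condition) and applying the Goodman--Wallach bound mode by mode, as in the proof of Proposition \ref{coupled with Lg trace convergence}. That works, but it requires reproving a parametrized version of that proposition rather than quoting it — which is why the paper's front-loaded gauge transformation is the cleaner path.
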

\begin{proof}
We establish convergence in the presence of tangent vectors $X_1, \ldots, X_{2k} \in T_{\gamma}(LU_{\alpha})$.
Let $Y_i = \hat{R}(X_{2i-1}, X_{2i})_{\gamma} \in L\g$ for $i = 1, \ldots, k$.
By Floquet theory, there exists a gauge transformation $g_0 \in LG$ such that
\begin{align}
\widehat{Ad}_{g_0}(\tau \dd + \tau(\iota_K\hat{A}_{\alpha})_{\gamma}) = \tau \dd + \tau K_0,
\end{align}
where $K_0 \in \mathfrak{t}$ is a constant element.
Therefore,
\begin{align}
&\Tr_{\mathcal{H}}[\exp(\tau \dd + [\tau \iota_K\hat{A}_{\alpha} + \hat{R}_{\alpha}])]_{\gamma}(X_1 \cdots X_{2k})\\
&\quad = \Tr_{\mathcal{H}}[\exp(\widetilde{Ad}_{g_0}(\tau \dd + [\tau \iota_K\hat{A}_{\alpha} + \hat{R}_{\alpha}]))]_{\gamma}(X_1 \cdots X_{2k}) \nonumber\\
&\quad = \exp(k\triangle) \cdot \Tr_{\mathcal{H}}\bigl[\exp(\pi(\tau \dd + \tau K_0)) \tfrac{1}{k!}\pi((Ad_{g_0}Y_1)) \cdots \pi((Ad_{g_0}Y_k))\bigr], \nonumber
\end{align}
where $\triangle$ denotes the additional central term arising from the adjoint action.
The convergence follows from Proposition \ref{coupled with Lg trace convergence}.
\end{proof}
\subsection{Trivialization by String-\texorpdfstring{$G$}{G} Structure}\label{subsec: Trivialized Elliptic Loop Chern Character by String G-structure}
We study the trivialization of the lifting gerbe $(\mathcal{G}_P,\nabla^B)$, particularly via the transgression of the geometric string $G$-structure.
The trivialization of the lifting gerbe is described by local data $(h_{\alpha\beta},\mathcal{B}_{\alpha})$, where $h_{\alpha\beta}:LU_{\alpha}\cap LU_{\beta}\to S^1$ and $\mathcal{B}_{\alpha}\in \Omega^1(LU_{\alpha})$, such that
the cocycle in \cref{!!!cocycle} becomes a coboundary (see \cite{Gomi2001ConnectionsAC}):
\begin{align}
(z_{\alpha\beta\gamma},u_{\alpha\beta},K_{\alpha})=\delta(h_{\alpha\beta},\mathcal{B}_{\alpha}).
\end{align}
We can correct the transition functions and connections by
\begin{itemize}
    \item $\widetilde{Lg_{\alpha\beta}}^{cor}=h_{\alpha\beta}\cdot \widetilde{Lg_{\alpha\beta}} : LU_{\alpha}\cap LU_{\beta}\to \widetilde{LG}$.
    \item $\widetilde{A}_{\alpha}^{cor}=\widetilde{A}_{\alpha}+\mathcal{B}_{\alpha}\cc: \in \Omega^1(LU_{\alpha},\widetilde{L\g})$.
\end{itemize}
After the correction, it results in the lifting principal $\widetilde{LG}$-bundle $(\widetilde{LP},\widetilde{A})\to \LX$, which is compatible with $(LP,\hat{A})\to \LX$ as described in \cite{Coquereaux1989StringSO}:
\be
    \begin{tikzcd}
        (\widetilde{LP},\widetilde{A})\arrow[dr]\arrow[rr]&  & (LP,\hat{A})\arrow[dl]\\
                                      &\LX &
    \end{tikzcd}
\ee
where $\widetilde{LP}\to LP$ is a principal circle bundle.
The geometric string $G$-structure refers to the geometric trivialization of the Chern-Simons $2$-gerbe on $X$.
 It can be transgressed to become the geometric trivialization of the lifting gerbe $(\mathcal{G}_P,\nabla^B)$ on $\LX$. 
 There exists a $B_{\alpha}\in \Omega^2(U_{\alpha})$ and
$
\mathcal{B}_{\alpha}=\int_{S^1}B_{\alpha}.
$
 Then
$\{B_{\alpha}\}$ defines the connection of trivialization of the Chern-Simons $2$-gerbe $(\mathbb{CS}_P,\nabla_A)$, which is called the string-$G$ connection, such that
\begin{align}\label{11}
C=\CCS(A_{\alpha})-dB_{\alpha}
\end{align}
is the curving. It is a globally defined $3$-form on $X$ satisfying $dC=\Phi$.
Let 
\begin{align}
\mathcal{C}=\int_{S^1}C\in \Omega^2(\LX)
\end{align}
be the transgression, which serves as the curving of the trivialization of the lifting gerbe.
To see this, we can take the transgression of both sides of \cref{11},
\begin{align}
\mathcal{C}=K_{\alpha}-d\mathcal{B}_{\alpha}.
\end{align}
The corrected curvature is locally expressed as 
\begin{align}
\widetilde{R}_{\alpha}^{cor}=\widetilde{R}_{\alpha}+d\mathcal{B}_{\alpha}\cc.
\end{align}

\begin{proposition}
The trivialized elliptic Chern Character $ECh_{A}(\mathcal{H})|_{LU_{\beta}}$ is given by
\begin{align}
\Tr_{\mathcal{H}}[(\exp{(\tau \dd+\tau \iota_K \hat{A}_{\beta}+\hat{R}_{\beta}+\tau \iota_Kc_{\beta}\cc)}]\exp{(k\int_{S^1}\langle A_{\beta}\wedge R_{\beta}\rangle)}\cdot \exp{(-k\mathcal{C})}.
\end{align}
\end{proposition}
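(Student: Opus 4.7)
The proof is essentially a direct computation, unpacking the local definition
$$ECh_A(\mathcal{H})|_{LU_\beta} = \Tr_{\mathcal{H}}[\exp(\widetilde{R}_\beta[\tau] + K_\beta)]$$
and then applying the trivialization. The plan is first to insert the explicit formulas $\widetilde{A}_\beta = \hat{A}_\beta + c_\beta\cc$ and $\widetilde{R}_\beta = \hat{R}_\beta + (\tfrac{1}{2}\omega(\hat{A}_\beta,\hat{A}_\beta) + dc_\beta)\cc$ from \cref{local wideLG curvature} into the definition of $\widetilde{R}_\beta[\tau]$, then split the exponent into a central (scalar) part and the genuinely $L\g$-valued part $\tau\dd + \tau\iota_K\hat{A}_\beta + \hat{R}_\beta$. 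Because $\mathcal{H}$ has level $k$, the positive-energy representation sends $\cc$ to $k\cdot I$, so every central contribution commutes with everything else and can be factored out of the trace as an ordinary scalar exponential.

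The algebraic heart of the computation is the identity
$$K_\beta + \tfrac{1}{2}\omega(\hat{A}_\beta,\hat{A}_\beta) + dc_\beta = \int_{S^1}\langle A_\beta\wedge R_\beta\rangle,$$
which follows immediately from the equality $K_\beta = \int_{S^1}\CCS(A_\beta) = \Psi_\beta - dc_\beta$ in \cref{!!!cocycle} combined with the definition $\Psi_\beta = \int_{S^1}\langle A_\beta\wedge R_\beta\rangle - \tfrac{1}{2}\omega(\hat{A}_\beta,\hat{A}_\beta)$. Collecting the scalar contributions $kK_\beta + k[\tau\iota_Kc_\beta + \tfrac{1}{2}\omega(\hat{A}_\beta,\hat{A}_\beta) + dc_\beta]$ (where the factor $k$ appears because the gerbe module lies over $\mathcal{G}_P^{\otimes k}$, equivalently because $\pi(\cc) = k$), this identity collapses them to $k\int_{S^1}\langle A_\beta\wedge R_\beta\rangle + k\tau\iota_Kc_\beta$. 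Re-absorbing $k\tau\iota_Kc_\beta = \pi(\tau\iota_Kc_\beta\cc)$ back into the exponent yields
$$ECh_A(\mathcal{H})|_{LU_\beta} = \Tr_{\mathcal{H}}[\exp(\tau\dd + \tau\iota_K\hat{A}_\beta + \hat{R}_\beta + \tau\iota_Kc_\beta\cc)]\cdot\exp\Bigl(k\int_{S^1}\langle A_\beta\wedge R_\beta\rangle\Bigr).$$

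Finally, I invoke the geometric string-$G$ trivialization. As explained preceding the proposition, a string-$G$ structure provides a global $3$-form $\mathcal{C}\in\Omega^2(\LX)$ (the transgression of the string curving) satisfying $H = d\mathcal{C}$, which induces the untwisting isomorphism $\omega\mapsto e^{-k\mathcal{C}}\omega$ from $kH$-twisted cohomology to untwisted cohomology. The trivialized elliptic Chern character is by definition the image of $ECh_A(\mathcal{H})$ under this isomorphism, so multiplying the formula above by $\exp(-k\mathcal{C})$ produces exactly the asserted expression. The main obstacle is purely one of bookkeeping: one must carefully track the factors of $k$ arising from $\pi(\cc) = kI$, distinguish scalar-valued forms (such as $K_\beta$, $dc_\beta$, $\tfrac{1}{2}\omega(\hat{A}_\beta,\hat{A}_\beta)$) from genuinely $L\g$-valued forms ($\hat{A}_\beta$, $\hat{R}_\beta$), and verify that the re-absorption step is compatible with the equivariant closedness established in \cref{subsec:Elliptic loop Chern Character}. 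Convergence is not an issue here since it is already guaranteed by \cref{convergence of Ech}.
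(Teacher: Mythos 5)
Your computation is correct, and the key algebraic identity you isolate, namely $K_\beta + dc_\beta + \tfrac{1}{2}\omega(\hat{A}_\beta,\hat{A}_\beta) = \int_{S^1}\langle A_\beta\wedge R_\beta\rangle$, is exactly the engine that drives the paper's own proof as well. The difference is one of packaging. The paper works entirely with the \emph{corrected} $\tau$-deformed curvature $\widetilde{R}_\beta^{cor}[\tau]$, obtained by inserting the local trivialization $1$-form $\mathcal{B}_\beta$ into the central part of the connection ($\widetilde{A}_\beta^{cor}=\widetilde{A}_\beta+\mathcal{B}_\beta\cc$, $\widetilde{R}_\beta^{cor}=\widetilde{R}_\beta+d\mathcal{B}_\beta\cc$), and then reads off the central coefficient $\square_\beta$ of that corrected curvature. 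You never introduce $\mathcal{B}_\beta$: you simplify the central part of the \emph{uncorrected} $\widetilde{R}_\beta[\tau]+K_\beta\cc$ and then apply the abstract untwisting map $e^{-k\mathcal{C}}$ at the end. The two routes coincide because $d\mathcal{B}_\beta=K_\beta-\mathcal{C}$, so $\widetilde{R}_\beta^{cor}[\tau]=\widetilde{R}_\beta[\tau]+(K_\beta-\mathcal{C})\cc$, and therefore $\Tr[\exp(\widetilde{R}_\beta^{cor}[\tau])]=e^{-k\mathcal{C}}\Tr[\exp(\widetilde{R}_\beta[\tau]+K_\beta\cc)]$. Your version is somewhat more conceptual in that it avoids local trivialization data and uses only the global $2$-form $\mathcal{C}$, whereas the paper's version makes the local correction explicit, which is helpful later when one wants the corrected connection $\widetilde{A}_\beta^{cor}$ itself. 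One small slip: you describe $\mathcal{C}$ as a ``global $3$-form $\mathcal{C}\in\Omega^2(\LX)$''; $\mathcal{C}=\int_{S^1}C$ is a $2$-form on $\LX$ (the $3$-form is $C$ on $X$), and the correct untwisting relation is $H=d\mathcal{C}$ as you state.
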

\begin{proof}
The corrected $\tau$-deformed curvature is given by
\begin{align}
\widetilde{R}_{\beta}^{cor}[\tau]&=\tau \dd+\tau \iota_K\widetilde{A}_{\beta}^{cor}+\widetilde{R}_{\beta}^{cor} =(\tau \dd+\tau \iota_K \hat{A}_{\beta}+\hat{R}_{\beta})+\square_{\beta}\cc.
\end{align}
Recall that
\begin{align}
\widetilde{R}_{\beta}=\hat{R}_{\beta}+(\frac{1}{2}\omega(\hat{A}_{\beta},\hat{A}_{\beta})+dc_{\beta})\cc,
\end{align}
Then the central part is
\begin{align}
\square_{\beta}&=((d+\tau \iota_K)c_{\beta}+d\mathcal{B}_{\beta}+\frac{1}{2}\omega(\hat{A}_{\beta},\hat{A}_{\beta}))\\ 
&=\tau \iota_Kc_{\beta}+K_{\beta}-\mathcal{C}+dc_{\beta}+\frac{1}{2}\omega(\hat{A}_{\beta},\hat{A}_{\beta})=\tau \iota_Kc_{\beta}-\mathcal{C}+\int_{S^1}\langle A_{\beta}\wedge R_{\beta}\rangle. \nonumber
\end{align}
The last equality follows from the formulas in 
\cref{subsec:geometry of lifting gerbe and cir-equivariance}: $K_{\beta}+dc_{\beta}=\Psi_{\beta}$ and $\Psi_{\beta}+\frac{1}{2}\omega(\hat{A}_{\beta},\hat{A}_{\beta})=\int_{S^1}\langle A_{\beta}\wedge R_{\beta}\rangle$.

Since $\mathcal{H}\in \per^k(LG)$ is a level-$k$ positive-energy representation, the formula for the trivialized elliptic Chern character follows.
\end{proof}  

Let us explain the geometric interpretation of each term:
$\mathcal{E}_{\beta}=\iota_Kc_{\beta}=\int_{S^1}\langle \iota_K\hat{A}_{\beta},\iota_K\hat{A}_{\beta}\rangle$ can be viewed as the kinetic energy of the gauge field along the loop $\gamma$. 
It is defined only locally and serves as a correction term, as in \cite{FrenkelOrbital}.
The factor $\exp(-k\mathcal{C})$ arises from the global $B$-field $\mathcal{C}$, which emerges from the geometric trivialization of the lifting gerbe.
$\exp{(k\int_{S^1}A_{\beta}\wedge R_{\beta})}$ again arises from the reduced splitting.

\subsection{\texorpdfstring{$\disc$}{disc}-equivariant \texorpdfstring{$\hat{A}$}{hatA}-genus}\label{subsec: Spinor Bundle and -Equivariant A-hat-genus on LX}\label{subsec: Spinor Bundle and Equivariant A-hat-genus on LX}
As an application, we propose the $\disc$-equivariant $\hat{A}$-genus on $\LX$, which serves as the elliptic analogue of the classical $\hat{A}$-genus on $X$.
\subsubsection{String and Loop-Spin}
Let $(X,g)$ be a spin Riemannian manifold of dimension $2n$.
Let $P = P_{\mathrm{Spin}(X)}$ denote the spin frame bundle of $TX$,
which is a principal $G = \mathrm{Spin}(2n)$-bundle.
Let $A$ denote the spin connection induced by the Levi-Civita connection.
In this case, the intrinsic Chern-Simons $2$-gerbe associated to the spin frame bundle $(P,A)$ is denoted by $(\mathrm{CS}_X,\nabla^g)$.
The curving is the string anomaly
\begin{align}
    \Phi=\langle R,R\rangle=\frac{1}{2}p_1(X,g),
\end{align}
where $p_1(X,g)$ is the first Pontryagin form.
The factor $\frac{1}{2}$ arises from the fact that the Dynkin index is $2$. The minimal form on $\mathrm{spin}(2n)$ is thus half the Killing form.

A string structure on $X$ refers to the trivialization of $(\CCS_X,\nabla^g)$. It can be transgressed to give the trivialization of the lifting gerbe $(\mathcal{G}_{\mathrm{Spin}(X)},\nabla^B)$ on $\LX$,
which is equivalent to lifting the principal $L\mathrm{Spin}(X)$-bundle $L\mathrm{Spin}(X)\to \LX$ to a principal $\widetilde{L\mathrm{Spin}(2n)}$-bundle.
This lifting is called the loop-spin structure (see \cite{waldorf_spin_2012}).
\subsubsection{Spinor bundle on $\LX$}
The construction of the spinor bundle over $\LX$ has garnered significant attention as it is a prerequisite for the rigorous construction of the Dirac-Ramond operator on $\LX$, which is central to the Stolz conjecture.
We briefly summarize the construction of the (twisted) loop spinor bundle $\mathcal{S}_{\LX}$ on $\LX$. It is the gerbe module of the lifting gerbe $\mathcal{G}_{\mathrm{Spin}(X)}$ on $\LX$,
 associated with the level-one Fock representation $\mathcal{F} = S^+-S^-$, where $S^\pm \in \per^1(L\mathrm{Spin}(2n))$. 

Assume $X$ admits a string structure. A loop-spin structure on $\LX$ then provides a lift
$\widetilde{L\mathrm{Spin}(X)}\to \LX$. The loop spinor bundle is defined as the associated Fock bundle
\begin{align}
\mathcal{S}_{\LX}=\widetilde{L\mathrm{Spin}(X)}\times_{\widetilde{L\mathrm{Spin}(2n)}}\mathcal{F}\longrightarrow \LX.
\end{align}
Since the $\widetilde{L\mathrm{Spin}(2n)}$ action is only smooth on a dense subspace,
we refer to \cite{Kristel2020SmoothFB} for a rigorous treatment.
\subsubsection{$\tau$-deformed Euler class of $\LX$}
Recall that the super Bismut-Chern character of the spinor bundle, $BCh_s(\mathcal{S}_X)$, 
can be viewed as the $\cir$-equivariant Euler class of $\LX$:
\begin{align}
e_{\cir}(\LX) = BCh_s(\mathcal{S}_X).
\end{align}
We previously demonstrated that it is reasonable to call this the $\cir$-equivariant Euler class since
\begin{align}\label{pre-index formula}
\hat{A}(X) \wedge Ch_s(\mathcal{S}_X) = e(X).
\end{align}
We now define a $\disc$-equivariant Euler class at $\tau \in \mathbb{H}$ by modifying this construction as follows:
\begin{align}
e_{\disc}(\LX)[\tau]:= \tau^{-\mathrm{deg}/2} \cdot e_{\cir}(\LX),
\end{align}
where $\mathrm{deg}:\Omega^*(\LX) \to \Omega^*(\LX)$ denotes the degree operator, 
which acts on differential forms by $\mathrm{deg}(\omega) = m\omega$ for $\omega \in \Omega^m(\LX)$. 
Then $e_{\disc}(\LX)[\tau]$ is $(d + \tau \iota_K)$-closed.
\subsubsection{$\tau$-deformed $\hat{A}$-genus on $\LX$}
When $X$ is string,
we define the $\tau$-deformed super Chern character of the loop spinor bundle $\mathcal{S}_{\LX}$ as
\begin{align}
Ch_s(\mathcal{S}_{\LX})[\tau] = ECh_s(\mathcal{F}) \in \Omega^{2*}_{d+\tau \iota_K}(\LX)^{\cir},
\end{align}
where $ECh_s(\mathcal{F}) = ECh(S^+) - ECh(S^-)$ is the super elliptic Chern character of the Fock representation $\mathcal{F}=S^+ - S^-$. 

The $\disc$-equivariant $\hat{A}$-genus on $\LX$ is given by the formal expansion
\begin{align}\label{hatA on LX}
\hat{A}_{\disc}(\LX)[\tau] = \tau^n \frac{e_{\disc}(\LX)[\tau]}{q^{m_{\Lambda}} \cdot Ch_s(\mathcal{S}_{\LX})[\tau] / \eta(\tau)^{2n}} \in \Omega^{2*}_{d+\tau K}(\LX)^{\cir}.
\end{align}
This can be interpreted as the elliptic analogue of the pre-index formula \cref{pre-index formula}.
Applying the localization formula, we aim to show:
\begin{align}\label{localized to be Witten genus}
\int_{\LX} \hat{A}_{\disc}(\LX)[\tau] &= \int_X \hat{A}_{\disc}(\LX)[\tau]|_X \wedge \hat{A}(X) = \int_X \hat{W}(X),
\end{align}
where $\hat{W}(X)$ is the Witten genus.

When restricted to the constant loop space $X$, 
\begin{align}
\hat{A}_{\disc}(\LX)[\tau]|_X = \frac{Ch_s(\mathcal{S}_X)}{q^{m_{\Lambda}} \cdot Ch(S^+-S^-) / \eta(\tau)^{2n}},
\end{align}
where $Ch(S^+-S^-)$ is the super $q$-graded Chern character on $X$. Expressed in terms of the Chern roots $\{x_i\}$, 
\begin{align}
q^{m_{\Lambda}} \cdot Ch(S^+ - S^-) / \eta(\tau)^{2n} = \prod_{i=1}^n \frac{\theta_{11}(x_i, \tau)}{\eta(\tau)^3}.
\end{align}
Therefore, we obtain
\begin{align}
\hat{A}_{\disc}(\LX)[\tau]|_X \wedge \hat{A}(X) = \prod_{i=1}^n \frac{x_i}{\theta_{11}(x_i, \tau) / \eta(\tau)^3} = \hat{W}(X).
\end{align}
Formally, the $\disc$-equivariant $\hat{A}$-genus on $\LX$ can be viewed as the $\disc$-equivariant index density for the Dirac-Ramond operator (if rigorously defined) on $\LX$:
\begin{align}
\mathrm{Ind}_{\disc}(\slashed{D}_{\LX})[\tau]= \int_{\LX}\hat{A}_{\disc}(\LX)[\tau]=\int_X \hat{W}(X).
\end{align}

On the other hand, torus localization $\LLX\to X$ directly computes the Witten genus (see \cite{Liu1994ModularIA}). 
The $\disc$-equivariant $\hat{A}$-genus on $\LX$ serves as an intermediate result in the localization process $\LLX \to \LX \to X$.
From this perspective, the $\disc$-equivariant $\hat{A}$-genus arises as the equivariant localization from $\LLX$ to $\LX$.
\section{Elliptic Atiyah--Witten Formula}\label{Sec: Elliptic AW and Modular}
In this section, we establish the elliptic Atiyah--Witten formula on $\LLX$ 
and relate the geometric quantization of Chern--Simons gauge theory to the double loop space geometry.

In \cite{AST_1985__131__43_0}, the normalized infinite product
\begin{align}\label{1d infinite product}
\prod_{n\in \Z} (x+n) \sim \sin(\pi x)
\end{align}
plays the dominant role in the loop space Lagrangian formulation of Atiyah-Singer index theory.
The $\cir$-equivariant Euler class of the normal bundle of $X$ in $\LX$ computes the $\hat{A}$-genus
\begin{align}
e_{\cir}(X/\LX)^{-1}\sim \hat{A}(X).
\end{align}
On the other hand, it yields the Atiyah-Witten formula on $\LX$, which identifies the $\zeta$-regularized pfaffian of the covariant derivative $\pf_{\zeta}(\nabla_{\dot{\gamma}})$ with the super-trace of the holonomy of the spinor bundle
$\Tr_s[hol_{\mathcal{S}_X}(\gamma)]$.

Liu observed that the two-dimensional analogue
of \eqref{1d infinite product} is the Eisenstein product formula (see \cite{Liu1994ModularIA})
\begin{align}\label{Eisenstein product}
\prod_{m,n\in \Z}(z+m+n\tau) \sim \theta_{11}(z,\tau)/\eta(\tau).
\end{align}
The torus-equivariant Euler class of the normal bundle of $X$ in $\LLX$ computes the Witten genus
\begin{align}
e_{T^2}(X/\LLX)^{-1}\sim \hat{W}(X).
\end{align}
We expect the normalized infinite product should yield the elliptic Atiyah-Witten formula on $\LLX$.
There are four formulas, as one can replace $z$ with $z+\frac{i+\tau j}{2}, i, j \in \Z^2$ in \cref{Eisenstein product} to derive four Jacobi theta functions.

On the holonomy side, we define the elliptic holonomy on $\LLX$ from the $(1+1)$-transgression perspective
as the $\disc$-deformed equivariant twisted trace of the holonomy of gerbe modules on $\LX$ associated with positive-energy representations:
\be
Ehol: \per^k(LG)\to \Gamma(\LLX,\mathcal{L}(\mathcal{G}_P)^k)^{T^2}.
\ee 
Here, $\mathcal{L}(\mathcal{G}_P)\to \LLX$ denotes the transgression line bundle of the lifting gerbe $(\mathcal{G}_P,\nabla^B)$ on $\LX$.
A key limitation is that we have only established convergence of the elliptic holonomy on gauged poly-stable double loops, rather than on the entire space $\LLX$.

We now focus on $G=\mathrm{Spin}(2n)$ at level one, where
\be
\per^1(L\mathrm{Spin}(2n))=\{S^+-S^-,S^++S^-, S_+-S_-, S_++S_-\}.
\ee
These four virtual level-one positive-energy representations give rise to four corresponding elliptic holonomies on $\LLX$.

On the analytic side, there are pfaffians of four spin structures on the elliptic curve $\Sigma_{\tau}$ which are canonical sections of the corresponding Pfaffian line bundles over $\LLX$.
We show the transgression of lifting gerbe $\mathcal{L}(\mathcal{G}_P,\nabla^B)$ 
and the determinant(Pfaffian) line bundles over $\LLX$ are all isomorphic to Chern-Simons line bundles.

During the exploration, we discover that the elliptic Atiyah-Witten formula is connected to the geometric quantization of Chern-Simons gauge theory. 
This arises from the fact that the push-down of four Pfaffians to the coarse moduli provides a basis-to-basis proof of \emph{quantization commutes with reduction} (for general proof by extension, see \cite{Preparation}):
\begin{align}
    r: \mathcal{H}_1(\mathrm{Spin}(2n))\cong V_1(\mathrm{Spin}(2n)).
\end{align}

Essentially, we show that the push-down of the four Pfaffian sections to the coarse moduli corresponds to the four Jacobi theta functions, 
which are basis of the genus one conformal blocks $V_1(\mathrm{Spin}(2n))$ when $G=\mathrm{Spin}(2n)$ at level one.
The elliptic Atiyah-Witten formula requires only a principal $\mathrm{Spin}(2d)$–bundle with connection $(P,A)\to X$ and the associated vector bundle via the standard representation $\rho:\mathrm{Spin}(2d)\to \mathrm{SO}(2d)$; 
it need not be the spin frame bundle of a Riemannian spin manifold. 
In the case of genus one, the conformal block $V_1(\mathrm{Spin}(2d))$ is special:
 its Verlinde dimension is four, and a natural basis is provided by the four Jacobi theta functions. Four spin structures on the elliptic curve $\Sigma_{\tau}$ correspond to the four virtual level-one positive-energy representations of $L\mathrm{Spin}(2d)$ and to the four Jacobi theta functions.
 One should not expect analogous push-downs of Pfaffians to yield the full conformal blocks for other groups or higher levels.

{\bf This section is organized as follows.}
In Subsection \ref{Subsec: Atiyah-Witten}, we review the classical Atiyah--Witten formula on $\LX$ and its $\Z_2$-torsion generalization for non-spin manifolds. 
In Subsection \ref{subsec: Elliptic Holonomy}, we introduce the elliptic holonomy.
In Subsection \ref{Pfaffians}, we introduce the Pfaffians on $\LLX$ via family index theory.
In Subsection \ref{subsec: EQ of Line Bundles}, we introduce the universal Chern--Simons line bundle and its pullback to mapping spaces. We identify the transgression of the lifting gerbe and the determinant (Pfaffian) line bundles with Chern--Simons line bundles.
In Subsection \ref{subsec:Elliptic Atiyah-Witten on MG}, we compute the pushdown of determinants (Pfaffians) and establish the elliptic Atiyah--Witten formula on the coarse moduli.
In Subsection \ref{subsec:Elliptic Atiyah-Witten}, we give the elliptic Atiyah--Witten formula on $\LLX$.
In Subsection \ref{subsec: from QR=0 of CS and Conformal Blocks}, we relate the quantization of Chern--Simons gauge theory to double loop space geometry.
\subsection{The Classical Atiyah-Witten Formula on \texorpdfstring{$\LX$}{LX}}\label{Subsec: Atiyah-Witten}
Let $(X,g)$ be a compact oriented Riemannian manifold of dimension $2n$. When $X$ is spin, $\mathcal{S}_X$ is the spinor bundle. The Atiyah-Witten formula on $\LX$ in \cite{AST_1985__131__43_0} states that
    \begin{align}
    \pf_{\zeta}(\nabla_{\dot{\gamma}})=\tr_s[hol_{\mathcal{S}_X}(\gamma)], \quad \gamma\in \LX.
    \end{align}
This formula identifies the $\zeta$-regularized Pfaffian of the covariant derivative $\nabla_{\dot{\gamma}}$ with the supertrace of the holonomy of $\mathcal{S}_X$.

When $X$ is not spin, the obstruction is measured by the second Stiefel-Whitney class $w_2(X)$, which can be transgressed to a $\mathbb{Z}_2$-torsion line bundle over $\LX$.

On the analytic side, this torsion line bundle is represented by the Pfaffian line bundle $(\mathrm{PF}_V,\norm{\cdot}_{\mathrm{PF}})\to \LX$, which has a canonical Pfaffian section $\pf\in \Gamma(\LX,\mathrm{PF}_V)$ satisfying
\be 
\norm{\text{pf}(\gamma)}_{\text{PF}}=\text{pf}_{\zeta}(\nabla_{\dot{\gamma}}).
\ee
On the topological side, it is realized as the transgression line bundle of the spin lifting gerbe. Consider the loop frame bundle $L\mathrm{SO}(X)\to \LX$, which is a principal $L\mathrm{SO}(2n)$-bundle. There exists a short exact sequence of Lie groups:
\begin{align}
1\to \Z_2\to \mathrm{Spin}(2n)\to \mathrm{SO}(2n)\to 1.
\end{align}
The monodromy map $m:L\mathrm{SO}(2n)\to \Z_2$ defines the transgression line bundle $\mathcal{L}$ as the associated line bundle $L\mathrm{SO}(X)\times_m \R$ over $\LX$, equipped with the trivial metric $\norm{\cdot}$. The super-trace of holonomy $\tr_s[hol_{\mathcal{S}_X}]\in \Gamma(\LX,\mathcal{L})$ serves as the canonical section, with the property that
\be
\norm{\tr_s[hol_{\mathcal{S}_X}(\gamma)]}^2=\det(I-hol_{TX}(\gamma)).
\ee
The $\Z_2$-torsion Atiyah-Witten formula \cite{Hanisch2017TheFI} states that there is a geometric isometry between the two line bundles:
\begin{align}
\Phi:(\PF_V,\norm{\cdot}_{\PF})\to (\mathcal{L},\norm{\cdot}).
\end{align}
Under the isometry, canonical sections $\pf$ and $\tr_s[hol_{\mathcal{S}_X}]$ are identified.
\subsubsection*{\textbf{Atiyah-Witten formula on $\mathrm{Loc}_G(S^1)$}}
Temporarily, we set $G=\mathrm{SO}(2n)$ and $\widetilde{G}=\mathrm{Spin}(2n)$. We demonstrate that the classical Atiyah–Witten formula can be established on $\mathrm{Loc}_G(S^1)$ before pulling it back to $\LX$.

The $LBG$ can be realized as the quotient stack $\mathrm{Loc}_G(S^1)=[\mathcal{A}_{S^1}/LG]$, which is Morita equivalent to $[G/G]$ 
via the holonomy map $\Psi:\mathcal{A}_{S^1}\to G$ sending a connection to its holonomy around $S^1$,
 (see \cite{Behrend2003EquivariantGO}).

On the analytic side, the Pfaffian line bundle $\mathrm{PF}$ on $\mathrm{Loc}_G(S^1)$ is realized as a $LG$-equivariant geometric line bundle over $\mathcal{A}_{S^1}$, with its canonical section $\pf$ serving as a $LG$-equivariant section.

On the topological side, consider the line bundle $\mathcal{L}_G = \widetilde{G} \times_{\Z_2} \mathbb{R} \to G$, a $\Z_2$-torsion $G$-equivariant line bundle. 
The following diagram illustrates the relationship:
\be
\begin{tikzcd}
\mathcal{L} \arrow[r] \arrow[d] & \mathcal{L}_G \arrow[d] \\
\mathcal{A}_{S^1} \arrow[r, "\Psi"] & G
\end{tikzcd}
\ee
The pullback $\mathcal{L} = \Psi^*\mathcal{L}_G$ is a $LG$-equivariant $\Z_2$-torsion line bundle on $\mathcal{A}_{S^1}$. 
Given the spin representation $\Delta = \Delta^+ \oplus \Delta^-$, its super character $\chi: \widetilde{G} \to \mathbb{R}$ is a class function of $\widetilde{G}$. It can be identified with a $G$-equivariant section $\chi: G \to \widetilde{G} \times_{\Z_2} \mathbb{R}$ by $\chi(g) = [\tilde{g}, \chi_{\Delta}(\tilde{g})]$, where $\tilde{g} \in \widetilde{G}$ is a lift of $g \in G$. 
This definition is independent of the choice of lift. The section $\chi$ serves as the canonical section of $\mathcal{L}_G$, as the representation ring $R(\widetilde{G})$ is a free $R(G)$-module of rank one generated by the virtual representation $\Delta^+ - \Delta^-$. 

There exists a geometric isometry between the Pfaffian line bundle and the transgression line bundle, both regarded as $LG$-equivariant $\Z_2$-torsion line bundles over $\mathcal{A}_{S^1}$. Under this isometry, the canonical section $\pf$ and the pullback $\Psi^*\chi$ are identified. We only need to verify this for constant connections, as any $G$-connection in $\mathcal{A}_{S^1}$ can be gauge-equivalent to a constant connection. The identification for constant connections follows directly from the classical infinite product formula \eqref{1d infinite product}.

These ideas will motivate our approach to the elliptic Atiyah-Witten formula on $\LLX$.
\subsection{Elliptic Holonomy}\label{subsec: Elliptic Holonomy}
We define the elliptic holonomy as the $\disc$-deformed equivariant twisted holonomy of the gerbe module on $\LX$ that arises from positive energy representations. It is a $T^2$-equivariant section of the transgression line bundle of the lifting gerbe on $\LLX$.
\subsubsection{Transgression line bundle of the lifting gerbe}
Recall from Subsection \ref{subsec:geometry of lifting gerbe and cir-equivariance} that the Deligne cocycle $(z_{\alpha\beta\gamma},u_{\alpha\beta},K_{\alpha})$ describes the lifting gerbe $(\mathcal{G}_P,\nabla^B)$. The transgression line bundle with connection $\mathcal{L}(\mathcal{G}_P,\nabla^B)\to \LLX$ is characterized by 
\begin{itemize}
    \item The transition function $h_{\alpha\beta}':L^2U_{\alpha}\cap L^2U_{\beta}\to S^1$,
   $$
     h_{\alpha\beta}':=\exp{( i\int_{S^1}u_{\alpha\beta})}.  
    $$
    \item 
    The connection form $\Theta_{\alpha}\in \Omega^1(L^2U_{\alpha},i\R)$,
    $$
    \Theta_{\alpha}= i\int_{S^1}K_{\alpha}=i\int_{\Sigma}\CCS(A_{\alpha}).
    $$
\end{itemize}
\subsubsection{Definition of the elliptic holonomy}
Given $\mathcal{H}\in \per^k(LG)$, we aim to define 
\be
Ehol_A(\mathcal{H})\in \Gamma(\LLX,\mathcal{L}(\mathcal{G}_P)^k)^{T^2}.
\ee
Let $\partial_z^{\#} = \tau \partial_x - \partial_y$ be the complex vector field dual to $dz$ under the standard Kähler form on the elliptic curve $\Sigma_{\tau}$. 
This induces a complex vector field on $\LLX$. 
We consider the parallel transport of the operator $\mathcal{D}_{\alpha}^0$ defined below, which acts in the $y$-direction and takes values in $\widetilde{L\g_{\CC}}'$:
\begin{align}
\mathcal{D}_{\alpha}^0=\frac{d}{dy}-(\tau \dd+\iota_{\partial_{z}^{\#}}\widetilde{A}_{\alpha}).
\end{align}
Under the positive energy representation $\pi:\widetilde{L\g_{\CC}}'\to \mathrm{End}(\mathcal{H})$, when evaluated at $\gamma\in L^2U_{\alpha}$ and at time $y$, the operator $\pi(\tau \dd +\iota_{\partial_{z}^{\#}}\widetilde{A}_{\alpha}(\gamma)_y)$ acts on $\mathcal{H}$. The transport is given by the iterated integral
\begin{align}
hol(\mathcal{D}_{\alpha}^0)_{\gamma}=\sum_{n=0}^{\infty}\int_{\Delta_n}\prod_{i=1}^{n}\pi( \tau \dd +\iota_{\partial_{z}^{\#}}\widetilde{A}_{\alpha}(\gamma)_{y_i})dy_i.
\end{align}
We first prove that it defines a global section which is called the elliptic holonomy.
\begin{proposition}\label{Global defined elliptic holonomy}
For $\mathcal{H}\in \per^k(LG)$, define 
$
f_{\alpha}(\gamma)=\Tr_{\mathcal{H}}[hol(\mathcal{D}_{\alpha}^0)_{\gamma}], \quad \gamma\in L^2U_{\alpha}.
$
On the intersection $L^2U_{\alpha}\cap L^2U_{\beta}$, we have
$
f_{\beta}(\gamma)=h_{\alpha\beta}^k(\gamma)f_{\alpha}(\gamma).
$
\end{proposition}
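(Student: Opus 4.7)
The plan is to reduce the cocycle property of the elliptic holonomy to the Deligne-cocycle identity of Theorem \ref{!!!cocycle} by treating $\mathcal{D}_\alpha^0$ and $\mathcal{D}_\beta^0$ as gauge-equivalent first-order operators valued in $\widetilde{L\g_{\CC}}'$, and then tracking the central correction through the level-$k$ representation. Fix $\gamma \in L^2(U_\alpha\cap U_\beta)$ and put $g(y)=Lg_{\alpha\beta}(\gamma_y)\in LG$; since $\gamma$ is a loop in $y$, $g$ is a smooth loop in $LG$. Choose a pointwise lift $\widetilde{g}(y)=\widetilde{Lg_{\alpha\beta}}(\gamma_y)\in \widetilde{LG}$. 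The lift need not close, and its defect $\widetilde{g}(0)\widetilde{g}(1)^{-1}$ is a central element of $\widetilde{LG}$, computed by the connection $v$ integrated along the path $\widetilde{g}$.

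The key algebraic step is to produce the gauge-transformation law
\begin{equation*}
(\tau\dd+\iota_{\partial_z^{\#}}\widetilde{A}_\beta)_y \;=\; \widetilde{Ad}_{\widetilde{g}(y)^{-1}}(\tau\dd+\iota_{\partial_z^{\#}}\widetilde{A}_\alpha)_y \;-\; \widetilde{g}(y)^{-1}\partial_y\widetilde{g}(y) \;-\; (\iota_{\partial_y}u_{\alpha\beta})(\gamma_y)\cdot\cc.
\end{equation*}
The non-central part is exactly the standard transformation of a principal-bundle connection under $g(y)$ in the $y$-direction. The central part is extracted from Theorem \ref{!!!cocycle}: the relation $\widetilde{A}_\beta = \widetilde{Ad}_{Lg_{\alpha\beta}^{-1}}\widetilde{A}_\alpha + \widetilde{Lg}_{\alpha\beta}^*\widetilde{\mu} + u_{\alpha\beta}\cc$ gives, upon contracting with $\iota_{\partial_z^{\#}}=\tau\iota_{\partial_x}-\iota_{\partial_y}$, precisely the correction $\iota_{\partial_z^{\#}}u_{\alpha\beta}\cdot\cc = -\iota_{\partial_y}u_{\alpha\beta}\cdot\cc$, where the $\tau\iota_{\partial_x}u_{\alpha\beta}$ term vanishes by the vanishing-moment-map identity of Proposition \ref{moment map vanishes of uab}. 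The $\tau\dd$ piece is governed by equation \eqref{transform of connection}, whose central consequences are already reflected in the formula above via Proposition \ref{Adjoint formula}.

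Passing to the parallel transport and applying the standard conjugation identity for a time-dependent gauge, one obtains
\begin{equation*}
hol(\mathcal{D}_\beta^0)_\gamma \;=\; \pi(\widetilde{g}(1))^{-1}\,hol(\mathcal{D}_\alpha^0)_\gamma\,\pi(\widetilde{g}(0))\cdot \exp\!\Big(\!-k\!\int_0^1(\iota_{\partial_y}u_{\alpha\beta})(\gamma_y)\,dy\Big),
\end{equation*}
where $\pi$ denotes the level-$k$ positive-energy representation. Taking traces and using cyclicity, the factor $\pi(\widetilde{g}(0))\pi(\widetilde{g}(1))^{-1}$ becomes the scalar $\pi$ of the central element $\widetilde{g}(0)\widetilde{g}(1)^{-1}\in S^1$, which enters as its $k$-th power. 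The product of this central phase with the exponential factor collapses, by the very definition of $u_{\alpha\beta}$ from Theorem \ref{!!!cocycle}, to $\exp\!\big(ik\int_{S^1} u_{\alpha\beta}\big) = h_{\alpha\beta}^{\prime\,k}(\gamma)$. This yields $f_\beta(\gamma)=h_{\alpha\beta}^{\prime\,k}(\gamma)f_\alpha(\gamma)$.

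The main obstacle is the careful bookkeeping of signs and normalizations between three distinct central contributions: (i) the central cocycle produced by $\widetilde{Ad}_{\widetilde{g}^{-1}}$ on $\widetilde{L\g_{\CC}}'$ (Proposition \ref{Adjoint formula}); (ii) the connection $v$ on $\widetilde{LG}\to LG$ evaluated along the open path $\widetilde{g}$; and (iii) the contracted cocycle $\iota_{\partial_y}u_{\alpha\beta}$. These must assemble into exactly $\int_{S^1}u_{\alpha\beta}$, with the right sign and factor of $k$, and this is precisely the content of the Deligne-cocycle identity in Theorem \ref{!!!cocycle} together with \eqref{transform of connection}. Convergence of the iterated integrals defining $hol(\mathcal{D}_\alpha^0)_\gamma$ under the trace is guaranteed in the setting we will ultimately use (gauged polystable double loops) by the energy estimates of Section \ref{Convergence of Elliptic Loop Chern Character}.
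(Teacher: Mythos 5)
The proposal follows the same route as the paper---start from the Deligne cocycle of Theorem \ref{!!!cocycle}, use the vanishing of $\iota_{\partial_x}u_{\alpha\beta}$ from Proposition \ref{moment map vanishes of uab} and the adjoint transformation from Proposition \ref{Adjoint formula}, factor out the central $u_{\alpha\beta}$-contribution, and invoke cyclicity of the trace for the conjugation---but it introduces a spurious complication that then carries the weight of the argument. You set $\widetilde{g}(y)=\widetilde{Lg_{\alpha\beta}}(\gamma_y)$, where $\widetilde{Lg_{\alpha\beta}}$ is the \emph{globally defined} lift on $L(U_\alpha\cap U_\beta)$ from Theorem \ref{!!!cocycle}. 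Since $\gamma$ is a double loop, $\gamma_0=\gamma_1$, hence $\widetilde{g}(0)=\widetilde{g}(1)$; the lift closes automatically and the ``defect'' $\widetilde{g}(0)\widetilde{g}(1)^{-1}$ is the identity. The claim ``the lift need not close'' is incorrect here, and there is no nontrivial central phase from the endpoints.

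This matters because you then lean on the ``product of this central phase with the exponential factor collapsing'' to produce $\exp\bigl(ik\int_{S^1}u_{\alpha\beta}\bigr)=h_{\alpha\beta}^{\prime\,k}(\gamma)$. With the defect equal to $1$, the entire burden falls on the exponential $\exp\bigl(-k\int_0^1(\iota_{\partial_y}u_{\alpha\beta})(\gamma_y)\,dy\bigr)$ from your key algebraic step, which as written does not match $\exp\bigl(ik\int_{S^1}u_{\alpha\beta}\bigr)$ --- there is an $i$-factor and a sign to account for. You acknowledge this as ``careful bookkeeping'' and defer it, but this is precisely the step that needs to be carried out: since $\pi(\cc)$ acts as the scalar $ik$ in a level-$k$ positive-energy representation, the coefficient of $\cc$ in the difference $\mathcal{D}_\beta^0 - \widetilde{g}^{-1}\mathcal{D}_\alpha^0\widetilde{g}$ must come out to $-\iota_{\partial_y}u_{\alpha\beta}$ (with the right sign relative to the convention for $\partial_z^{\#}$), so that the holonomy picks up $\exp\bigl(ik\int_{S^1}u_{\alpha\beta}\bigr)$ and nothing else. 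The paper's proof is precisely this computation done directly: substitute $\widetilde{A}_\beta=u_{\alpha\beta}\cc+\widetilde{Ad}_{Lg_{\alpha\beta}^{-1}}\widetilde{A}_\alpha+\widetilde{Lg_{\alpha\beta}}^*\widetilde{\mu}$ and Lemma \ref{rel of tau curv-B} into $\mathcal{D}_\beta^0$, extract the central phase, and observe the remainder is $\widetilde{Lg_{\alpha\beta}}^{-1}\mathcal{D}_\alpha^0\widetilde{Lg_{\alpha\beta}}$, whose holonomy trace is unchanged by the periodicity just noted. Remove the defect digression, carry the central coefficient through the representation explicitly, and your argument reduces to the paper's.
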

\begin{proof}
Recall from \cref{!!!cocycle} that
$
u_{\alpha\beta}=\widetilde{A}_{\beta}-[\widetilde{Ad}_{Lg_{\alpha\beta}^{-1}}\widetilde{A}_{\alpha}+\widetilde{Lg_{\alpha\beta}}^*\widetilde{\mu}],
$
and from Lemma \ref{rel of tau curv-B} that
$
(\tau\dd-\tau \iota_{\partial_x}\widetilde{A}_{\beta})=
\widetilde{Ad}_{Lg_{\alpha\beta}^{-1}}(\tau \dd-\tau \iota_{\partial_x}\widetilde{A}_{\alpha}).
$
We then have
\begin{align}
 f_{\beta}(\gamma)&=\Tr_{\mathcal{H}}hol_{\gamma}\left(\frac{d}{dy}-[\iota_{\partial_y}\widetilde{A}_{\beta}]-[\tau \dd-\tau \iota_{\partial_x}\widetilde{A}_{\beta}]\right) \\
 &=\Tr_{\mathcal{H}}hol_{\gamma}\left(\frac{d}{dy}-[\iota_{\partial_y}(u_{\alpha\beta}+\widetilde{Ad}_{Lg_{\alpha\beta}^{-1}}\widetilde{A}_{\alpha}+\widetilde{Lg_{\alpha\beta}}^*\widetilde{\mu})]-\widetilde{Ad}_{Lg_{\alpha\beta}^{-1}}(\tau \dd-\tau \iota_{\partial_x}\widetilde{A}_{\alpha})\right) \nonumber\\
 &=\exp{(\int_{S^1}iku_{\alpha\beta})}\cdot \Tr_{\mathcal{H}}hol_{\gamma}\left(\widetilde{Lg_{\alpha\beta}}^{-1}\left(\frac{d}{dy}-[\iota_{\partial_y}\widetilde{A}_{\alpha}]-[\tau \dd-\tau \iota_{\partial_x}\widetilde{A}_{\alpha}]\right)\widetilde{Lg_{\alpha\beta}}\right)\nonumber\\
 &=\exp{(\int_{S^1}iku_{\alpha\beta})}\cdot \Tr_{\mathcal{H}}hol_{\gamma}\left(\frac{d}{dy}-[\iota_{\partial_y}\widetilde{A}_{\alpha}]-[\tau \dd-\tau \iota_{\partial_x}\widetilde{A}_{\alpha}]\right) = h_{\alpha\beta}^k(\gamma)f_{\alpha}(\gamma). \nonumber
\end{align}
\end{proof}
The elliptic holonomy  $Ehol_A(\mathcal{H})\in \Omega^0(\LLX,\mathcal{L}(\mathcal{G}_P)^k)^{T^2}$ is defined using the local data
 $\{f_{\alpha},L^2U_{\alpha}\}$.

\subsubsection{Convergence at gauged-poly-stable double loops}
Previously, in \cref{Convergence of Elliptic Loop Chern Character}, we established the convergence of the elliptic Chern character on $\LX$ using Floquet theory, which shows that $G$-connections on the circle can be gauge transformed into constant connections. However, $G$-connections on the torus $\mathcal{A}$ exhibit significantly different behavior: not all connections can be transformed into flat connections, let alone constant ones.

It is related to stability problems when considering the complex structure. Once the complex structure $\tau\in \mathbb{H}$ of the torus is chosen, $\mathcal{A}$ acquires a K\"ahler structure, and we have
$
\mathcal{A} \cong \Omega^{0,1}(\Sigma_{\tau},\g_{\CC})
$
as $\CC$-vector spaces. The corresponding complexification of the gauge group $\Sigma G_{\CC}$ acts on $\mathcal{A}$.

Given a connection $A$, the polarization $\mathcal{A} \cong \Omega^{0,1}(\Sigma_{\tau},\g_{\CC})$ associates it with the Cauchy-Riemann operator $\bar{\partial}_A = \bar{\partial} + A$, which classifies holomorphic $G_{\CC}$-bundles on $\Sigma_{\tau}$. The moment map $\mu$ sends a connection to its curvature. The preimage $\mu^{-1}(0)$ is the space of flat connections.
\begin{definition}
A connection in the $\Sigma G_{\CC}$-orbit of $\mu^{-1}(0)$ in $\mathcal{A}$ is called poly-stable. Denote $\mathcal{A}^{ps}=\Sigma G_{\CC}\cdot \mu^{-1}(0)$.
\end{definition}
From the Riemann-Hilbert correspondence, we have the following
\begin{proposition}\label{equivalent to constant}
Let $\mathfrak{t}_{\CC}\subset \Omega^{0,1}(\Sigma_{\tau},\g_{\CC})$ denote the space of constant connections with values in $\mathfrak{t}_{\CC}$. Every flat connection is gauge equivalent to an element in $\mathfrak{t}_{\CC}$.
\end{proposition}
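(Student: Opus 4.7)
The plan is to invoke the Riemann--Hilbert correspondence to convert the statement into a question about homomorphisms $\pi_1(\Sigma_\tau) \to G$ and then to apply the classical Borel theorem on commuting pairs. Since $\pi_1(\Sigma_\tau) = \Z^2$, a flat $G$-connection is determined, up to $\Sigma G$-gauge equivalence, by a commuting pair $(u,v) = (\rho(\gamma_1), \rho(\gamma_2)) \in G \times G$ modulo simultaneous conjugation, where $\gamma_1, \gamma_2$ are the standard generators of $\pi_1(\Sigma_\tau)$.

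The key input is that, because $G$ is compact, connected and simply connected, every commuting pair lies in a common maximal torus. Thus there exists $g \in G$ with $gug^{-1}, gvg^{-1} \in T$, and by surjectivity of the exponential on the compact torus $T$ we may write these elements as $\exp(\alpha), \exp(\beta)$ for some $\alpha, \beta \in \mathfrak{t}$. Denoting by $x, y$ the standard real coordinates on $\Sigma_\tau$ dual to $\gamma_1, \gamma_2$, the constant $\mathfrak{t}$-valued $1$-form
\[ A_0 = \alpha \, dx + \beta \, dy \]
is flat and has holonomies $(\exp(\alpha), \exp(\beta))$ around $(\gamma_1, \gamma_2)$. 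Since any two flat connections on $\Sigma_\tau$ with the same holonomy representation are $\Sigma G$-gauge equivalent (the gauge transformation is produced by parallel transport from a fixed basepoint, combined with the constant gauge $g$), the original flat connection is $\Sigma G$-gauge equivalent to $A_0$.

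Finally, under the $\CC$-linear isomorphism $\mathcal{A} \cong \Omega^{0,1}(\Sigma_\tau, \g_\CC)$ induced by the K\"ahler polarization, $A_0$ is sent to its $(0,1)$-part $A_0^{0,1} = \lambda \, d\bar z$ for a unique $\lambda \in \mathfrak{t}_\CC$, which is a constant element of $\mathfrak{t}_\CC \subset \Omega^{0,1}(\Sigma_\tau, \g_\CC)$. The main obstacle is Borel's theorem on commuting pairs: without simple connectedness of $G$ it can fail (e.g.\ in $\mathrm{SO}(3)$, where the commuting variety has extra components of triples of anticommuting involutions that do not lie in a common torus), but the Cartan-type hypothesis on $G$ ensures the reduction to $T$ goes through, and the remaining steps are routine.
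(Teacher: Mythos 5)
The paper states this proposition without proof, attributing it to the Riemann--Hilbert correspondence, so your argument supplies precisely the details left implicit; the argument is correct. The reduction to $\mathrm{Hom}(\Z^2,G)/G$, the appeal to Borel's connectedness-of-centralizers theorem (which places any commuting pair in a common maximal torus exactly because $G$ is compact, connected and simply connected), the construction of a constant flat representative $\alpha\,dx+\beta\,dy$ with $\alpha,\beta\in\mathfrak t$, and the translation to a constant $\mathfrak t_{\CC}$-valued $(0,1)$-form via the K\"ahler polarization are all sound; the last step is nonvacuous because $\tau\notin\R$ makes the map $(\alpha,\beta)\mapsto(\alpha\tau-\beta)/(2i\,\mathrm{Im}\,\tau)$ surjective onto $\mathfrak t_{\CC}$. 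One small inaccuracy in your parenthetical: the failure in $\mathrm{SO}(3)$ is already witnessed by a pair of \emph{commuting} involutions (rotations by $\pi$ about two orthogonal axes, generating a Klein four-group contained in no maximal torus); these lift to anticommuting elements of $\mathrm{SU}(2)$, but in $\mathrm{SO}(3)$ itself they commute. This does not affect the validity of the main argument.
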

As a corollary, any poly-stable connection is $\Sigma G_{\CC}$-equivalent to a constant Cartan connection in $\mathfrak{t}_{\CC}$.

For $Y\in \Sigma \g_{\CC}$, consider the operator $\frac{d}{dy}-(\tau \dd+Y)$. On one hand, it is a first-order differential operator in the $y$-direction with values in $L\g_{\CC}$. On the other hand, denoting $\dd$ as $\frac{d}{dx}$ to emphasize its differentiation in the $x$-direction, it can be viewed as a $\bar{\partial}$-operator on $\Sigma_{\tau}$ with values in $\g_{\CC}$. The set $\{\frac{d}{dy}-(\tau \frac{d}{dx}+Y),Y\in \Sigma \g_{\CC}\}$ forms the $(-\tau,1)$-level set of the extended double loop algebra $\widehat{\Sigma \g_{\CC}}=\Sigma \g_{\CC}\oplus \CC\frac{d}{dx}\oplus \CC\frac{d}{dy}$. The identification between this $(-\tau,1)$-level set and the space of $G$-connections $\mathcal{A}$ is $\Sigma G_{\CC}$-equivariant. Consequently, the first-order differential operator $\frac{d}{dy}-(\tau \dd+Y)$ in the $y$-direction, with values in $L\g_{\CC}$, can be identified with the Cauchy-Riemann operator $\bar{\partial}_Y=\bar{\partial}+\frac{1}{2\tau_2}Y d\bar{z}$. Thus, $\mathcal{D}_{\alpha}^0=\frac{d}{dy}-(\tau \dd+\iota_{\partial_{z}^{\#}}\widetilde{A}_{\alpha})$, when ignoring the central part, can be viewed as a $\bar{\partial}$-operator on $\Sigma_{\tau}$ with values in $\g_{\CC}$, parameterized by $\LLX$.

Let $\Psi_A:LP\to \mathcal{A}$ be the evaluation of the connection $A$. It is a $\Sigma G$-equivariant map.
\begin{definition}
A loop $\gamma\in \LLX$ is called gauged-poly-stable if $[\Psi_A(\gamma_P)]$ is a poly-stable $\Sigma G_{\CC}$-orbit in $\mathcal{A}$, 
where $\gamma_P$ is a lift of $\gamma$ in $L^2P$.
\end{definition}
This definition is independent of the choice of lift $\gamma_P$, as they only differ by the action of $\Sigma G$.
\begin{proposition}\label{Convergence at gauged-poly-stable}
Elliptic holonomy $Ehol_{A}(\mathcal{H})$ converges at gauged-poly-stable double loops.
\begin{proof}
For $\gamma\in \LLX$ is gauged-poly-stable, there exits $g\in \Sigma G_{\CC}$ such that 
\begin{align}
g\mathcal{D}_{\alpha}^0({\gamma})g^{-1}=\frac{d}{dy}-(\tau \dd+K_{\alpha})+\text{central part},
\end{align}\
where $K_{\alpha}\in \mathfrak{t}_{\CC}$ is constant. Then the holonomy is the exponential 
\be 
hol(\frac{d}{dy}-(\tau \dd+K_{\alpha}))=\exp(\tau \dd+K_{\alpha}).
\ee
Then we have
\begin{align}
\Tr_{\mathcal{H}}[hol(\mathcal{D}_{\alpha}^0({\gamma}))]&=\Tr_{\mathcal{H}}[g^{-1}hol(\frac{d}{dy}-(\tau \dd+K_{\alpha}))g]\cdot  \exp{(k\square)}\\
&=\Tr_{\mathcal{H}}[\exp(\tau \dd+K_{\alpha})]\cdot  \exp{(k\square)} <\infty \nonumber.
\end{align}
$\square$ is the extra contribution of the central part from the adjoint action.
 $\Tr_{\mathcal{H}}[\exp(\tau \dd+K_{\alpha})]$ converges since it is given by the Kac-Weyl character.
\end{proof}
\end{proposition}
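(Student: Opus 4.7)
The plan is to reduce the convergence question to the already-established convergence of the Kac--Weyl character via a gauge transformation in the complexified gauge group $\Sigma G_{\CC}$. First, I would unpack the hypothesis: by Proposition \ref{equivalent to constant} and the definition of gauged-poly-stability, for any gauged-poly-stable $\gamma\in\LLX$ there exists a lift $\gamma_P\in L^2P$ and an element $g\in\Sigma G_{\CC}$ such that $g\cdot \Psi_A(\gamma_P)$ is a constant Cartan connection in $\mathfrak{t}_{\CC}\subset\Omega^{0,1}(\Sigma_\tau,\g_{\CC})$. Under the identification between $\mathcal{A}$ and the $(-\tau,1)$-level set of $\widehat{\Sigma\g_{\CC}}$, this translates into the statement that the $\widetilde{L\g_{\CC}}'$-valued operator $\mathcal{D}^0_{\alpha}(\gamma)=\frac{d}{dy}-(\tau\dd+\iota_{\partial_z^{\#}}\widetilde{A}_{\alpha})$ is intertwined, modulo the central direction, with an operator of the form $\frac{d}{dy}-(\tau\dd+K_{\alpha})$ with $K_{\alpha}\in\mathfrak{t}_{\CC}$ constant.

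Next, I would carry out the conjugation $g\mathcal{D}^0_{\alpha}(\gamma)g^{-1}$ explicitly using Proposition \ref{Adjoint formula}, which controls the adjoint action of $LG$ on the double extended loop algebra $\widetilde{L\g}'$ and its extension to $\widetilde{L\g_{\CC}}'$. The adjoint action produces a central correction, which I will denote by $\square$; under the level-$k$ positive-energy representation $\pi$, this contributes a scalar factor $\exp(k\square)$. Since the remaining operator $\frac{d}{dy}-(\tau\dd+K_{\alpha})$ has constant coefficients along the $y$-direction, its parallel transport over $[0,1]$ is simply $\exp(\tau\dd+K_{\alpha})$ (in $\widetilde{L\g_{\CC}}'$). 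Invariance of trace under conjugation then gives
\begin{align}
\Tr_{\mathcal{H}}[hol(\mathcal{D}^0_{\alpha}(\gamma))]
=\exp(k\square)\cdot \Tr_{\mathcal{H}}[\exp(\pi(\tau\dd+K_{\alpha}))].
\end{align}

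Finally, the right-hand side is precisely the modified Kac--Weyl character of $\mathcal{H}$ evaluated at $(\tau,K_{\alpha})$, whose absolute convergence for $\tau\in\mathbb{H}$ was established in Lemma \ref{decay of formal character}. Multiplying by the finite scalar $\exp(k\square)$ preserves convergence, so $Ehol_A(\mathcal{H})(\gamma)$ converges.

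The main technical obstacle I anticipate is bookkeeping for the central term $\square$: because $g$ lies in the \emph{complexified} gauge group $\Sigma G_{\CC}$ rather than $\Sigma G$, one cannot rely on unitarity of $\pi(g)$, and the cocycle $\mathcal{Z}$ from \eqref{Z cocycle} must be extended compatibly with Proposition \ref{Adjoint formula}. A secondary concern is verifying that the conjugation is meaningful at the level of the trace, i.e.\ that $\pi(g)$ and $\pi(g)^{-1}$, though unbounded on $\mathcal{H}$, combine with the trace-class operator $\exp(\pi(\tau\dd+K_{\alpha}))$ to produce a genuinely trace-class operator. The decomposition into finite-dimensional energy subspaces together with the exponential decay in Lemma \ref{decay of formal character} should absorb the polynomial growth from $\pi(g)$ by the same energy-estimate strategy used in Proposition \ref{coupled with Lg trace convergence}.
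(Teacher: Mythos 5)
Your proposal follows essentially the same route as the paper: reduce to a constant Cartan connection via a complexified gauge transformation from Proposition \ref{equivalent to constant}, track the central correction $\square$ via Proposition \ref{Adjoint formula}, and invoke convergence of the Kac--Weyl character from Lemma \ref{decay of formal character}. The subtleties you flag at the end (non-unitarity of $\pi(g)$ for $g\in\Sigma G_{\CC}$ and trace-class verification under conjugation by an unbounded operator) are genuine and are in fact glossed over by the paper's own proof as well, so raising them is appropriate caution rather than a deviation.
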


\subsubsection{$q$-transport equation and convergence}
If the connection evaluated on the double loop is Holomorphic(of $z$ after the polarization) or real-analytic,
we can prove the convergence.  
Let $LG^h=Hol(\CC^*,G_{\CC})$ be the holomorphic loop group, and let $L\g^h=Hol(\CC^*,\g_{\CC})$ be the holomorphic loop algebra. For $q\in \CC^*$, the map $R_q:LG^h\to LG^h$ represents the complex rotation defined by
\begin{align}
R_q\cdot g(z):=g(qz), \quad g\in LG^h.
\end{align}
The corresponding extended holomorphic loop group is denoted by $\widehat{LG^h}=LG^h\rtimes \CC^*$. We define the $q$-level set as $LG^h_q=\{(g,q)|g\in LG^h\}$. By adjunction, we can define smooth loops into the holomorphic loop group as $L(LG^h)=C^{\infty}(S^1,LG^h)$ and its Lie algebra as $L(L\g^h)$.

For $Y\in L(L\g^h)$, we solve the transport equation for smooth $\psi:\R \to  \widehat{LG^h}$ that $t\to \psi(t)=(g(t),\lambda(t))$:
\begin{align}\label{q transport eq}
\frac{d\psi}{dy}\psi^{-1}(y)=\tau \dd+Y(y), \ \psi(0)=e.
\end{align}
The holonomy $hol(\tau \dd+Y)=\psi(1)$.
\begin{proposition}
    The solution is given by the $q$-deformed iterated integral
\begin{align}\label{hol tau dd+Y}
hol(\tau \dd+Y)=(\sum_{n=0}^{\infty}\int_{\Delta_n}\prod_{i=1}^{n}(R_{q^{t_i}}(Y_{t_i})dt_i),q).
\end{align}
\begin{proof}
Remember we use the multiplication of the semi-product 
\begin{align}
(\dot{\lambda},\dot{g})(\lambda^{-1},R_{\lambda^{-1}}g^{-1})=(2\pi i\tau,A).
\end{align}
From $\dot{\lambda}\lambda^{-1}=2\pi i\tau$, we have $\lambda(y)=q^y$. 
\begin{align}
R_{q^{-y}}(\dot{g})R_{q^{-y}}(g^{-1})=A_y,\
\text{then}\ 
\frac{dg}{dy}g^{-1}=R_{q^y}(A_y).
\end{align}
The solution is given by the iterated integral
\begin{align}
hol(\tau \dd+Y)=\psi(1)=(\sum_{n=0}^{\infty}\int_{\Delta_n}\prod_{i=1}^{n}(R_{q^{t_i}}(Y_{t_i})dt_i),q).
    \end{align}
\end{proof}
\end{proposition}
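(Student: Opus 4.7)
The plan is to exploit the semidirect product decomposition $\widehat{LG^h} = LG^h \rtimes \CC^*$ to reduce the transport equation into a pair of coupled ODEs, one abelian and one in $LG^h$, and then solve the second by a Dyson-type iteration.

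First, I would write $\psi(y) = (g(y), \lambda(y))$ with $g(y) \in LG^h$ and $\lambda(y) \in \CC^*$, and compute the logarithmic derivative $\dot{\psi}\psi^{-1}$ in the extended Lie algebra $\widehat{L\g^h} = L\g^h \oplus \CC\dd$. Using the group law of the semidirect product (where $\CC^*$ acts on $LG^h$ via $R_\lambda$), the components of $\dot{\psi}\psi^{-1}$ are $\dot{\lambda}\lambda^{-1}$ in the central direction and $R_{\lambda^{-1}}(\dot g g^{-1})$ in the $L\g^h$-direction, giving the decomposition the excerpt records as $(\dot\lambda, \dot g)(\lambda^{-1}, R_{\lambda^{-1}}g^{-1}) = (2\pi i\tau, A)$.

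Second, I would match components with $\tau\dd + Y(y)$. The $\dd$-component yields $\dot{\lambda}\lambda^{-1} = 2\pi i\tau$, whose unique solution with $\lambda(0) = 1$ is $\lambda(y) = q^y$ (with $q = e^{2\pi i\tau}$). Substituting this back into the $L\g^h$-component produces the non-autonomous ODE $\dot g g^{-1} = R_{q^y}(Y_y)$ with initial condition $g(0) = e$. At this point the problem has become a standard right-invariant transport equation on $LG^h$ with time-dependent Lie-algebra-valued forcing $R_{q^y}(Y_y)$.

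Third, I would solve this ODE by the usual Picard iteration, producing the time-ordered exponential
\begin{equation*}
g(1) = \sum_{n=0}^{\infty} \int_{\Delta_n} \prod_{i=1}^{n} R_{q^{t_i}}(Y_{t_i})\, dt_i,
\end{equation*}
and pairing with $\lambda(1) = q$ to recover \eqref{hol tau dd+Y}. The routine but essential step I expect to be the main point of care is justifying convergence of this series in $LG^h$: since $Y \in L(L\g^h)$ takes values in holomorphic loops on $\CC^*$, the rotation $R_{q^{t}}$ with $|q| < 1$ acts as a contraction on positive Fourier modes while amplifying negative ones, so some control on the annulus of holomorphy of $Y$ is required. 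For $Y$ with values in $L\g^h$ genuinely holomorphic on $\CC^*$, one uses uniform bounds on compact annuli together with the standard Gronwall estimate for the iterated integrals on $\Delta_n$, so the series converges absolutely and the formula is verified by differentiating term by term.
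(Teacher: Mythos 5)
Your proposal is correct and follows the paper's proof step for step: decompose $\psi=(g,\lambda)$ in the semidirect product $\widehat{LG^h}=LG^h\rtimes\CC^*$, read off $\dot\lambda\lambda^{-1}=2\pi i\tau$ (hence $\lambda(y)=q^y$) and $\dot g g^{-1}=R_{q^y}(Y_y)$ from the two components of $\dot\psi\psi^{-1}$, and solve the latter by the time-ordered iterated integral. Two small remarks: the $\CC^*$-factor is the derivation direction $\dd$, not a central direction (the central $\CC\cc$ is a separate extension, and you do correct this to ``$\dd$-component'' a sentence later); and the convergence discussion you append is sound but goes beyond what the paper records in this proposition---the paper defers convergence to the paragraph immediately following, where the substantive issue is trace-class convergence of $\Tr_{\mathcal{H}}[\hol(\tau\dd+Y)]$ under a positive-energy representation for $q\in\disc$, rather than convergence of the Volterra series in $LG^h$ itself.
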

Let $\widetilde{LG^h}$ be the central extension of $LG^h$ and $\widetilde{LG^h}'$ be the double extended holomorphic loop group, which is the $\CC^*$ semidirect product of $\widetilde{LG^h}$.
Denote by $\widetilde{LG^h_q}$ the $q$-level set of $\widetilde{LG^h}'$.
Following \cite[Theorem 2.2, Lemma 2.3]{Etingof1994SphericalFO} and \cite{Goodman1984StructureAU},
under positive energy representations, 
all elements of $\widetilde{LG^h_q}$ are trace-class operators when $q\in \disc$.
Then for any $\mathcal{H}\in \per^k(LG)$ and $Y\in L(L\g^h)$, 
\begin{align}
\Tr_{\mathcal{H}}[hol(\tau \dd+Y)]< \infty.
\end{align}
\begin{remark}
    Convergence also holds for real-analytic loops.
Let $L\g_{\CC}^{an}$ denote the real-analytic loop algebra, and let $LG^{an}$ denote the corresponding real-analytic loop group.
The Fourier coefficients of a real-analytic element decay exponentially, so there exists a common $\delta>1$ such that any $Y(t)\in L\g_{\CC}^{an}$ can be analytically continued to an annulus $D_{\delta}=\{z\in \CC\mid \frac{1}{\delta}<\abs{z}<\delta \}$ for all $t\in S^1$.
When $|q|<\frac{1}{\delta}$, the above estimate gives $\Tr_{\mathcal{H}}[hol(\tau \dd+Y)]<\infty$.

However, it is quite subtle to work with real-analytic principal bundles.
Moreover, we cannot expect a common $\delta$ for all real-analytic loops.
Therefore, we will not pursue this direction further in this paper and instead restrict to gauged-polystable double loops in the smooth setting.
\end{remark}

\subsection{Pfaffians on \texorpdfstring{$\LLX$}{LLX}}\label{Pfaffians}
The determinant and Pfaffian line bundles over the mapping space $\Sigma X$ are constructed in \cite{Freed1987OnDL,Freed1986DeterminantsTA,Bunke2009StringSA}.
Let $P\to X$ be a principal $G$-bundle equipped with a connection $A$, and let $V$ be a unitary representation of $G$.
The associated vector bundle is $E:=P\times_G V$, equipped with the induced connection $\nabla^E$.
Let $\Sigma$ be a closed oriented surface, and denote by $\Sigma X$ the mapping space.

We endow $\Sigma$ with a complex structure, and fix a spin structure by choosing a square root $\sqrt{K}$ of the canonical line bundle $K$.
Equip $\Sigma$ with a chosen K\"ahler metric.
The evaluation map $ev:\Sigma X \times \Sigma \to X$ is defined by $ev(\gamma,x):=\gamma(x)$.
The pullback $ev^*(E,\nabla^E)$ is a Hermitian vector bundle with connection over $\Sigma X \times \Sigma$.

With this geometric data, there is a family of Dirac operators parametrized by $\Sigma X$.
By the family index theorem, there arises a determinant line bundle equipped with the Bismut-Freed connection and the Quillen metric as follows:
\begin{equation}
\begin{tikzcd}
   & ev^*(E,\nabla^E) \arrow[d] \\
 \Sigma\arrow[r]  & \Sigma \times \Sigma X \arrow[d] \\
   &    \Sigma X
\end{tikzcd}
\Longrightarrow
\begin{tikzcd}
  (\mathrm{DET}(V),\nabla^{\mathrm{DET}(V)},g^{\mathrm{DET}(V)}) \arrow[d] \\
   \Sigma X
\end{tikzcd}
\end{equation}
When $V$ is a real representation, there's the canonical square root of the determinant line bundle called the Pfaffian line bundle, together with the induced connection and the metric, such that 
\begin{align}
    (\mathrm{PF}(V),\nabla^{\mathrm{PF}(V)},g^{\mathrm{PF}(V)})^2\cong
    (\mathrm{DET}(V),\nabla^{\mathrm{DET}(V)},g^{\mathrm{DET}(V)}).
\end{align}

When genus one, the mapping space $\Sigma X$ is the double loop space $\LLX$.
By Atiyah-Singer index theorem, the index of the Dirac operator is always zero.
There's the canonical section $\det_V\in \Gamma(\LLX,\mathrm{DET}(V))$. When $V$ is a real representation, there's the canonical section $\mathrm{pf}_V\in \Gamma(\LLX,\mathrm{PF}(V))$ such that
\begin{align}
\mathrm{pf}_V^2=\dt_V.
\end{align}
For simplicity, we fix the standard flat, unit-volume K\"ahler metric on the elliptic curve $\Sigma_{\tau}$.
 The four spin structures on $\Sigma_{\tau}$ correspond to the four 2-torsion points.

When $G=\mathrm{Spin}(2n)$ and $V$ is the standard real representation given by
\be
\rho:\mathrm{Spin}(2n)\to \mathrm{SO}(2n),
\ee
the curvature of the Pfaffian line bundle is
\begin{align}
R^{\mathrm{PF}(V)}=\int_{\Sigma}\tfrac{1}{2}p_1(E,\nabla^E)\in \Omega^2(\LLX).
\end{align}

In particular, if $(X^{2n},g)$ is a spin Riemannian manifold and $P=P_{\mathrm{Spin}(X)}$ is the spin frame bundle, then the associated bundle $E$ is $TX$ equipped with the Levi-Civita connection $\nabla^g$. 
Fixing the odd spin structure on $\Sigma_{\tau}$ and the standard Kähler metric, we denote the Pfaffian line bundle by
$
(\mathrm{PF},\nabla^{\mathrm{PF}},g^{\mathrm{PF}})\to \LLX,
$
whose curvature is the transgression of $\tfrac{1}{2}p_1(X,g)$. 
The canonical section is denoted as $\mathrm{pf}[\tau]\in \Gamma(\LLX,\mathrm{PF})$. The other three spin structures yield analogous Pfaffian sections.

\subsection{Equivalence of Line Bundles}\label{subsec: EQ of Line Bundles}
In this subsection, 
we demonstrate that the transgression of the lifting gerbe and the determinant (resp.\ Pfaffian) line bundles are all isomorphic to suitable powers of Chern-Simons line bundles.

Conceptually, the equivalence between the transgression of the lifting gerbe and the Chern-Simons line bundle arises because the Chern-Simons line bundle is the 2‑transgression of the Chern-Simons 2‑gerbe.
The isometry between the determinant (and Pfaffian) line bundles and the Chern-Simons line bundle over the mapping space was established in \cite{Bunke2009StringSA}. To relate the geometry of the mapping space to the moduli of holomorphic $G_{\CC}$-bundles, we will utilize the isometry on $[\mathcal{A}/\Sigma G]$ and subsequently pull it back to $\Sigma X$.

\subsubsection{Universal Chern-Simons line bundle on $[\mathcal{A}/\Sigma G]$}\label{universal CS line bundle}
We begin with an arbitrary genus and will later specialize to genus one.
Let $\Sigma$ be a closed oriented surface and $\Sigma X$ be the mapping space.
Define $\mathcal{A}=\Omega^1(\Sigma,\g)$ as the space of $G$-connections, and let $\Sigma G$ be the gauge group acting on $\mathcal{A}$ through gauge transformations.

 In Atiyah-Bott \cite{Atiyah1983TheYE}, $\mathcal{A}$ is equipped with a minimal integral symplectic form $\omega$ such that $\Sigma G$-action is Hamiltonian. The moment map $\mu$ sends the connection to the curvature.
The universal Chern-Simons line bundle $(\mathcal{L},h,\nabla)$ is a $\Sigma G$-equivariant prequantum line bundle over the Hamiltonian $\Sigma G$-space $(\mathcal{A},\omega,\mu)$. We sometimes omit the metric $h$ of the Chern-Simons line bundle since it's the trivial one.
\begin{lemma}{\label{CSdiff}}
For $A$ is connection of principal $G$-bundle $P\to X$ and $g\in \mathcal{G}(P)$ is the gauge transformation. $A^g=g^{-1}\cdot A=Ad_{g^{-1}}(A)+g^{-1}dg$ is the gauge transformation acting on right. We have
\begin{align}
\CCS_{A^g}-\CCS_A=\frac{1}{12\pi}\Tr(g^{-1}dg)^3-d(\frac{1}{4\pi}\Tr(A\wedge dg\cdot g^{-1})). 
\end{align}
The trace here is given by the minimal form.
\end{lemma}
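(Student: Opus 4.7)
The plan is to substitute $A^g = \mathrm{Ad}_{g^{-1}}A + \theta$ directly into the defining formula for $\mathrm{CS}$, where $\theta := g^{-1}dg$ denotes the pulled-back Maurer--Cartan form, and then reassemble the resulting terms into three groups: those that recover $\mathrm{CS}_A$, the pure cubic term in $\theta$, and a residue of mixed terms that I will recognize as exact.

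First, I would rewrite $\mathrm{CS}_B = \langle B \wedge dB \rangle + \tfrac{2}{3}\langle B\wedge B\wedge B\rangle$ in the Bianchi-style form $\langle B, F_B\rangle - \tfrac{1}{3}\langle B,[B,B]\rangle$ by substituting $dB = F_B - B\wedge B$. Using the transformation law $F_{A^g} = \mathrm{Ad}_{g^{-1}}F_A$ and ad-invariance of the minimal pairing, the quadratic piece produces $\langle A,F_A\rangle$ plus a cross term $\langle\theta, \mathrm{Ad}_{g^{-1}}F_A\rangle$. Expanding the cubic piece by trilinearity produces the cube of $\mathrm{Ad}_{g^{-1}}A$ (again equal to the cubic part of $\mathrm{CS}_A$ by ad-invariance), the pure cube of $\theta$, and two mixed cubic terms.

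The two identities that finish the proof are, up to normalization, (i) the pure $\theta$-contribution equals the Wess--Zumino density $\tfrac{1}{12\pi}\mathrm{Tr}(g^{-1}dg)^3$, which follows from $d\theta = -\theta\wedge\theta$ together with the defining formula for $\mathrm{CS}$ applied to the flat connection $\theta$, and (ii) the cross term $\langle\theta,\mathrm{Ad}_{g^{-1}}F_A\rangle$ together with the two mixed cubic terms reassembles into $-d\bigl(\tfrac{1}{4\pi}\mathrm{Tr}(A\wedge dg\cdot g^{-1})\bigr)$. For (ii), I would compute $d(A\wedge dg\cdot g^{-1}) = dA\wedge dg\cdot g^{-1} - A\wedge d(dg\cdot g^{-1})$, use the right Maurer--Cartan equation $d(dg\cdot g^{-1}) = -(dg\cdot g^{-1})\wedge(dg\cdot g^{-1})$, and then match the result termwise against the mixed terms obtained from the expansion by invoking cyclic invariance of $\mathrm{Tr}$ on matrix-valued forms and the identity $\mathrm{Ad}_{g^{-1}}X = g^{-1}Xg$.

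The main obstacle is the careful bookkeeping of signs and of the numerical factors $\tfrac{1}{12\pi}$ versus $\tfrac{1}{4\pi}$, which must reproduce exactly the ratio $\tfrac{1}{3}$ built into the cubic-to-quadratic normalization of $\mathrm{CS}$; all other features of the argument are standard manipulations with Maurer--Cartan forms and ad-invariant traces on $\mathfrak{g}$-valued differential forms.
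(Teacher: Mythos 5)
The paper states Lemma~\ref{CSdiff} without proof, treating it as the classical gauge-variation identity for the Chern--Simons $3$-form, so there is no paper argument to compare against. Your overall strategy---rewriting $\CCS$ in the Bianchi form $\Tr(B\wedge F_B)-\tfrac{1}{3}\Tr(B^3)$, substituting $B=A^g=\mathrm{Ad}_{g^{-1}}A+g^{-1}dg$ together with $F_{A^g}=\mathrm{Ad}_{g^{-1}}F_A$, and sorting the expansion into the $\CCS_A$ piece, the pure $(g^{-1}dg)^3$ piece, and an exact mixed-term residue---is the standard elementary derivation and is correct in outline.

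There is, however, a concrete sign error in step (ii) that breaks the final bookkeeping. You invoke ``the right Maurer--Cartan equation $d(dg\cdot g^{-1}) = -(dg\cdot g^{-1})\wedge(dg\cdot g^{-1})$,'' but the right-invariant form $\beta := dg\cdot g^{-1}$ actually satisfies
\begin{align}
d\beta \;=\; +\,\beta\wedge\beta,
\end{align}
as one sees from $d(dg\cdot g^{-1}) = -\,dg\wedge d(g^{-1}) = dg\wedge(g^{-1}dg\,g^{-1})=(dg\,g^{-1})\wedge(dg\,g^{-1})$; the minus sign holds only for the left form $g^{-1}dg$. Carrying the expansion through, the mixed terms in $\CCS_{A^g}-\CCS_A$ (beyond $-\tfrac{1}{3}\Tr((g^{-1}dg)^3)$) assemble to $\Tr(\beta\wedge dA) - \Tr(\beta\wedge\beta\wedge A)$, while $d\Tr(A\wedge\beta) = \Tr(dA\wedge\beta) - \Tr(A\wedge d\beta)$ equals $\Tr(\beta\wedge dA) - \Tr(\beta\wedge\beta\wedge A)$ precisely because $d\beta=+\beta\wedge\beta$. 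If you instead use $d\beta = -\beta\wedge\beta$, you obtain $d\Tr(A\wedge\beta)=\Tr(\beta\wedge dA)+\Tr(\beta\wedge\beta\wedge A)$, which differs from the mixed residue by $2\Tr(\beta\wedge\beta\wedge A)$ and the claimed exactness fails. Since you yourself flag sign bookkeeping as the main obstacle, this is exactly the sort of error that must be corrected before the argument closes; with the corrected Maurer--Cartan sign the rest of the proposal goes through.
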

\begin{proposition}\label{wzw=diff of cs}
Let $B$ be a three-dimensional manifold whose boundary $\partial B=\Sigma$ where the pair $(\Tilde{A},\Tilde{g}) \in \Omega^1(B,g)\times C^{\infty}(B,G)$ is the extension of $(A,g)\in \mathcal{A}\times \Sigma G$ such that
$
(\Tilde{A},\Tilde{g})|_{\partial B=\Sigma}=(A,g)
$. The gauged-WZW cocycle is defined as
\begin{align}
W(g,A)=\int_B (\CCS_{\widetilde{A}}-\CCS_{\widetilde{g}\cdot \widetilde{A}}),
\end{align}
which is independent of the choice of extension $(\widetilde{A},\widetilde{g})$.
\end{proposition}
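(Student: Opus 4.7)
The plan is the standard ``double and glue'' argument from Chern--Simons theory. I take two extensions $(\widetilde A_1,\widetilde g_1)$ on $B_1$ and $(\widetilde A_2,\widetilde g_2)$ on $B_2$ of the same boundary data $(A,g)\in\mathcal{A}\times\Sigma G$, and form the closed oriented 3-manifold $M:=B_1\cup_{\Sigma}(-B_2)$, on which the two pairs glue to a single pair $(\widetilde A,\widetilde g)$. Existence of such extensions is not an obstacle: since $G$ is compact, connected, and simply connected, $\pi_1(G)=\pi_2(G)=0$, so every principal $G$-bundle on a 3-manifold is trivializable and every map $\Sigma\to G$ extends across any 3-manifold it bounds. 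Independence of $W(g,A)$ then reduces to controlling
\begin{equation}
W_1(g,A)-W_2(g,A)=\int_M\bigl(\CCS_{\widetilde A}-\CCS_{\widetilde g\cdot\widetilde A}\bigr).
\end{equation}

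The second step is to apply \cref{CSdiff} pointwise on $M$, which gives
\begin{equation}
\int_M\bigl(\CCS_{\widetilde A}-\CCS_{\widetilde g\cdot\widetilde A}\bigr)
=-\int_M\tfrac{1}{12\pi}\Tr(\widetilde g^{-1}d\widetilde g)^3
+\int_M d\!\left(\tfrac{1}{4\pi}\Tr(\widetilde A\wedge d\widetilde g\cdot\widetilde g^{-1})\right),
\end{equation}
where the sign of the WZW piece is fixed by the convention for $\widetilde g\cdot\widetilde A$ but is inessential for the argument. Stokes' theorem kills the exact piece because $\partial M=\emptyset$, and the question collapses to the classical WZW integral $\int_M\tfrac{1}{12\pi}\Tr(\widetilde g^{-1}d\widetilde g)^3$.

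The final step invokes the standard integrality: for $G$ compact, simply connected, and simple, with the minimal bilinear form adopted throughout the paper, $\tfrac{1}{24\pi^2}\Tr(g^{-1}dg)^3$ represents the generator of $H^3(G,\mathbb{Z})$. Therefore the WZW integral over any closed oriented 3-manifold lies in $2\pi\mathbb{Z}$. Consequently $W(g,A)$ is well-defined modulo $2\pi\mathbb{Z}$, and the exponentiated cocycle $e^{iW(g,A)}$ is genuinely extension-independent, which is exactly what is required for the transition data of the Chern--Simons line bundle built in \cref{subsec: EQ of Line Bundles}. The main obstacle is purely bookkeeping: one must verify that the coefficient $\tfrac{1}{12\pi}$ in \cref{CSdiff} is coherent with the normalization of $\langle\cdot,\cdot\rangle$ that makes $[\omega_{LG}/2\pi]$ integral in \cref{up to exact of LG}. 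Once this compatibility is in place, the integrality of the WZW term --- and hence the proposition --- follows immediately from the fact that $\widetilde g:M\to G$ is a smooth map from a closed $3$-manifold into a simply connected compact simple Lie group.
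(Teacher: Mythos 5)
Your doubling argument is correct and is essentially the proof the paper has in mind: the paper does not write out a separate proof, but immediately applies \cref{CSdiff} together with Stokes to rewrite $W(g,A)=\Gamma(g)-\frac{1}{4\pi}\int_{\Sigma}\langle A\wedge g^{-1}dg\rangle$, with the second term manifestly boundary-only and the first term $\Gamma:\Sigma G\to\R/2\pi\Z$ well-defined by the same WZW integrality you invoke after gluing two extensions into a closed $3$-manifold. You also correctly read the statement as well-definedness modulo $2\pi\Z$ (what $e^{iW(g,A)}$ actually needs), and your cautionary remark about matching the $\tfrac{1}{12\pi}$ coefficient of \cref{CSdiff} against the normalization of $\langle\cdot,\cdot\rangle$ is warranted, since the paper's definition of $\CCS(A_{\alpha})$ omits the usual $\tfrac{1}{8\pi^2}$ prefactor while \cref{CSdiff} carries $\tfrac{1}{12\pi}$ and $\tfrac{1}{4\pi}$; the structure of your argument is unaffected but the constants should be reconciled.
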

Follows by Lemma \ref{CSdiff} and Stokes formula,
\begin{align}
W(g,A)=\Gamma(g)-\frac{1}{4\pi}\int_{\Sigma}\langle A \wedge g^{-1}dg\rangle,
\end{align}
where $\Gamma:\Sigma G\to \R/2\pi\Z$ is the gerbal holonomy,
$\Gamma(g)=\int_B \widetilde{g}^*\Theta$, $\widetilde{g}$ is the extension of
$g$ to some $B$ whose boundary is $\Sigma$.
\begin{definition}
The cocycle $W(g,A)$ defines a $\Sigma G$-equivariant line bundle 
over $\mathcal{A}$ as follows.
$\mathcal{L}:=\mathcal{A}\times \CC$, where $\Sigma G$-action is
\begin{align}
g(A,c)=(gA,e^{iW(g,A)}c), \text{ for } g\in \Sigma G, A\in \mathcal{A}.
\end{align} 
We endow $\mathcal{L}$ with the trivial metric $h$.
The unitary connection $\nabla=d-\sqrt{-1}\theta$,
where $\theta\in \Omega^1(\mathcal{A},\R) $ is defined as, for
$A\in \mathcal{A}$ and $a\in T_A\mathcal{A}$,
\begin{align}
\theta_A(a)=\frac{1}{4\pi}\int_{\Sigma}\langle A\wedge a\rangle. 
\end{align}
We call $(\mathcal{L},h,\nabla)$ the universal Chern-Simons line bundle over $\mathcal{A}$.
\end{definition}
\begin{proposition}[{\cite{FREED1995237}}]
The universal Chern-Simons line bundle $(\mathcal{L},h,\nabla)$ is
a $\Sigma G$-equivariant prequantum line bundle over the Hamiltonian $\Sigma G$-space
$(\mathcal{A},\omega,\mu)$.
\end{proposition}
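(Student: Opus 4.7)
The plan is to verify four things in turn: (i) the cocycle identity
\[
W(g_1g_2,A)\equiv W(g_1,g_2\cdot A)+W(g_2,A)\pmod{2\pi\Z},
\]
so that $(\mathcal{L},h)$ is a well-defined $\Sigma G$-equivariant Hermitian line bundle; (ii) the $\Sigma G$-invariance of the connection $\nabla=d-\sqrt{-1}\theta$; (iii) the prequantum identity $F_\nabla=-\sqrt{-1}\omega$; and (iv) the moment-map condition $\iota_{\xi^{\#}}\omega=d\mu_\xi$ for $\mu(A)=F_A$.

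For (i), I would choose simultaneous extensions $\widetilde{A},\widetilde{g_1},\widetilde{g_2}$ to a bounding $3$-manifold $B$ with $\widetilde{g_1g_2}:=\widetilde{g_1}\widetilde{g_2}$, whereupon Proposition~\ref{wzw=diff of cs} reduces the cocycle statement to the telescoping identity
\[
W(g_1,g_2\cdot A)+W(g_2,A)=\int_B\bigl(\CCS_{\widetilde{g_2}\cdot\widetilde{A}}-\CCS_{\widetilde{g_1}\widetilde{g_2}\cdot\widetilde{A}}\bigr)+\int_B\bigl(\CCS_{\widetilde{A}}-\CCS_{\widetilde{g_2}\cdot\widetilde{A}}\bigr)=W(g_1g_2,A),
\]
with any ambiguity in the extensions absorbed mod $2\pi\Z$ by the integrality of the Chern--Simons $3$-form $\Theta$.

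For (ii), I would expand $W(g,A)=\Gamma(g)-\tfrac{1}{4\pi}\int_\Sigma\langle A\wedge g^{-1}dg\rangle$, so that its tangential variation at $a\in T_A\mathcal{A}$ is $-\tfrac{1}{4\pi}\int_\Sigma\langle a\wedge g^{-1}dg\rangle$. Pushing forward by $g_*=Ad_{g^{-1}}$, using Ad-invariance of $\langle\cdot,\cdot\rangle$ and $Ad_g(g^{-1}dg)=dg\cdot g^{-1}$, one finds
\[
\theta_{g\cdot A}(g_*a)-\theta_A(a)=\tfrac{1}{4\pi}\int_\Sigma\langle dg\cdot g^{-1}\wedge a\rangle,
\]
which matches the variation of $W(g,\cdot)$ up to the sign convention in $g\cdot(A,c)=(g\cdot A,e^{\sqrt{-1}W(g,A)}c)$. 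Hence $\nabla$ descends to a $\Sigma G$-invariant connection on $\mathcal{L}$.

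For (iii), since $\mathcal{A}$ is affine and $\theta$ is linear in $A$, constant vector fields $a,b\in\Omega^1(\Sigma,\g)$ commute and
\[
(d\theta)(a,b)=a\,\theta(b)-b\,\theta(a)=\tfrac{1}{4\pi}\int_\Sigma\bigl(\langle a\wedge b\rangle-\langle b\wedge a\rangle\bigr)=\tfrac{1}{2\pi}\int_\Sigma\langle a\wedge b\rangle=\omega(a,b),
\]
so $F_\nabla=-\sqrt{-1}\omega$, the prequantum condition for the Atiyah--Bott form. For (iv), the fundamental vector field of $\xi\in\Omega^0(\Sigma,\g)$ at $A$ is $\xi^{\#}_A=\pm d_A\xi$; differentiating $F_A=dA+\tfrac12[A,A]$ gives $d\mu_\xi|_A(b)=\tfrac{1}{4\pi}\int_\Sigma\langle d_Ab,\xi\rangle$, and Stokes' theorem with Ad-invariance converts this to $\tfrac{1}{2\pi}\int_\Sigma\langle d_A\xi\wedge b\rangle=\omega(\xi^{\#}_A,b)$, establishing the moment-map identity. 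The main obstacle is keeping normalization factors and signs coherent across all four steps, and in particular verifying (ii) at the level of equivariant sections rather than just equivariant line bundles; this is precisely the computation carried out in \cite{FREED1995237}.
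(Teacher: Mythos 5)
The paper itself offers no proof of this proposition; it is stated as a citation to Freed's work, so there is no in-text argument to compare against. Your four-step verification scheme is the standard one and is structurally sound: the telescoping of Chern--Simons integrals for (i), the affine computation $d\theta=\omega$ for (iii), and the Hamiltonian moment-map identity for (iv) are exactly the computations one would carry out. Two small notational slips are worth fixing: with the paper's convention $A^g=g^{-1}\cdot A=Ad_{g^{-1}}(A)+g^{-1}dg$, the left action $g\cdot A$ has differential $g_*=Ad_g$, not $Ad_{g^{-1}}$; and your sign bookkeeping in (ii) should ultimately resolve to $\theta_{g\cdot A}(g_*a)-\theta_A(a)=d_A W(g,\cdot)(a)$ (or its negative, depending on whether one writes $\nabla=d-\sqrt{-1}\theta$ or $d+\sqrt{-1}\theta$), not merely ``matches up to sign.''

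The one substantive point your sketch glosses over is that ``equivariant prequantum line bundle over a Hamiltonian $\Sigma G$-space $(\mathcal{A},\omega,\mu)$'' requires more than (ii) $+$ (iv). Step (ii) gives that $\nabla$ is $\Sigma G$-invariant, and step (iv) gives that $\mu(A)=F_A$ is a Hamiltonian moment map for $\omega$ --- but the prequantum condition additionally demands that this $\mu$ is the Kostant moment map of $(\mathcal{L},\nabla)$: concretely, that the vertical part of the lifted infinitesimal action, namely $\iota_{\xi^{\#}}\theta-\dot W(\xi,\cdot)$ in the trivialization, equals $\mu_\xi$ on the nose rather than up to a constant. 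Since $\mathcal{A}$ is connected, invariance of $\nabla$ plus $F_\nabla=-\sqrt{-1}\omega$ forces $\iota_{\xi^{\#}}\theta-\dot W(\xi,\cdot)$ to be \emph{a} Hamiltonian of $\xi^{\#}$, so it differs from $\mu_\xi$ only by a constant in $\xi$ --- but that constant must still be checked to vanish, and this is the computation ``at the level of equivariant sections'' that you correctly flag and defer to Freed. Making that final identification explicit is what separates a complete proof from your outline.
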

\subsubsection{Pull-back to the mapping space $\Sigma X$} 
On the homotopy level, $[\mathcal{A}/\Sigma G]\sim \Sigma BG$. The universal Chern-Simons line bundle $(\mathcal{L},\nabla,h)\to \mathcal{A}$ is the geometric realization of the 2-transgression of the universal Chern-Simons 2-gerbe on $BG$. For universal constructions on $BG$, see \cite{Gomi2001TheFO,Carey2004BundleGF}.
Let $f_P:X\to BG$ be the classifying map and $\Sigma f_P:\Sigma X\to \Sigma BG$.
Since 2-transgression is functorial, the pullback $\Sigma f_P^*\mathcal{L}\to \Sigma X$ is the 2-transgression of the Chern-Simons 2-gerbe on $X$.

We now give a geometric description of the Chern-Simons line bundle on $\Sigma X$.
Consider the principal $\Sigma G$-bundle $\Sigma P\to \Sigma X$. The connection $A$ on $P$ induces
a $\Sigma G$-equivariant map
\begin{align}
    \Psi_A:\Sigma P\to \mathcal{A}, \quad \Psi_A(\gamma)=\gamma^*A.
\end{align}
The pullback $\Psi_A^*(\mathcal{L},\nabla,h)\to \Sigma P$ is a $\Sigma G$-equivariant line bundle with an invariant connection and trivial metric.
It descends to the free quotient
$\Sigma P/\Sigma G=\Sigma X$.
We call it the Chern-Simons line bundle $\mathcal{L_{\CCS}}$ on $\Sigma X$, together with the pull-back connection $\nabla_{\CCS}^A$ and the trivial metric $h$.
The Chern-Simons line bundle $(\mathcal{L}_{\CCS},\nabla_{\CCS}^A)\to \Sigma X$ is given by the local data $\{\Sigma U_{\alpha},h_{\alpha\beta},\Theta_{\alpha}\}$:
\begin{itemize}
    \item The transition function $h_{\alpha\beta}:\Sigma U_{\alpha}\cap \Sigma U_{\beta}\to \R/2\pi \Z$,
    for $x:\Sigma \to U_{\alpha}\cap U_{\beta}$,
    $$
    h_{\alpha\beta}(x):=\exp{(iW(x\circ g_{\alpha\beta}},x^*A_{\beta}).
    $$
    \item 
    The connection form 
    $$
\Theta_{\alpha}=i\int_{\Sigma}\CCS(A_{\alpha}).
    $$
\end{itemize}
\subsubsection{Transgression of lifting gerbe is Chern-Simons line bundle on $\LLX$}
In the genus one case, the mapping space $\Sigma X$ is the double loop space $\LLX$. 
\begin{proposition}\label{1+1 transgression=2 transgression}
The transgression line bundle of the lifting gerbe $\mathcal{L}(\mathcal{G}_P,\nabla^B)\to \LLX$ coincides with the Chern-Simons line bundle $(\mathcal{L}_{\CCS},\nabla_{\CCS}^A)\to \LLX$.
\end{proposition}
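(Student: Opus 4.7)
The plan is to verify the identification by direct comparison of the local data on $\LLX$. Both line bundles have already been presented via explicit transition functions and connection $1$-forms, so the task reduces to matching these data. The connection $1$-forms coincide trivially: on $L^2 U_\alpha$ both are given by $\Theta_\alpha = i\int_\Sigma \CCS(A_\alpha)$, as seen by comparing the formula for $\mathcal{L}(\mathcal{G}_P,\nabla^B)$, with $K_\alpha=\int_{S^1}\CCS(A_\alpha)$ from \cref{!!!cocycle}, against the pullback formula for $(\mathcal{L}_{\CCS},\nabla_{\CCS}^A)$ under $\Psi_A$.

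The heart of the proof is therefore the matching of transition functions: for $\gamma\in L^2(U_\alpha\cap U_\beta)$, one must show
\begin{align*}
\int_{S^1_y}\iota_{\partial_y} u_{\alpha\beta}\big|_{\gamma(y)}\,dy \;\equiv\; W(\gamma\circ g_{\alpha\beta},\gamma^*A_\beta)\pmod{2\pi\Z}.
\end{align*}
I would expand $u_{\alpha\beta}=\widetilde{A}_\beta-\widetilde{Ad}_{Lg_{\alpha\beta}^{-1}}\widetilde{A}_\alpha-\widetilde{Lg_{\alpha\beta}}^*\widetilde{\mu}$ using \cref{Adjoint formula} together with the Maurer--Cartan decomposition $\widetilde{\mu}=\rho^*\hat{\mu}+v$ from \eqref{MC form of central extension}. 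The $L\g$-components cancel thanks to the gauge transformation law $A_\beta=\mathrm{Ad}_{g_{\alpha\beta}^{-1}}A_\alpha+g_{\alpha\beta}^*\mu$, so only the scalar central part survives. Two contributions remain: the Kac--Moody cocycle term $-\mathcal{Z}(Lg_{\alpha\beta}^{-1},\hat{A}_\alpha)$ from the adjoint action, whose $y$-integration reproduces $-\tfrac{1}{4\pi}\int_\Sigma\langle A_\beta\wedge g_{\alpha\beta}^{-1}dg_{\alpha\beta}\rangle$ up to corrections from $c_\alpha-c_\beta$; and the pullback $\iota_{\partial_y}\widetilde{Lg_{\alpha\beta}}^*v$ of the prequantum connection on $\widetilde{LG}$, which integrates over $y$ to the gerbal holonomy $\Gamma(\gamma\circ g_{\alpha\beta})$ appearing in \cref{wzw=diff of cs}, since $v$ is by construction a prequantization of $\omega_{LG}$ and hence transgresses the Chern--Simons $3$-form $\Theta$. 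Assembling the two pieces produces exactly $W(\gamma\circ g_{\alpha\beta},\gamma^*A_\beta)$.

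The main obstacle is the careful bookkeeping of signs, normalizations and $2\pi$-factors when converting the ``one-step'' $\widetilde{LG}$-transgression into the ``two-step'' transgression of the Chern--Simons $2$-gerbe, and in particular verifying that the gerbal holonomy $\Gamma$, which is only well-defined modulo $2\pi\Z$, matches the $3$-dimensional integral appearing in the definition of $W$ through extension to a bounding $B$. As a conceptual sanity check, which I would use to double-check the calculation rather than replace it, both line bundles are geometric realizations of the $2$-transgression of the universal Chern--Simons $2$-gerbe on $BG$ to $\LLX$: the Chern--Simons line bundle is this $2$-transgression by construction, while the transgression of the lifting gerbe is obtained by first $1$-transgressing the Chern--Simons $2$-gerbe to the lifting gerbe on $\LX$ and then transgressing once more along the $y$-loop. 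Functoriality of transgression forces these two constructions to yield canonically isomorphic Hermitian line bundles with connection, which confirms that the explicit cocycle computation above must produce the desired identification.
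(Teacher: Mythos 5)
Your proposal is correct in overall strategy and correctly identifies the crux, but it diverges from the paper's route and leaves the hardest step unproved. The paper does not expand $u_{\alpha\beta}$ term by term. Instead it applies Stokes' theorem to replace the circle integral $\int_{S^1}(x^\vee)^*u_{\alpha\beta}$ by a disk integral $\int_D(\widetilde{x}^\vee)^*(du_{\alpha\beta})$, for an extension $\widetilde{x}:D\times S^1\to X$ of the double loop, then invokes the already-established relation $du_{\alpha\beta}=K_\alpha-K_\beta=\int_{S^1}\bigl(\CCS(A_\alpha)-\CCS(A_\beta)\bigr)$ from \cref{!!!cocycle} and \eqref{K diff =u}. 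This produces the $3$-dimensional integral $\int_{D\times S^1}\widetilde{x}^*\bigl(\CCS(A_\alpha)-\CCS(A_\beta)\bigr)$, which is precisely the definition of the gauged WZW cocycle $W$ from \cref{wzw=diff of cs}. The $\bmod\,2\pi\Z$ ambiguity in the choice of extension is absorbed in one stroke, since $W$ is itself defined only modulo $2\pi\Z$ via choice of bounding $3$-manifold.

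Your route of isolating the central scalar part of $u_{\alpha\beta}$ via \cref{Adjoint formula} and $\widetilde{\mu}=\rho^*\hat{\mu}+v$ is sound as far as it goes: the $L\g$-components do cancel by the gauge cocycle relation, and what remains is the sum of a Kac--Moody cocycle term and the pullback of $v$. But you then assert that $\int_{S^1}\iota_{\partial_y}\widetilde{Lg_{\alpha\beta}}^*v\,dy$ equals the gerbal holonomy $\Gamma(\gamma\circ g_{\alpha\beta})$ modulo $2\pi\Z$, offering as justification only the heuristic that $v$ ``transgresses $\Theta$.'' That assertion is a genuine intermediate lemma, not a formality: $v$ is defined abstractly as a descent of $\theta+\int_{[0,1]}\omega_{LG}$ from $P(LG)\times S^1$, so relating its line integral to a $3$-dimensional $\Theta$-integral requires exactly the combination of Stokes, $dv=\rho^*\omega_{LG}$, and Lemma \ref{up to exact of LG} that the paper packages more efficiently through $du_{\alpha\beta}=K_\alpha-K_\beta$. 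In effect your approach would rederive the Stokes argument at the level of $v$ alone, which is strictly more work than the paper's route and is the step you should not leave as a hand-wave; note also that the $c_\alpha-c_\beta$ corrections you mention will interact with the $\beta$-term in Lemma \ref{up to exact of LG}, and tracking these cleanly is the real labor that the paper's formulation sidesteps.

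One further small point: your observation that the connection $1$-forms coincide trivially is correct and worth stating explicitly, since the paper's statement is about equality of line bundles \emph{with connection} but the written proof only checks the transition functions; both local connection forms are $\Theta_\alpha=i\int_\Sigma\CCS(A_\alpha)$.
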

\begin{proof}
To verify that $\mathcal{L}(\mathcal{G}_P,\nabla^B) = (\mathcal{L}_{\CCS},\nabla_{\CCS}^A)$, it suffices to show that they have the same transition functions
Let $x^{\vee}:S^1\to \LX$ be the adjoint of $x:S^1\times S^1\to X$.
\[
h'_{\alpha\beta}(x)
=\exp\!\Bigl(i\!\int_{S^1}(x^{\vee})^*u_{\alpha\beta}\Bigr),
\quad
h_{\alpha\beta}(x)
=\exp\!\bigl(i\,W(x\circ g_{\alpha\beta},x^*A_{\beta})\bigr).
\]
Thus it suffices to show that
\begin{equation}\label{eq:hab-equality}
\int_{S^1}(x^{\vee})^*u_{\alpha\beta}
\equiv
W(x\circ g_{\alpha\beta},x^*A_{\beta})
\quad (\mathrm{mod}\;2\pi\Z).
\end{equation}
Choose an extension $\widetilde{x}:D\times S^1\to X$ of $x$, and let
$\widetilde{x}^{\vee}:D\to \LX$ denote the adjoint map. By Stokes' formula,
\be
\int_{S^1}(x^{\vee})^*u_{\alpha\beta}
=\int_D(\widetilde{x}^{\vee})^*(du_{\alpha\beta}).
\ee
Using \cref{K diff =u}, we have $du_{\alpha\beta}=K_{\alpha}-K_{\beta}$, and recall that
$K_{\alpha}=\int_{S^1}\CCS(A_{\alpha})$. Therefore the right side
\be
\int_D(\widetilde{x}^{\vee})^*(du_{\alpha\beta})
=
\int_{D\times S^1}\widetilde{x}^*\!\bigl(\CCS(A_{\alpha})-\CCS(A_{\beta})\bigr).
\ee
By the definition of the gauged WZW cocycle, it is precisely
$W(x\circ g_{\alpha\beta},x^*A_{\beta})$. Hence
\eqref{eq:hab-equality} holds and consequently $h'_{\alpha\beta}(x)=h_{\alpha\beta}(x)$.
\end{proof}

\subsubsection{Determinant(Pfaffian) line bundle is Chern-Simons line bundle}
We apply the gauge-equivariant family index theorem to identify the determinant line bundle with a suitable power of the Chern--Simons line bundle. 
The construction of the determinant line bundle, together with the Quillen metric and the Bismut--Freed connection, is standard (see \cite{Bismut1986TheAO}). 

In our setting (families of Cauchy--Riemann operators parametrized by the space of connections), the determinant line bundle goes back to the foundational work of Quillen and Atiyah--Singer \cite{Quillen1985DeterminantsOC,Atiyah1984DiracOC}. 
For real representations, one obtains Pfaffian line bundles; their basic properties and their relation to anomaly line bundles are discussed in \cite{Freed1987OnDL,Freed1986DeterminantsTA}.

We first recall the notion of the \emph{level} of a representation.
For a finite-dimensional representation $\rho:\g \to \mathfrak{gl}(V)$,
the Dynkin index $d_V$ is the scalar characterized by
\begin{align}
    \tr\bigl(\rho(x)\rho(y)\bigr)=d_V\langle x,y\rangle,
\end{align}
for all $x,y\in \g$.
In particular, the adjoint representation has Dynkin index
$d_{\mathrm{ad}}=2h^{\vee}$, where $h^{\vee}$ denotes the dual Coxeter number of $\g$.
It is known that a spin structure on $\Sigma$ is determined by a choice of square root $\sqrt{K}$ of the canonical line bundle.

We now summarize the constructions and results below.
 For further details, we refer to \cite{Preparation} for the discussion of $\Sigma G$-equivariance and the isometry with Chern--Simons line bundles.

 In order to define the the determinant line bundle over $\mathcal{A}$, 
we fix the  K\"{a}hler metric of $\Sigma$,
the spin structure $\sqrt{K}$,
and the unitary representation  $V$ of $G$. 
With these geometric information, $\mathcal{A}$ parameterizes the $\Sigma G$-equivariant
family of Dirac operators.
Given finite dimensional unitary representation $ V\in \text{Rep}(G)$,
there's the associated vector bundle of the unitary representation. Since it's smoothly trivial, we still denote it by $V$.
There's the splitting of the bundle connection 
$
\nabla_A=\nabla_A^{1,0}+\nabla_A^{0,1},
$
where $\nabla_A^{0,1}$ serves as the $\bar{\partial}$-operator defining the holomorphic structure. The corresponding holomorphic vector bundle 
is denoted by $V_A$.
The Dirac operator coupled with vector potential is
\begin{align}
 \slashed {\partial}_A^V:\Omega^0(\Sigma,V\otimes \sqrt{K})\to \Omega^{0,1}(\Sigma,V\otimes \sqrt{K}).
\end{align}
and its adjoint
$
 \slashed {\partial}_A^{V,*}:\Omega^{0,1}(\Sigma,V\otimes\sqrt{K}) \to \Omega^{0}(\Sigma,V\otimes \sqrt{K}).
$
The kernel and the co-kernel of the Dirac operator is 
\begin{align}
\mathrm{Ker}(\slashed {\partial}_A^V)=H^0(\Sigma,V_A\otimes \sqrt{K}) \quad
\mathrm{Ker}(\slashed {\partial}_A^{V,*})=H^1(\Sigma,V_A\otimes \sqrt{K}).
\end{align}
The index 
$
\mathrm{Ind}(\slashed {\partial}_A^V)=\dim H^0(\Sigma,V_A\otimes \sqrt{K})-\dim H^1(\Sigma,V_A\otimes \sqrt{K})
$
by Riemann-Roch theorem is
\begin{align}
 \mathrm{Ind}(\slashed {\partial}_A^V)=\chi(\Sigma,V_A\otimes \sqrt{K} ) =-r(V)(1-g).
\end{align}
$r(V)$ is the dimension of representation $V$ and $g$ is the genus of $\Sigma$.
Especially, when genus $g=1$, the index is always zero.

\begin{theorem}[{\cite[Theorem 2.5]{Preparation}}]{\label{detpf}}      
Given the spin structure $\sqrt{K}$, the K\"{a}hler metric of $\Sigma$,
and a finite dimensional unitary representation $V$, there's the determinant line bundle $\mathrm{DET}(V)$ over $\mathcal{A}$ with 
the Quillen metric
 $g^{\mathrm{DET}(V)}$ and the Bismut-Freed connection $\nabla^{\mathrm{DET}(V)}$. 
 The determinant line bundle has following properties:
\begin{enumerate}
    \item $(\mathrm{DET}(V),g^{\mathrm{DET}(V)},\nabla^{\mathrm{DET}(V)})$ is a $\Sigma G$-equivariant  prequantum line bundle of level $d_V$ over the  Hamiltonian $\Sigma G$ space $(\mathcal{A},\omega,\mu)$.
    \item $(\mathrm{DET}(V),g^{\mathrm{DET}(V)},\nabla^{\mathrm{DET}(V)})$ is isomorphic to $(\mathcal{L},h,\nabla)^{d_V}$.
    \item When genus one, the canonical section $\det_V\in H^0_{\Sigma G}(\mathcal{A},\mathrm{DET}(V))$ can be defined, which has the property that
    \begin{align*}
    \norm{\mathrm{det}_V(A)}^2_{g^{\mathrm{DET}(V)}}=\mathrm{det}_{\zeta}(\slashed\partial_A^{V,*}\slashed\partial^V_A),
    \end{align*}
    where $\slashed\partial_A^V$ is the Dirac operator twisted by the vector potential $A$ on $V$ and $\slashed\partial_A^{V,*}$ is its formal adjoint.
\item  When $V$ is a real representation,
there's the canonical square root of the determinant line bundle called the
Pfaffian line bundle $(\mathrm{PF}(V),g^{\mathrm{PF}(V)},\nabla^{\mathrm{PF}(V)})$ with similar properties as above,
whose level is $d_V/2$.
\end{enumerate}
\end{theorem}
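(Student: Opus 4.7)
The plan is to combine the Bismut--Freed family index theorem with the Hamiltonian geometry of $(\mathcal{A},\omega,\mu)$ established by Atiyah--Bott, and then exploit the contractibility of $\mathcal{A}$ to deduce the isomorphism with powers of the universal Chern--Simons line bundle. All four items of the theorem fit into a single arc: compute the curvature, recognize it as $d_V\cdot\omega$, invoke uniqueness of prequantizations, and finally exploit vanishing of the genus-one index to produce the canonical section.

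First I would compute the curvature of $\nabla^{\mathrm{DET}(V)}$ using the local index formula. For the family of Cauchy--Riemann operators $\{\slashed\partial_A^V\}_{A\in\mathcal{A}}$ twisted by $V\otimes\sqrt{K}$, the Bismut--Freed formula gives
\[
R^{\mathrm{DET}(V)} \;=\; \Bigl[\int_\Sigma \hat{A}(T\Sigma)\,\mathrm{ch}(V_{\mathcal{A}\times\Sigma}\otimes\sqrt{K})\Bigr]_{(2)},
\]
which reduces, in the fiber direction, to $\int_\Sigma \mathrm{ch}_2(V_{\mathcal{A}\times\Sigma})$. By the very definition of the Dynkin index one has $\mathrm{tr}_V(F\wedge F) = d_V\langle F\wedge F\rangle$, so this integrates to $d_V\cdot\omega$, with $\omega$ the Atiyah--Bott symplectic form defined via the minimal bilinear pairing. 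This identifies the level. The $\Sigma G$-equivariance of the Quillen metric and Bismut--Freed connection is automatic from the naturality of the superconnection formalism; the moment map identity $\iota_{\xi_X}\nabla^{\mathrm{DET}(V)} = -i\,d_V\langle\mu,X\rangle$ follows from the standard gauge anomaly computation applied to the Bismut superconnection.

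For part (2), since $\mathcal{A}$ is an affine space, any two $\Sigma G$-equivariant Hermitian prequantum line bundles with connection over the Hamiltonian space $(\mathcal{A},\omega,\mu)$ are determined by their level up to unique $\Sigma G$-equivariant isomorphism. The universal Chern--Simons line bundle $(\mathcal{L},h,\nabla)$ of Subsection~\ref{subsec: EQ of Line Bundles} is the tautological such bundle at level one, so part (1) forces $(\mathrm{DET}(V),g^{\mathrm{DET}(V)},\nabla^{\mathrm{DET}(V)})\cong(\mathcal{L},h,\nabla)^{\otimes d_V}$. For part (3), Riemann--Roch gives $\mathrm{Ind}(\slashed\partial_A^V)=-r(V)(1-g)=0$ in genus one, so the determinant line $\det\ker\slashed\partial_A^V\otimes(\det\ker\slashed\partial_A^{V,*})^{-1}$ is canonically trivialized by the pairing between the two cohomologies; the Quillen metric then \emph{by construction} has $\|\det_V(A)\|^2=\det_\zeta(\slashed\partial_A^{V,*}\slashed\partial_A^V)$. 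For part (4), when $V$ is real the coupled Dirac operator inherits a skew-symmetric real structure, whence $\mathrm{DET}(V)=\mathrm{PF}(V)^{\otimes 2}$ canonically; the Pfaffian halves the Chern class and so the level.

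The main obstacle will be getting the normalizations to match without stray factors of $2\pi$, $2h^\vee$, or $d_V/2$ creeping in: the Bismut--Freed curvature, the Atiyah--Bott symplectic form, and the Dynkin index must all be referred to the same minimal bilinear pairing $\langle\cdot,\cdot\rangle$, and the half-spin twist $\sqrt{K}$ needs to be tracked carefully so that the genus-zero $\hat{A}$-contribution does not introduce spurious scalar curvature terms. A secondary analytical subtlety, which is where I expect \cite{Preparation} to do the real work, is the smooth dependence of the Quillen metric and Bismut--Freed connection across jump loci where $\dim\ker\slashed\partial_A^V$ is not locally constant, together with the verification of the moment map identity globally on the infinite-dimensional Hamiltonian $\Sigma G$-space.
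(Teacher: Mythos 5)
The paper does not actually prove this theorem; it is cited verbatim from the forthcoming reference \cite{Preparation}, so there is no in-paper argument to compare against. Assessed on its own merits, your outline — Bismut--Freed curvature $\Rightarrow$ level $d_V$, contractibility of $\mathcal{A}$ $\Rightarrow$ isomorphism with $\mathcal{L}^{d_V}$, Riemann--Roch $\Rightarrow$ index zero in genus one $\Rightarrow$ canonical section with Quillen norm $\det_\zeta$, real structure $\Rightarrow$ Pfaffian square root — follows the standard route (Atiyah--Singer, Freed, Bunke) and is broadly sound; your remarks about tracking normalizations, the $\sqrt{K}$ twist, and the smoothness of the Quillen metric across jump loci are exactly the right things to worry about.

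The genuine gap is in your treatment of item (2). Contractibility of $\mathcal{A}$ kills the underlying topological line bundle, but it does \emph{not} pin down the $\Sigma G$-equivariant structure up to unique isomorphism: the ambiguity lives in $H^1_{\Sigma G}(\mathcal{A};U(1))\cong H^1(B\Sigma G;U(1))$, and since $\pi_1(\Sigma G)\cong[\Sigma,\Omega G]\cong\mathbb{Z}$, this group is a priori non-trivial. Curvature plus moment map (i.e.\ infinitesimal equivariance) fixes the equivariant structure only up to a character of $\Sigma G$, and you would need to argue separately that no non-trivial smooth $U(1)$-character of $\Sigma G$ intervenes. What actually establishes (2) is a direct global-anomaly computation: since both line bundles are topologically trivial over $\mathcal{A}$, each is determined by a $U(1)$-valued gauge cocycle, and one must show the determinant cocycle equals $e^{i d_V W(g,A)}$ up to coboundary. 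Your infinitesimal moment-map identity is a necessary ingredient but does not by itself exponentiate to the full cocycle identity; this is precisely the ``real work'' you correctly anticipate is delegated to \cite{Preparation}. A secondary, more cosmetic point: in (3), when the index vanishes the determinant line is not ``trivialized by a pairing'' between the two cohomologies — rather one obtains the canonical section $\det_V$, defined as the operator determinant off the jump locus and zero on it, whose Quillen norm is $\det_\zeta$; this is what the statement asserts, and the trivialization only appears where the Dirac operator is invertible.
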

The above theorem is the universal version establishing that the determinant (Pfaffian) line bundle is isomorphic to the Chern--Simons line bundle.
The isometry on the mapping space follows directly, since both line bundles are pullbacks from $[\mathcal{A}/\Sigma G]$.
\subsection{Elliptic Atiyah-Witten Formula on Moduli}\label{subsec:Elliptic Atiyah-Witten on MG}
In this subsection, we derive a general formula for the pushdown of determinants on the coarse moduli $M_G[\tau]$.
For $G=\mathrm{Spin}(2n)$ at level one, we establish the elliptic Atiyah-Witten formula on $M_G[\tau]$.

We demonstrate that under an isometry between the Pfaffian line bundle and the Chern-Simons line bundle on $M_G[\tau]$,
the pushdown of the four Pfaffian sections (one for each spin structure) coincides with the modified Kac-Weyl characters of the four virtual level-one representations $\{S_{ij}\}$.
\subsubsection{Coarse Moduli and Genus One Conformal Blocks}\label{genus one CS and CB}
We begin by recalling the K\"ahler geometric quantization of the Atiyah-Bott stack.
For further details, see \cite{Preparation}.

The moduli stack $Bun_{G}(\Sigma_{\tau})$ of holomorphic principal $G_{\CC}$-bundles over the elliptic curve $\Sigma_{\tau}$ admits a presentation as the Atiyah-Bott stack $\mathcal{A}/\Sigma G_{\CC}$, where $\mathcal{A}$ is the space of connections, 
viewed as an infinite-dimensional Kähler manifold, and $\Sigma G_{\CC}$ is the complexified gauge group.
Let $M_G[\tau]$ denote the coarse moduli of $Bun_G(\Sigma_{\tau})$, \ie the moduli space of $S$-equivalence classes of semistable holomorphic $G_{\CC}$-bundles on $\Sigma_{\tau}$.
Complex-analytically, $M_G[\tau]$ can be described via the infinite-dimensional geometric invariant theory (GIT) quotient.
The theory of the Yang-Mills flow on surfaces establishes the Kempf-Ness theorem, which identifies the GIT quotient with the Kähler quotient $\mathcal{A}//\Sigma G$.

Fortunately, the genus one coarse moduli $M_G[\tau]$ admits a much simpler explicit description since it has a global quotient model (see, e.g., \cite{Friedman1997PrincipalGB,etingof_central_1994}).
Geometrically, the global model can be
constructed from constant Cartan connections $\mathfrak{t}_{\CC}$ (see \cite{Preparation}).
The restricted gauge group on $\mathfrak{t}_{\CC}$ is the affine Weyl group $W_{\mathrm{aff}}=(\Lambda \oplus \tau \Lambda) \rtimes W$, where $\Lambda$ is the co-root lattice of $G$ and $W$ is the Weyl group.
The global model is then given by the quotient of constant Cartan connections by the affine Weyl group 
\begin{align}\label{global quotient model}
M_G[\tau]=\mathfrak{t}_{\CC}/W_{\mathrm{aff}}=(\Sigma_{\tau}\otimes_{\Z}\Lambda)/W.
\end{align}
Consequently, the genus one conformal block is given by Weyl group invariant combinations of theta functions, as in \cite{Looijenga1976RootSA}.
\begin{definition}\label{Conformal block}
The genus one conformal block at $\tau \in \mathbb{H}$ is 
\begin{align}
V_k(G)|_{\tau}=H^0(M_G[\tau],\widetilde{\mathcal{L}}^k).
\end{align}  
Here $\widetilde{\mathcal{L}}$ is the descent of the Chern-Simons line bundle $\mathcal{L}$ to the coarse moduli $M_G[\tau]$.
\end{definition}
Since 
$
M_G[\tau]=(\Sigma_{\tau}\otimes_{\Z}\Lambda)/W,
$
the genus-one conformal block can be identified with
\begin{align}
V_k(G)|_{\tau}=H^0((\Sigma_{\tau}\otimes_{\Z}\Lambda)/W,\widetilde{\mathcal{L}}^k)=H^0(\Sigma_{\tau}\otimes_{\Z}\Lambda,\mathcal{L}_0^k)^W,
\end{align}
where the basic line bundle $\mathcal{L}_0$ over the abelian variety $\Sigma_{\tau}\otimes_{\Z}\Lambda$ is isomorphic to the restriction of the Chern-Simons line bundle $\mathcal{L}$ 
to constant Cartan connections $\mathfrak{t}_{\CC}\subset \mathcal{A}$.
\subsubsection{Push-down of Determinants}\label{pf as theta}
We present a general formula for the push-down of determinants associated with unitary representations. 
        Throughout, we  always assume that the elliptic curve $\Sigma_{\tau}$ is equipped with the standard flat K\"ahler metric of unit volume.

Denote the Jacobian of $\Sigma_{\tau}$ by $\widehat{\Sigma_{\tau}}$.
The push-down of the determinant line bundle $\mathrm{DET}(V)$ is given by the Theta divisor $\Theta(V)$, which is described as follows:
\begin{definition}[{\cite{Drzet1989GroupeDP}}]\label{Theta line bundle}
    Given a unitary representation $V\in \mathrm{Rep}(G)$, 
for any $\delta \in \widehat{\Sigma_{\tau}}$, define
\begin{align}
    D_{\delta}=\{E\in M_G[\tau] \mid H^0(\Sigma_{\tau},\delta \otimes E(V))\neq 0\},
\end{align}
where $E(V)$ is the associated bundle of $E$ with respect to the representation $V$.
$D_{\delta}$ is a Cartier divisor and does not depend on the choice of $\delta$. 
\begin{align}
    \Theta(V)=\mathcal{O}_{\Sigma}(D_{\delta})
\end{align}
is called the Theta divisor of the representation $V$.
\end{definition}
To show that $\Theta(V)$ is the push-down of the determinant line bundle $\mathrm{DET}(V)$ to the coarse moduli $M_G[\tau]$, 
we consider 
the universal holomorphic vector bundle $\mathcal{E}\to \Sigma_{\tau}\times M_G[\tau]$, which restricts to $E\in M_G[\tau]$ as the associated holomorphic vector bundle $E(V)$.
By the holomorphic family index theorem, we have the push-down determinant line bundle $\mathrm{DET}(V)\to M_G[\tau]$, whose canonical section $\dt_V$ has the zero locus $D_{\delta}$.
Since $M_G[\tau]$ is singular, we need to work on the Weyl group equivariant line bundle on the abelian variety $\Sigma_{\tau}\otimes_{\Z}\Lambda$ and then descend to the coarse moduli $M_G[\tau]$.

\subsubsection*{\textbf{On Jacobian $\widehat{\Sigma_{\tau}}$}}
To begin, we first consider the (Theta) determinant line bundle over the Jacobian $\widehat{\Sigma_{\tau}}$ (see \cite{Freed1987OnDL,AlvarezGaum1986ThetaFM}).

There exists the Poincar\'e line bundle $\mathcal{P}$ on $\Sigma_{\tau}\times \widehat{\Sigma_{\tau}}$, whose restriction to $\Sigma_{\tau}\times\{L\}$ is the line bundle $L\in\widehat{\Sigma_{\tau}}$ itself. Given $\delta\in \widehat{\Sigma_{\tau}}$, we can twist the Poincar\'e line bundle by $\delta$ as
$
\mathcal{P}_{\delta}:=\mathcal{P}\otimes\delta,
$
whose restriction to $\Sigma_{\tau}\times\{L\}$ is $L\otimes\delta$. 
By the holomorphic family index theorem, 
\begin{equation}\label{eq:det-theta}
\begin{tikzcd}
                      & \mathcal{P}_{\delta} \arrow[d] \\
\Sigma_{\tau}\arrow[r] & \Sigma_{\tau}\times \widehat{\Sigma_{\tau}} \arrow[d] \\
                      & \widehat{\Sigma_{\tau}}
\end{tikzcd}\longrightarrow 
\begin{tikzcd}
\mathcal{L}_{\delta} \arrow[d]\\
\widehat{\Sigma_{\tau}}
\end{tikzcd}
\end{equation}
Every $L\in \widehat{\Sigma_{\tau}}$ gives the $\bar{\partial}$-operator $\bar{\partial}_{\mathcal{L}\otimes \delta}$. 
 $\mathcal{L}_{\delta}$ is the corresponding determinant line bundle.
 The canonical section 
 $\dt_{\delta}\in H^0(\widehat{\Sigma_{\tau}},\mathcal{L}_{\delta})$ whose zero locus is the Theta divisor
$D_{\delta}=\{x\in \widehat{\Sigma_{\tau}}|H^0(\Sigma_{\tau},\delta \otimes x) \neq 0 \}=-\delta$.
The Abel-Jacobi map that identifies $\Sigma_{\tau}$ with its Jacobian $\widehat{\Sigma_{\tau}}$ can be explicitly described as follows.
Given  $a+\tau b$ as the coordinate of $\Sigma_{\tau}$, where $a,b\in \R/\Z$.
 The point $a+\tau b$ maps to $\delta=A(a,b)\in \widehat{\Sigma_{\tau}}$ corresponding to the character 
$m\tau+n \to e^{2\pi\sqrt{-1}(ma-nb)}$.

\subsubsection*{\textbf{On Abelian Variety $(\widehat{\Sigma_{\tau}}\otimes_{\Z}\bigwedge)$}}

We start with $\Delta(\lambda)$, the irreducible representation of $G$ given by the highest weight $\lambda$ of $\g$.

Let $\mathcal{E}_{\lambda}\to \Sigma_{\tau}\times M_G[\tau]$ be the universal holomorphic vector bundle associated with $\Delta(\lambda)$.
Let $\Theta(\lambda)$ be the Theta divisor of $\Delta(\lambda)$ as in Definition \ref{Theta line bundle}.
We describe the geometry on $\widehat{\Sigma_{\tau}}\otimes \bigwedge $ before the quotient by the Weyl group $W$.

The weight $\mu\in \mathrm{Hom}_{\Z}(\bigwedge,\Z)$ induces the map 
\begin{align}
 \mu:(\widehat{\Sigma_{\tau}}\otimes_{\Z}\bigwedge)\to \widehat{\Sigma_{\tau}}.
\end{align}
Fix $\delta\in \widehat{\Sigma_{\tau}}$, we use the weight $\mu$ to pull back the twisted Poincare line bundle $\mathcal{P}_{\delta}$
\begin{equation}
\begin{tikzcd}
    &  \mu^*\mathcal{P}_{\delta} \arrow[r]\arrow[d]& \mathcal{P}_{\delta}\arrow[d]\\ 
 \Sigma_{\tau}\arrow[r]   & \Sigma_{\tau}\times (\widehat{\Sigma_{\tau}}\otimes_{\Z}\bigwedge) \arrow[r,"Id\times \mu"]\arrow[d] & \Sigma_{\tau}\times \widehat{\Sigma_{\tau}} \arrow[d]\\
    & (\widehat{\Sigma_{\tau}}\otimes_{\Z}\bigwedge) \arrow[r,"\mu"]& \widehat{\Sigma_{\tau}} 
\end{tikzcd}
\rightarrow
\begin{tikzcd}
\mu^*\mathcal{L}_{\delta} \arrow[d] \arrow[r]& \mathcal{L}_{\delta} \arrow[d]\\ 
(\widehat{\Sigma_{\tau}}\otimes_{\Z}\bigwedge) \arrow[r,"\mu"]&  \widehat{\Sigma_{\tau}}
\end{tikzcd}  
\end{equation}
By the weight space decomposition of the highest weight module
\begin{align}
    \Delta(\lambda)=\bigoplus_{\mu\preceq\lambda}\Delta(\lambda)_{\mu}
\end{align}
where $\Delta(\lambda)_{\mu}$ is the weight space of weight $\mu$ with dimension $m_{\mu}$.
We can describe the universal holomorphic vector bundle $\mathcal{E}_{\lambda}\to \Sigma_{\tau}\times M_G[\tau]$ before taking the quotient as the direct sum of pullback Poincaré line bundles
\be V(\lambda)=\bigoplus_{\mu\preceq \lambda}m_{\mu}\mu^*\mathcal{P}\to \Sigma_{\tau}\times (\widehat{\Sigma_{\tau}}\otimes_{\Z} \bigwedge).
\ee 
Let $V_{\delta}(\lambda)=V(\lambda)\otimes \delta$ be the twist of $V(\lambda)$ by $\delta$, 
which is equivalent to $V_{\delta}(\lambda)=\bigoplus_{\mu\preceq \lambda}m_{\mu}\mu^*\mathcal{P}_{\delta}$.
\be
\begin{tikzcd}
                       &  V_{\delta}(\lambda) \arrow[d] \\
\Sigma_{\tau} \arrow[r] & \Sigma_{\tau}\times(\widehat{\Sigma_{\tau}}\otimes_{\Z}\bigwedge) \arrow[d]\\
                       & (\widehat{\Sigma_{\tau}}\otimes_{\Z}\bigwedge)
\end{tikzcd}
\rightarrow
\begin{tikzcd}
                 \mathrm{DET}(\lambda) \arrow[d] \\
                 (\widehat{\Sigma_{\tau}}\otimes_{\Z}\bigwedge)
\end{tikzcd}
\rightarrow
\begin{tikzcd}
                    \Theta(\lambda) \arrow[d] \\
                    M_G[\tau]
\end{tikzcd}
\ee
The determinant line bundle is given by
\be
\mathrm{DET}(\lambda)=\bigotimes_{\mu\preceq\lambda}(\mu^*\mathcal{L}_{\delta})^{\otimes m_{\mu}},
\ee
which is independent of $\delta$. Also, $\mathrm{DET}(\lambda)$ is $W$-equivariant.
The canonical section $\dt_{\delta}^{\Delta(\lambda)}\in H^0(\widehat{\Sigma_{\tau}}\otimes_{\Z}\bigwedge,\mathrm{DET}(\lambda))$ is defined by
\be \label{splitting}
\dt_{\delta}^{\Delta(\lambda)}=\bigotimes_{\mu \preceq \lambda}(\mu^*\dt_{\delta})^{\otimes m_{\mu}},
\ee
which is $W$-invariant and descends to a section in the conformal block $V_{d_{\lambda}}(G)$, where $d_{\lambda}$ denotes the Dynkin index of the dominant weight $\lambda$.

To display a formula of push-down determinants, we pass to the universal cover which is the space of constant Cartan connections $\mathfrak{t}_{\CC}=\CC\otimes_{\Z}\bigwedge \to \widehat{\Sigma_{\tau}}\otimes_{\Z}\bigwedge$.
 Given $z\in \mathfrak{t}_{\CC}$ and weight $\mu$, set $\mu(z)$ as the image of $z$ under the map $\mu:\mathfrak{t}_{\CC}\to \CC$.
 The push-down determinant is defined via restriction to constant Cartan connections.

\begin{proposition}\label{det as theta}
 Let $\mathrm{res}(\dt^{\Delta(\lambda)}_{a,b})$ be the restriction of the determinant twisted by $\delta=A(a,b)\in \widehat{\Sigma_{\tau}}$ to  $\mathfrak{t}_{\CC}$.
 It is a holomorphic section of the corresponding determinant line bundle.
For $z\in \mathfrak{t}_{\CC}$, up to a phase factor,
\be
    \mathrm{res}(\dt^{\Delta(\lambda)}_{[a,b]})(z)\sim 
      \prod_{\mu \preceq \lambda}\Big\{\frac{\vartheta\!\begin{bmatrix}a+\frac{1}{2}\\ b+\frac{1}{2}\end{bmatrix}\!(\mu(z),\tau)}{\eta(\tau)}\Big\}^{m_{\mu}}.
\ee
For the expressions of theta functions with characteristics $(a,b)$, see Subsection \ref{theta functions}.
\begin{proof}
By \cref{splitting},
\be 
\mathrm{res}(\dt^{\Delta(\lambda)}_{[a,b]})=\prod_{\mu \preceq \lambda}\mu^*
(\dt_{\delta})^{m_{\mu}}.
\ee
We only need to show that, under the covering map $\pi:\CC \to \widehat{\Sigma_{\tau}}$ of $\Sigma_{\tau} \to \widehat{\Sigma_{\tau}}$,
\be
\pi^*\dt_{\delta}(z)\sim \frac{\vartheta\!\begin{bmatrix}a+\frac{1}{2}\\ b+\frac{1}{2}\end{bmatrix}\!(z,\tau)}{\eta(\tau)}.
\ee
For $\delta$ is trivial one,
$
\pi^*\dt_0(z)\sim \frac{\theta_{11}(z,\tau)}{\eta(\tau)}
$
is given in \cite[Theorem 4.1]{Ray1973AnalyticTF}.
For arbitrary $\delta=A(a,b)$, 
the formula is similarly given by comparing the zeros (see \cite{AlvarezGaum1986ThetaFM}).
\end{proof}
\end{proposition}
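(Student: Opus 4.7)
The plan is to reduce the proposition to the rank-one case on the Jacobian and then pin down the section by its divisor. First I would record that, by construction, $V_{\delta}(\lambda) = \bigoplus_{\mu\preceq\lambda} m_{\mu}\,\mu^{*}\mathcal{P}_{\delta}$ is a direct sum of pulled-back Poincaré line bundles. The determinant functor is multiplicative under direct sums, so the family-index determinant splits as a tensor product, which is exactly the decomposition
\[
\mathrm{DET}(\lambda)=\bigotimes_{\mu\preceq\lambda}(\mu^{*}\mathcal{L}_{\delta})^{\otimes m_{\mu}},\qquad
\dt^{\Delta(\lambda)}_{\delta}=\bigotimes_{\mu\preceq\lambda}(\mu^{*}\dt_{\delta})^{\otimes m_{\mu}}
\]
already recorded in \eqref{splitting}. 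Pulling back along $\pi\colon \mathfrak{t}_{\CC}\to \widehat{\Sigma_\tau}\otimes_{\Z}\bigwedge$ and pairing with the weight map $\mu\colon \mathfrak{t}_{\CC}\to \CC$, the proposition is equivalent to the single-weight assertion
\[
\pi^{*}\dt_{\delta}(z)\;\sim\;\frac{\vartheta\!\begin{bmatrix}a+\tfrac12\\ b+\tfrac12\end{bmatrix}\!(z,\tau)}{\eta(\tau)},\qquad \delta=A(a,b).
\]

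To establish this single-weight identity, I would proceed in two steps. The first step is the case $\delta=0$, which is precisely Ray's theorem \cite{Ray1973AnalyticTF}: the Quillen norm of the canonical section of the determinant of the $\bar\partial$-operator coupled to the trivial bundle, computed with respect to the spin structure $\sqrt{K}$, equals $|\theta_{11}(z,\tau)/\eta(\tau)|^{2}$, and as a holomorphic section of the appropriate automorphy factor it is forced to agree with $\theta_{11}(z,\tau)/\eta(\tau)$ up to a constant of modulus one. The shift by $\tfrac12$ in the characteristics in the general case is nothing other than the fact that the odd spin structure on $\Sigma_\tau$ has half-characteristic $(\tfrac12,\tfrac12)$, and tensoring the relevant line bundle with $\delta = A(a,b)$ translates these characteristics by $(a,b)$.

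The second step is the twist by an arbitrary $\delta$. Here both sides are holomorphic sections of the same holomorphic line bundle $\pi^{*}\mathcal{L}_{\delta}$ on $\CC$; equivalently both transform under $\Z\oplus \tau\Z$ by the same automorphy factor. Since the relevant line bundle has degree one, its space of holomorphic sections is one-dimensional, so comparing the zero locus determines the section up to a scalar. The zero locus of $\dt_\delta$ on $\widehat{\Sigma_\tau}$ is the single point $-\delta$, which pulls back under $\pi$ to the lattice translate $\{z\equiv -(a+b\tau)\ \mathrm{mod}\ \Z\oplus\tau\Z\}$, and this is precisely the zero set of $\vartheta[a+\tfrac12,\,b+\tfrac12](z,\tau)$ (shifted from the zero $z=0$ of $\theta_{11}$ by the standard translation formula for theta characteristics). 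Hence the two sections agree up to a nowhere-vanishing holomorphic function on the torus, that is, up to a constant of modulus one.

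The genuinely substantive point, and hence the main obstacle, is verifying that the shifts in characteristics really line up as $(a+\tfrac12,b+\tfrac12)$ rather than $(a,b)$ or some other translate. This is where one must keep careful track of the spin structure $\sqrt{K}$ (which contributes the $\tfrac12$'s), the Abel--Jacobi normalization $A\colon (a,b)\mapsto \delta$, and the translation formula
\[
\vartheta\!\begin{bmatrix}a+\tfrac12\\ b+\tfrac12\end{bmatrix}\!(z,\tau)
=\exp\!\bigl(\pi i a^{2}\tau+2\pi i a(z+b+\tfrac12)\bigr)\,\theta_{11}\!\bigl(z+a\tau+b,\tau\bigr).
\]
Once the characteristics are matched on both sides using this identity and Ray's formula, the proposition follows. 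Everything else, the reduction via the weight decomposition and the divisor argument, is routine given the setup developed earlier. I would therefore present the proof as a short calculation exactly along the lines indicated in the excerpt, with the details of the divisor comparison and the characteristic-shift formula carried out explicitly in place of the ``similarly given by comparing the zeros'' remark.
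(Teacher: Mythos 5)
Your proof follows the paper's own argument step for step: reduce via the splitting $\dt^{\Delta(\lambda)}_{\delta}=\bigotimes_{\mu}(\mu^{*}\dt_{\delta})^{\otimes m_{\mu}}$, invoke Ray's theorem for $\delta=0$, then pin down the general section by its divisor; the only difference is that you spell out the divisor comparison the paper delegates to \cite{AlvarezGaum1986ThetaFM}. One small bookkeeping slip: by your own translation formula the zero of $\vartheta[a+\tfrac12,b+\tfrac12](z,\tau)$ lies at $z\equiv-(a\tau+b)$, not $z\equiv-(a+b\tau)$, so when matching against the zero $-\delta$ of $\dt_{\delta}$ you must check carefully which of $(a,b)$ pairs with $\tau$ under the paper's Abel--Jacobi normalization $a+\tau b\mapsto A(a,b)$.
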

\begin{corollary}
 Let $(a,b)$ be $2$-torsion points, in this case, we write $\mathrm{res}(\dt^{\Delta(\lambda)}_{i,j})$ 
as the determinant with respect to the spin structure determined by the $2$-torsion point $\frac{i+\tau j}{2},i,j\in \{0,1\}$.
\be
        \mathrm{res}(\dt^{\Delta(\lambda)}_{ij})(z)\sim \prod_{\mu \preceq \lambda}\frac{\theta_{1-i,1-j}(\mu(z),\tau)}{\eta(\tau)}.
\ee
\end{corollary}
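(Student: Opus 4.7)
The plan is to derive the corollary as a direct specialization of Proposition \ref{det as theta} to the four $2$-torsion points of the Jacobian $\widehat{\Sigma_{\tau}}$. Recall from the paragraph introducing the Abel-Jacobi map that the point $a+\tau b \in \Sigma_{\tau}$ corresponds to $\delta = A(a,b)\in\widehat{\Sigma_{\tau}}$. Therefore, the $2$-torsion point $\tfrac{i+\tau j}{2}$ with $(i,j)\in\{0,1\}^2$ is precisely $A(i/2,j/2)$, and the four choices exhaust the square roots of the (trivial) canonical bundle on $\Sigma_{\tau}$, \ie the four spin structures. First I would substitute $(a,b)=(i/2,j/2)$ into the formula of Proposition \ref{det as theta} to obtain, up to a phase,
\begin{align*}
\mathrm{res}(\dt^{\Delta(\lambda)}_{ij})(z)
\;\sim\;
\prod_{\mu \preceq \lambda}
\left\{\frac{\vartheta\!\begin{bmatrix}i/2+1/2\\[2pt] j/2+1/2\end{bmatrix}\!(\mu(z),\tau)}{\eta(\tau)}\right\}^{m_{\mu}}.
\end{align*}

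Next I would rewrite each shifted theta function in terms of the standard $\theta_{kl}$'s introduced in Subsection \ref{theta functions}. Since $i,j\in\{0,1\}$, the identities $i/2+1/2 = (1-i)/2 + i$ and $j/2+1/2 = (1-j)/2 + j$ express the characteristic as a standard half-characteristic plus an integer shift. Applying the quasi-periodicity relation
\begin{align*}
\vartheta\!\begin{bmatrix}a+m\\ b+n\end{bmatrix}\!(z,\tau)
\;=\; e^{2\pi i\, a n}\,\vartheta\!\begin{bmatrix}a\\ b\end{bmatrix}\!(z,\tau), \qquad m,n\in\mathbb{Z},
\end{align*}
which follows from reindexing the defining series by $k\mapsto k-m$ and using $e^{2\pi i k n}=1$, I can identify
\begin{align*}
\vartheta\!\begin{bmatrix}i/2+1/2\\[2pt] j/2+1/2\end{bmatrix}\!(\mu(z),\tau)
\;=\;\varepsilon_{ij}(\mu)\cdot\theta_{1-i,\,1-j}(\mu(z),\tau),
\end{align*}
where $\varepsilon_{ij}(\mu)\in\{\pm 1\}$ is a phase determined by the parity of the shifts.

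Finally, assembling the product over weights $\mu\preceq\lambda$, the cumulative phase $\prod_{\mu}\varepsilon_{ij}(\mu)^{m_{\mu}}$ is absorbed into the $\sim$-equivalence already carried by Proposition \ref{det as theta}, yielding the claimed identity
\begin{align*}
\mathrm{res}(\dt^{\Delta(\lambda)}_{ij})(z)
\;\sim\;
\prod_{\mu \preceq \lambda}\frac{\theta_{1-i,\,1-j}(\mu(z),\tau)}{\eta(\tau)}.
\end{align*}
The argument is essentially mechanical once the proposition is in hand; the only bookkeeping point is the parity labeling $(i,j)\leftrightarrow(1-i,1-j)$, which arises from the fact that the ``reference'' spin structure in Proposition \ref{det as theta} corresponds to $\delta=0$ and thus to the \emph{odd} spin structure (whose theta function is $\theta_{11}$). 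Since the claim is stated only up to a phase factor, there is no substantive analytic obstacle.
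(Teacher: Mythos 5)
Your proposal is correct and is exactly the argument the paper leaves implicit: the corollary is stated without proof as a specialization of Proposition [\ref{det as theta}] to $(a,b)=(i/2,j/2)$, followed by the half-characteristic bookkeeping that identifies $\vartheta\!\begin{bmatrix}i/2+1/2\\ j/2+1/2\end{bmatrix}$ with $\theta_{1-i,1-j}$ up to the phase $e^{2\pi i a n}$ from the quasi-periodicity of the characteristics, which is absorbed into the $\sim$. One minor wrinkle you inherit from the paper's own display: the exponents $m_\mu$ from Proposition [\ref{det as theta}] silently disappear in the corollary's product, which is evidently a notational shorthand for counting each weight $\mu$ with multiplicity; your intermediate display keeps the $m_\mu$ and your final display drops it, mirroring the paper, so this is a cosmetic matter rather than a gap.
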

Consequently,
when $V$ is a real representation and $\delta$ is a $2$-torsion point,
there exists a corresponding Pfaffian line bundle. Taking the square root of both sides yields the formulas for the push-down Pfaffian sections.
\begin{remark}
In \cite[Section 5]{Axelrod1991GeometricQO}, the authors computed the Pfaffian for the adjoint representation of a general Lie group $G$.

A key feature in this case is that the adjoint representation has a constant-dimensional kernel arising from the Cartan subalgebra, 
leading to the vanishing of the Pfaffian section. 
After removing this kernel, one encounters a $\mathbb{Z}_2$-anomaly, once corrected, the section becomes a Weyl-group \textbf{anti-invariant} theta function of level $h^{\vee}$. The Pfaffian of the adjoint representation plays a significant role in metaplectic quantization (Coxeter-number shifting) and in heat-operator expressions for the Hitchin connection.
\end{remark}

\subsubsection{Elliptic Atiyah-Witten formula on $M_G[\tau]$ for \texorpdfstring{$G=\mathrm{Spin}(2n)$}{G=Spin(2n)}}
We now consider $G=\mathrm{Spin}(2d)$ and its standard representation $\rho:\mathrm{Spin}(2d)\to \mathrm{SO}(2d)$, which is a real representation with Dynkin index $d_{\rho}=2$.  

For the spin structure given by $(i,j)\in \Z_2\times \Z_2$,
the Pfaffian line bundle $(\mathrm{PF}_{ij},\nabla^{\mathrm{PF}}_{ij},g^{\mathrm{PF}}_{i,j})$ is isomorphic to the level-one Chern-Simons line bundle $(\mathcal{L},\nabla,h)$ via the isometry $\Phi_{ij}$.

Let $\chi_{ij}$ denote the modified Kac-Weyl character of the level-one virtual representation $S_{ij}\in \per^1(L\mathrm{Spin}(2n))$.
\begin{theorem}\label{AW formula on MG by computation}
    Under the induced isometry $\Phi_{ij}'$ on $M_G[\tau]$, there is an identification between the push-down Pfaffian section and the modified Kac-Weyl character of the level-one virtual representation $S_{ij}\in \per^1(L\mathrm{Spin}(2n))$:
\be 
\Phi_{1-i,1-j}'(\mathrm{res}(\pf_{1-i,1-j}))=\chi_{i,j}.
\ee
\end{theorem}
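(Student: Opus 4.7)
The plan is to compute both sides of the identity explicitly on the universal cover $\mathfrak t_{\CC}$ of the coarse moduli $M_G[\tau]=(\Sigma_{\tau}\otimes_{\Z}\Lambda)/W$, and match them term by term. The input is the push-down formula for determinant sections (\cref{det as theta} and its corollary), the level-to-level matching between the Pfaffian line bundle and the Chern--Simons line bundle (\cref{detpf}), and the theta-function product expressions for the modified Kac--Weyl characters (\cref{characters are theta functions}).

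First I would specialize \cref{det as theta} to the standard real representation $\rho:\mathrm{Spin}(2d)\to \mathrm{SO}(2d)$, whose Dynkin index is $d_{\rho}=2$ and whose weights are $\{\pm e_k\}_{k=1}^d$, each of multiplicity one. The corollary to \cref{det as theta} then yields, for the spin structure $(i,j)\in\Z_2\times\Z_2$,
\begin{align*}
\mathrm{res}(\dt_{ij}^{\rho})(z)
 \;\sim\; \prod_{k=1}^d \frac{\theta_{1-i,\,1-j}(e_k(z),\tau)\,\theta_{1-i,\,1-j}(-e_k(z),\tau)}{\eta(\tau)^2}.
\end{align*}
Using the parity identities $\theta_{11}(-z,\tau)=-\theta_{11}(z,\tau)$ and $\theta_{ij}(-z,\tau)=\theta_{ij}(z,\tau)$ for $(i,j)\neq(1,1)$, the right-hand side is a perfect square (up to a sign in the odd case), and extracting the canonical square root gives
\begin{align*}
\mathrm{res}(\pf_{1-i,\,1-j})(z) \;\sim\; \prod_{k=1}^d \frac{\theta_{i,j}(e_k(z),\tau)}{\eta(\tau)}
\end{align*}
up to a universal phase. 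Here I re-indexed $(1-i,1-j)\mapsto(i,j)$, which is the index flip already built into the statement. Comparing directly with \cref{characters are theta functions} identifies the right-hand side with the modified Kac--Weyl character $\chi_{i,j}$ of $S_{ij}$, giving the desired formula.

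The bulk of the work lies in three subtle points. First, one must verify that the phase factor hidden in the ``$\sim$'' of \cref{det as theta} is compatible with the choice of canonical square root defining $\pf_{1-i,1-j}$; this reduces to a comparison of normalizations of Quillen metrics and Bismut--Freed connections on $\mathrm{DET}(\rho)$ versus $\mathrm{PF}(\rho)^{\otimes 2}$, which is standard for real representations. Second, one must check that the section so produced is $W$-invariant and descends to a section of $\widetilde{\mathcal L}$ on $M_G[\tau]=(\Sigma_{\tau}\otimes_{\Z}\Lambda)/W$; this follows from the manifest invariance of the product over the weights $\{\pm e_k\}$ of a $G$-representation under the Weyl group action. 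Third, \cref{detpf}(4) records that $\mathrm{PF}(\rho)$ is a Chern--Simons line bundle at level $d_\rho/2=1$, hence under the induced isometry $\Phi'_{1-i,1-j}$ on $M_G[\tau]$ it is identified with the level-one Chern--Simons line bundle $\widetilde{\mathcal L}$ whose sections are precisely $V_1(\mathrm{Spin}(2d))|_{\tau}$ (by the construction in \cref{genus one CS and CB}).

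The main obstacle I foresee is controlling the overall phase and normalization constant in the square-root extraction, since the Pfaffian is defined only up to sign by the determinant and the canonical choice must be read off from the holomorphic family index theorem applied to the Cauchy--Riemann operator twisted by $\rho$ on $\Sigma_{\tau}$. Everything else is a direct application of the explicit theta-function formulas already established in the paper.
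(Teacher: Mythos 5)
Your proposal is correct and follows essentially the same route as the paper: it specializes the determinant push-down formula (Proposition \ref{det as theta} and its corollary) to the standard representation $\rho$, extracts the Pfaffian as the canonical square root, compares the resulting theta product with the Kac--Weyl characters from Proposition \ref{characters are theta functions}, and absorbs the residual phase into the choice of isometry. The paper's own proof is terser—it simply quotes the push-down Pfaffian formula $\mathrm{res}(\pf_{i,j})\sim\prod_k\theta_{1-i,1-j}(z_k,\tau)/\eta(\tau)$ (which was derived in \cref{pf as theta} by taking the square root of the determinant formula for a real representation)—whereas you make the intermediate determinant-to-Pfaffian step explicit by listing the weights $\{\pm e_k\}$ of $\rho$ and invoking the parity of the theta functions, which is a worthwhile clarification but not a genuinely different argument.
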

\begin{proof}
When restricted to constant connections $\mathfrak{t}_{\CC}=\CC^n$, the push-down Pfaffian section corresponding to the spin structure determined by the $2$-torsion point $\frac{i+\tau j}{2}$ is given by
\begin{align}
\mathrm{res}(\pf_{i,j})(z_1,\cdots z_n,\tau)\sim \prod_{1\leq k\leq n}\frac{\theta_{1-i,1-j}(z_k,\tau)}{\eta(\tau)}.
\end{align} 
Recall from Proposition \ref{characters are theta functions} that the modified Kac-Weyl characters of the four level-one virtual representations are 
\be 
\chi_{S_{ij}}=\prod_{1\leq k\leq n}\frac{\theta_{i,j}(z_k,\tau)}{\eta(\tau)}.
\ee 
Therefore, up to a phase factor, we have
$
\chi_{S_{ij}}\sim \mathrm{res}(\pf_{1-i,1-j}).
$
The isometry $\Phi_{ij}$ can be chosen by fixing this phase factor appropriately.
\end{proof}

\subsection{Elliptic Atiyah-Witten Formula on \texorpdfstring{$\LLX$}{LLX}}\label{subsec:Elliptic Atiyah-Witten}
We now provide a proof of \cref{elliptic AW formula}.
We consider the special case in which $(P,A)$ is the spin frame bundle equipped with its spin connection on a Riemannian spin manifold $(X^{2n},g)$.

As shown previously, the loop spinor gerbe module $\mathcal{S}_{\LX}\to \LX$ is the associated gerbe module bundle of the Fock representation
$
\mathcal{F}=S^+-S^-
$.
Its elliptic holonomy $Ehol_{S^+-S^-}[\tau]$ serves as the natural choice for the holonomy of the loop spinor bundle on the loop space.

For convenience, we denote by $(\mathrm{PF},\nabla^{\mathrm{PF}},g^{\mathrm{PF}})$ the Pfaffian line bundle corresponding to the odd spin structure.
We state and prove the elliptic Atiyah–Witten formula only for the odd spin structure; the proofs for the other three even spin structures follow by entirely analogous arguments.
\subsubsection{Global transformation property}
We first show that the transformation of the elliptic holonomy under global gauge transformations aligns with the gauged-WZW cocycle.
\begin{proposition}
For $\mathcal{H}\in \per^k(LG)$, let $g\in \mathcal{G}(P)$ be the global gauge transformation of $P\to X$. The elliptic holonomy transforms as follows:
\be 
Ehol_{gA}(\mathcal{H})=e^{iW(\hat{g},\hat{A})}Ehol_A(\mathcal{H}),
\ee
where $\hat{g}$ and $\hat{A}$ are the evaluations on double loops. 
In other words,
when stated locally, let $g_{\alpha}:U_{\alpha}\to G$ be the local gauge transformation, and $A_{\alpha}$ be the local connection form. 
Fix $\gamma \in L^2U_{\alpha}$, we have
\be 
Ehol_{gA}(\mathcal{H})(\gamma)|_{L^2U_{\alpha}}=e^{ikW(\gamma^*g_{\alpha},\gamma^*A_{\alpha})}Ehol_A(\mathcal{H})(\gamma)|_{L^2U_{\alpha}}.
\ee
\end{proposition}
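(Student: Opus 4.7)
The plan is to mimic the proof of Proposition \ref{Global defined elliptic holonomy}, replacing the transition function $Lg_{\alpha\beta}$ by the looped global gauge transformation $Lg_\alpha$, and then to identify the resulting central phase with the gauged-WZW cocycle by a Stokes' theorem argument parallel to Proposition \ref{1+1 transgression=2 transgression}. Accordingly, I would first define, by analogy with $u_{\alpha\beta}$ in \cref{!!!cocycle},
\begin{align*}
v(g,A_\alpha)\;:=\;\widetilde{(gA)}_\alpha\;-\;\bigl[\widetilde{Ad}_{Lg_\alpha^{-1}}\widetilde{A}_\alpha+\widetilde{Lg}_\alpha^{\,*}\widetilde{\mu}\bigr]\;\in\;\Omega^1(LU_\alpha,i\mathbb{R})\cdot\cc,
\end{align*}
where $\widetilde{(gA)}_\alpha=\widehat{(gA)}_\alpha+c_{(gA)_\alpha}\cc$. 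Comparing the central components of $\widetilde{R}_{A_\alpha}$ and $\widetilde{R}_{(gA)_\alpha}$ as in the proof of \cref{!!!cocycle}, and using $\int_{S^1}[\langle A\wedge R\rangle-\langle gA\wedge R_{gA}\rangle]=\mathcal{Z}(Lg_\alpha,\hat R_\alpha)$, yields the transgressed Chern--Simons identity
\begin{align*}
dv(g,A_\alpha)\;=\;K_{(gA)_\alpha}-K_{A_\alpha}\;=\;\int_{S^1}\bigl[\CCS((gA)_\alpha)-\CCS(A_\alpha)\bigr].
\end{align*}

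Next, I would substitute the decomposition of $\widetilde{(gA)}_\alpha$ into the defining operator $\mathcal{D}_\alpha^0[gA]=\frac{d}{dy}-(\tau\dd+\iota_{\partial_z^{\#}}\widetilde{(gA)}_\alpha)$ and repeat the computation of Proposition \ref{Global defined elliptic holonomy} verbatim. Lemma \ref{rel of tau curv-B} absorbs the $\widetilde{Ad}_{Lg_\alpha^{-1}}\widetilde{A}_\alpha+\widetilde{Lg}_\alpha^{\,*}\widetilde{\mu}$ contributions into a conjugation of $\mathcal{D}_\alpha^0[A]$ by $\widetilde{Lg}_\alpha$, which is invisible under $\Tr_{\mathcal H}$, while the central part $v(g,A_\alpha)\cc$ produces a scalar factor of $\exp\!\bigl(ik\!\int_{S^1}\iota_{\partial_y}v(g,A_\alpha)\bigr)$ (the factor $k$ being the level on which $\cc$ acts). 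This yields the intermediate local identity
\begin{align*}
Ehol_{gA}(\mathcal H)(\gamma)\big|_{L^2U_\alpha}\;=\;\exp\!\Bigl(ik\!\int_{S^1}\iota_{\partial_y}v(g,A_\alpha)\Bigr)\cdot Ehol_A(\mathcal H)(\gamma)\big|_{L^2U_\alpha}.
\end{align*}

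Finally, I would identify the central phase with $W(\gamma^*g_\alpha,\gamma^*A_\alpha)$ by exactly the argument used in Proposition \ref{1+1 transgression=2 transgression}. Choose a 3-ball $B$ with $\partial B=\Sigma_\tau$ and smooth extensions $\widetilde{x}:B\to U_\alpha$ and $\widetilde{g}:B\to G$ of $\gamma$ and $\gamma^*g_\alpha$; applying Stokes' theorem to the transgressed Chern--Simons identity above and passing to the adjoint map on $S^1\times S^1$ gives
\begin{align*}
\int_{S^1}\iota_{\partial_y}v(g,A_\alpha)\;=\;\int_B\bigl[\CCS(\widetilde{x}^*A_\alpha)-\CCS(\widetilde{g}\cdot\widetilde{x}^*A_\alpha)\bigr]\;\equiv\;W(\gamma^*g_\alpha,\gamma^*A_\alpha)\pmod{2\pi\mathbb Z},
\end{align*}
which is the desired local formula. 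Patching across charts $L^2U_\alpha$ is then automatic from the cocycle relation $W(g_1g_2,A)=W(g_1,g_2A)+W(g_2,A)$ together with Proposition \ref{1+1 transgression=2 transgression}, giving the global statement. The main obstacle is the first step: verifying the transgressed Chern--Simons identity $dv(g,A_\alpha)=K_{(gA)_\alpha}-K_{A_\alpha}$, but this is a direct central-extension computation paralleling \cref{!!!cocycle} and reduces to the algebraic identities already used there for $u_{\alpha\beta}$.
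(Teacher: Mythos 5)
Your proposal takes essentially the same route as the paper's proof: define a central-extension cocycle for the global gauge transformation analogous to $u_{\alpha\beta}$, compute its differential as a transgressed Chern--Simons difference, absorb the adjoint contribution into a trace-invariant conjugation by the lift $\widetilde{Lg}_\alpha$, and identify the residual central phase with the gauged WZW cocycle via the Stokes' argument of Proposition [\ref{1+1 transgression=2 transgression}]. (You actually spell out that last Stokes' identification more explicitly than the paper does, which simply asserts $\exp(k\int_{S^1}\gamma^*u_{\alpha,g_\alpha})=e^{ikW}$.)

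One convention issue worth flagging: you define $v(g,A_\alpha)$ with $\widetilde{Ad}_{Lg_\alpha^{-1}}$ and $\widetilde{Lg}_\alpha^{*}\widetilde{\mu}$, which is the pattern for a \emph{transition function} relation $A_\beta = Ad_{g_{\alpha\beta}^{-1}}A_\alpha + g_{\alpha\beta}^*\mu$ as in Theorem [\ref{!!!cocycle}]; the paper instead writes $u_{\alpha,g_\alpha}=\widetilde{g_\alpha A_\alpha}-[\widetilde{Ad}_{Lg_\alpha}\widetilde{A}_\alpha+\widetilde{Lg_\alpha^{-1}}^{*}\widetilde{\mu}]$, consistent with the convention $g\cdot A = Ad_g A + (g^{-1})^*\mu$ used in defining the WZW cocycle $W(g,A)=\int_B(\CCS_{\widetilde A}-\CCS_{\widetilde g\cdot\widetilde A})$ via Lemma [\ref{CSdiff}]. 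With your choice you would in fact be computing $W(g^{-1},A)$ rather than $W(g,A)$, and there is also an internal sign slip: your stated $dv(g,A_\alpha)=\int_{S^1}[\CCS((gA)_\alpha)-\CCS(A_\alpha)]$ would give $\int_B[\CCS(\widetilde g\cdot\widetilde x^*A_\alpha)-\CCS(\widetilde x^*A_\alpha)]$ under Stokes, not the $\int_B[\CCS(\widetilde x^*A_\alpha)-\CCS(\widetilde g\cdot\widetilde x^*A_\alpha)]$ you wrote. Aligning your definition of $v$ with the paper's $u_{\alpha,g_\alpha}$ (so that the adjoint is by $Lg_\alpha$, not its inverse) removes both mismatches and gives the stated $e^{ikW(\gamma^*g_\alpha,\gamma^*A_\alpha)}$ directly.
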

\begin{proof}
The elliptic holonomy locally is given by 
\be
Ehol_A(\mathcal{H})(\gamma)|_{L^2U_{\alpha}}=f_{\alpha}(\gamma)=\Tr_{\mathcal{H}}hol_{\gamma}\left(\frac{d}{dy}-[\iota_{\partial_y}\widetilde{A_{\alpha}}]-[\tau \dd-\tau \iota_{\partial_x}\widetilde{A_{\alpha}}]\right).
\ee
Recall that $\widetilde{A_{\alpha}}=\widehat{A_{\alpha}}+c_{\alpha}\cc \in \Omega^1(LU_{\alpha},\widetilde{L\g})$, where the central part is defined as $c_{\alpha}=\int_{S^1}\langle \iota_K\widehat{A_{\alpha}}, \widehat{A_{\alpha}}\rangle dt$.
By convention, we use the same symbols to denote their pullbacks to the double loop space.
Consider
 $\widetilde{g_{\alpha}A_{\alpha}}=\widehat{g_{\alpha}{A}_{\alpha}}+c_{g,\alpha}\cc\in \Omega^1(LU_{\alpha},\widetilde{L\g})$, 
 where $c_{g_{\alpha},\alpha}=\int_{S^1}\langle g_{\alpha} A_{\alpha},g_{\alpha}A_{\alpha}\rangle $.
 
Let $u_{\alpha,g_{\alpha}}=\widetilde{g_{\alpha}A_{\alpha}}-[\widetilde{Ad}_{Lg_{\alpha}}(\widetilde{A_{\alpha}})+\widetilde{Lg_{\alpha}^{-1}}^*\widetilde{\mu}]$,
where $\widetilde{Lg_{\alpha}}:LU_{\alpha}\to \widetilde{LG}$ is a lift of $Lg_{\alpha}$.
Although $u_{\alpha,g_{\alpha}}$ depends on the choice of lift $\widetilde{Lg_{\alpha}}$, 
the differential $du_{\alpha,g_{\alpha}}$ is independent of this choice. This follows from the same computation as in \cref{subsec:geometry of lifting gerbe and cir-equivariance}:
\be 
du_{\alpha,g_{\alpha}}=\int_{S^1}\CCS(A_{\alpha})-\int_{S^1}\CCS(gA_{\alpha})\in \Omega^2(LU_{\alpha}).
\ee
We now proceed by mimicking the computation in Proposition \ref{Global defined elliptic holonomy}:
\begin{align}
&Ehol_{gA}(\mathcal{H})(\gamma)|_{L^2U_{\alpha}}=f_{\alpha,g_{\alpha}}(\gamma)
=\Tr_{\mathcal{H}}hol_{\gamma}(\frac{d}{dy}-[\iota_{\partial_y}\widetilde{g_{\alpha}A_{\alpha}}]-[\tau \dd-\tau \iota_{\partial_x}\widetilde{g_{\alpha}A_{\alpha}}])   \\
&=
\Tr_{\mathcal{H}}hol_{\gamma}(\frac{d}{dy}-[\iota_{\partial_y}(u_{\alpha,g_{\alpha}}+\widetilde{Ad}_{Lg_{\alpha}}\widetilde{A}_{\alpha}+\widetilde{Lg_{\alpha}^{-1}}^*\widetilde{\mu})]-\widetilde{Ad}_{Lg_{\alpha}}(\tau \dd-\tau \iota_{\partial_x}\widetilde{A}_{\alpha})) \nonumber \\
&=\exp{(k\int_{S^1}\gamma^*u_{\alpha,g_{\alpha}})}\cdot \Tr_{\mathcal{H}}[hol_{\gamma}(\widetilde{Lg_{\alpha}}(\frac{d}{dy}-[\iota_{\partial_y}\widetilde{A_{\alpha}}]-[\tau \dd-\tau \iota_{\partial_x}\widetilde{A_{\alpha}}]) )\widetilde{Lg_{\alpha}^{-1}}]  \nonumber \\
&=\exp(k\int_{S^1}\gamma^*u_{\alpha,g_{\alpha}})\cdot f_{\alpha}(\gamma)=e^{ikW(\gamma^*g_{\alpha},\gamma^*A_{\alpha})}f_{\alpha}(\gamma)
=e^{ikW(\gamma^*g_{\alpha},\gamma^*A_{\alpha})}Ehol_A(\mathcal{H})(\gamma)|_{L^2U_{\alpha}}.\nonumber
\end{align}
\end{proof}
\subsubsection{Proof of Elliptic Atiyah-Witten formula(\cref{elliptic AW formula})}
We present the proof of odd spin case, other three even spin structures follow the same way.
\begin{theorem}
    There's a isometry between the Pfaffian line bundle and the transgression line bundle of the lifting gerbe on $\LLX$:
    \begin{align}
        \Phi: (\mathrm{PF},\nabla^{\mathrm{PF}},g^{\mathrm{PF}}) \xrightarrow{\cong} (\mathcal{L}(\mathcal{G}_P,\nabla^B),h).
    \end{align}
Under the isometry, on gauged poly-stable double loops,
\be q^{m}\cdot Ehol_{S^+-S^-}[\tau]=\Phi(\pf[\tau]). \ee  
$m$ is the modular anomaly of of $S^+-S^-$.
\begin{proof}
By the lemma above, the elliptic holonomy transforms under the $\Sigma G$-action in the same way as sections of the Chern-Simons line bundle $\mathcal{L}\to \mathcal{A}$. 
Let $(L^2P)^{ps}$ denote the gauged poly-stable double loops. The evaluation map
$\Psi_A^*:(L^2P)^{ps}\to \mathcal{A}^{ps}$ is $\Sigma G$-equivariant.
The elliptic holonomy defined on $(L^2P)^{ps}$ is the pullback of a $\Sigma G_{\CC}$-equivariant section of $\mathcal{L}\to \mathcal{A}^{ps}$.
Since poly-stable connections are $\Sigma G_{\CC}$-equivalent to constant connections in $\mathfrak{t}_{\CC}$, it suffices to establish the formula 
for constant connections.

For $\gamma \in L^2U_{\alpha}$ such that $i_{\partial_z^{\#}}(\gamma^*A_{\alpha})=K_{\alpha}\in \mathfrak{t}_{\CC}$ is a constant connection,
the elliptic holonomy is then given by
\begin{align}
&q^m\cdot Ehol_A(S^+-S^-)(\gamma)|_{L^2U_{\alpha}}
=\ q^m\cdot \Tr_{\mathcal{H}}\left[\exp\left(\tau \dd+K_{\alpha}+i_{\partial_z^{\#}}\gamma^*c_{\alpha}\cc\right)\right]\\
&\ = e^{\frac{\pi i\langle K_{\alpha},K_{\alpha}-\overline{K_{\alpha}}\rangle}{2\mathrm{Im}{(\tau)}}}q^m\cdot \Tr_{S^+-S^-}[\exp{(\tau \dd+K_{\alpha})}]. \nonumber
\end{align}
Identify $K_{\alpha}=(z_1,\cdots, z_n)\in \CC^n$; it is computed by
\be
F(z_1,\cdots, z_n)=e^{\frac{\pi i\sum_{i=1}^n (z_i,z_i-\bar{z_i})}{2\mathrm{Im}{(\tau)}}}\prod_{i=1}^n\frac{\theta_{11}(z_i,\tau)}{\eta(\tau)}.
\ee
By Theorem \ref{AW formula on MG by computation}, it can be identified with the odd Pfaffian under the chosen isometry. The prefactor here arises from the isometry between the Chern-Simons line bundle restricted to constant connections and the basic line bundle $\mathcal{L}_0$ defined on the Abelian variety $(\Sigma_{\tau}\otimes_{\Z}\bigwedge)$.
\end{proof}
\end{theorem}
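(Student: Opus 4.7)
The plan is to establish the formula by reducing both sides to the universal Chern--Simons line bundle over the Atiyah--Bott stack $[\mathcal{A}/\Sigma G]$ and then matching sections after a further reduction to constant Cartan connections. First, I would build the isometry $\Phi_{ij}$ on the level of line bundles. By Proposition \ref{1+1 transgression=2 transgression}, the transgression line bundle $\mathcal{L}(\mathcal{G}_P,\nabla^B)$ on $\LLX$ is the pullback $\Psi_A^*\mathcal{L}$ of the universal Chern--Simons line bundle along the $\Sigma G$-equivariant evaluation map $\Psi_A:L^2P\to \mathcal{A}$; by Theorem \ref{detpf}, the Pfaffian line bundle of the standard representation $\rho:\mathrm{Spin}(2n)\to \mathrm{SO}(2n)$ is $\Sigma G$-equivariantly isometric to $(\mathcal{L},h,\nabla)^{d_\rho/2}=(\mathcal{L},h,\nabla)$ over $\mathcal{A}$. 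Composing these two universal isometries and pulling back via $\Psi_A^*$ gives the required $\Phi_{ij}$.

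Next, to identify the sections, I would show that both $\pf_{1-i,1-j}[\tau]$ and $q^{m_{ij}}\cdot Ehol_{S_{ij}}[\tau]$ descend from $\Sigma G$-equivariant objects on $\mathcal{A}$. For the Pfaffian, this is built into the construction in Theorem \ref{detpf}. For the elliptic holonomy, the transformation law established just before the proof shows that $Ehol_A(\mathcal{H})$ transforms under a local gauge transformation exactly by the gauged-WZW cocycle $e^{ikW(\hat g,\hat A)}$, which is precisely the cocycle defining $\mathcal{L}$; hence on the gauged-polystable locus $Ehol_{S_{ij}}[\tau]$ is the $\Psi_A^*$-pullback of a $\Sigma G_{\CC}$-equivariant section of $\mathcal{L}$. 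Because polystability (Proposition \ref{equivalent to constant}) means $\Sigma G_{\CC}$-orbits meet the space $\mathfrak{t}_{\CC}$ of constant Cartan connections, it suffices to verify the equality at constant Cartan connections.

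At a constant connection $K_\alpha\in\mathfrak{t}_{\CC}$, the operator $\mathcal{D}_\alpha^0$ in the definition of the elliptic holonomy reduces to $\tfrac{d}{dy}-(\tau\dd+K_\alpha)$, and its parallel transport exponentiates to $\exp(\tau\dd+K_\alpha)$; taking the trace in $S_{ij}$ and multiplying by $q^{m_{ij}}$ reproduces the modified Kac--Weyl character $\chi_{S_{ij}}$, which by Proposition \ref{characters are theta functions} is the product of theta functions $\prod_k \theta_{ij}(z_k,\tau)/\eta(\tau)$. On the Pfaffian side, Theorem \ref{AW formula on MG by computation} (via Proposition \ref{det as theta}) shows that the push-down of $\pf_{1-i,1-j}$ to $M_G[\tau]$, restricted to $\mathfrak{t}_{\CC}$, equals $\prod_k \theta_{ij}(z_k,\tau)/\eta(\tau)$ up to a phase. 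The isometry $\Phi_{ij}$ can be normalized to absorb this phase, and the remaining Gaussian factor $e^{\pi i\langle K_\alpha,K_\alpha-\overline{K_\alpha}\rangle/(2\,\mathrm{Im}\,\tau)}$ that appears when tracing the central part of $\widetilde{L\g}'$ is precisely the transition between the Chern--Simons line bundle on $\mathcal{A}$ and the basic line bundle on the abelian variety $\Sigma_\tau\otimes_{\Z}\bigwedge$.

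The principal obstacle will be the analytic justification for reducing to constant Cartan connections: the elliptic holonomy is only known to converge on the gauged-polystable locus (Proposition \ref{Convergence at gauged-poly-stable}), so the argument is genuinely only valid there, and one must verify carefully that the $\Sigma G_{\CC}$-equivariance used to move a polystable loop to a constant representative is compatible with the trace and the central corrections — this is where the extra factor $\exp(k\square)$ from the adjoint action enters and must be matched with the corresponding factor in the section of $\mathcal{L}$. A secondary technical point is that $M_G[\tau]$ is singular and one must realize all sections as $W$-invariant sections of the basic line bundle on $\mathfrak{t}_{\CC}$ before descending; this is handled by the global quotient presentation \eqref{global quotient model} and the fact that the level-one conformal block $V_1(\mathrm{Spin}(2n))$ has as a basis exactly the four theta-function products indexed by spin structures, so there is no room for ambiguity once the phase in $\Phi_{ij}$ is fixed.
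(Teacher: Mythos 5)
Your proposal follows essentially the same path as the paper's proof: identify both line bundles with (powers of) the universal Chern--Simons line bundle on $[\mathcal{A}/\Sigma G]$ via \cref{1+1 transgression=2 transgression} and \cref{detpf}, use the gauged-WZW transformation law to realize the elliptic holonomy as a pulled-back $\Sigma G_{\CC}$-equivariant section, reduce to constant Cartan connections by polystability, and match the resulting Kac--Weyl character (a product of $\theta_{11}/\eta$) against the push-down Pfaffian of \cref{AW formula on MG by computation}. The technical points you flag---the central-term factor from the adjoint action, the Gaussian prefactor as the CS/basic-line-bundle transition, and the $W$-equivariant descent to $M_G[\tau]$---are precisely those the paper addresses, partly implicitly.
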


\subsection{Chern-Simons Gauge Theory and Double Loop Space Geometry}\label{subsec: from QR=0 of CS and Conformal Blocks}
By the principle of \emph{quantization commutes with reduction} in Chern-Simons gauge theory as discussed in \cite{Axelrod1991GeometricQO}, we derive a $2$-transgression description of the elliptic holonomy. This description can be extended to higher genus Riemann surfaces and potentially to surfaces with marked points.

When the complex structure $\tau \in \mathbb{H}$ given, $\mathcal{A}$ is K\"ahler. The space \emph{quantization before reduction} is
$\mathcal{H}_k(G)|_{\tau}=H^0_{\Sigma G}(\mathcal{A},\mathcal{L}^k)$.
The space of \emph{quantization} after reduction is the genus one conformal block
$V_k(G)|_{\tau}=H^0(M_{G}[\tau],\widetilde{\mathcal{L}}^k)$.
In \cite[(1.56)]{Axelrod1991GeometricQO}, authors stated that the push-down 
\begin{align}\label{QR=0}
r:\mathcal{H}_k(G)|_{\tau}\to V_k(G)|_{\tau}    
\end{align}
is an isomorphism of vector spaces.
A rigorous treatment is provided in \cite{Preparation}. 

\begin{corollary}
The torus $T^2$ naturally acts on connections $\mathcal{A}=\Omega^1(\Sigma,\g)$ as the pull-back.
Let $\mathcal{H}_k(G)^{T^2}|_{\tau}$ be the $T^2$-invariant subspace of $\mathcal{H}_k(G)|_{\tau}$.
We have
    \begin{align}
\mathcal{H}_k(G)|_{\tau}=\mathcal{H}_k(G)^{T^2}|_{\tau}.
\end{align}
\begin{proof}
Constant connections are fixed points of $T^2$-action. 
Since the push-down $r$ is injective, any section in $\mathcal{H}_k(G)|_{\tau}$ must be $T^2$-invariant. 
\end{proof}
\end{corollary}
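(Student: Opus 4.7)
The plan is to exploit the isomorphism $r:\mathcal{H}_k(G)|_\tau \to V_k(G)|_\tau$ asserted in \cref{[Q,R]=0}, together with the fact that the coarse moduli admits the global quotient presentation $M_G[\tau]=\mathfrak{t}_{\CC}/W_{\mathrm{aff}}$ via restriction to constant Cartan connections, as described in \cref{genus one CS and CB}. The core observation is that the $T^2$--action on $\mathcal{A}=\Omega^1(\Sigma_\tau,\g)$ by pullback along translations of $\Sigma_\tau$ fixes precisely the space of constant $\g$--valued $1$--forms, which contains $\mathfrak{t}_{\CC}\subset\mathcal{A}$ as the Cartan subspace. Consequently, the restriction map that presents $V_k(G)|_\tau$ factors through the $T^2$--fixed locus, and any two sections in $\mathcal{H}_k(G)|_\tau$ that agree on constant connections must coincide under $r$.

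First I would verify that $T^2$ actually acts on $\mathcal{H}_k(G)|_\tau$. The action of $T^2$ on $\Sigma_\tau$ lifts to an action on $\Sigma G$ (by pullback of gauge transformations) and on $\mathcal{A}$ (by pullback of connections), and the pair is equivariant so that the quotient stack $[\mathcal{A}/\Sigma G]$ carries a $T^2$--action. Since the universal Chern--Simons line bundle $(\mathcal{L},\nabla,h)$ is constructed naturally from the gauge data, it inherits a $T^2$--equivariant structure compatible with the $\Sigma G$--equivariance. Pullback by $t\in T^2$ therefore preserves both holomorphicity and $\Sigma G$--equivariance, yielding a $T^2$--action on $\mathcal{H}_k(G)|_\tau=H^0_{\Sigma G}(\mathcal{A},\mathcal{L}^k)$.

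Next I would carry out the main argument. Let $s\in \mathcal{H}_k(G)|_\tau$ and $t\in T^2$. Consider the difference $s-t^*s\in \mathcal{H}_k(G)|_\tau$. Since constant connections in $\mathfrak{t}_{\CC}\subset\mathcal{A}$ are $T^2$--fixed points, the restrictions $s|_{\mathfrak{t}_{\CC}}$ and $(t^*s)|_{\mathfrak{t}_{\CC}}$ coincide, using that the $T^2$--equivariant lift of $\mathcal{L}^k$ restricts trivially over the fixed locus (or, equivalently, that both sections descend to the same element of $V_k(G)|_\tau$ under the push--down procedure reviewed in \cref{genus one CS and CB}). Hence $r(s-t^*s)=0\in V_k(G)|_\tau$, and by the injectivity guaranteed in \cref{[Q,R]=0} we conclude $t^*s=s$. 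Since $t$ was arbitrary, $s\in \mathcal{H}_k(G)^{T^2}|_\tau$, and the reverse inclusion is trivial.

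The step most requiring care is the second one, namely producing a canonical $T^2$--equivariant structure on $\mathcal{L}^k$ that is compatible with the $\Sigma G$--equivariant one and trivial enough over $\mathfrak{t}_{\CC}$ that invariance under $r$ follows directly. Once naturality of the Chern--Simons prequantization under diffeomorphisms of $\Sigma_\tau$ is in place (which follows from the intrinsic construction of the gauged WZW cocycle $W(g,A)$ in \cref{wzw=diff of cs}), the remaining assertions are formal consequences of the already--established isomorphism $r$. I do not expect any genuine analytic obstacle here, since the argument only uses restriction to the finite--dimensional fixed locus and the injectivity part of \cref{[Q,R]=0}.
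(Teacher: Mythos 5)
Your proof is correct and takes essentially the same route as the paper's: both arguments reduce to the observations that $\mathfrak{t}_{\CC}$ consists of $T^2$--fixed points and that the push--down $r$ is injective. You simply spell out two points the paper leaves implicit — that the intrinsic construction of $(\mathcal{L},\nabla)$ and the gauged WZW cocycle gives a $T^2$--equivariant lift that is trivial on fibers over the fixed locus, and that one then compares $s$ and $t^*s$ via $r(s-t^*s)=0$ — and you correctly identify the first of these as the step requiring care.
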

Recall that 
the Chern-Simons line bundle $(\mathcal{L}_{\CCS},\nabla_{\CCS}^A)\to \LLX$ is the pull-back $\Psi_A^*\mathcal{L}\to \mathcal{L}^2 P$ descended down to $\LLX$. Combined with the corollary above, it induces the map
\begin{align}
\Psi_A^*:\mathcal{H}_k(G)|_{\tau} \to \Omega^0(\LLX,\mathcal{L}_{\CCS}^k)^{T^2}.
\end{align}
We define the elliptic holonomy from $2$-transgression as the composition of the extension $r^{-1}$ and the pull-back by $\Psi_A$,
\begin{equation}
 \begin{tikzcd}
V_k(G)|_{\tau}\arrow[r,"r^{-1}"] & \mathcal{H}_k(G)|_{\tau}  \arrow[r,"\Psi_A^*"]&\Omega^0(\LLX,\mathcal{L}^k_{\CCS})^{T^2}. \\
\end{tikzcd} 
\end{equation}
The above construction generalizes naturally to higher genus. 
Let $\Sigma$ be a closed surface of genus $g>1$ equipped with a complex structure $J$, which provides the polarization for geometric quantization.
We denote by $\mathcal{A}=\Omega^{0,1}(\Sigma_J,\g_{\CC})$ the space of $G$-connections, and let $\mathcal{L}$ denote the Chern-Simons line bundle.

The K\"ahler quotient $M_{G}[J]=\mathcal{A}//\Sigma G$ can be identified with the coarse moduli obtained analytically via GIT-quotient.
In this setting, the space of quantization before reduction is 
\begin{align}
\mathcal{H}_{k}(G)|_J:=H^0_{\Sigma G}(\mathcal{A},\mathcal{L}^k),
\end{align}
and the conformal block is the space of \emph{quantization after reduction}
\begin{align}
V_{k}(G)|_J= H^0(M_{G}[J],\widetilde{\mathcal{L}}^k).
\end{align}
The \emph{quantization commutes with reduction} is still valid for higher genus.
Likewise, the composition of the extension and the pull-back
\begin{align}
V_{k}(G)|_{J} \to \mathcal{H}_{k}(G)|_J \to \Gamma(\Sigma X,\mathcal{L}_{\CCS}^k)
\end{align}
gives the generalization of the elliptic holonomy for higher genus Riemann surfaces.

\subsubsection{Relative Pfaffian}\label{relative pfaffian}
As $\tau$ varies over $\mathbb{H}$, the conformal blocks form the Verlinde bundle over the moduli space of elliptic curves $\mathcal{M}_{ell}=[\mathbb{H}//SL_2(\Z)]$:
\begin{align}
    V_k(G)\to \mathcal{M}_{ell}.
\end{align}
The Verlinde bundle is equipped with a projectively flat connection known as the Hitchin connection (see explicit formulas in \cite{Axelrod1991GeometricQO}).
Projective flatness ensures that the monodromy representation of $\mathbb{P}SL_2(\Z)$ acts on the conformal blocks.

Now we focus on $G=\mathrm{Spin}(2n)$ and level $k=1$.
We have already shown that the four Pfaffians form a basis of the conformal block $V_1(\mathrm{Spin}(2n))|_{\tau}$.
We can understand the modular invariance of $V_1(\mathrm{Spin}(2n))$ by studying the modular properties of the relative Pfaffians.

In \cite{Freed1987OnDL},
the upper half-plane $\mathbb{H}$ parameterize the complex structures of the torus, thereby defining a family of $\bar{\partial}$-operators. Let $\mathfrak{P} \to \mathbb{H}$ denote Pfaffian line bundle associated with the odd spin structure.

We investigate the modular properties of the Pfaffian line bundle corresponding to the odd spin structure.
The $SL_2(\Z)$ action on $\mathcal{A} \times \mathbb{H}$ is defined as follows: for $\gamma \in SL_2(\Z)$,
\begin{align}
\gamma(A, \tau) = (\gamma^{-1,*}A, \gamma(\tau)).
\end{align}
The Pfaffian line bundle $\mathrm{PF} \to \mathcal{A}$ can be extended to a \textbf{relative} Pfaffian line bundle as follows:
\be
\mathrm{PF}^{\mathrm{rel}}:=\mathrm{PF} \otimes \mathfrak{P}^{-2n} \to \mathcal{A} \times \mathbb{H}.
\ee
Both the Pfaffian line bundle $\mathrm{PF} \to \mathcal{A} \times \mathbb{H}$ and $\mathfrak{P} \to \mathbb{H}$ admit only $Mp_2(\Z)$-equivariant structures, where $Mp_2(\Z)$ denotes the metaplectic double cover of $SL_2(\Z)$. 
This is because the $SL_2(\Z)$ action does not lift to the spinors, only $Mp_2(\Z)$ does.
The relative structure cancels the $\Z_2$-anomaly, yielding the following result:
\begin{proposition}
The relative Pfaffian line bundle  $\mathrm{PF}^{\mathrm{rel}}\to \mathcal{A}\times \mathbb{H}$ is $\Sigma G\times T^2\times SL_2(\Z)$-equivariant. 
\end{proposition}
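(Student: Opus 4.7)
The plan is to verify the three equivariances in turn: $\Sigma G$, $T^2$, and $SL_2(\mathbb{Z})$. The first two follow essentially formally from the naturality of the Pfaffian construction and inherit from the factors; the $SL_2(\mathbb{Z})$-equivariance carries the substantive content and is the main obstacle, amounting to the $\mathbb{Z}_2$-anomaly cancellation noted in the discussion preceding the proposition.

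For $\Sigma G$-equivariance, Theorem \ref{detpf} provides $\mathrm{PF}\to \mathcal{A}$ as a $\Sigma G$-equivariant prequantum line bundle of level $d_V/2$; the construction is natural in the parameter $\tau\in\mathbb{H}$, so this extends canonically to $\mathrm{PF}\to \mathcal{A}\times \mathbb{H}$ with the $\Sigma G$-action trivial on the $\mathbb{H}$-factor. The basic Pfaffian $\mathfrak{P}\to \mathbb{H}$ carries the trivial $\Sigma G$-action, and so $\mathrm{PF}^{\mathrm{rel}} = \mathrm{PF}\otimes \mathfrak{P}^{-2n}$ is $\Sigma G$-equivariant. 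For $T^2$-equivariance, the two-torus $\Sigma_\tau$ acts on $\mathcal{A}=\Omega^1(\Sigma,\g)$ by pullback along translations, preserving the flat K\"ahler metric and commuting with the $\Sigma G$-action; translations carry the Dirac family $\{\slashed\partial_A^V\}$ to itself via canonical bundle isomorphisms that intertwine the Quillen metrics and Bismut--Freed connections, so $\mathrm{PF}$ inherits a $T^2$-equivariant structure. Since translations preserve each complex structure on $\Sigma_\tau$, the $T^2$-action on $\mathbb{H}$ is trivial and $\mathfrak{P}^{-2n}$ is trivially $T^2$-equivariant; combining gives $T^2$-equivariance of $\mathrm{PF}^{\mathrm{rel}}$.

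The $SL_2(\mathbb{Z})$-equivariance is the main step. The natural modular action $(A,\tau)\mapsto(\gamma^{-1,*}A,\gamma(\tau))$ lifts to both $\mathrm{PF}$ and $\mathfrak{P}$ only after passage to the metaplectic double cover $Mp_2(\mathbb{Z})\twoheadrightarrow SL_2(\mathbb{Z})$, the obstruction being the sign ambiguity in transforming the spin square root $\sqrt{K}$ of the canonical bundle. Writing $\epsilon\in Mp_2(\mathbb{Z})$ for the non-trivial central element above the identity, it suffices to show that $\epsilon$ acts trivially on the fibers of $\mathrm{PF}^{\mathrm{rel}}$. The general principle to invoke is that $\epsilon$ acts on the Pfaffian of the Dirac operator coupled to a real bundle of rank $r$ by the sign $\sigma^{r}$, where $\sigma=\pm 1$ is the basic metaplectic sign inherited from a single spinor mode of $\mathfrak{P}$. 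Applied here, $\epsilon$ acts on $\mathrm{PF}$ by $\sigma^{2n}$ and on $\mathfrak{P}^{-2n}$ by $\sigma^{-2n}$, and these cancel to $+1$ on the tensor product; the $Mp_2(\mathbb{Z})$-equivariant structure therefore descends through the quotient to an $SL_2(\mathbb{Z})$-equivariant structure on $\mathrm{PF}^{\mathrm{rel}}$.

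The main obstacle is making the fiberwise identification of the $\epsilon$-action concrete. To that end I would use Proposition \ref{det as theta} to restrict to the constant Cartan locus, where $\mathrm{PF}$ admits the generating section $\prod_{k=1}^n \theta_{11}(z_k,\tau)/\eta(\tau)$ and $\mathfrak{P}$ the section $\eta(\tau)$. Applying the transformation laws $\theta_{11}(z/\tau,-1/\tau)=-i\sqrt{-i\tau}\,e^{\pi i z^2/\tau}\theta_{11}(z,\tau)$ and $\eta(-1/\tau)=\sqrt{-i\tau}\,\eta(\tau)$, one sees that the generating section of $\mathrm{PF}^{\mathrm{rel}}$ picks up $\sqrt{-i\tau}$ with total exponent $n - n - 2n = -2n$, which is always even; the parity of this exponent is precisely the triviality of the metaplectic sign, and an analogous computation for $T$-transformation shows that the $e^{i\pi/12}$-phases of the $\eta$-factors organize into a well-defined $SL_2(\mathbb{Z})$-character. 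This completes the anomaly bookkeeping and yields the required $SL_2(\mathbb{Z})$-equivariant structure, finishing the proposition.
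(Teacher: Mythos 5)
The paper itself gives no formal proof of this proposition; it simply asserts that "the relative structure cancels the $\Z_2$-anomaly" and then later in the same subsection verifies the $T$- and $S$-transformations of $\theta_{11}(z,\tau)/\eta(\tau)^3$. Your overall strategy — verify $\Sigma G$- and $T^2$-equivariance formally, then reduce the $SL_2(\Z)$-equivariance to the metaplectic anomaly cancellation, checked explicitly on the constant Cartan locus via theta-function multipliers — is exactly the argument the paper has in mind, and the final exponent count ($-2n$, always even, so the $\sqrt{-i\tau}$ ambiguity disappears) matches the paper's subsequent multiplier computation.

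One step is internally inconsistent, though: you invoke a "general principle" that $\epsilon$ acts on the Pfaffian coupled to a rank-$r$ real bundle by $\sigma^r$, then apply it with $r=2n$ to get $\sigma^{2n}$ on $\mathrm{PF}$ and $\sigma^{-2n}$ on $\mathfrak{P}^{-2n}$. Since $\sigma^2=1$, both are already $+1$ individually; by your own principle $\mathrm{PF}$ would be $SL_2(\Z)$-equivariant on its own and the "cancellation on the tensor product" is vacuous, whereas the paper explicitly asserts that $\mathrm{PF}\to\mathcal{A}\times\mathbb{H}$ admits only an $Mp_2(\Z)$-equivariant structure. The genuine content is not the $\Z_2$-sign but the $\Z/24\Z$-valued $\eta$-multiplier character $\mu:Mp_2(\Z)\to\Z/24\Z$ that the paper names explicitly — what the $\eta(\tau)^{-2n}$ correction cancels is the $T$-phase $e^{in\pi/6}$ and the residual half-integral power of $\sqrt{-i\tau}$ under $S$, exactly as in your last paragraph. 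The theta/eta computation therefore does carry the proof; the abstract $\sigma^r$ bookkeeping in the middle is either misstated (the Pfaffian should contribute $\sigma^{r/2}$, i.e., $\sigma^n$, making the parity $n$-dependent) or superfluous, and you should either fix that exponent or drop the abstract step and let the explicit multiplier computation stand alone.
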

When pulled back to $\LLX\times \mathbb{H}$, it yields 
the $SL_2(\Z)$-equivariant relative Pfaffian line bundle on $\LLX\times \mathbb{H}$ as described in \cite[(5.3)]{Freed1987OnDL}.
\begin{remark}
Let $\Gamma<SL_2(\Z)$ be the subgroup preserving the given spin structure, and let $\widetilde{\Gamma}$ denote its double cover.
The corresponding Pfaffian line bundle is then $\widetilde{\Gamma}$-equivariant.
\end{remark}
On the moduli side,
when restricted to constant Cartan connections $\mathfrak{t}_{\CC}\times \mathbb{H}$, since now $T^2$-action is trivial,
the restricted relative Pfaffian line bundle admits $W_{\text{aff}}\times SL_2(\Z)$ action.

We study the modular property of the relative Pfaffian
under the trivialization.
Let $T \cdot (z, \tau) = (z, \tau+1)$ and $S \cdot (z, \tau) = \left(\frac{z}{\tau}, -\frac{1}{\tau}\right)$ be the generators of $SL_2(\Z)$.
The transformation laws for the normalized theta function are:
\begin{align}
    T \cdot \frac{\theta_{11}(z, \tau)}{\eta(\tau)}= e^{\frac{\pi i}{6}} \cdot \frac{\theta_{11}(z, \tau)}{\eta(\tau)},\quad
    S \cdot \frac{\theta_{11}(z, \tau)}{\eta(\tau)}= -i\, e^{i\pi z^2/\tau} \cdot \frac{\theta_{11}(z, \tau)}{\eta(\tau)}.
\end{align}
The relative Pfaffian is given by the Pfaffian divided by the Dedekind $\eta$-function $\eta(\tau)^{2n}$, after the trivialization, it is
\begin{align}
    \mathrm{res}(\pf^{\mathrm{rel}})(z_1, \ldots, z_n, \tau)\sim \prod_{k=1}^n \frac{\theta_{11}(z_k, \tau)}{\eta(\tau)^3}.
\end{align}
From the transformation law of $\eta(\tau)$, we have:
\begin{align}
    T \cdot \frac{\theta_{11}(z, \tau)}{\eta(\tau)^3}= \frac{\theta_{11}(z, \tau)}{\eta(\tau)^3},\quad
    S \cdot \frac{\theta_{11}(z, \tau)}{\eta(\tau)^3}= \frac{e^{i\pi z^2/\tau}}{\tau} \cdot \frac{\theta_{11}(z, \tau)}{\eta(\tau)^3}.
\end{align}
 $\frac{\theta_{11}(z, \tau)}{\eta(\tau)}$ is modified by dividing by $\eta(\tau)^2$ and then the factor $e^{\frac{\pi i}{6}}$ under $T$-transformation is killed.
This explicitly shows how the relative structure eliminates the global anomaly given by the character
\be 
\mu:Mp_2(\Z)\to \Z/24\Z.
\ee

Under the $S$-transformation, it acquires a projective factor $e^{i\pi z^2/\tau}/\tau$ that cannot be eliminated. To render $\frac{\theta_{11}(z,\tau)}{\eta(\tau)^3}$ modular, we must incorporate the exponential factor derived from the second Eisenstein series.

Let $E_k(\tau)=\sum_{m,n\in \Z^2-(0,0)}\frac{1}{(m\tau+n)^k}$ denote the Eisenstein series of weight $k$.
Here we use the normalized Eisenstein series \( G_k(\tau) = \frac{(k-1)!}{2(2\pi i)^k} \cdot E_k(\tau) \).
For even \( k \), \( G_k(\tau) \) is a modular form of weight \( k \), whereas \( G_2(\tau) \) is only quasi-modular, with the following transformation law:
\begin{align}
 G_2(\tau+1)=G_2(\tau),\quad  G_2\left(-\frac{1}{\tau}\right) = \tau^2 G_2(\tau)+\frac{\tau}{4\pi i}.
\end{align}
From the transformation law of $G_2(\tau)$, directly we have 
\begin{align}
e^{-4\pi^2G_2(\tau)z^2}\cdot \frac{z}{\theta_{11}(z,\tau)/\eta(\tau)^3}
\end{align}
is $SL_2(\Z)$-invariant.

By \cite{Zagier1988NoteOT}, $\frac{z}{\theta_{11}(z,\tau)/\eta(\tau)^3}$ represents the characteristic series of the Witten genus. 
From the formula above, it's direct to see that the Witten genus is modular if $\frac{1}{2}p_1=0$.

The projective factor $e^{4\pi^2G_2(\tau)z^2}$ is intricately linked to the Hitchin connection on the Verlinde bundle. We aim to find the corresponding differential operator associated with the characteristic series $\frac{z}{\theta_{11}(z,\tau)/\eta(\tau)^3}$.

It is known that
Jacobi theta functions $\theta_{ij}(z,\tau)$ satisfy the classical heat equation, where $\mathcal{D}=\frac{\partial}{\partial \tau}-\frac{1}{4\pi i}\frac{\partial^2}{\partial z^2}$, such that
\begin{align}
\mathcal{D}\theta_{ij}(z,\tau)=0.
\end{align}

We define the \textbf{modified heat operator} 
\begin{align}
   \widehat{\mathcal{D}}=\frac{\partial}{\partial \tau}-3G_2(\tau)-\frac{1}{4\pi i}\frac{\partial^2}{\partial z^2}
\end{align}
$\frac{\theta_{ij}(z,\tau)}{\eta(\tau)^3}$ are solutions of the modified heat operator $\widehat{\mathcal{D}}$ since
\begin{align}
\widehat{\mathcal{D}}\frac{\theta_{ij}(z,\tau)}{\eta(\tau)^3}&=\frac{1}{\eta(\tau)^3}\left(\frac{\partial}{\partial \tau}-3G_2(\tau)-\frac{1}{4\pi i}\frac{\partial^2}{\partial z^2}\right)\theta_{ij}(z,\tau)+\theta_{ij}(z,\tau)\left(\frac{\partial}{\partial \tau}\right)\frac{1}{\eta(\tau)^3}\\
&=\frac{\theta_{ij}(z,\tau)}{\eta(\tau)^3}\left(-3G_2(\tau)-3\frac{\eta'(\tau)}{\eta(\tau)}\right)=0\nonumber.
\end{align}
The last equality follows from the fact that the logarithmic derivative of $\eta(\tau)$ 
\begin{align}
(\log \eta(\tau))'=-G_2(\tau).
\end{align}

We then consider the conjugation
\begin{align}
\widetilde{\mathcal{D}}=e^{4\pi^2G_2(\tau)z^2}\circ \widehat{\mathcal{D}}\circ e^{-4\pi^2G_2(\tau)z^2}=\mathcal{D}+C_1 z\partial_z+C_2 z^2+C_3,
\end{align}
where $C_1,C_2,C_3$ are elements in the ring $\CC[G_2,G_4]$.
We have 
\begin{align}
\widetilde{\mathcal{D}}\left(e^{4\pi^2G_2(\tau)z^2}\cdot \frac{\theta_{11}(z,\tau)}{\eta(\tau)^3}\right)=0.
\end{align}
We expect to investigate the modular properties of $\widetilde{\mathcal{D}}$ since they are closely related to the deprojectivization of the Hitchin connection.
\begin{remark}
To achieve modularity, one can instead replace $G_2(\tau)$ with its non-holomorphic completion $\hat{G}_2(\tau)=G_2(\tau)-\frac{1}{8\pi \Im(\tau)}$.
Then
\begin{align}
\hat{G}_2\left(-\frac{1}{\tau}\right)=\tau^2G_2(\tau)+\frac{\tau}{4\pi i}-\frac{1}{8\pi \mathrm{Im}(-1/\tau)}
=\tau^2\hat{G}_2(\tau).
\end{align}
The completion $\hat{G}_2(\tau)$ arises geometrically from the Bismut-Freed connection on the Pfaffian line bundle $\mathfrak{P}^{1/2}\to \mathbb{H}$. 
Since the kernel of the Dirac operator $D_{\tau}$ is always one-dimensional, it fits together to form a line bundle $L\to \mathbb{H}$,
 where $w_{\tau}=\sqrt{dz}$ is a holomorphic section of $L$.  The square of the norm of $w_{\tau}$ is $\mathrm{Im}(\tau)$.
It is expected that the Hitchin connection induced on the determinant bundle $\det(V_k(G))\to \mathcal{M}_{\mathrm{ell}}$ is compatible with the Bismut-Freed connection. 
\end{remark}

\section{Elliptic Bismut-Chern Characters on \texorpdfstring{$L^2X$}{LLX}}\label{sec: Elliptic Bismut Chern Character}
Based on Section~\ref{sec:twisted bismut}, we formulate the elliptic Bismut-Chern character $EBCh$ on $\LLX$ as the $(S^1\times \disc)$-equivariant closed extension of the elliptic holonomy.
It fits into the following commutative diagram:
\begin{equation}
\begin{tikzcd}
    & h_{S^1\times \disc}^{2*}(\LLX,(\mathcal{L}_{\CCS},\nabla_{\CCS}^A,\overline{H})^k)|_{\tau}  \arrow[ld, "i_{10}^*"'] \arrow[dr,"i_{01}^*"] & \\
h_{S^1}^{2*}(\LX)[[q]] \arrow[dr,"i^*",swap]& \per^k(LG)\arrow[dashed,l,"BCh"]\arrow[dashed,d,"Ch"]\arrow[dashed,r,"ECh"]\arrow[dashed,u,"EBCh"]&  h_{\disc}^{2*}(\LX,kH)|_{\tau} \arrow[ld, "i^*"'swap] \\
    & h^{2*}(X)[[q]]   &
\end{tikzcd}
\end{equation}
\noindent\textbf{Organization of this section.}
In Subsection \ref{subsec: completed Periodic Exotic Twisted equivariant Cohomology of LLX}, we construct the (completed-periodic) exotic twisted $(S^1\times \disc)$-equivariant cohomology of $\LLX$.
In Subsection \ref{subsec: Elliptic Bismut Chern Character on LLX}, we introduce the elliptic Bismut-Chern character as the $(S^1\times \disc)$-equivariant twisted Bismut-Chern character.
In Subsection \ref{EBCH further discussions}, we discuss further directions and open questions.
\subsection{Exotic Twisted \texorpdfstring{$(S^1\times \disc)$}{Torus}-equivariant Cohomology of \texorpdfstring{$\LLX$}{LLX}}\label{subsec: completed Periodic Exotic Twisted equivariant Cohomology of LLX}
Following Subsection \ref{subsec: completed Periodic Exotic Twisted equivariant Cohomology of LM}, 
we set up the completed-periodic exotic twisted $(S^1\times \disc)$-equivariant cohomology of $L(LX)$.

As before, we distinguish between the two circles.
Let the torus $T^2 = S^1 \times \cir$, where
the first circle $S^1$ corresponds to the $y$-direction, and the second circle $\cir$ corresponds to the $x$-direction.
$\partial_x$ is the vector field generating rotations in the $x$-direction, which is the evaluation of the vector field $K$;
$\partial_y$ is the vector field on $L(\LX)$ generating rotations in the $y$-direction.
Let $\tau = \tau_1 + i\tau_2 \in \mathbb{H}$,
and $z = x + \tau y$ denote the complex coordinate on the elliptic curve $\Sigma_{\tau} = \CC / (\Z \oplus \tau \Z)$.
The standard flat Kähler form with unit volume is $\frac{-1}{2\tau_2} dz \wedge d\bar{z}$.
The complex vector field dual to $dz$ is
$\partial_z^{\#} = \tau \partial_x - \partial_y$.
Replacing $\partial_x$ with $\tau \partial_x$ effectively deforms the $x$-direction circle $\cir$ to the punctured disk $\disc$.
\begin{definition}
We introduce non-twisted version first. The $(S^1\times \disc)$-equivariant double loop space cohomology
$h_{S^1\times \disc}^*(\LLX)|_{\tau}$ is given by the
$\Z_2$-graded complex
$(\Omega^*(\LLX)^{T^2},d-i_{\partial_z^{\#}})$
where $\Omega^*(\LLX)^{T^2}$ is the $T^2$-invariant part of $\Omega^*(\LLX)$, and on it $\mathcal{D}_{\tau}=d-i_{\partial_z^{\#}}$ is square zero.
The cohomology is
called the completed periodic $(S^1\times \disc)$-equivariant cohomology of $\LLX$.
\end{definition}

For the exotic twisted version,
we consider the $\Z_2$-graded complex $\Omega^*(\LLX,\mathcal{L}_{\CCS})$ and
set $\widehat{\Phi}\in \Omega^4(\LLX)$ as the evaluation on $\LLX$.
\begin{align}
\overline{H}=\int_{S^1}(\iota_{\partial_x}\widehat{\Phi})dy\in \Omega^3(\LLX)
\end{align}
is the average of the curving $H\in \Omega^3(\LX)$ along the $y$-direction. Consider
\begin{align}
\mathcal{D}_{\tau}=\nabla_{\CCS}^A-\iota_{\partial_{z}^{\#}}-\overline{H},
\end{align}
which is an odd operator acting on $\Omega^*(\LLX,\mathcal{L}_{\CCS})$.
Locally, recall that the local $B$-field is given by the transgression of the Chern-Simons $3$-form $K_{\alpha}=\int_{S^1}\CCS(A_{\alpha})\in \Omega^2(LU_{\alpha})$.
$\overline{K_{\alpha}}$ is the average of $K_{\alpha}$ along the $y$-direction.
\begin{align}
\mathcal{D}_{\tau}|_{\mathcal{L}^2U_{\alpha}}&=d-\iota_{\partial_{z}^{\#}}\overline{K_{\alpha}}-\iota_{\partial_{z}^{\#}}-d\overline{K_{\alpha}}=e^{\overline{K_{\alpha}}}(d-\iota_{\partial_{z}^{\#}})e^{-\overline{K_{\alpha}}}.
\end{align}
Then $\mathcal{D}_{\tau}^2=-L_{\partial_{z}^{\#}}$, which is zero restricted on thec $T^2$-invariant part
$\Omega^*(\LLX,\mathcal{L}_{\CCS})^{T^2}$.
\begin{definition}
$(\Omega^*(\LLX,\mathcal{L}_{\CCS})^{T^2},\mathcal{D}_{\tau})$ is a $\Z_2$-graded complex,
we call the cohomology of this complex the exotic twisted $(S^1\times \disc)$-equivariant cohomology of $\LLX$ with respect to the Chern-Simons line bundle $(\mathcal{L}_{\CCS},\nabla_{\CCS}^A)$.
At $\tau \in \mathbb{H}$, it is
denoted by
$
h^*_{S^1\times \disc}(\LLX,(\mathcal{L}_{\CCS},\nabla_{\CCS}^A,\overline{H}))|_{\tau}.
$
\end{definition}
\subsubsection{String-$G$ Structure and Trivialization}\label{subsec: string-G structure and trivialization}
Recall from Subsection \ref{subsec: Trivialized Elliptic Loop Chern Character by String G-structure}
that the geometric string-$G$ structure provides a geometric trivialization of the Chern-Simons $2$-gerbe, which gives:
\begin{itemize}
    \item  $C\in \Omega^3(X)$ satisfying
$
   dC=\Phi.
$
    \item  $B_{\alpha}\in \Omega^2(U_{\alpha})$ satisfying
$
  \CCS(A_{\alpha})-C=dB_{\alpha}.   
$
\end{itemize}
We now use the string-$G$ structure to construct a flat connection $\nabla^C$ and a covariant flat section $s^C$ of unit norm, which provides a trivialization of the Chern--Simons line bundle.

\begin{proposition}
    For each $\alpha$, define $f_{\alpha}=i\int_{\Sigma}B_{\alpha}\in \Omega^0(\Sigma U_{\alpha})$.
    The collection $\{df_{\alpha}\}$ defines a flat connection $\nabla^C$ on $\mathcal{L}_{\CCS}$.
\end{proposition}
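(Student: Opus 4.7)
The plan is to verify both the cocycle condition for a connection and its flatness by expressing $\nabla^C$ as a modification of $\nabla_{\CCS}^A$ by a globally defined $1$-form built from the string class $C$. The crucial computation uses the string trivialization relation $\CCS(A_{\alpha})-C=dB_{\alpha}$ and the fact that, since $\Sigma$ is closed, fiber integration $\int_{\Sigma}$ commutes with the de Rham differential on $\Sigma X$. Combining these gives
\begin{align}
df_{\alpha}=i\int_{\Sigma}dB_{\alpha}=i\int_{\Sigma}\CCS(A_{\alpha})-i\int_{\Sigma}C=\Theta_{\alpha}-i\int_{\Sigma}C.
\end{align}

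From this identity the cocycle condition is immediate. Since $C$ is a global $3$-form on $X$, the $1$-form $i\int_{\Sigma}C$ is globally defined on $\Sigma X$, so $\Theta_{\alpha}-df_{\alpha}$ is independent of $\alpha$. Hence on every overlap $\Sigma U_{\alpha}\cap \Sigma U_{\beta}$ one has
\begin{align}
df_{\beta}-df_{\alpha}=\Theta_{\beta}-\Theta_{\alpha}=h_{\alpha\beta}^{-1}dh_{\alpha\beta},
\end{align}
so $\{df_{\alpha}\}$ glues into a connection $\nabla^C$ on $\mathcal{L}_{\CCS}$ with the same transition functions $\{h_{\alpha\beta}\}$ as $\nabla_{\CCS}^A$. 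In particular, $\nabla^C=\nabla_{\CCS}^A-i\int_{\Sigma}C$ as genuine connections on $\mathcal{L}_{\CCS}$.

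Flatness then follows either trivially from $d(df_{\alpha})=0$ for each $\alpha$, or more conceptually by computing the curvature shift: the curvature of $\nabla_{\CCS}^A$ is the transgression $i\int_{\Sigma}\Phi$, and subtracting the global $1$-form $i\int_{\Sigma}C$ changes it by $-i\int_{\Sigma}dC$, giving
\begin{align}
R^{\nabla^C}=i\int_{\Sigma}(\Phi-dC)=0
\end{align}
by the string equation $dC=\Phi$. There is no substantive analytic or conceptual obstacle here: the entire argument rests on the closedness of $\Sigma$ (so $d$ and $\int_{\Sigma}$ commute) together with the two defining equations of the geometric string trivialization. The only point requiring minor care is tracking the sign convention in moving $d$ past the fiber integral, which is clean since $\partial\Sigma=\emptyset$.
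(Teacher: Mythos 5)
Your proof is correct and takes essentially the same approach as the paper: both derive $df_{\alpha}-df_{\beta}=\Theta_{\alpha}-\Theta_{\beta}$ from $dB_{\alpha}=\CCS(A_{\alpha})-C$ (with the global $3$-form $C$ cancelling across the overlap), identify this with $h_{\alpha\beta}\,dh_{\alpha\beta}^{-1}$, and conclude. The paper leaves flatness implicit; you spell it out both trivially via $d(df_{\alpha})=0$ and conceptually via the curvature shift $R^{\nabla^C}=i\int_{\Sigma}(\Phi-dC)=0$, and you also make explicit the clean global identity $\nabla^C=\nabla_{\CCS}^A-i\int_{\Sigma}C$, which the paper does not state but which is the real content.
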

\begin{proof}
Since $\Theta_{\alpha}-\Theta_{\beta}=\sqrt{-1}(\int_{\Sigma}\CCS(A_{\alpha})-\int_{\Sigma}\CCS(A_{\beta}))=h_{\alpha\beta}dh_{\alpha\beta}^{-1}$ and $H_{\alpha}=H_{\beta}$, we have
\begin{align}
df_{\alpha}-df_{\beta}=\sqrt{-1}(\int_{\Sigma}dB_{\alpha}-\int_{\Sigma}dB_{\beta})=\Theta_{\alpha}-\Theta_{\beta}=h_{\alpha\beta}dh_{\alpha\beta}^{-1}.
\end{align}
\end{proof}

\begin{proposition}
    Define $s_{\alpha}(x)=e^{f_{\alpha}(x)}=e^{i\int_{\Sigma}x^*B_{\alpha}}$ for $x\in \mathcal{L}^2 U_{\alpha}$.
    The collection $\{s_{\alpha}\}$ glues together to define a globally flat covariant section $s^C$ of unit norm on the trivialized Chern--Simons line bundle.
\end{proposition}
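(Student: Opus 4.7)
My plan is to verify three properties of the proposed section $s^C$: the cocycle gluing condition, horizontality for $\nabla^C$, and unit norm. The real content lies in the gluing; the other two are essentially immediate.

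First I would verify the cocycle condition $s_\beta = h_{\alpha\beta} s_\alpha$ on $L^2(U_\alpha\cap U_\beta)$, which amounts to the congruence
\[
f_\beta(x) - f_\alpha(x) \;\equiv\; \log h_{\alpha\beta}(x) \pmod{2\pi i\,\mathbb{Z}}.
\]
The left-hand side equals $i\int_\Sigma x^*(B_\beta - B_\alpha)$. For the right-hand side I would pick an extension $\widetilde x:B\to U_\alpha\cap U_\beta$ with $\partial B = \Sigma$, and use the gauge-transformation identity $g_{\alpha\beta}\cdot A_\beta = A_\alpha$ together with the definition of the gauged-WZW cocycle from \cref{wzw=diff of cs} to rewrite
\[
W(x\circ g_{\alpha\beta},\,x^*A_\beta) \;=\; \int_B \widetilde x^{\,*}\bigl(\CCS(A_\beta) - \CCS(A_\alpha)\bigr).
\]
The geometric string-$G$ trivialization gives $\CCS(A_\alpha) - C = dB_\alpha$, and subtracting the analogous relation on $U_\beta$ yields $\CCS(A_\beta) - \CCS(A_\alpha) = d(B_\beta - B_\alpha)$ on the overlap. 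Applying Stokes' theorem reduces the integral to $\int_\Sigma x^*(B_\beta - B_\alpha)$, matching the left-hand side. This is precisely the $2$-transgression analogue of the identity compared in \cref{1+1 transgression=2 transgression}, where $h'_{\alpha\beta}$ was matched with $h_{\alpha\beta}$ by the same Stokes mechanism.

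Next I would verify horizontality. The preceding proposition equips $\nabla^C$ with local connection $1$-forms $df_\alpha$; in this trivialization $s^C$ is represented by the scalar $e^{f_\alpha}$, so horizontality reduces to the one-line identity $d(e^{f_\alpha}) - df_\alpha\cdot e^{f_\alpha} = 0$, with the sign dictated by the same convention (the one making $\Theta_\alpha - \Theta_\beta = h_{\alpha\beta}\,dh_{\alpha\beta}^{-1}$) used throughout. For the unit norm, $B_\alpha \in \Omega^2(U_\alpha;\mathbb{R})$ implies $f_\alpha(x) = i\int_\Sigma x^*B_\alpha \in i\mathbb{R}$, hence $|s_\alpha(x)| = |e^{f_\alpha(x)}| = 1$; since the Hermitian metric $h$ on $\mathcal{L}_{\CCS}$ is trivial, this gives $\|s^C(x)\|_h = 1$ globally.

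The main technical obstacle I anticipate is bookkeeping signs and orientations in the gluing step so as to align the gauge-action convention $g_{\alpha\beta}\cdot A_\beta = A_\alpha$, the induced orientation of $\partial B = \Sigma$, and the transition-function convention of \cref{universal CS line bundle}. Once these are pinned down, the conceptual content is simply that $\{s_\alpha\}$ is the $2$-transgression of the geometric string-$G$ trivialization of the Chern-Simons $2$-gerbe to the corresponding trivialization of its $2$-transgression line bundle $\mathcal{L}_{\CCS}$; flatness and unit norm are intrinsic features of any such transgressed trivialization when the underlying datum $B_\alpha$ is real.
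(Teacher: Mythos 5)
Your proof takes essentially the same route as the paper's: convert the transition ratio $s_\alpha/s_\beta$ into a boundary integral, extend $x$ over a bounding $3$-manifold $B$, apply Stokes to trade $\int_\Sigma x^*(B_\alpha - B_\beta)$ for $\int_B \widetilde{x}^*\bigl(\CCS(A_\alpha)-\CCS(A_\beta)\bigr)$, and recognize the gauged-WZW cocycle defining $h_{\alpha\beta}$; your added checks of horizontality and unit norm are the same one-liners the paper either states or leaves tacit. The one place your bookkeeping diverges from the paper is the direction of the cocycle — you assert $s_\beta = h_{\alpha\beta}s_\alpha$ and $W(x\circ g_{\alpha\beta},x^*A_\beta)=\int_B\widetilde{x}^*(\CCS(A_\beta)-\CCS(A_\alpha))$, whereas the paper (here and in \cref{1+1 transgression=2 transgression}) uses $s_\alpha/s_\beta = h_{\alpha\beta}$ and the opposite sign in $W$; since you explicitly flag the sign/orientation conventions as the point needing care, this is a convention alignment to pin down rather than a gap in the argument.
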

\begin{proof}
Locally, $s_{\alpha}$ is covariant flat since
$
(d-df_{\alpha})s_{\alpha}=0.
$
To verify that $\{s_{\alpha}\}$ defines a global section, observe that
\begin{align}
s_{\alpha}/s_{\beta}(x)=e^{f_{\alpha}-f_{\beta}(x)}=e^{\sqrt{-1}\int_{\Sigma}x^*(B_{\alpha}-B_{\beta})}.
\end{align}
Choose an arbitrary three-dimensional manifold $B$ with boundary $\partial B=\Sigma$, and extend $x$ to a map $\widetilde{x}:B\to U_{\alpha}\cap U_{\beta}$ such that $\widetilde{x}|_{\partial B}=x$.
By Stokes formula,
\begin{align}
   \int_{\Sigma}x^*(B_{\alpha}-B_{\beta})=\int_{B}\widetilde{x}^*(dB_{\alpha}-dB_{\beta})=\int_B \widetilde{x}^*(\CCS(A_{\alpha})-\CCS(A_{\beta})). 
\end{align} 
Consequently,
$
s_{\alpha}/s_{\beta}(x)=e^{iW(x\circ g_{\alpha\beta},x^*A_{\beta})}=h_{\alpha\beta}(x).
$
\end{proof}

When a geometric string $G$-structure exists, the Chern-Simons line bundle can be trivialized, leading to
\begin{align}\label{trivialized double loop cohomology}
    h^*_{S^1\times \disc}(\LLX,(\mathcal{L}_{\CCS},\nabla_{\CCS}^A,\overline{H}))|_{\tau}\cong h_{S^1\times \disc}^*(\LLX)|_{\tau}.
\end{align}

Similarly, we can define the cohomology with respect to multiple products of Chern-Simons line bundles associated with principal $G_k$-bundles with connections $(P_k,A_k)\to X$.
When anomaly cancellation occurs, \ie there exists $C\in \Omega^3(X)$ such that
$dC=\sum_k \Phi_{K}$, where $\Phi_K=\langle R_k,R_k\rangle$,
the cohomology becomes isomorphic to the non-twisted version $h_{S^1\times \cir}^*(\LLX)|_{\tau}$.

\subsubsection{Restrictions}
We set $M=\LX$, so that $M^{\cir}=X$.
Let $i_{00}:X\to \LLX$ denote the inclusion of double loops constant in both directions.
Let $i_{01}=i_{\LX}:\LX \to \LLX$ denote the inclusion of double loops constant in the $y$-direction. 
Let $i_{10}=Li:\LX \to \LLX$ denote the inclusion of double loops constant in the $x$-direction.
The following diagram commutes:
\begin{equation}
\begin{tikzcd}[rotate=45]
    & \LLX  & \\
    \LX \arrow[ur,"i_{10}"] & & \LX\arrow[ul,"i_{01}",swap]\\
    & X \arrow[ur,"i"]\arrow[ul,"i",swap]\arrow[uu,"i_{00}"] &
\end{tikzcd}
\end{equation}
Under restriction via $i_{10}$, the cohomology reduces to the $S^1$-equivariant cohomology of $\LX$, since the lifting gerbe is trivial on the constant loop space $X$.
Under restriction via $i_{01}$, the cohomology reduces to the twisted $\disc$-equivariant cohomology of $\LX$, governed by
the operator $\mathcal{D}_{\tau}=d-\tau \iota_{\partial_x}-kH$.
The following diagram commutes:
\begin{equation}\label{eq:double-loop-cohomology-diagram}
\begin{tikzcd}
    & h_{S^1\times \disc}^{*}(\LLX,(\mathcal{L}_{\CCS},\nabla_{\CCS}^A,\overline{H})^k)|_{\tau}  \arrow[ld, "i_{10}^*"'] \arrow[dr,"i_{01}^*"] \arrow[dd,"i_{00}^*"] & \\
h_{S^1}^{*}(\LX) \arrow[dr,"i^*",swap]& &  h_{\disc}^{*}(\LX,kH)|_{\tau} \arrow[ld, "i^*"'swap] \\
    & h^{*}(X)   &
\end{tikzcd}
\end{equation}
\subsection{Elliptic Bismut-Chern Characters}\label{subsec: Elliptic Bismut Chern Character on LLX}
Building on the formulation of the equivariant twisted Bismut-Chern character on $\LX$,
we set $M=\LX$, and then $LM=\LLX$. Consider
the lifting gerbe $(\mathcal{G}_P,\nabla^B)\to \LX$ which is $\cir$-equivariant and whose moment map vanishes.

The elliptic Bismut-Chern Character is defined as the $(S^1\times \disc)$-equivariant twisted Bismut-Chern character associated with the gerbe module given by $\mathcal{H}\in \per^k(LG)$.
It serves as the double loop space refinement of the elliptic Chern character: 
\begin{equation}
     \begin{tikzcd}
    \per^k(LG) \arrow[rr,"EBCh"] \arrow[dr,"ECh"] &  &  h_{S^1\times \disc}^{2*}(\LLX,(\mathcal{L_{\CCS}},\nabla_{\CCS}^A,\overline{H})^k|_{\tau}) \arrow[dl,"i_{10}^*"] \\
        &  h_{\disc}^{2*}(\LX,kH)|_{\tau} &
    \end{tikzcd} 
\end{equation}
From  \cref{eq-twisted bismut D operator},
over $L^2U_{\alpha}$, we consider $\mathcal{D}_{\alpha}$, a first-order differential operator of $y$ with values in $\widetilde{L\g_{\CC}}'\otimes\Omega^*(L^2U_{\alpha})$,
\begin{align}
 \mathcal{D}_{\alpha}= (\frac{d}{dy}+\iota_{\partial_y}\widetilde{A}_{\alpha})-(\widetilde{R}_{\alpha}[\tau]+K_{\alpha}).
\end{align}
$\widetilde{A}_{\alpha},\widetilde{R}_{\alpha}[\tau],K_{\alpha}$ are all defined on $LU_{\alpha}$. Abuse of notation, we denote their evaluation on $L^2U_{\alpha}$ by the same symbols.
Denote $\mathcal{D}_{\alpha}=\frac{d}{dy}-(\tau \dd+Y_{\alpha})$,
where 
\begin{align}
Y_{\alpha}=\iota_{\partial_{z}^{\#}}\widetilde{A}_{\alpha}+\widetilde{R}_{\alpha}+K_{\alpha} \in \Omega^*(L^2U_{\alpha},\widetilde{L\g_{\CC}}').
\end{align}
Let $Y_{\alpha}(t)$ be the pull-back of $Y_{\alpha}$ by the evaluation on $y=t$.
Under the representation $\mathcal{H}\in \per^k(LG)$, the elliptic Bismut-Chern Character is locally given by the trace of holonomy
which can be expressed as the iterated integral,
\begin{align}
EBCh_A(\mathcal{H})|_{L^2U_{\alpha}}=\Tr_{\mathcal{H}}[hol(\mathcal{D}_{\alpha})]=\Tr_{\mathcal{H}}[\sum_{m=0}^{\infty}\int_{\Delta_m}Y_{\alpha}(t_1)\cdots Y_{\alpha}(t_m)dt_1\cdots dt_m].
\end{align}
First, the degree-zero component corresponds to the elliptic holonomy. 
While we cannot address the convergence for general smooth double loops, we can ensure convergence at gauged-poly-stable double loops.

By the same argument in \cref{Global defined elliptic holonomy},
it defines a global form on $\LLX$ with values in the Chern-Simons line bundle.
Follows by \cref{D_H closed},
 $EBCh_A(\mathcal{H})$ is $\mathcal{D}_{\tau}$-closed, it is a cocyle of $h_{S^1\times \disc}^{2*}(\LLX,(\mathcal{L_{\CCS}},\nabla_{\CCS}^A),\overline{H})|_{\tau}$.

We show its rectriction to one direction is the elliptic Chern character.
 \begin{proposition}
$i_{10}=Li:\LX \to L(\LX)$ is the inclusion of double loops constant on $y$, 
\begin{align}
i_{10}^*(EBCh_A(\mathcal{H}))=ECh_A(\mathcal{H}).
\end{align}
\end{proposition}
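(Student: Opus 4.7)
The plan is to verify the identity on each local chart $L^2U_\alpha$ and then patch. The $EBCh$ character is given locally by
\begin{align*}
EBCh_A(\mathcal{H})|_{L^2U_\alpha}
=\Tr_{\mathcal{H}}[hol(\mathcal{D}_\alpha)],
\qquad
\mathcal{D}_\alpha=\tfrac{d}{dy}-(\tau\dd+Y_\alpha),\quad Y_\alpha=\iota_{\partial_z^{\#}}\widetilde{A}_\alpha+\widetilde{R}_\alpha+K_\alpha,
\end{align*}
while the local formula for $ECh_A(\mathcal{H})$ on $LU_\alpha$ is $\Tr_{\mathcal{H}}[\exp(\widetilde{R}_\alpha[\tau]+K_\alpha)]$ with $\widetilde{R}_\alpha[\tau]=\tau\dd+\tau\iota_{\partial_x}\widetilde{A}_\alpha+\widetilde{R}_\alpha$. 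Both expressions patch correctly into global objects on their respective spaces, by the cocycle computation already used in \cref{Global defined elliptic holonomy} and its $(1+1)$-transgression analogue for $EBCh$. So it suffices to compare the two local expressions after pullback under $i_{10}$.

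The decisive observation is that $i_{10}:\LX\to \LLX$ sends $\gamma$ to the double loop $(x,y)\mapsto \gamma(x)$ that is constant in $y$. At any point of $i_{10}(\LX)$ the vector field $\partial_y$ vanishes, and each of $\widetilde{A}_\alpha,\widetilde{R}_\alpha,K_\alpha$ on $L^2U_\alpha$, being the pullback from $LU_\alpha$ under the evaluation $ev_y:L^2U_\alpha\to LU_\alpha$, has trivial $y$-dependence on the image. Consequently $i_{10}^*(\iota_{\partial_y}\widetilde{A}_\alpha)=0$, and since $\partial_z^{\#}=\tau\partial_x-\partial_y$,
\begin{align*}
i_{10}^*(\tau\dd+Y_\alpha)
=\tau\dd+\tau\iota_{\partial_x}\widetilde{A}_\alpha+\widetilde{R}_\alpha+K_\alpha
=\widetilde{R}_\alpha[\tau]+K_\alpha,
\end{align*}
which is independent of $y$.

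The main step is then that, for a $y$-independent transport coefficient, the iterated integral defining the holonomy collapses to an ordinary exponential:
\begin{align*}
i_{10}^*\bigl(hol(\mathcal{D}_\alpha)\bigr)
=\sum_{m=0}^\infty \tfrac{1}{m!}\bigl(\widetilde{R}_\alpha[\tau]+K_\alpha\bigr)^m
=\exp\bigl(\widetilde{R}_\alpha[\tau]+K_\alpha\bigr).
\end{align*}
Taking the trace in $\mathcal{H}$ recovers the local formula for $ECh_A(\mathcal{H})|_{LU_\alpha}$, and patching across $\{LU_\alpha\}$ yields $i_{10}^*(EBCh_A(\mathcal{H}))=ECh_A(\mathcal{H})$ globally.

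The one genuine subtlety, and the point where care is needed, is analytic: the exponential is infinite-dimensional operator-valued, and the interchange of the infinite sum with $\Tr_{\mathcal{H}}$ must be justified in the trace-class sense. This is precisely the content of the convergence discussion in \cref{Convergence of Elliptic Loop Chern Character}, in particular \cref{convergence of Ech} combined with the Floquet-gauge reduction and the energy estimates of \cref{coupled with Lg trace convergence}. Since convergence of $ECh_A(\mathcal{H})$ is already established there, no new analytical input beyond citing those results is required to close the argument.
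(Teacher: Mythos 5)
Your proof is correct and follows essentially the same route as the paper: the single decisive step is that on loops constant in $y$ one has $\iota_{\partial_y}\widetilde{A}_\alpha=0$, so the transport coefficient $\tau\dd+Y_\alpha$ reduces to the $y$-independent $\widetilde{R}_\alpha[\tau]+K_\alpha$ and the iterated integral collapses to $\exp(\widetilde{R}_\alpha[\tau]+K_\alpha)$, matching $ECh_A(\mathcal{H})|_{LU_\alpha}$. The paper's proof is terser but identical in substance; your extra remarks on patching and on trace-class convergence (citing the earlier convergence results) are correct but not additional content beyond what the paper has already established elsewhere.
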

\begin{proof}
Since constant on $y$, $\iota_{\partial_y}\widetilde{A}_{\alpha}=0$.
The holonomy $hol(\mathcal{D}_{\alpha})$ reduces to the exponential
$\exp(\widetilde{R}_{\alpha}[\tau]+K_{\alpha})$. Then
\begin{align}
    \Tr_{\mathcal{H}}[hol(\mathcal{D}_{\alpha})]=\Tr_{\mathcal{H}}[\exp(\widetilde{R}_{\alpha}[\tau]+K_{\alpha})]=Ech_A(\mathcal{H})|_{LU_{\alpha}}.
\end{align}
\end{proof}
We show that its restriction to another direction is the $q$-graded Bismut-Chern character.
\begin{proposition}
Let $i_{01}=i_{\LX}:\LX \to L(\LX)$ be the inclusion of loops constant in the $x$-direction. Then,
\begin{align}
i_{01}^*(EBCh_A(\mathcal{H}))=BCh_A(\mathcal{H})[q].
\end{align}
\begin{proof}
Since the loops are constant in the $x$-direction, both the central term $c_{\alpha}$ and the $B$-field $K_{\alpha}$ vanish. Consequently, $\mathcal{D}_{\alpha}$ simplifies to
$
    \mathcal{D}_{\alpha}=\frac{d}{dy}-\iota_{\partial_y}\hat{A}_{\alpha}-(\tau \dd+\hat{R}_{\alpha}).
$
Evaluating at fixed time $y=t$ gives a $\g_{\CC}$-valued expression. From the iterated integral, the trace decomposes according to the energy decomposition of $\mathcal{H}$:
\begin{align}
    &\Tr_{\mathcal{H}}[hol(\mathcal{D}_{\alpha})]=\sum_{n=0}^{\infty} \Tr_{\mathcal{H}_n}[hol(\frac{d}{dy}-\iota_{\partial_y}\hat{A}_{\alpha}-(\tau \dd+\hat{R}_{\alpha}))] \\
    &\ =\sum_{n=0}^{\infty}q^n \Tr_{\mathcal{H}_n}[hol(\frac{d}{dy}-(\iota_{\partial_y}\hat{A}_{\alpha}+\hat{R}_{\alpha}))]=\sum_{n=0}^{\infty}q^n BCh_A((P\times_G \mathcal{H}_n)) \nonumber.
\end{align}
This results in the $q$-graded Bismut-Chern character.
\end{proof}
\end{proposition}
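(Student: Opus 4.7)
The plan is to show that on double loops that are constant in the $x$-direction, every piece of $\mathcal{D}_\alpha$ that records the central extension of $\widetilde{LG}$ drops out, and what survives factors as the classical Bismut--Chern operator on each finite-dimensional energy summand $\mathcal{H}_n$ of $\mathcal{H}$. The resulting decomposition of the trace by energy level produces exactly the $q$-grading on the right-hand side.

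First I would unpack the geometric restriction. A point of $i_{01}(\LX)\subset\LLX$ is a double loop depending only on $y$, which at each fixed $y$ is a constant loop in $X\subset\LX$. On such loops $\iota_K\widehat{A}_\alpha=0$ pointwise in $t$, so the central correction $c_\alpha=\int_{S^1}\langle\iota_K\widehat{A}_\alpha,\widehat{A}_\alpha\rangle\,dt$ vanishes, the cocycle $\omega(\widehat{A}_\alpha,\widehat{A}_\alpha)$ vanishes (its integrand is the $t$-derivative of a $t$-independent form), and the $B$-field $K_\alpha=\int_{S^1}\CCS(A_\alpha)$ vanishes as well. Thus the central parts of $\widetilde{A}_\alpha$ and $\widetilde{R}_\alpha$ disappear and the surviving $L\g_{\mathbb{C}}$-valued inputs are $t$-independent, i.e.\ land in $\g_{\mathbb{C}}\subset\widetilde{L\g_{\mathbb{C}}}'$.

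Second, the operator therefore simplifies to
\[
\mathcal{D}_\alpha\big|_{i_{01}(\LX)}=\tfrac{d}{dy}-\iota_{\partial_y}\widehat{A}_\alpha-\bigl(\tau\mathfrak{d}+\widehat{R}_\alpha\bigr),
\]
where $\mathfrak{d}$ acts diagonally on the energy grading $\mathcal{H}=\bigoplus_{n}\mathcal{H}_n$ with $\exp(\tau\mathfrak{d})|_{\mathcal{H}_n}=q^n$, and commutes with the $\g_{\mathbb{C}}$-action preserving each $\mathcal{H}_n$. Expanding the iterated-integral formula for $hol(\mathcal{D}_\alpha)$ and splitting by energy, the trace factors as
\[
\sum_{n\ge 0}q^n\,\Tr_{\mathcal{H}_n}\!\bigl[hol\bigl(\tfrac{d}{dy}-\iota_{\partial_y}\widehat{A}_\alpha-\widehat{R}_\alpha\bigr)\bigr],
\]
and each summand is exactly the local expression \eqref{Bismut-Chern formula} for $BCh_A(P\times_G\mathcal{H}_n)$. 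Summing over $n$ recovers $BCh_A(\mathcal{H})[q]$.

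The main obstacle is justifying the interchange of the infinite energy sum with the iterated-integral trace. As in Proposition \ref{convergence of Ech}, this should follow from combining the energy estimate of Proposition \ref{Lg action controlled by energy} with the Kac--Weyl decay of Lemma \ref{decay of formal character}; here the situation is in fact easier, because the $L\g_{\mathbb{C}}$-valued inputs have collapsed to $\g_{\mathbb{C}}$ and each $\mathcal{H}_n$ is finite-dimensional. Global well-definedness of the restricted form on $\LX$ then follows from Proposition \ref{Global defined elliptic holonomy}, whose transition identity degenerates to the ordinary gauge covariance of the classical Bismut--Chern character once the central extension is trivialized.
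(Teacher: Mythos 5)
Your argument coincides with the paper's own proof: the vanishing of $c_\alpha$, the cocycle term, and $K_\alpha$ on $x$-constant double loops, the resulting reduction of $\mathcal{D}_\alpha$ to a $\g_{\mathbb{C}}$-valued operator, and the energy-grading factorization $\exp(\tau\dd)|_{\mathcal{H}_n}=q^n$ yielding the $q$-graded Bismut--Chern character. The extra remarks on interchanging the energy sum with the trace and on gauge covariance are sound elaborations of steps the paper leaves implicit, not a different route.
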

\subsection{Further Discussions}\label{EBCH further discussions}
We propose the analytical elliptic Bismut-Chern character on $\LLX$ as super Pfaffians. 
From our construction of the elliptic Bismut-Chern character, it is evident that the two loops play distinct roles in the $(1+1)$-transgression. One loop is associated with loop groups and loop bundles, while the other loop is used for transgression. Consequently, the modular property cannot be directly observed, as the two loops are treated asymmetrically.
In contrast, the $2$-transgression approach appears more natural for understanding modularity. 
If we consider the Pfaffian line bundle over the double loop space $\LLX$ discussed earlier, there should exist the analytical elliptic Bismut-Chern character 
$\pf(\frac{\partial}{\partial_{\bar{z}}}-(i_{\partial_z^{\#}}\hat{A}+\hat{R}))$, which is a form on $\LLX$
    with values in the Pfaffian line bundle.
Note that $\nabla_{\bar{z}}^A=\frac{\partial}{\partial_{\bar{z}}}-i_{\partial_z^{\#}}\hat{A}$ is given by the family of Dirac operators coupled with the connection $\gamma^*A$ at $\gamma \in \LLX$.
The degree-zero component corresponds precisely to the canonical section Pfaffian.

On the other hand, when restricted to the constant loop space $X$,
 it simplifies to the regularized Pfaffian $\pf(\frac{\partial}{\partial_{\bar{z}}}-\hat{R})$, which can be rigorously defined as demonstrated in \cite[(5.2)]{BerwickEvans2019SupersymmetricLM}. 
The relative regularized pfaffian $\pf(\frac{\partial}{\partial_{\bar{z}}}-\hat{R})/\pf(\frac{\partial}{\partial_{\bar{z}}})$ computes the Witten genus when $X$ is further string.

However, there are significant challenges in providing a precise definition of $\pf(\nabla_{\bar{z}}^A-\hat{R})$ over $\LLX$.
We won't delve into the technical details here but give a brief outline and highlight the main difficulties.

First, we define the higher twisted Laplacian $\hat{\Delta}_A=(\nabla_{\bar{z}}^A-\hat{R})^*(\nabla_{\bar{z}}^A-\hat{R})=\Delta_A+\mathcal{F}$, 
where $\Delta_A$ is the ordinary Laplacian and $\mathcal{F}$ is the higher degree term given by the curvature.
Formally we can write down $e^{-t\hat{\Delta}_A}$ by the Volterra series
\begin{align}
    e^{-t\hat{\Delta}_A}=e^{-t\Delta_A}+\sum_{k=1}^{\infty}(-t)^k \int_{\Delta_k}e^{-s_0t\Delta_A}\mathcal{F}e^{-s_1t\Delta_A}\cdots \mathcal{F}e^{-s_kt\Delta_A}ds_1\cdots ds_k.
\end{align}
Here the summation extends to infinity because $\LLX$ is now infinite-dimensional.
If the heat kernel of $\hat{\Delta}_A$ can be constructed rigorously,
we can define the trace $\tr[e^{-t\hat{\Delta}_A}]$ by integrating the heat kernel on the diagonal. Now 
$\tr[e^{-t\hat{\Delta}_A}]$ is a form defined on $\LLX$ which has exponential decay as $t\to \infty$ and the asymptotic expansion as $t\to 0$ where the coefficients are forms.
Then we can define the zeta-regularized determinant by Mellin transformation 
\begin{align}
\zeta_{\hat{\Delta}_A}(s)=\frac{1}{\Gamma(s)}\int_0^{\infty}t^{s-1}\tr[e^{-t\hat{\Delta}_A}]dt,
\end{align}
and then the form version zeta-regularized determinant is given by
\begin{align}
\dt_{\zeta}(\hat{\Delta}_A)=\exp{(-\zeta_{\hat{\Delta}_A}'(0))}.
\end{align}
We are uncertain whether this can be well-defined as a form on $\LLX$. 
The only certainty is that the degree-zero component corresponds to the $\zeta$-regularized determinant of the Laplacian of Dirac operators, which defines the Quillen metric for determinant and Pfaffian line bundles.
Finally, we anticipate that the analytical elliptic Bismut-Chern character $\pf(\nabla_{\bar{z}}^A-\hat{R})$ satisfies
\begin{align}
    \|\pf(\nabla_{\bar{z}}^A-\hat{R})\|_{\PF}^2=\dt_{\zeta}(\hat{\Delta}_A).
\end{align}
The analytical elliptic Bismut-Chern character $\pf(\nabla_{\bar{z}}^A-\hat{R})$ refers to the super Pfaffian.
 If a string structure exists on $X$ and the Pfaffian line bundle is trivialized, then
$\pf(\nabla_{\bar{z}}^A-\hat{R})$ can be viewed as a function on the super double loop space $\mathcal{S}(\LLX)$.
It would be more practical to consider a finite-dimensional space $B$ parameterizing double loops, i.e., there is a map $\rho:B\times \Sigma \to X$. Furthermore, we require $B$ to have a $T^2$-action and $\rho$ to be $T^2$-invariant under the $T^2$-action on $B\times \Sigma$.

Even if the analytic difficulties above can be resolved, another issue remains: how to formulate the $(S^1\times \disc)$-equivariant cohomology twisted by the relative Pfaffian line bundle. 
We need to find an analytical construction of $3$-form \(\xi\) on \(\LLX\) such that \(i_{\partial_z^{\#}}\xi\) equals the curvature of the Pfaffian line bundle. We then define the twisted equivariant differential
\begin{align}\label{Q}
    Q_{\tau}=\nabla^{\mathrm{rel}}-i_{\partial_z^{\#}}-\xi,
\end{align}
acting on \(\Omega^*(\LLX\times \mathbb{H},\mathrm{PF}^{\mathrm{rel}})^{T^2}\), where $\mathrm{PF}^{\mathrm{rel}}$ is the relative Pfaffian line bundle and $\nabla^{\mathrm{rel}}$ is the relative Bismut-Freed connection.
This situation differs from the (1+1)-transgression case, where the 3-form \(\xi=\overline{H}\) is obtained by averaging the curving \(H\) along the \(y\)-direction. It would be interesting to develop such an equivariant twisted cohomology for \(\LLX\) with respect to the $2$-transgression line bundle.

In \cite{BerwickEvans2019SupersymmetricLM}, the double loop space model of complex analytic elliptic cohomology is presented as a $SL_2(\Z)$-equivariant sheaf of commutative differential graded algebras over $\mathbb{H}$.
For a fixed $\tau \in \mathbb{H}$, when a string structure exists, the cohomology given in \cref{trivialized double loop cohomology} is compatible with the above $Q$-cohomology at the fiber $\tau$.

A natural question arises: can we extend the results of \cite{BerwickEvans2019SupersymmetricLM} to the scenario 
where $X$ is not string and consider twisting by the relative Pfaffian line bundle over $\LLX\times \mathbb{H}$ as in \cite{Freed1986DeterminantsTA}? 
This question illuminates the connection between the Green-Schwarz string anomaly and the conformal anomaly arising from the Hitchin connection being only projectively flat.
The relative Bismut-Freed connection is related with the logarithmic derivative of the Dedekind eta function, which is the second normalized Eisenstein series $G_2(\tau)$.
This structure plays a dual role: it provides essential geometric data for constructing the double loop space cohomology twisted by the relative Pfaffian line bundle, while simultaneously governing the deprojectivization of the Hitchin connection.
We anticipate that a unified framework integrating Verlinde bundles, Hitchin connections, and the double loop space model of complex-analytic elliptic cohomology will emerge from further investigation of these relationships.

\noindent\rule{\textwidth}{0.4pt}
\bibliographystyle{amsalpha}
\bibliography{references-2}

\end{document}